\documentclass[11pt]{article}
\usepackage[utf8]{inputenc}
\usepackage[T1]{fontenc}
\usepackage[english]{babel}
\usepackage{amsthm,amsmath,amssymb}
\usepackage{geometry}
\usepackage{enumitem}
\usepackage{microtype}
\usepackage{xcolor}
\usepackage{url}
\usepackage[colorlinks=true,linkcolor=blue!70!black,citecolor=green!50!black,
            urlcolor=blue!60!black]{hyperref}
\usepackage{tikz}
\usetikzlibrary{arrows.meta,positioning}

\newtheorem{theorem}{Theorem}[section]
\newtheorem{lemma}[theorem]{Lemma}
\newtheorem{proposition}[theorem]{Proposition}
\newtheorem{corollary}[theorem]{Corollary}
\newtheorem{fact}[theorem]{Fact}
\theoremstyle{definition}
\newtheorem{definition}[theorem]{Definition}
\theoremstyle{remark}
\newtheorem{remark}[theorem]{Remark}
\newtheorem{example}[theorem]{Example}
\newtheorem{convention}[theorem]{Convention}
\newtheorem{openproblem}[theorem]{Open Problem}

\title{The Extended Real Line with Reentry:\\
Separating US from KC in the Clontz Hierarchy}
\author{Dami\'{a}n Rafael Lattenero\\[4pt]
\small\texttt{damian.lattenero@gmail.com}}
\date{March 2026}

\begin{document}
\maketitle

\begin{abstract}
We construct the \emph{Extended Real Line with Reentry} (ERI):
identify $\{-\infty, 0, +\infty\}$ to a single point $\ast$ in
$\overline{\mathbb{R}}$, and require every neighborhood of $\ast$ to
have dense preimage.
The resulting space is compact, path-connected, and sober; it is
$T_1$ and US (uniquely sequential), but not weakly Hausdorff, not
KC, and not Hausdorff.  In the refined hierarchy of
Clontz~\cite{bib:clontz} it sits at the $k_2$-Hausdorff level.
A search of pi-Base~\cite{bib:pibase} for compact US-not-KC spaces
returns three entries---$\mathbb{Q}^{\ast} \times \mathbb{Q}^{\ast}$,
$\omega_1{+}1$ with doubled endpoint (S37), and the one-point
compactification of the Arens--Fort space (S165)---all totally
disconnected.  ERI is the first compact path-connected example.

The same density condition on a general compact Hausdorff base
without isolated points defines a \emph{Filter-Modified Quotient}
(FMQ).  We prove that the density modifier $\mathcal{D}_Y$ is the
least restrictive admissible modifier preserving US, and that the
hierarchy level $k_2\mathrm{H}$-not-$\mathrm{wH}$ is invariant under
infinite closed nowhere-dense collapse sets, iteration of the
construction, and arbitrary products.  The only remaining direction
toward a US-not-$k_2\mathrm{H}$ level runs through non-first-countable
base spaces.
\end{abstract}

\smallskip
\noindent\textbf{MSC 2020:} 54A20, 54B15, 54D10, 54D25, 54D30, 54D55, 54G20.\quad
\textbf{Keywords:} US space, KC space, filter-modified quotient,
admissible modifier, density condition, Wilansky hierarchy,
$k_2$-Hausdorff, tightness, sobriety, infinite collapse set,
iterated quotient.

\tableofcontents

\section{Introduction}\label{sec:intro}

The study of separation axioms strictly between $T_1$ and $T_2$ was
initiated by Wilansky~\cite{bib:wilansky}.

\begin{definition}[Wilansky~\cite{bib:wilansky}]\label{def:USKC}
A space $X$ is a \emph{US-space} (uniquely sequential) if every convergent
sequence in $X$ has a unique limit, and a \emph{KC-space} (kompacts
closed) if every compact subset of $X$ is closed.
\end{definition}

The implications $T_2 \Rightarrow \mathrm{KC} \Rightarrow \mathrm{US}
\Rightarrow T_1$ are all strict~\cite{bib:wilansky,bib:alas}.
A refined hierarchy developed by Clontz~\cite{bib:clontz}, with
intermediate axioms drawn from the $k$-space and homotopical literature,
is
\begin{equation}\label{eq:clontz}
T_2 \;\Rightarrow\; k_1\mathrm{H} \;\Rightarrow\; \mathrm{KC}
\;\Rightarrow\; \mathrm{wH} \;\Rightarrow\; k_2\mathrm{H}
\;\Rightarrow\; \mathrm{US} \;\Rightarrow\; T_1,
\end{equation}
where a space is \emph{weakly Hausdorff} (wH) if every continuous image
of a compact Hausdorff space is closed, and \emph{$k_2$-Hausdorff}
($k_2$H) if for every continuous $f\colon K \to X$ with $K$ compact
Hausdorff the kernel $\{(a,b) \in K \times K : f(a) = f(b)\}$ is
closed~\cite{bib:clontzwilliams}.
The role of first-countability is classical:

\begin{fact}[see e.g.\ {\cite[p.~60]{bib:goreham}}]\label{fact:firstUS}
A first-countable space is Hausdorff if and only if it is US.
\end{fact}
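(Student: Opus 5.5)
Hausdorff $\Rightarrow$ US holds in every space, so only the converse uses first-countability. Suppose $x\in U$ and $y \in V$ with $U\cap V\neq\emptyset$. The plan is to build a single sequence converging to both $x$ and $y$ with $x\neq y$; this contradicts US.

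First, fix countable neighbourhood bases $\{U_n\}$ at $x$ and $\{V_n\}$ at $y$, and make them decreasing by replacing $U_n$ with $U_1\cap\dots\cap U_n$, and likewise for $V_n$. Next, suppose $X$ is US but not Hausdorff. Then there are distinct points $x\neq y$ such that every neighbourhood of $x$ meets every neighbourhood of $y$. In particular, for each $n$ we may choose a point $z_n\in U_n\cap V_n$.

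We now check the convergence. Given any neighbourhood $W$ of $x$, some $U_N\subseteq W$. Since the base is decreasing, $z_n\in U_n\subseteq U_N\subseteq W$ for all $n\ge N$, so $z_n\to x$. The same argument with the $V_n$ gives $z_n\to y$. Thus the sequence $(z_n)$ has the two distinct limits $x$ and $y$, contradicting the US property. Hence $X$ is Hausdorff.

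There is no real obstacle here. The only point needing care is the replacement by decreasing bases: without it, membership $z_n\in U_n$ would not force the tail of the sequence into an arbitrary neighbourhood.
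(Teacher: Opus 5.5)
Your proof is correct: the paper gives no argument of its own for this Fact (it only cites Goreham), and your diagonal construction is the standard proof of this classical result. You choose $z_n \in U_n \cap V_n$ from decreasing countable bases at two distinct points $x \neq y$ that cannot be separated, so that $z_n \to x$ and $z_n \to y$. As a side remark, your argument only uses countable bases at the two points being separated, so it actually proves a slightly stronger local statement: in a US space, any two distinct points with countable neighbourhood bases have disjoint neighbourhoods.
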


The strict implications in~\eqref{eq:clontz} therefore collapse in
the first-countable setting, so any counterexample to
$\mathrm{US} \Rightarrow \mathrm{KC}$ has to fail first-countability at
some point.

The author is unaware of any compact path-connected space in the
literature that separates US from KC.
A search of pi-Base~\cite{bib:pibase}\footnote{Space IDs (S$n$)
throughout this paper refer to entries in the pi-Base community
database~\cite{bib:pibase}.} for compact US-not-KC spaces returns
three entries: the product $\mathbb{Q}^{\ast} \times \mathbb{Q}^{\ast}$,
the ordinal $\omega_1{+}1$ with doubled endpoint (S37), and the
one-point compactification of the Arens--Fort space
(S165)~\cite{bib:clontz}.
All three are totally disconnected.
Van~Douwen's anti-Hausdorff Fr\'{e}chet--Urysohn
space~\cite{bib:vandouwen} is compact and US, but it was built from a
MAD family and is KC by K\"{u}nzi--van der Zypen~\cite{bib:kunzi}.
The line with two origins is $T_1$ and first-countable, and there the
sequence $1/n$ has two limits, so US fails.

We construct such a space.
The \emph{Extended Real Line with Reentry} (ERI) is obtained from
$\overline{\mathbb{R}} = [-\infty, +\infty]$ by identifying
$\{-\infty, 0, +\infty\}$ to a single point $\ast$; the topology is
the coarsening of the standard quotient in which every neighborhood
of $\ast$ is required to have dense preimage.
ERI is compact, path-connected, and sober.
It is $T_1$ and US, and it is $k_2$-Hausdorff; it is not weakly
Hausdorff, not KC, and not Hausdorff.
The failure of first-countability at $\ast$ is forced by
Fact~\ref{fact:firstUS}, and $\ast$ is the only point at which ERI
fails to be first-countable.
At $\ast$ the tightness is $\aleph_0$ while the character is at least
$\aleph_1$, so ERI is an example of countable tightness without
sequentiality.

We now generalize the construction.
For any compact Hausdorff $Y$ without isolated points and any finite
nonempty $A \subseteq Y$, an admissible collection
$\mathcal{F} \subseteq \mathcal{P}(Y)$---a ``filter-like'' family
stable under supersets and suitable intersections---determines a
quotient topology on $Y/A$ by declaring an open set containing the
collapsed point open iff its preimage lies in $\mathcal{F}$.
We call the resulting space a \emph{Filter-Modified Quotient} (FMQ).
ERI is the FMQ in which $Y = \overline{\mathbb{R}}$, $A = \{-\infty,
0, +\infty\}$, and $\mathcal{F}$ is the collection $\mathcal{D}_Y$ of
dense subsets.
We show that the density modifier $\mathcal{D}_Y$ is the least
restrictive admissible modifier for which the quotient is US
(Theorem~\ref{thm:optimality}), and that the resulting hierarchy
level $k_2\mathrm{H}$-not-$\mathrm{wH}$ is stable under enlarging
$A$ to any closed nowhere-dense set, under iterating the
construction, and under products.
The only parameter we have not ruled out as a route to a higher
hierarchy level is first-countability of the base: the candidate is
$Y = \beta\mathbb{N} \setminus \mathbb{N}$, and the question is open.

The paper is arranged as follows.
Section~\ref{sec:construction} defines ERI and its basic separation,
connectedness, and dimensional properties.
Section~\ref{sec:countseq} develops the sequential theory and places
ERI in the Wilansky hierarchy.
Section~\ref{sec:k2H} proves the $k_2$-Hausdorff property and
compares ERI with the three known compact US-not-KC examples.
The FMQ framework is introduced in Section~\ref{sec:fmq}, the
modifier spectrum and the optimality theorem in
Section~\ref{sec:modifiers}, and the extended-rigidity results
together with further examples in Section~\ref{sec:extended}.
Open problems are collected in Section~\ref{sec:open}.

\section{The ERI Space}\label{sec:construction}

\subsection{Definition and notation}

\begin{definition}[ERI space]\label{def:ERI}
Let $\overline{\mathbb{R}} = [-\infty,+\infty]$ denote the extended real
line with its standard order topology.  Define the equivalence relation
$\sim$ on $\overline{\mathbb{R}}$ whose sole non-singleton class is
$\{-\infty, 0, +\infty\}$, and let
$X_0 := \overline{\mathbb{R}}/{\sim}$
be the quotient set.  Write $q:\overline{\mathbb{R}} \to X_0$ for the
projection, $\ast := q(-\infty) = q(0) = q(+\infty)$ for the collapsed
point, and $\hat{x} := q(x)$ for all
$x \in \overline{\mathbb{R}}$.
The restriction $q|_{\mathbb{R}\setminus\{0\}}$ is a bijection onto
$X_0 \setminus \{\ast\}$.

The \emph{ERI topology} on $X_0$ is
\[
\tau_{\mathrm{ERI}} \;:=\;
\bigl\{\, U \subseteq X_0 :
  \text{(a) } q^{-1}(U) \text{ is open in } \overline{\mathbb{R}},\;
  \text{and (b) } \ast \in U \Rightarrow \overline{q^{-1}(U)} =
  \overline{\mathbb{R}} \,\bigr\}.
\]
We write $X := (X_0, \tau_{\mathrm{ERI}})$.
\end{definition}

\begin{remark}[Intuitive picture]\label{rem:picture}
Think of $\overline{\mathbb{R}}$ as a closed interval with endpoints
$-\infty$ and $+\infty$ and midpoint~$0$.
ERI is obtained by gluing these three points into a single
point~$\ast$, creating a ``figure-eight'' topology where the two
loops share the vertex~$\ast$.
The density condition means that a neighborhood of $\ast$ must be
``almost all'' of this figure-eight: the complement of a
neighborhood (relative to the figure-eight) must be a closed
nowhere-dense set---such as a Cantor set, a convergent sequence
with its limit, or finitely many points---but never a set with
interior (e.g., an interval).
\end{remark}

\begin{remark}[Standing notation]\label{rem:notation}
Throughout this paper:
$\overline{\mathbb{R}} = [-\infty,+\infty]$ denotes the extended real
line (compact, Hausdorff, homeomorphic to $[-1,1]$);
$\mathbb{R} = (-\infty,+\infty)$ the usual real line.
A subset $D$ of a topological space $Y$ is \emph{dense} if
$\overline{D} = Y$.
A set $A$ is \emph{nowhere dense} if
$\mathrm{int}(\overline{A}) = \varnothing$;
for closed $A$ this reduces to $\mathrm{int}(A) = \varnothing$.
\end{remark}

\begin{convention}[Closure convention]\label{conv:closed}
Throughout this paper, when we say that $F \subset \mathbb{R}\setminus\{0\}$
is \emph{closed}, we mean closed in~$\overline{\mathbb{R}}$.
Since $F$ avoids $\{-\infty, 0, +\infty\}$, this is equivalent to
$F$ being a closed and bounded subset of~$\mathbb{R}$ that does not
contain~$0$, i.e., a compact subset of~$\mathbb{R}\setminus\{0\}$.
\end{convention}

\subsection{Preliminary facts}

$\overline{\mathbb{R}}$ has no isolated points (every basic open set is
an infinite interval), so for any finite
$A \subset \overline{\mathbb{R}}$, $\overline{\mathbb{R}} \setminus A$
is dense.
A subset $S \subseteq \overline{\mathbb{R}}$ is \emph{saturated}\label{def:saturated}
if $q^{-1}(q(S)) = S$, equivalently
$S \cap \{-\infty,0,+\infty\} \in \{\varnothing,\;\{-\infty,0,+\infty\}\}$.
Every subset of $\mathbb{R}\setminus\{0\}$ and its complement in
$\overline{\mathbb{R}}$ are both saturated (immediate from
$\{-\infty,0,+\infty\}$ being the unique non-singleton equivalence class).

\subsection{Topology axioms and basic properties}

\begin{proposition}[$\tau_{\mathrm{ERI}}$ is a topology]
\label{prop:topology}
The collection $\tau_{\mathrm{ERI}}$ satisfies the topology axioms.
\end{proposition}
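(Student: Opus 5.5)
We check the three axioms directly from Definition~\ref{def:ERI}, using that taking preimages under $q$ commutes with arbitrary unions and finite intersections. Throughout, condition (a) is handled by the fact that the quotient topology is a topology: $q^{-1}$ of a union (resp.\ finite intersection) is the union (resp.\ intersection) of the preimages, and these are open in $\overline{\mathbb{R}}$. So the work is entirely in condition (b).

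\emph{Empty set and whole space.} For $\varnothing$, (a) holds and (b) is vacuous. For $X_0$, $q^{-1}(X_0)=\overline{\mathbb{R}}$ is open and trivially dense.

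\emph{Arbitrary unions.} Let $U=\bigcup_i U_i$ with each $U_i\in\tau_{\mathrm{ERI}}$. If $\ast\in U$, then $\ast\in U_{i_0}$ for some $i_0$. Hence $q^{-1}(U_{i_0})$ is dense. Since $q^{-1}(U)\supseteq q^{-1}(U_{i_0})$, the larger set $q^{-1}(U)$ is dense as well, so (b) holds.

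\emph{Finite intersections.} It suffices to treat $U\cap V$ with $U,V\in\tau_{\mathrm{ERI}}$. If $\ast\notin U\cap V$, only (a) is needed. If $\ast\in U\cap V$, then $O_1=q^{-1}(U)$ and $O_2=q^{-1}(V)$ are both open and dense in $\overline{\mathbb{R}}$. This is the one point needing an argument, since an intersection of two merely dense sets need not be dense.

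We use the standard fact that in any topological space the intersection of two open dense sets is dense. Take a nonempty open $W$. Then $W\cap O_1$ is nonempty by density of $O_1$. It is also open, so it meets the dense set $O_2$. Therefore $W\cap O_1\cap O_2\neq\varnothing$. Since $q^{-1}(U\cap V)=O_1\cap O_2$, condition (b) holds. Openness of the preimages, guaranteed by (a), is exactly what makes this step go through.
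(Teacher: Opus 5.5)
Your proof is correct and follows the same route as the paper. Arbitrary unions inherit density from a member containing $\ast$, and finite intersections use the fact that two open dense sets have dense intersection, which you prove directly where the paper cites Engelking~1.3.5.
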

\begin{proof}
The empty set and the full set satisfy the axioms vacuously, and
arbitrary unions preserve condition~(b) since any dense summand makes
the union dense.  The only nontrivial case is finite intersections:
if $G_1, G_2 \in \tau_{\mathrm{ERI}}$ and $\ast \in G_1 \cap G_2$, then
$q^{-1}(G_1)$ and $q^{-1}(G_2)$ are both dense and open, so their
intersection is dense~\cite[1.3.5]{bib:engelking}.
\end{proof}

\begin{proposition}[Continuity]\label{prop:cont}
$q: \overline{\mathbb{R}} \to (X, \tau_{\mathrm{ERI}})$ is continuous.
\end{proposition}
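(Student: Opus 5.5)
The argument unwinds the definition of $\tau_{\mathrm{ERI}}$ directly. Continuity of $q\colon \overline{\mathbb{R}} \to (X,\tau_{\mathrm{ERI}})$ means that $q^{-1}(U)$ is open in $\overline{\mathbb{R}}$ for every $U \in \tau_{\mathrm{ERI}}$. By Definition~\ref{def:ERI}, membership $U \in \tau_{\mathrm{ERI}}$ requires condition~(a), which says exactly that $q^{-1}(U)$ is open in $\overline{\mathbb{R}}$. So the proof is to fix an arbitrary $U \in \tau_{\mathrm{ERI}}$, invoke~(a), and conclude. Condition~(b) is never needed; it only restricts which sets are open, and so only makes continuity easier.

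To place the statement in context, I would add a remark. Condition~(a) alone defines the usual quotient topology $\tau_q$, the finest topology on $X_0$ making $q$ continuous. Condition~(b) then picks out a subcollection, so $\tau_{\mathrm{ERI}} \subseteq \tau_q$. This inclusion gives continuity a second way: $q$ is continuous into $\tau_q$, and the identity map $(X_0,\tau_q)\to(X_0,\tau_{\mathrm{ERI}})$ is continuous because $\tau_{\mathrm{ERI}}$ is coarser. Proposition~\ref{prop:topology} guarantees that $\tau_{\mathrm{ERI}}$ is a topology, so continuity is a meaningful notion here.

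There is no real obstacle; the only care needed is to check that the argument does not secretly require $q$ to be a quotient map onto $(X,\tau_{\mathrm{ERI}})$. It is not one in general. For example, take $V = q(\overline{\mathbb{R}}\setminus[1,2])$. Its preimage is open and saturated, but not dense, so $V \in \tau_q \setminus \tau_{\mathrm{ERI}}$. The claim is only continuity, which the inclusion above secures.
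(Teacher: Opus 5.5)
Your proof is correct and is the paper's argument: continuity is condition~(a) of Definition~\ref{def:ERI} applied to an arbitrary $U \in \tau_{\mathrm{ERI}}$. Your side remarks are also correct and agree with Proposition~\ref{prop:strict}: $\tau_{\mathrm{ERI}} \subseteq \tau_q$ gives a second route, and $q(\overline{\mathbb{R}}\setminus[1,2]) \in \tau_q \setminus \tau_{\mathrm{ERI}}$ shows $q$ is not a quotient map.
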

\begin{proof}
Condition~(a) states $q^{-1}(U)$ is open for every
$U \in \tau_{\mathrm{ERI}}$.
\end{proof}

\begin{proposition}[Strict inclusion]\label{prop:strict}
$\tau_{\mathrm{ERI}} \subsetneq \tau_q$.
The standard quotient $(X_0, \tau_q)$ is Hausdorff.
\end{proposition}
\begin{proof}
The inclusion $\tau_{\mathrm{ERI}} \subseteq \tau_q$ is immediate.
For strictness, given $x \in \mathbb{R}\setminus\{0\}$, set
$\varepsilon := |x|/2$ and
$V_\ast := q([-\infty, x-\varepsilon) \cup (x+\varepsilon, +\infty])$.
Then $V_\ast \in \tau_q$ (its preimage is open) and $\ast \in V_\ast$,
but $q^{-1}(V_\ast)$ is not dense (its complement
$[x-\varepsilon, x+\varepsilon]$ has nonempty interior), so
condition~(b) fails and $V_\ast \notin \tau_{\mathrm{ERI}}$.
The Hausdorff property of $\tau_q$ follows from the general fact that
quotients of compact Hausdorff spaces by closed equivalence relations
are Hausdorff~\cite[2.4.15]{bib:engelking}.
\end{proof}

\subsection{Neighborhoods of \texorpdfstring{$\ast$}{*}}

\begin{definition}[The family $\mathcal{W}$]\label{def:WF}
For any closed nowhere-dense $F \subset \mathbb{R}\setminus\{0\}$,
define $W_F := q(\overline{\mathbb{R}} \setminus F)$.
\end{definition}

\begin{lemma}[Canonical neighborhoods]\label{lem:nbhd}
\begin{enumerate}[label=\textup{(\roman*)},nosep]
\item\label{lem:nbhd:forward}
If $U \in \tau_{\mathrm{ERI}}$ and $\ast \in U$, then
$F_U := \overline{\mathbb{R}} \setminus q^{-1}(U)$ is a closed
nowhere-dense subset of\/ $\mathbb{R}\setminus\{0\}$, and $U = W_{F_U}$.
\item\label{lem:nbhd:converse}
For every closed nowhere-dense $F \subset \mathbb{R}\setminus\{0\}$,
$W_F$ is an open neighborhood of $\ast$ in $\tau_{\mathrm{ERI}}$, with
$q^{-1}(W_F) = \overline{\mathbb{R}} \setminus F$.
\end{enumerate}
\end{lemma}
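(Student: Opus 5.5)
Both parts reduce to unwinding Definition~\ref{def:ERI} together with the saturation remarks in the preliminary facts. I would prove \ref{lem:nbhd:forward} and \ref{lem:nbhd:converse} separately, each by checking conditions (a) and (b) directly.

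For \ref{lem:nbhd:forward}, let $U \in \tau_{\mathrm{ERI}}$ with $\ast \in U$. By condition~(a), $q^{-1}(U)$ is open, so $F_U$ is closed in $\overline{\mathbb{R}}$. Since $\ast \in U$, the preimage $q^{-1}(U)$ contains the whole class $\{-\infty,0,+\infty\}$. Hence $F_U$ misses these three points, so $F_U \subset \mathbb{R}\setminus\{0\}$, and by Convention~\ref{conv:closed} it is closed in the sense used there.

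Nowhere density comes from condition~(b). If $\mathrm{int}(F_U)$ contained a nonempty open set $O$, then $O$ would be disjoint from $q^{-1}(U)$. This contradicts $\overline{q^{-1}(U)} = \overline{\mathbb{R}}$, so $\mathrm{int}(F_U)=\varnothing$; since $F_U$ is closed, this is nowhere density by Remark~\ref{rem:notation}.

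For the identity $U = W_{F_U}$, note that $q^{-1}(U)$ is saturated, being the preimage of a set. Therefore $q\bigl(q^{-1}(U)\bigr) = U$ because $q$ is surjective, and $q^{-1}(U) = \overline{\mathbb{R}}\setminus F_U$. This gives $U = q(\overline{\mathbb{R}}\setminus F_U) = W_{F_U}$.

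For \ref{lem:nbhd:converse}, let $F \subset \mathbb{R}\setminus\{0\}$ be closed and nowhere dense. By the preliminary facts, $F$ and its complement $\overline{\mathbb{R}}\setminus F$ are saturated. Hence $q^{-1}(W_F) = q^{-1}\bigl(q(\overline{\mathbb{R}}\setminus F)\bigr) = \overline{\mathbb{R}}\setminus F$. This set is open because $F$ is closed in $\overline{\mathbb{R}}$, so condition~(a) holds. It contains $0$, so $\ast \in W_F$.

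For condition~(b), a closed set has dense complement exactly when its interior is empty. Since $\mathrm{int}(F)=\varnothing$, every nonempty open set meets $\overline{\mathbb{R}}\setminus F$, so $\overline{q^{-1}(W_F)} = \overline{\mathbb{R}}$. Thus $W_F \in \tau_{\mathrm{ERI}}$ is an open neighborhood of $\ast$.

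There is no real obstacle here. The only point needing care is the saturation bookkeeping: we must have $q(q^{-1}(U)) = U$ and $q^{-1}(q(S)) = S$ for $S=\overline{\mathbb{R}}\setminus F$. Both follow from surjectivity of $q$ and the fact that $\{-\infty,0,+\infty\}$ is the only non-singleton class.
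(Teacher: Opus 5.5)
Your proposal is correct and follows the paper's proof step for step: closedness from condition~(a), nowhere density from condition~(b), $F_U \subset \mathbb{R}\setminus\{0\}$ because $q^{-1}(U)$ contains the collapsed class, and saturation of $\overline{\mathbb{R}}\setminus F$ for part~(ii). The one small difference is that you get $U = W_{F_U}$ from $q(q^{-1}(U)) = U$ via surjectivity of $q$, whereas the paper appeals to saturation of $\overline{\mathbb{R}}\setminus F_U$; your justification is, if anything, the more direct one.
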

\begin{proof}
\ref{lem:nbhd:forward}.\;
Set $V := q^{-1}(U)$.  Conditions (a) and (b) give $V$ open and dense.
Since $\{-\infty,0,+\infty\} \subseteq V$, we get
$F_U \subseteq \mathbb{R}\setminus\{0\}$.
Then $F_U$ is closed ($V$ is open) with $\mathrm{int}(F_U) = \varnothing$
($V$ is dense), hence nowhere dense.
Since $F_U \subset \mathbb{R}\setminus\{0\}$,
$\overline{\mathbb{R}}\setminus F_U$ is saturated
(saturation), giving $U = W_{F_U}$.

\ref{lem:nbhd:converse}.\;
Since $F \subset \mathbb{R}\setminus\{0\}$, the set
$\overline{\mathbb{R}} \setminus F$ contains $\{-\infty,0,+\infty\}$
and is therefore saturated, so
$q^{-1}(W_F) = q^{-1}(q(\overline{\mathbb{R}} \setminus F))
= \overline{\mathbb{R}} \setminus F$.
This set is open ($F$ closed) and dense ($F$ nowhere dense),
so both conditions hold; $\ast \in W_F$.
\end{proof}

\subsection{The key intersection principle}

\begin{proposition}[Key intersection principle (KIP)]
\label{prop:kip}
If $U \in \tau_{\mathrm{ERI}}$ with $\ast \in U$ and
$V \in \tau_{\mathrm{ERI}}$ with $V \neq \varnothing$, then
$U \cap V \neq \varnothing$.
In particular, every open neighborhood of $\ast$ is dense in $X$.
\end{proposition}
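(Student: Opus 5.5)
The plan is to pass to preimages in $\overline{\mathbb{R}}$ and use that a dense set meets every nonempty open set. Set $A := q^{-1}(U)$ and $B := q^{-1}(V)$. By condition~(a) of Definition~\ref{def:ERI}, both $A$ and $B$ are open in $\overline{\mathbb{R}}$. Since $\ast\in U$, condition~(b) shows that $A$ is dense.

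The set $B$ is nonempty, because $q$ is surjective and $V\neq\varnothing$: any point $v\in V$ has some preimage in $B$. So $B$ is a nonempty open subset of $\overline{\mathbb{R}}$, and density of $A$ gives a point $y\in A\cap B$. Then $q(y)\in U\cap V$, so $U\cap V\neq\varnothing$.

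Lemma~\ref{lem:nbhd}\ref{lem:nbhd:forward} gives the same conclusion with concrete sets. It says $A=\overline{\mathbb{R}}\setminus F_U$ with $F_U$ closed nowhere dense, so the nonempty open $B$ cannot lie inside $F_U$.

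For the ``in particular'' clause, recall that a set is dense in $X$ exactly when it meets every nonempty open set of $X$. The first part says this holds for every open neighborhood $U$ of $\ast$, so every such $U$ is dense in $X$.

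There is no real obstacle. The only step needing care is that $B$ is nonempty, which relies on surjectivity of $q$ rather than on $V$ avoiding $\ast$. If $\ast\in V$, the argument still applies, and in that case $\ast\in U\cap V$ trivially anyway.
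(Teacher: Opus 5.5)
Your proof is correct and is essentially the paper's own argument: $q^{-1}(U)$ is open and dense by conditions (a) and (b), $q^{-1}(V)$ is nonempty and open, so they meet and $U \cap V \neq \varnothing$. Your explicit appeal to surjectivity of $q$ for the nonemptiness of $q^{-1}(V)$, and your derivation of the density clause, only spell out steps the paper leaves implicit.
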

\begin{proof}
$q^{-1}(U)$ is dense by condition~(b) and $q^{-1}(V)$ is nonempty open
by condition~(a), so $q^{-1}(U) \cap q^{-1}(V) \neq \varnothing$; hence
$U \cap V \neq \varnothing$.
\end{proof}

\begin{corollary}\label{cor:pclosure}
$\ast \in \overline{O}$ for every non-empty open
$O \subseteq X$.
\end{corollary}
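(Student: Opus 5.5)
The plan is to read the corollary off the Key Intersection Principle (Proposition~\ref{prop:kip}) via the neighborhood characterization of closure. We must show that every open neighborhood of $\ast$ meets $O$. So fix a non-empty open $O \subseteq X$ and an arbitrary $U \in \tau_{\mathrm{ERI}}$ with $\ast \in U$.

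Apply Proposition~\ref{prop:kip} with $V := O$. Since $O$ is open and non-empty, it gives $U \cap O \neq \varnothing$. Because $U$ was an arbitrary open neighborhood of $\ast$, the point $\ast$ lies in $\overline{O}$. Equivalently, by the ``in particular'' clause of Proposition~\ref{prop:kip}, $U$ is dense and so meets the non-empty open set $O$.

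For transparency one could also argue directly at the level of $\overline{\mathbb{R}}$. By condition~(a), $q^{-1}(O)$ is open, and it is non-empty since $q$ is surjective. By Lemma~\ref{lem:nbhd}\ref{lem:nbhd:forward}, $q^{-1}(U) = \overline{\mathbb{R}}\setminus F_U$ with $F_U$ closed and nowhere dense. Hence $q^{-1}(O)\not\subseteq F_U$, and any point of $q^{-1}(O)\setminus F_U$ maps into $U\cap O$.

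There is no real obstacle here. The only point needing care is that neighborhoods of $\ast$ in $\tau_{\mathrm{ERI}}$ must be taken in the coarser topology rather than in $\tau_q$; under $\tau_q$ the statement fails by Proposition~\ref{prop:strict}. That is exactly why condition~(b) is invoked.
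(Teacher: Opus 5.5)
Your proof is correct and is the same as the paper's: the corollary is read off directly from the KIP (Proposition~\ref{prop:kip}) applied to an arbitrary open neighborhood of $\ast$ together with $V := O$. Your supplementary argument via $F_U$ simply unpacks the proof of the KIP itself.
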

\begin{proof}
Every open neighborhood $W$ of $\ast$ satisfies
$W \cap O \neq \varnothing$ by the KIP.
\end{proof}

\subsection{Separation, connectedness, and dimension}\label{sec:separation}

\begin{proposition}[$X$ is $T_1$]\label{prop:T1}
Every singleton in $X$ is closed.
\end{proposition}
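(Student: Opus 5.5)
To show every singleton of $X$ is closed, I will show its complement lies in $\tau_{\mathrm{ERI}}$ by checking conditions~(a) and~(b) of Definition~\ref{def:ERI} directly. There are two kinds of points, $\ast$ and $\hat{x}$ with $x \in \mathbb{R}\setminus\{0\}$, and I treat them separately.

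\emph{The singleton $\{\ast\}$.} Let $U := X \setminus \{\ast\}$. Since $q^{-1}(\{\ast\}) = \{-\infty, 0, +\infty\}$, we have $q^{-1}(U) = \overline{\mathbb{R}} \setminus \{-\infty,0,+\infty\}$. This set is open because finite sets are closed in the Hausdorff space $\overline{\mathbb{R}}$, so condition~(a) holds. Condition~(b) is vacuous because $\ast \notin U$. Hence $U \in \tau_{\mathrm{ERI}}$, and $\{\ast\}$ is closed.

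\emph{The singleton $\{\hat{x}\}$ with $x \in \mathbb{R}\setminus\{0\}$.} Here the complement $U := X \setminus \{\hat{x}\}$ contains $\ast$, so condition~(b) must genuinely be checked. The cleanest route is Lemma~\ref{lem:nbhd}\ref{lem:nbhd:converse} with $F := \{x\}$. Note that $F$ is a closed subset of $\mathbb{R}\setminus\{0\}$ in the sense of Convention~\ref{conv:closed}. It is nowhere dense because $\overline{\mathbb{R}}$ has no isolated points, so $\mathrm{int}\{x\} = \varnothing$. The lemma then makes $W_{\{x\}}$ an open neighborhood of $\ast$ with preimage $\overline{\mathbb{R}} \setminus \{x\}$. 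Since $q$ is surjective and $q|_{\mathbb{R}\setminus\{0\}}$ is injective with $\ast \neq \hat{x}$, we get $W_{\{x\}} = X \setminus \{\hat{x}\}$. So $\{\hat{x}\}$ is closed.

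I expect no real obstacle here. The only point needing care is the second case, where density of $\overline{\mathbb{R}}\setminus\{x\}$ is required. This follows from the preliminary remark that removing a finite set from $\overline{\mathbb{R}}$ leaves a dense set, and Lemma~\ref{lem:nbhd} already packages that verification.
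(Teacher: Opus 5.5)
Your proof is correct and takes essentially the paper's approach. The $\{\ast\}$ case is identical. For $\{\hat{x}\}$, the paper directly checks that $\overline{\mathbb{R}}\setminus\{x\}$ is open and dense, and that is exactly what Lemma~\ref{lem:nbhd}\ref{lem:nbhd:converse} packages when you take $F=\{x\}$.
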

\begin{proof}
\emph{Case $y = \ast$.}\;
$q^{-1}(X\setminus\{\ast\}) =
\overline{\mathbb{R}}\setminus\{-\infty,0,+\infty\}$ is open
(finite sets are closed in the Hausdorff $\overline{\mathbb{R}}$).
Since $\ast \notin X\setminus\{\ast\}$, condition~(b) is vacuous.

\emph{Case $y = \hat{x}$, $x \in \mathbb{R}\setminus\{0\}$.}\;
$q^{-1}(X\setminus\{\hat{x}\}) = \overline{\mathbb{R}}\setminus\{x\}$
is open and dense (no isolated points).
Since $\ast \in X\setminus\{\hat{x}\}$, condition~(b) is satisfied.
\end{proof}

\begin{theorem}[Failure of Hausdorff]\label{thm:nothausdorff}
$(X, \tau_{\mathrm{ERI}})$ is not Hausdorff: $\ast$ cannot be separated
from any other point by disjoint open sets.
\end{theorem}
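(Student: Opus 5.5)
The plan is to reduce the statement directly to the Key Intersection Principle (Proposition~\ref{prop:kip}). Fix a point $\hat{x}$ with $x \in \mathbb{R}\setminus\{0\}$. Suppose, for contradiction, that there are open sets $U, V \in \tau_{\mathrm{ERI}}$ with $\ast \in U$, $\hat{x} \in V$ and $U \cap V = \varnothing$. Since $\hat{x} \in V$, the set $V$ is nonempty. Proposition~\ref{prop:kip} therefore gives $U \cap V \neq \varnothing$, which is a contradiction. Hence no pair of disjoint open sets separates $\ast$ from $\hat{x}$.

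The order of steps is short. First, every point of $X$ other than $\ast$ has the form $\hat{x}$ with $x \in \mathbb{R}\setminus\{0\}$, by Definition~\ref{def:ERI}. Second, apply the KIP to an arbitrary pair of open sets, one containing $\ast$ and the other containing $\hat{x}$. Third, conclude that $X$ is not Hausdorff, since this requires only one pair of distinct points that cannot be separated. The non-separation holds for every $\hat{x}$, and $X$ has at least two points, for instance $\ast$ and $\hat{1}$.

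For concreteness, the argument can also be checked by hand. By Lemma~\ref{lem:nbhd}, $U$ may be taken to be $W_F$ for some closed nowhere-dense $F$. Any open $V \ni \hat{x}$ has open preimage containing $x$, and that preimage contains an interval around $x$. Such an interval cannot lie inside the nowhere-dense set $F$, so it meets $\overline{\mathbb{R}}\setminus F = q^{-1}(W_F)$. This repeats the content of the KIP, so citing Proposition~\ref{prop:kip} is enough.

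I do not expect a real obstacle. The only care needed is to note that the KIP requires $V$ to be nonempty, which holds because $\hat{x} \in V$.
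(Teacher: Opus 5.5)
Your proposal is correct and follows the paper's proof: fix $\hat{x} \neq \ast$ and open sets $U \ni \ast$, $V \ni \hat{x}$; $V$ is nonempty, so the KIP (Proposition~\ref{prop:kip}) gives $U \cap V \neq \varnothing$. Your by-hand check via $W_F$ and an interval around $x$ simply unpacks the KIP.
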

\begin{proof}
Let $\hat{x} \neq \ast$ and let $U, V \in \tau_{\mathrm{ERI}}$ with
$\ast \in U$ and $\hat{x} \in V$.
Since $V \neq \varnothing$, the KIP (Proposition~\ref{prop:kip})
gives $U \cap V \neq \varnothing$.
\end{proof}

\begin{proposition}[Compactness]\label{prop:compact}
$X$ is compact.
\end{proposition}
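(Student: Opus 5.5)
The simplest route is to use the continuity of $q$ established in Proposition~\ref{prop:cont}. Since $\overline{\mathbb{R}}$ is compact (homeomorphic to $[-1,1]$) and $q$ is surjective onto $X_0$, the set $X = q(\overline{\mathbb{R}})$ is the continuous image of a compact space, hence compact. No Hausdorff hypothesis is needed for this direction, so the argument is one line once Proposition~\ref{prop:cont} is invoked.

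For a self-contained check that does not lean on the general image theorem, I would argue directly with open covers using Lemma~\ref{lem:nbhd}. Let $\mathcal{U}$ be an open cover of $X$. I would choose $U_0 \in \mathcal{U}$ with $\ast \in U_0$. By Lemma~\ref{lem:nbhd}\ref{lem:nbhd:forward}, $U_0 = W_F$, where $F = \overline{\mathbb{R}} \setminus q^{-1}(U_0)$ is a closed subset of $\mathbb{R}\setminus\{0\}$. By Convention~\ref{conv:closed}, $F$ is compact in $\overline{\mathbb{R}}$. The preimages $q^{-1}(U)$, $U \in \mathcal{U}$, are open by condition~(a) and cover $F$, so finitely many $U_1,\dots,U_n$ suffice. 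Then $U_0, U_1, \dots, U_n$ is a finite subcover, because every point of $X$ either lies in $U_0$ or is $\hat{x}$ with $x \in F$.

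I do not expect any real obstacle. The only point needing care is that condition~(b) coarsens the topology rather than refining it, and coarsening can only preserve compactness. This is exactly what continuity of $q$ already encodes.
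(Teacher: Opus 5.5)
Your proof is correct and is essentially the paper's argument. The paper observes that $(X_0,\tau_q)$ is compact as a continuous image of $\overline{\mathbb{R}}$ and then uses $\tau_{\mathrm{ERI}}\subseteq\tau_q$, which is the same continuous-image idea you apply directly via Proposition~\ref{prop:cont}; your open-cover check, which uses the compact complement $F$ of $q^{-1}(U_0)$, is also valid but not needed.
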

\begin{proof}
$(X_0, \tau_q)$ is compact (continuous image of compact
$\overline{\mathbb{R}}$) and $\tau_{\mathrm{ERI}} \subseteq \tau_q$.
\end{proof}

\begin{proposition}[Connectedness]\label{prop:connected}
$X$ is connected.
\end{proposition}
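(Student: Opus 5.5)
The most direct route is to realize $X$ as a continuous image of a connected space. By Proposition~\ref{prop:cont}, $q:\overline{\mathbb{R}} \to (X,\tau_{\mathrm{ERI}})$ is continuous. The extended real line $\overline{\mathbb{R}}$ is homeomorphic to $[-1,1]$ and is therefore connected. The quotient map $q$ is surjective by construction. Continuous images of connected spaces are connected, so $X$ is connected. That is essentially the whole proof, and no step is a genuine obstacle. The only thing to check is that we use continuity with respect to $\tau_{\mathrm{ERI}}$ itself, not merely $\tau_q$, and Proposition~\ref{prop:cont} provides exactly this.

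As an alternative (or a sanity check), one can also argue directly from the KIP. Suppose $X = U \cup V$ with $U,V$ disjoint, nonempty, and open. Then one of them, say $U$, contains $\ast$. Proposition~\ref{prop:kip} gives $U \cap V \neq \varnothing$, which is a contradiction. This shows even more: any two nonempty open sets one of which contains $\ast$ intersect. It also foreshadows the path-connectedness argument, where paths in $\overline{\mathbb{R}}$ are pushed forward by $q$; the same continuity fact would give path-connectedness immediately, since $\overline{\mathbb{R}}$ is path-connected.

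I would present the first argument as the proof and perhaps mention the KIP argument in one line. If I wanted to anticipate the later path-connectedness claim, I would phrase the proof as: $q$ is a continuous surjection from the path-connected space $\overline{\mathbb{R}}$, hence $X$ is path-connected and in particular connected.
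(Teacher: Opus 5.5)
Your proof is correct, but your primary argument is not the paper's. The paper proves connectedness exactly by your ``sanity check'': if $U,V$ partition $X$ with $\ast \in U$, the KIP (Proposition~\ref{prop:kip}) forces $U \cap V \neq \varnothing$.

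Your continuous-image route is valid: continuity of $q$ into $\tau_{\mathrm{ERI}}$ (Proposition~\ref{prop:cont}), surjectivity, and connectedness of $\overline{\mathbb{R}}$ suffice. It is also shorter and upgrades immediately to path-connectedness, which is essentially how the paper proves Proposition~\ref{prop:pathconn}. However, it never uses the density condition~(b): it proves connectedness of the finer figure-eight topology $\tau_q$ just as well. It also depends entirely on the base being connected.

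The KIP argument is the one that carries over to the general setting. It gives Theorem~\ref{thm:general}(d) for every compact Hausdorff $Y$ without isolated points, including disconnected bases. Examples are $[0,1] \sqcup [2,3]$ (Remark~\ref{rem:geninstances}) and the Cantor set (Proposition~\ref{prop:cantor}). In the Cantor case $X$ is connected but not path-connected, and the base is totally disconnected, so no argument pushing connectedness forward along $q$ is available. The same KIP reasoning also gives connectedness of the basic neighborhoods $W_F$, hence local connectedness at $\ast$ (Proposition~\ref{prop:localconn}).

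Presenting the continuous-image argument for ERI is perfectly fine. Your instinct to keep the KIP argument alongside it is sound, because it shows that connectedness comes from the density modifier rather than from the topology of $\overline{\mathbb{R}}$.
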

\begin{proof}
If $U, V$ partition $X$ with $\ast \in U$, then $V$ is non-empty open and
the KIP gives $U \cap V \neq \varnothing$, a contradiction.
\end{proof}

\begin{proposition}[Path-connectedness]\label{prop:pathconn}
$X$ is path-connected.
\end{proposition}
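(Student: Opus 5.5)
Since $X$ is a continuous image of $\overline{\mathbb{R}}$, the natural plan is to push paths forward through $q$. By Proposition~\ref{prop:cont}, $q\colon \overline{\mathbb{R}} \to X$ is continuous and surjective. The space $\overline{\mathbb{R}}$ is homeomorphic to $[-1,1]$, so it is path-connected. A continuous image of a path-connected space is path-connected: given $a,b \in X$, pick preimages $s,t \in \overline{\mathbb{R}}$ and a path $\gamma\colon[0,1]\to\overline{\mathbb{R}}$ from $s$ to $t$. Then $q\circ\gamma$ is a path in $X$ from $a$ to $b$.

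Concretely, I would use the homeomorphism $h\colon[0,1]\to\overline{\mathbb{R}}$ given by $h(u) = \tan(\pi(u-\tfrac12))$, with $h(0)=-\infty$ and $h(1)=+\infty$. Then $\sigma := q\circ h\colon[0,1]\to X$ is a single continuous surjection. For $a = q(s)$ and $b = q(t)$, the path $u \mapsto \sigma\bigl((1-u)h^{-1}(s) + u\,h^{-1}(t)\bigr)$ joins $a$ to $b$. Continuity of each of these maps is inherited from Proposition~\ref{prop:cont}, because composing with affine maps of $[0,1]$ preserves continuity.

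Nothing here is genuinely hard; the only point to verify is that continuity of $q$ into the \emph{coarser} topology $\tau_{\mathrm{ERI}}$ is what we need, and that is exactly Proposition~\ref{prop:cont}. Coarsening the topology only makes continuity easier, so paths in the ordinary quotient $(X_0,\tau_q)$ remain paths in $X$. One could also remark that even the loop $\sigma$ from $\ast$ to $\ast$ passing through every point shows $X$ is a Peano-type continuous image of $[0,1]$. No separation property of $X$ is used, so the failure of Hausdorffness (Theorem~\ref{thm:nothausdorff}) causes no difficulty.
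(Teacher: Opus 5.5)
Your proof is correct and takes the paper's approach: push a segment in the base through the continuous map $q$ (Proposition~\ref{prop:cont}). The only difference is cosmetic. The paper picks representatives $a,b \in \mathbb{R}$ (using $0$ for $\ast$) and uses $t \mapsto (1-t)a+tb$, whereas you parametrize all of $\overline{\mathbb{R}}$ by $[0,1]$ first.
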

\begin{proof}
For $\hat{a}, \hat{b} \in X$, choose representatives
$a, b \in \mathbb{R}$ (taking $0$ as the representative of $\ast$) and
set $\tilde\gamma(t) = (1-t)a + tb$.
Then $\gamma := q \circ \tilde\gamma$ is continuous (composition of
continuous maps) with $\gamma(0) = \hat{a}$ and $\gamma(1) = \hat{b}$.
\end{proof}

\begin{remark}[Submetrizability]\label{rmk:submetrizable}
A coarser Hausdorff topology on~$X_0$ would separate distinct points
by disjoint sets that remain $\tau_{\mathrm{ERI}}$-open, contradicting
Theorem~\ref{thm:nothausdorff}; hence ERI admits no coarser Hausdorff
topology and, in particular, is not submetrizable.
The subspace $X \setminus \{\ast\} \cong \mathbb{R} \setminus \{0\}$
is itself metrizable, so $\ast$ is the unique obstruction.
\end{remark}

\begin{proposition}[Small inductive dimension]\label{prop:dim}
$\mathrm{ind}(X) = 1$.
\end{proposition}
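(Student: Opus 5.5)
To prove $\mathrm{ind}(X) = 1$ we show $\mathrm{ind}(X) \le 1$ and $\mathrm{ind}(X) \ge 1$, using the recursive definition: $\mathrm{ind}(X)\le n$ iff every point has arbitrarily small open neighborhoods whose boundaries have $\mathrm{ind}\le n-1$. Here $\mathrm{ind}(\varnothing)=-1$.

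\emph{Lower bound.} $X$ is connected (Proposition~\ref{prop:connected}) and has more than one point. Hence $\mathrm{ind}(X)\neq 0$, because $\mathrm{ind}\,0$ would give a nontrivial clopen set. Concretely, a point $\hat{x}\neq\ast$ has a neighborhood $X\setminus\{\ast\}$ (open by Proposition~\ref{prop:T1}). Any open $U\ni\hat{x}$ with $U\subseteq X\setminus\{\ast\}$ is nonempty and misses $\ast$, so it cannot be clopen. Thus $\mathrm{ind}(X)\ge 1$.

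\emph{Upper bound at points $\hat{x}\neq\ast$.} The set $X\setminus\{\ast\}$ is open and, via $q$, homeomorphic to $\mathbb{R}\setminus\{0\}$. For a given neighborhood of $\hat x$, take $U=q((x-\varepsilon,x+\varepsilon))$ with $\varepsilon<|x|$ small. Its preimage is open and misses $\ast$, so it is $\tau_{\mathrm{ERI}}$-open. We then compute its closure in $X$. For $y\notin[x-\varepsilon,x+\varepsilon]\cup\{0,\pm\infty\}$, a small interval around $y$ is an open set disjoint from $U$. For $\ast$, the set $W_F$ with $F=[x-\varepsilon,x+\varepsilon]$ is not allowed, since it has interior. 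However, Corollary~\ref{cor:pclosure} says $\ast\in\overline{U}$ anyway. So $\partial U=\{q(x\pm\varepsilon),\ast\}$, a finite $T_1$ subspace, and $\mathrm{ind}(\partial U)=0$.

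\emph{Upper bound at $\ast$, the main obstacle.} Given an open $W\ni\ast$, Lemma~\ref{lem:nbhd} gives $W=W_F$ with $F$ closed nowhere dense in $\mathbb{R}\setminus\{0\}$. We need an open $V$ with $\ast\in V\subseteq W_F$ and $\mathrm{ind}(\partial V)\le 0$. The natural choice is $V=W_F$ itself. Its complement $q(F)$ is closed, and $\overline{W_F}=X$ because $W_F$ is dense by Proposition~\ref{prop:kip}. Hence $\partial W_F=q(F)$.

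It remains to show $\mathrm{ind}(q(F))=0$. The set $q(F)$ lies in $X\setminus\{\ast\}$, where the subspace topology of $\tau_{\mathrm{ERI}}$ agrees with the Euclidean one. To see this, note that open subsets of $\mathbb{R}\setminus\{0\}$ are $\tau_{\mathrm{ERI}}$-open (condition (b) is vacuous), so the relative topologies coincide. Therefore $q(F)$ is homeomorphic to a compact nowhere-dense subset of $\mathbb{R}$, which is zero-dimensional, and $\mathrm{ind}(q(F))=0$.

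Combining the two upper-bound cases gives $\mathrm{ind}(X)\le1$, hence $\mathrm{ind}(X)=1$. The delicate points are the boundary identifications: $\partial W_F=q(F)$ relies on the density of $W_F$ from Proposition~\ref{prop:kip}, and in the first case $\ast$ is forced into $\overline{U}$ by Corollary~\ref{cor:pclosure}.
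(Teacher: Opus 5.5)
Your proof is correct. At $\ast$ you argue exactly as the paper does. You take $V=W_F$ itself, use the density of $W_F$ (Proposition~\ref{prop:kip}) to get $\partial W_F=q(F)$, and note that $q(F)\cong F$ is a compact nowhere-dense subset of $\mathbb{R}$, hence zero-dimensional.

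The other two steps differ.

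\emph{Lower bound.} The paper invokes monotonicity of $\mathrm{ind}$ on the subspace $q((a,b))\cong(a,b)$. You instead use connectedness: a clopen neighborhood of $\hat x$ inside $X\setminus\{\ast\}$ would be a nontrivial clopen set. Your route is more elementary. It also avoids asking whether monotonicity, usually stated for regular spaces, applies to the non-regular $X$ (it does, by the usual induction).

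\emph{Ordinary points.} The paper only appeals to metrizability of the open subspace $X\setminus\{\ast\}$. That does not by itself control boundaries taken in $X$. By Corollary~\ref{cor:pclosure}, $\ast$ lies in the $X$-boundary of every nonempty open set missing $\ast$, so no neighborhood of $\hat x$ has closure avoiding $\ast$. You compute explicitly that $\partial U=\{q(x-\varepsilon),q(x+\varepsilon),\ast\}$, and this three-point $T_1$ space is discrete. That is exactly the check the paper's one-line argument omits, so on this point your version is the more complete one.
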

\begin{proof}
At $\hat{x} \neq \ast$, the subspace
$X \setminus \{\ast\} \cong \mathbb{R} \setminus \{0\}$ is metrizable
with $\mathrm{ind} = 1$.
At $\ast$, the basic neighborhoods are
$W_F = q(\overline{\mathbb{R}} \setminus F)$ for $F \subset \mathbb{R}
\setminus \{0\}$ closed nowhere-dense (Lemma~\ref{lem:nbhd}); since
$W_F$ is dense in~$X$ (Proposition~\ref{prop:kip}),
$\partial W_F = X \setminus W_F = q(F)$.
The set $F$ is compact and totally disconnected, hence
zero-dimensional~\cite[Theorem~1.3.4]{bib:engelking}, and
$q(F) \cong F$, so $\mathrm{ind}(\ast, X) \leq 1$.
For the lower bound, $q((a,b)) \cong (a,b)$ is a subspace of~$X$ with
$\mathrm{ind} = 1$, and $\mathrm{ind}$ is
monotone~\cite{bib:engelking}.
\end{proof}

\begin{remark}[Dimension functions]\label{rmk:dimfunctions}
For metrizable spaces the three classical dimension functions
coincide: $\mathrm{ind} = \mathrm{Ind} = \dim$
(\cite{bib:engelking},~Theorem~1.7.7).
Since ERI is not metrizable (Remark~\ref{rmk:submetrizable}),
$\mathrm{Ind}(X)$ and $\dim(X)$ may differ from
$\mathrm{ind}(X) = 1$.
The covering dimension satisfies $\dim(X) \geq 1$: a space with
$\dim(X) = 0$ is totally disconnected
(\cite{bib:engelking},~Theorem~1.6.4), but ERI is connected with
$|X| \geq 2$.
The exact value of $\dim(X)$ in the non-metrizable, non-normal
setting of ERI remains open.
\end{remark}

\begin{proposition}[Baire property]\label{prop:baire}
$(X, \tau_{\mathrm{ERI}})$ is a Baire space: the intersection of
countably many dense open sets is dense.
\end{proposition}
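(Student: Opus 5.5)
The plan is to pull everything back along $q$ and use the Baire category theorem in the compact Hausdorff space $\overline{\mathbb{R}}$. Let $G_1, G_2, \dots$ be dense open subsets of $X$ and let $O \subseteq X$ be nonempty open. We must show $O \cap \bigcap_n G_n \neq \varnothing$. Put $V_n := q^{-1}(G_n)$ and $V := q^{-1}(O)$. By condition~(a) these sets are open in $\overline{\mathbb{R}}$, and $V \neq \varnothing$ because $q$ is surjective.

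The central step is to show that each $V_n$ is dense in $\overline{\mathbb{R}}$. I split into two cases.

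\emph{Case $\ast \in G_n$.} Condition~(b) gives density directly.

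\emph{Case $\ast \notin G_n$.} Then $V_n \subseteq \mathbb{R}\setminus\{0\}$. Suppose $V_n$ missed some nonempty open interval $I \subseteq \overline{\mathbb{R}}$. Shrink $I$ to a nonempty open interval $J \subseteq \mathbb{R}\setminus\{0\}$ with $\overline{J}$ compact in $\mathbb{R}\setminus\{0\}$; this is possible since $\overline{\mathbb{R}}$ has no isolated points. Then $q(J)$ is $\tau_{\mathrm{ERI}}$-open: $J$ is saturated, so $q^{-1}(q(J)) = J$ is open, and condition~(b) is vacuous because $\ast \notin q(J)$. Also $q(J)$ is nonempty and disjoint from $G_n$, since $q^{-1}(q(J)) \cap V_n = J \cap V_n = \varnothing$. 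This contradicts the density of $G_n$ in $X$.

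The same trick in fact handles both cases uniformly: any nonempty open subset of $\overline{\mathbb{R}}$ contains such a $J$, and $q(J)$ is a nonempty open subset of $X$, which must meet $G_n$. I expect this observation, that small intervals away from $\{-\infty,0,+\infty\}$ project to ERI-open sets, to be the only real obstacle.

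Having established density, the Baire category theorem for the compact Hausdorff space $\overline{\mathbb{R}}$ (e.g.\ \cite[3.9.3]{bib:engelking}) gives that $\bigcap_n V_n$ is dense. Hence it meets the nonempty open set $V$: pick $t \in V \cap \bigcap_n V_n$. Then $q(t) \in O \cap \bigcap_n G_n$, so $\bigcap_n G_n$ is dense in $X$.

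Alternatively, one could note that $X\setminus\{\ast\}$ is an open dense, completely metrizable subspace. Since Baire-ness passes to spaces containing a dense open Baire subspace, this gives a second route. I would use the pullback argument above, since it only relies on Lemma~\ref{lem:nbhd}-style saturation facts already in place.
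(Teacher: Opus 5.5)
Your proof is correct and matches the paper's: pull the dense open sets back along $q$, show each preimage is dense in $\overline{\mathbb{R}}$, apply the Baire category theorem in $\overline{\mathbb{R}}$, and meet the result with the nonempty open set $q^{-1}(O)$. The only difference is that the paper gets density via the homeomorphism $q|_{\mathbb{R}\setminus\{0\}}$ onto $X\setminus\{\ast\}$, whereas you project small intervals lying away from $\{-\infty,0,+\infty\}$; as you note, this lets you drop the case split.
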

\begin{proof}
Let $\{U_n\}_{n \geq 1}$ be a countable family of dense open subsets
of~$X$.
We show each $q^{-1}(U_n)$ is open and dense in~$\overline{\mathbb{R}}$.
If $\ast \in U_n$, density follows from condition~(b).
If $\ast \notin U_n$, then $q^{-1}(U_n) \subseteq
\mathbb{R}\setminus\{0\}$ is open.
Since $q|_{\mathbb{R}\setminus\{0\}}$ is a homeomorphism and
$U_n$ is dense in~$X$ (by hypothesis), $q^{-1}(U_n)$ is dense in
$\mathbb{R}\setminus\{0\}$, hence in~$\overline{\mathbb{R}}$
(as $\mathbb{R}\setminus\{0\}$ is itself dense
in~$\overline{\mathbb{R}}$).
Since $\overline{\mathbb{R}}$ is compact Hausdorff, it is a Baire
space (\cite{bib:munkres},~\S48), so
$D := \bigcap_n q^{-1}(U_n)$ is dense
in~$\overline{\mathbb{R}}$.
For any nonempty open $V \subseteq X$, $q^{-1}(V)$ is nonempty
open, so $D \cap q^{-1}(V) \neq \varnothing$ and
$\bigcap_n U_n$ is dense in~$X$.
\end{proof}

\begin{remark}\label{rmk:baire-significance}
Compact spaces need not be Baire without Hausdorff separation;
ERI's Baire property is inherited from~$\overline{\mathbb{R}}$
via the density condition, which ensures dense open subsets of~$X$
lift to dense open subsets of~$\overline{\mathbb{R}}$.
The same argument gives $\mathrm{FMQ}(Y, A, \mathcal{D}_Y)$
Baire whenever $Y$ is compact Hausdorff without isolated points.
\end{remark}

\section{Sequential and Countability Properties}\label{sec:countseq}

\subsection{First countability fails at
\texorpdfstring{$\ast$}{*}}

\begin{proposition}[Not first-countable at $\ast$]
\label{prop:notfirst}
$\ast$ has no countable neighborhood base in $X$.
In particular, $X$ is neither first-countable nor second-countable.
\end{proposition}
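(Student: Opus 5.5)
A diagonal argument against any countable family of neighborhoods of $\ast$ should work. By Lemma~\ref{lem:nbhd}, every open neighborhood of $\ast$ has the form $W_F$ with $F \subset \mathbb{R}\setminus\{0\}$ closed nowhere-dense, and $q^{-1}(W_F) = \overline{\mathbb{R}}\setminus F$. So suppose $\{U_n\}_{n\ge1}$ is a countable neighborhood base at $\ast$. Shrinking each $U_n$ to an open neighborhood inside it, we may assume $U_n = W_{F_n}$ with each $F_n$ closed nowhere-dense in $\mathbb{R}\setminus\{0\}$. For a base we need: for every closed nowhere-dense $F$ there is $n$ with $W_{F_n} \subseteq W_F$. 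This inclusion is equivalent to $F \subseteq F_n$, by saturation and Lemma~\ref{lem:nbhd}\ref{lem:nbhd:converse}. So it suffices to build one closed nowhere-dense $F \subset \mathbb{R}\setminus\{0\}$ with $F \not\subseteq F_n$ for every $n$.

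For the construction, take the compact interval $I = [1,2]$. Since each $F_n$ is nowhere dense, $I\setminus F_n$ contains a point. Choose pairwise distinct points inductively, using the disjoint subintervals $J_n = [1+2^{-n}, 1+2^{-n+1}]$. Since $F_n$ is closed with empty interior, $\mathrm{int}(J_n)\setminus F_n \neq \varnothing$, so we can pick $x_n \in \mathrm{int}(J_n)\setminus F_n$. Then $x_n \to 1$. Set $F := \{x_n : n\ge1\}\cup\{1\}$. It is compact, countable, contained in $[1,2] \subset \mathbb{R}\setminus\{0\}$, and closed in $\overline{\mathbb{R}}$. Its only accumulation point is $1$, so it has empty interior and is nowhere dense. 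By construction $x_n \in F\setminus F_n$, hence $F\not\subseteq F_n$ and $W_{F_n}\not\subseteq W_F$ for every $n$. This contradicts the assumption that $\{U_n\}$ is a base. This is also the remark's example of "a convergent sequence with its limit."

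The only delicate step is the equivalence $W_{F_n}\subseteq W_F \iff F\subseteq F_n$. I will justify it by applying $q^{-1}$: since both sets are saturated,
\[
W_{F_n}\subseteq W_F \iff \overline{\mathbb{R}}\setminus F_n \subseteq \overline{\mathbb{R}}\setminus F \iff F\subseteq F_n .
\]
In fact only the forward implication is needed, and that direction follows immediately from monotonicity of preimages.

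Finally, a second-countable space is first-countable, so failure of second countability follows at once.
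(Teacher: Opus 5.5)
Your proof is correct, but it takes a different route from the paper's at the key step.

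\textbf{The paper's route.} The paper does not diagonalize. It applies the Baire category theorem in $\overline{\mathbb{R}}$ to the dense open sets $q^{-1}(U_n)=\overline{\mathbb{R}}\setminus F_n$. This gives a \emph{single} point $t\in\mathbb{R}\setminus\{0\}$ lying outside every $F_n$ at once. The neighborhood $W_{\{t\}}=X\setminus\{\hat t\}$ then witnesses that no $U_n$ fits inside.

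\textbf{Your route.} You escape each $F_n$ separately, with its own point $x_n$. You then glue these countably many escape points into one closed nowhere-dense witness by forcing them to converge to $1$. This uses only the definition of nowhere density and the fact that a convergent sequence together with its limit is closed. So the Baire category theorem is avoided entirely. A small cosmetic point: the closed intervals $J_n$ share endpoints, so they are not literally disjoint. This is harmless, since you pick $x_n$ in their pairwise disjoint interiors, and distinctness of the $x_n$ is not needed anyway.

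\textbf{What each route buys.} The paper's argument proves more. The Baire step never uses the assumption that $\{U_n\}$ is a base, and it yields $\hat t\in\bigcap_n U_n$ with $\hat t\neq\ast$. Hence for \emph{every} countable family of neighborhoods, $\{\ast\}$ is not a $G_\delta$: the pseudocharacter at $\ast$ is uncountable, not merely the character. The paper's argument also transfers verbatim to $\mathrm{FMQ}(Y,A,\mathcal{D}_Y)$ for any compact Hausdorff $Y$ without isolated points. That includes bases such as $\beta\mathbb{N}\setminus\mathbb{N}$, which have no nontrivial convergent sequences, so your sequence-plus-limit witness is not available there. Your argument, in turn, is more elementary and self-contained. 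It needs only the local structure of $\mathbb{R}$ near one point (a sequence of disjoint intervals shrinking to $1$) rather than a category argument.
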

\begin{proof}
Suppose $\{U_n\}_{n \geq 1}$ is a countable neighborhood base at
$\ast$.
By Lemma~\ref{lem:nbhd}\ref{lem:nbhd:forward}, $U_n = W_{F_n}$ for
closed nowhere-dense $F_n$, and
$V_n := q^{-1}(U_n) = \overline{\mathbb{R}} \setminus F_n$ is dense
and open.

Since $\overline{\mathbb{R}}$ is compact Hausdorff, it is a Baire space
(\cite{bib:munkres},~\S48).
Hence $D := \bigcap_{n \ge 1} V_n$ is dense.
Pick $t \in D \cap (\mathbb{R}\setminus\{0\})$ (using that
$(-1,0)\cup(0,1)$ is non-empty open).
Then $W_{\{t\}}$ is an open neighborhood of $\ast$
(Lemma~\ref{lem:nbhd}\ref{lem:nbhd:converse}), so some
$U_{n_0} \subseteq W_{\{t\}}$, giving
$q^{-1}(U_{n_0}) \subseteq \overline{\mathbb{R}}\setminus\{t\}$.
But $t \in D \subseteq V_{n_0} = q^{-1}(U_{n_0})$, a contradiction.
\end{proof}

\begin{remark}[Character at $\ast$]\label{rem:character}
The neighborhoods of $\ast$ are parametrized by closed nowhere-dense
subsets $F \subset \mathbb{R}\setminus\{0\}$
(Lemma~\ref{lem:nbhd}).
The family of such sets has cardinality $\mathfrak{c}$
(the continuum), so the character
$\chi(\ast, X) \leq \mathfrak{c}$.
Since the character is uncountable
(Proposition~\ref{prop:notfirst}), we have
$\aleph_1 \leq \chi(\ast, X) \leq \mathfrak{c}$.
Under CH, $\chi(\ast, X) = \mathfrak{c}$.
At all other points, the character is $\aleph_0$
(Corollary~\ref{cor:firstcountbarrier}).
\end{remark}

\begin{remark}[Topological weight]\label{rmk:weight}
The weight of $X$ equals $\mathfrak{c}$.
The subspace $X \setminus \{\ast\} \cong \mathbb{R}\setminus\{0\}$ is
second-countable (weight~$\aleph_0$), so the weight of~$X$ is determined
by the neighborhoods of~$\ast$: a base for these is
$\{W_F : F \text{ closed nowhere dense in } \overline{\mathbb{R}}\}$,
which has cardinality~$\mathfrak{c}$, and no smaller family suffices
(Proposition~\ref{prop:notfirst}).
\end{remark}

\begin{remark}[Separability]\label{rmk:separable}
ERI is separable: $\mathbb{Q}\setminus\{0\}$ (identified via~$q$) is a
countable dense subset of~$X$, since $X$ is the continuous image
of the second-countable space~$\overline{\mathbb{R}}$.
\end{remark}

\begin{proposition}[Cardinal invariants]\label{prop:cardinal}
$c(X) = \aleph_0$ \textup{(}cellularity\textup{)} and
$s(X) = \aleph_0$ \textup{(}spread\textup{)}.
\end{proposition}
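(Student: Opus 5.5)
The plan is to prove upper and lower bounds separately for each invariant, using the conventions that $c(X)$ and $s(X)$ are suprema of cardinalities of cellular families and discrete subspaces, with $\aleph_0$ as the floor.

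\emph{Lower bounds.} Both lower bounds come from the metrizable subspace $q(\mathbb{R}\setminus\{0\})$, using that $q|_{\mathbb{R}\setminus\{0\}}$ is a homeomorphism onto an open subspace. The sets $q((n,n+1))$ for $n\ge 1$ are infinitely many pairwise disjoint nonempty open sets in $X$: their preimages avoid $\{-\infty,0,+\infty\}$, so condition~(b) is vacuous. Hence $c(X)\ge\aleph_0$. Likewise the points $\widehat{n+\tfrac12}$, $n\ge1$, form an infinite discrete subspace, since each is isolated by $q((n,n+1))$. Hence $s(X)\ge\aleph_0$.

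\emph{Cellularity upper bound.} Separability (Remark~\ref{rmk:separable}) gives $c(X)\le d(X)=\aleph_0$ by the standard argument: each member of a disjoint family of nonempty open sets meets the countable dense set $q(\mathbb{Q}\setminus\{0\})$ in a distinct point. A more self-contained alternative is to pull a cellular family back along $q$ to a cellular family in $\overline{\mathbb{R}}$, which is ccc.

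\emph{Spread upper bound.} This is the only step needing care, because separability does not bound spread in general. Let $S\subseteq X$ be discrete. Then $S\setminus\{\ast\}$ is a discrete subspace of $X\setminus\{\ast\}\cong\mathbb{R}\setminus\{0\}$. Discreteness in $X$ implies discreteness in this subspace, since a neighbourhood isolating a point of $S$ in $X$ restricts to one in the subspace. A discrete subspace of the second-countable space $\mathbb{R}\setminus\{0\}$ is countable, because it is itself second countable and discrete. Adding back at most the single point $\ast$ keeps $S$ countable. Thus $s(X)\le\aleph_0$, and combined with the lower bound $s(X)=\aleph_0$.

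The argument is therefore routine. The only subtle point is the spread bound, which I plan to handle by splitting off $\ast$ and reducing to the metrizable part, rather than attempting to lift discrete sets through $q$.
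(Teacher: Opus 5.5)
Your proof is correct. For cellularity you argue exactly as the paper does: each member of a cellular family meets the countable dense set $q(\mathbb{Q}\setminus\{0\})$. For spread you share the paper's strategy of reducing to the metrizable subspace $X\setminus\{\ast\}\cong\mathbb{R}\setminus\{0\}$.

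The real difference is how $\ast$ is handled. The paper asserts that every discrete subspace of $X$ avoids $\ast$, on the grounds that every neighbourhood of $\ast$ is dense. That inference is invalid: density of neighbourhoods only shows $\ast$ is not isolated in any \emph{open} set, and a discrete subspace need not be open. Since $X$ is $T_1$, both $\{\ast\}$ and $\{\ast,\hat 1\}$ are discrete. Even the infinite set $\{\ast\}\cup\{q(1+1/n):n\ge 1\}$ is discrete, with $\ast$ isolated by $W_F$ for $F=\{1\}\cup\{1+1/n:n\ge 1\}$. So the paper's intermediate claim is false, although its conclusion survives.

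Your step of discarding $\ast$, bounding $S\setminus\{\ast\}$ inside the second-countable space $\mathbb{R}\setminus\{0\}$, and then adding $\ast$ back is the correct form of the argument and costs nothing. You also give explicit lower bounds via $q((n,n+1))$ and the points $\widehat{n+\tfrac12}$. The paper omits these, and they are needed unless one adopts the convention that cardinal functions are always at least $\aleph_0$.
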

\begin{proof}
Every family of pairwise disjoint nonempty open subsets of~$X$
is countable, since each member meets the countable dense subset
$q(\mathbb{Q}\setminus\{0\})$ (Remark~\ref{rmk:separable}).
Every discrete subspace of~$X$ avoids~$\ast$ (whose every
neighborhood is dense and hence meets any nonempty open set), so it
lies in $X \setminus \{\ast\} \cong \mathbb{R}\setminus\{0\}$,
where discrete subspaces are countable (separable metrizable).
\end{proof}

\begin{remark}[Local compactness]\label{rmk:localcompact}
ERI is locally compact in both the weak and strong senses.
At $\hat{x} \neq \ast$, compact neighborhoods are inherited from
$\mathbb{R}\setminus\{0\}$.
At~$\ast$: each basic neighborhood $W_F$ satisfies
$\overline{W_F} = X$ (since $W_F$ is dense by the KIP), and $X$ is
compact, so $\overline{W_F}$ is a compact neighborhood of~$\ast$.
Thus $\ast$ has a local base of neighborhoods with compact closure.
\end{remark}

\begin{proposition}[Local connectedness]\label{prop:localconn}
$(X, \tau_{\mathrm{ERI}})$ is locally connected.
\end{proposition}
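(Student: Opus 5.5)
We show that every point has a local base of connected open sets, treating the points $\hat{x}\neq\ast$ and the point $\ast$ separately.

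\emph{Points $\hat{x}\neq\ast$.} The subspace $X\setminus\{\ast\}$ is open in $X$ by Proposition~\ref{prop:T1}, and $q$ restricts to a homeomorphism $\mathbb{R}\setminus\{0\}\to X\setminus\{\ast\}$. Indeed, an open $O\subseteq\mathbb{R}\setminus\{0\}$ is saturated and misses $\ast$, so $q(O)\in\tau_{\mathrm{ERI}}$ because condition~(b) is vacuous. Hence the images $q((x-\varepsilon,x+\varepsilon))$, taken for small $\varepsilon$ with $0\notin(x-\varepsilon,x+\varepsilon)$, form a base of open neighborhoods of $\hat{x}$ in $X$. Each is connected, being the continuous image of an interval under $q$ (Proposition~\ref{prop:cont}).

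\emph{The point $\ast$.} Let $U$ be an open neighborhood of $\ast$. By Lemma~\ref{lem:nbhd} we have $U=W_F$ for some closed nowhere-dense $F\subset\mathbb{R}\setminus\{0\}$. We claim that $W_F$ itself is connected, so the basic neighborhoods of $\ast$ are already connected and no shrinking is needed. The plan is to imitate Proposition~\ref{prop:connected} inside the subspace $W_F$.

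Suppose $W_F=P\cup Q$ with $P,Q$ disjoint, nonempty, and relatively open, and say $\ast\in P$. Since $W_F$ is open in $X$, both $P$ and $Q$ are open in $X$. Then $Q$ is a nonempty open subset of $X$ and $P$ is an open neighborhood of $\ast$, so the KIP (Proposition~\ref{prop:kip}) gives $P\cap Q\neq\varnothing$, a contradiction.

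There is an alternative, path-based route. Every $\hat{y}\in W_F$ with $y\in\mathbb{R}\setminus\{0\}$ lies in a component interval $I$ of the open set $\overline{\mathbb{R}}\setminus F$. The component $I$ is arbitrary and may avoid $0$ and $\pm\infty$ entirely, as happens, for example, between two points of a Cantor set. So a path inside $q(I)$ cannot in general reach $\ast$, which is why the KIP argument is the one to use.

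\emph{Main obstacle.} The only real obstacle is making sure that the subspace argument is legitimate, namely that relatively open subsets of the open set $W_F$ are open in $X$. This is immediate from the definition of the subspace topology. With that in hand, both cases give a local base of connected open sets at every point.
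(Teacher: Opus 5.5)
Your proof is correct and essentially the paper's. At points $\hat{x}\neq\ast$ you use the intervals $q((x-\varepsilon,x+\varepsilon))$, and at $\ast$ you show each $W_F$ is connected: a relative separation of the open set $W_F$ consists of sets open in $X$, which contradicts the KIP. Your aside on why a path-based route would not work is accurate but plays no role in the argument.
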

\begin{proof}
At points $\hat{x} \neq \ast$, basic neighborhoods are open intervals,
which are connected.
At~$\ast$: every $W_F$ is connected.
Indeed, if $W_F = U \cup V$ with $U, V$ disjoint and open in the
subspace $W_F$, and $\ast \in U$, then both $U$ and $V$ are open
in $W_F$, hence open in~$X$ (since $W_F$ is open in~$X$).
In particular, $U$ is an open neighborhood of~$\ast$ in~$X$ and $V$
is a nonempty open subset of~$X$.
The KIP (Proposition~\ref{prop:kip}) gives $U \cap V \neq \varnothing$,
a contradiction.
\end{proof}

\begin{remark}[Semiregularity]\label{rmk:semiregular}
ERI is \emph{not} semiregular.
A set $U$ is \emph{regular open} if
$U = \mathrm{int}(\overline{U})$.
For every nonempty $U \in \tau_{\mathrm{ERI}}$ with $\ast \in U$,
$U$ is dense (Proposition~\ref{prop:kip}), so
$\overline{U} = X$ and $\mathrm{int}(\overline{U}) = X$.
Hence the only regular open set containing $\ast$ is $X$ itself,
and the regular open sets do not form a base at~$\ast$.
This is consistent with the spectral collapse
(Remark~\ref{rmk:gelfand}): the semiregularization topology at
$\ast$ is trivial.
\end{remark}

\begin{remark}[Normality of subspaces]\label{rmk:normalsubspaces}
$X$ is not normal (Corollary~\ref{cor:ERI-notnormal}), but every
subspace not containing~$\ast$ is normal: the subspace
$X \setminus \{\ast\} \cong \mathbb{R}\setminus\{0\}$ is
metrizable, hence normal, and normality is hereditary to
closed subspaces.
Thus $\ast$ is the unique ``obstruction'' to normality.
\end{remark}

\begin{remark}[Paracompactness]\label{rmk:paracompact}
Every compact space is paracompact (every open cover has a finite,
hence locally finite, subcover).
Thus ERI is $T_1$, paracompact, and not normal---showing that
the hypothesis of regularity in Dieudonn\'{e}'s theorem
(paracompact $+$ regular $+$ $T_1$ $\Rightarrow$ normal) cannot
be dropped.
\end{remark}

\subsection{Sequential convergence}

Throughout this subsection, $(x_n)$ denotes a sequence in
$\mathbb{R}\setminus\{0\}$ and $\hat{x}_n := q(x_n) \in X$.

\begin{proposition}[Convergence criterion]\label{prop:seqconv}
$\hat{x}_n \to \ast$ in $X$ if and only if
$(x_n)$ has no accumulation point in $\mathbb{R}\setminus\{0\}$.
\end{proposition}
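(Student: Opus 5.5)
We prove both directions by reducing, through Lemma~\ref{lem:nbhd}, convergence to $\ast$ to a statement about closed nowhere-dense sets $F \subset \mathbb{R}\setminus\{0\}$. By Lemma~\ref{lem:nbhd}\ref{lem:nbhd:forward} the sets $W_F$ form a neighborhood base at $\ast$. Since each $x_n \in \mathbb{R}\setminus\{0\}$, we have $\hat{x}_n \in W_F$ iff $x_n \notin F$. Hence $\hat{x}_n \to \ast$ iff for every closed nowhere-dense $F \subset \mathbb{R}\setminus\{0\}$ (closed in $\overline{\mathbb{R}}$ in the sense of Convention~\ref{conv:closed}) only finitely many $x_n$ lie in $F$.

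($\Leftarrow$) Suppose $(x_n)$ has no accumulation point in $\mathbb{R}\setminus\{0\}$, and fix such an $F$. By Convention~\ref{conv:closed}, $F$ is a compact subset of $\mathbb{R}\setminus\{0\}$. If infinitely many $x_n$ lay in $F$, compactness of $F$ would give a subsequence converging to a point of $F \subset \mathbb{R}\setminus\{0\}$, which is an accumulation point there. This contradicts the hypothesis, so $x_n \in F$ only finitely often, and $\hat{x}_n \to \ast$.

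($\Rightarrow$) We argue the contrapositive. Suppose $a \in \mathbb{R}\setminus\{0\}$ is an accumulation point of $(x_n)$, meaning a subsequence $x_{n_k} \to a$. The main step is to build a closed nowhere-dense $F \subset \mathbb{R}\setminus\{0\}$ that contains infinitely many terms. Take
\[
F := \{a\} \cup \{x_{n_k} : k \ge k_0\},
\]
where $k_0$ is chosen so that $|x_{n_k} - a| < |a|/2$ for all $k \ge k_0$. This $F$ is contained in the compact interval $[a-|a|/2,\, a+|a|/2] \subset \mathbb{R}\setminus\{0\}$. It is closed, because its only limit point is $a$, which belongs to $F$. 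It is countable, and a countable closed subset of $\overline{\mathbb{R}}$ has empty interior, so $F$ is nowhere dense. Then $W_F$ is a neighborhood of $\ast$ by Lemma~\ref{lem:nbhd}\ref{lem:nbhd:converse}, yet $\hat{x}_{n_k} \notin W_F$ for all $k \ge k_0$. Therefore $\hat{x}_n \not\to \ast$.

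The only delicate points are interpretive rather than computational. First, "accumulation point" must be read as a cluster point of the sequence (a subsequential limit), not merely a limit point of the set of values. Under that reading, a sequence that repeats a value $a \neq 0$ infinitely often is correctly excluded; in that case the construction above still works, with $F = \{a\}$. Second, the construction needs $F$ to stay away from $0$ and $\pm\infty$, which is exactly why $k_0$ is chosen to keep the tail inside $[a-|a|/2,\, a+|a|/2]$.
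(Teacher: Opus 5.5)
Your proof is correct and follows essentially the same route as the paper. For ($\Leftarrow$) you use compactness of the complement $F$ of a neighborhood of $\ast$ (Bolzano--Weierstrass). For ($\Rightarrow$) you use the closed, countable, hence nowhere-dense set formed by a convergent subsequence and its limit, whose complement yields a neighborhood $W_F$ of $\ast$ that misses the subsequence. Truncating to a tail near $a$ is harmless but unnecessary: all $x_n$ already lie in $\mathbb{R}\setminus\{0\}$, so the full set $\{x_{n_k}\}\cup\{a\}$ is already closed in $\overline{\mathbb{R}}$ and avoids $\{-\infty,0,+\infty\}$.
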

\begin{proof}
\textbf{($\Rightarrow$).}\;
Suppose $t \in \mathbb{R}\setminus\{0\}$ is an accumulation point.
A subsequence satisfies $x_{n_k} \to t$.
Let $C := \{x_{n_k} : k \ge 1\} \cup \{t\}$.
Since $x_{n_k} \to t$, the only accumulation point of $C$ in
$\overline{\mathbb{R}}$ is $t$, and $t \in C$, so $C$ is closed.
$C$ is countable, hence nowhere dense
($\overline{\mathbb{R}} \cong [-1,1]$, so every non-empty open set is
uncountable).
Since $C \subset \mathbb{R}\setminus\{0\}$,
$W_C = q(\overline{\mathbb{R}} \setminus C) \in \tau_{\mathrm{ERI}}$
contains $\ast$ (Lemma~\ref{lem:nbhd}\ref{lem:nbhd:converse}).
But $\hat{x}_{n_k} \notin W_C$ for all~$k$.
If $\hat{x}_n \to \ast$, then $(\hat{x}_n)$ would be eventually in
$W_C$, forcing $\hat{x}_{n_k} \in W_C$ for large~$k$---a
contradiction.

\medskip\noindent
\textbf{($\Leftarrow$).}\;
Let $U$ be an open neighborhood of $\ast$.
By Lemma~\ref{lem:nbhd}\ref{lem:nbhd:forward},
$F := \overline{\mathbb{R}} \setminus q^{-1}(U)$ is a compact subset of
$\mathbb{R}\setminus\{0\}$ (closed in the compact space $\overline{\mathbb{R}}$).
If infinitely many $x_n$ belonged to $F$, the
Bolzano--Weierstrass theorem yields a convergent subsequence whose limit
lies in $F$ (since $F$ is closed in $\overline{\mathbb{R}}$),
hence in $\mathbb{R}\setminus\{0\}$, contradicting the hypothesis.
Hence eventually $\hat{x}_n \in U$.
\end{proof}

\begin{remark}[Equivalent formulations]\label{rem:seqequiv}
The condition ``$(x_n)$ has no accumulation point in
$\mathbb{R}\setminus\{0\}$'' is equivalent to:
\begin{enumerate}[label=(\alph*),nosep]
\item every accumulation point of $(x_n)$ in $\overline{\mathbb{R}}$
  belongs to $\{-\infty, 0, +\infty\}$;
\item $(x_n)$ eventually leaves every compact subset of
  $\mathbb{R}\setminus\{0\}$.
\end{enumerate}
In particular, the sequence need not converge in $\overline{\mathbb{R}}$:
different subsequences may tend to different elements of
$\{-\infty, 0, +\infty\}$.
\end{remark}

\subsection{Uniqueness of sequential limits}

\begin{proposition}[Unique sequential limits]\label{prop:unique}
Limits of convergent sequences in $X$ are unique.
\end{proposition}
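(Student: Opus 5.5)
We argue by contradiction: suppose a sequence $(y_n)$ in $X$ converges to two distinct points $p \neq p'$. Since $X$ is $T_1$ but not Hausdorff only at $\ast$, we split into two cases according to whether $\ast \in \{p,p'\}$.

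\emph{Case 1: $p, p' \neq \ast$.} Both points lie in the open subspace $X \setminus \{\ast\}$, which is homeomorphic to $\mathbb{R}\setminus\{0\}$ via $q$ and hence metrizable. Since $X\setminus\{\ast\}$ is an open neighborhood of $p$, the sequence is eventually in it. A tail therefore converges in this subspace to both $p$ and $p'$, contradicting the Hausdorff property of $\mathbb{R}\setminus\{0\}$.

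\emph{Case 2: $p' = \ast$ and $p = \hat t$ with $t \in \mathbb{R}\setminus\{0\}$.} Again $X\setminus\{\ast\}$ is an open neighborhood of $\hat t$, so eventually $y_n = \hat{x}_n$ with $x_n \in \mathbb{R}\setminus\{0\}$. Discarding finitely many terms does not affect convergence. Convergence $\hat x_n \to \hat t$ in $X$ restricts to convergence in $X\setminus\{\ast\}\cong\mathbb{R}\setminus\{0\}$, so $x_n \to t$ in $\mathbb{R}$ and $t$ is an accumulation point of $(x_n)$ in $\mathbb{R}\setminus\{0\}$. By Proposition~\ref{prop:seqconv}, the sequence $\hat x_n$ then does not converge to $\ast$, which is a contradiction.

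One more situation must be handled: the sequence may take the value $\ast$ infinitely often while converging to $\hat t$. This is already excluded, because $X\setminus\{\ast\}$ is an open set containing $\hat t$ (Proposition~\ref{prop:T1}), so only finitely many terms can equal $\ast$.

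The main obstacle is the reduction to Proposition~\ref{prop:seqconv}, which is stated for sequences lying in $\mathbb{R}\setminus\{0\}$. To apply it, one must check two things: that passing to a tail is legitimate, and that $q|_{\mathbb{R}\setminus\{0\}}$ is a homeomorphism onto the open subspace $X\setminus\{\ast\}$. For the latter, note that open sets avoiding $\ast$ in $\tau_{\mathrm{ERI}}$ are exactly the images of open subsets of $\mathbb{R}\setminus\{0\}$, since condition~(b) is vacuous for them. Alternatively, one can bypass the proposition and argue directly: the closed nowhere-dense set $C = \{x_n : n \geq N\}\cup\{t\}$, where $N$ is large enough that all $x_n$ with $n\ge N$ lie within $|t|/2$ of $t$, gives a neighborhood $W_C$ of $\ast$ that misses the whole tail.
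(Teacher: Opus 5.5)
Your proof is correct and follows essentially the same route as the paper: you use the $T_1$ property to pass to a tail in $X\setminus\{\ast\}\cong\mathbb{R}\setminus\{0\}$, handle two ordinary limits by the Hausdorff property there, and rule out the pair $\{\ast,\hat t\}$ via Proposition~\ref{prop:seqconv}. Your alternative argument with $W_C$ just inlines the forward direction of that proposition's proof.
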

\begin{proof}
Let $(y_n)$ be a sequence in $X$ converging to $L_1$ and $L_2$ with
$L_1 \neq L_2$.

\emph{Reduction to sequences in $X \setminus \{\ast\}$.}\;
If $y_n = \ast$ for infinitely many $n$, then every open $U \ni L$
must contain $\ast$ for any limit~$L$.
Since $\{\ast\}$ is closed (Proposition~\ref{prop:T1}),
$X \setminus \{\ast\}$ is an open set containing $L$ but not $\ast$
whenever $L \neq \ast$---a contradiction.
Hence $L = \ast$ is the only possible limit.
If $y_n = \ast$ for only finitely many $n$, the tail lies in
$X \setminus \{\ast\}$.
In either case we reduce to $\hat{x}_n$ with
$x_n \in \mathbb{R}\setminus\{0\}$.

\emph{First case: $\hat{x}_n \to \ast$ and $\hat{x}_n \to \hat{c}$},
$c \in \mathbb{R}\setminus\{0\}$.\;
For small $\varepsilon > 0$,
$q((c-\varepsilon, c+\varepsilon))$ is an open neighborhood of
$\hat{c}$ not containing $\ast$.
Convergence to $\hat{c}$ forces $x_n$ eventually into
$(c-\varepsilon, c+\varepsilon)$, making $c$ an accumulation point
in $\mathbb{R}\setminus\{0\}$---contradicting
Proposition~\ref{prop:seqconv}.

\emph{Second case: $\hat{x}_n \to \hat{c}$ and $\hat{x}_n \to \hat{d}$
with $c \neq d$ both in $\mathbb{R}\setminus\{0\}$.}\;
Disjoint neighborhoods
$q((c-\varepsilon, c+\varepsilon))$ and
$q((d-\varepsilon, d+\varepsilon))$ cannot both eventually contain
$\hat{x}_n$.
\end{proof}

\subsection{Nets witness non-Hausdorff}

\begin{example}[A net with two limits]\label{ex:net}
Fix $c \in \mathbb{R}\setminus\{0\}$.
Let $\mathcal{D} := \mathcal{N}_\ast \times \mathcal{N}_{\hat{c}}$
(products of neighborhood filters), directed by reverse inclusion.
For each $(U,V) \in \mathcal{D}$, the KIP gives
$U \cap V \neq \varnothing$; select
$z_{(U,V)} \in U \cap V$ (Axiom of Choice).
Then $z_{(U,V)} \to \ast$ (given $W \ni \ast$,
$(U,V) \succeq (W, X_0)$ implies $z_{(U,V)} \in U \subseteq W$)
and $z_{(U,V)} \to \hat{c}$ (similarly).
\end{example}

\subsection{Non-sequentiality}

Recall that a topological space is \emph{sequential}
(Franklin~\cite{bib:franklin}) if every sequentially closed set is
closed; equivalently, if the topology is completely determined by
convergent sequences.

\begin{proposition}\label{prop:notsequential}
$(X, \tau_{\mathrm{ERI}})$ is not a sequential space.
\end{proposition}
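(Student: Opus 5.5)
We have to exhibit a subset $S \subseteq X$ that is sequentially closed but not closed. The natural candidate is one whose closure contains $\ast$ only because of the density condition, while no sequence in $S$ can reach $\ast$. Corollary~\ref{cor:pclosure} puts $\ast$ in the closure of every nonempty open set. Proposition~\ref{prop:seqconv} says a sequence converges to $\ast$ only if it has no accumulation point in $\mathbb{R}\setminus\{0\}$. So we should take $S$ nonempty and open in $X$, avoiding $\ast$, with closure (within $\mathbb{R}\setminus\{0\}$) compact. Concretely, set $S := q((1,2))$. Alternatively we could use $S := q([1,2])$, which has the advantage of being compact in $\mathbb{R}\setminus\{0\}$ while still containing a nonempty open set.

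\textbf{Step 1: $S$ is not closed.} Take $S = q([1,2])$. It contains the nonempty open set $q((1,2))$, so Corollary~\ref{cor:pclosure} gives $\ast \in \overline{S}$. Since $\ast \notin S$, the set $S$ is not closed. We can also check this directly from Lemma~\ref{lem:nbhd}: every $W_F$ meets $q((1,2))$, because $F$ is nowhere dense.

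\textbf{Step 2: $S$ is sequentially closed.} Let $(y_n)$ be a sequence in $S$ with $y_n \to L$, and write $y_n = \hat{x}_n$ with $x_n \in [1,2]$.
\begin{itemize}
\item Suppose $L = \ast$. By Bolzano--Weierstrass, $(x_n)$ has an accumulation point in $[1,2] \subset \mathbb{R}\setminus\{0\}$. By Proposition~\ref{prop:seqconv} it cannot converge to $\ast$, so this case is impossible.
\item Suppose $L = \hat{c}$ with $c \neq 0$. Then $q$ restricted to $\mathbb{R}\setminus\{0\}$ is a homeomorphism onto the open subspace $X\setminus\{\ast\}$. Basic neighborhoods of $\hat{c}$ not containing $\ast$ are images of small intervals, so $x_n \to c$ in $\mathbb{R}$. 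Since $[1,2]$ is closed, $c \in [1,2]$, hence $L \in S$.
\end{itemize}
So every sequential limit of $S$ lies in $S$, and $S$ is sequentially closed but not closed.

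\textbf{Main obstacle.} The only delicate point is the case $L=\ast$. It requires the full strength of the ($\Rightarrow$) direction of Proposition~\ref{prop:seqconv}: a sequence with an accumulation point $t\neq 0$ is kept away from $\ast$ by the countable closed nowhere-dense set $C$. Here $C$ lies inside $[1,2]$, which is exactly what makes the argument work.
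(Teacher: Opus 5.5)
Your proof is correct and is essentially the paper's. Both use the set $q([1,2])$, the same case split in which Bolzano--Weierstrass and Proposition~\ref{prop:seqconv} exclude $L=\ast$, and non-closedness from $\ast\in\overline{q([1,2])}$; you get this last point from Corollary~\ref{cor:pclosure}, while the paper observes directly that $\overline{\mathbb{R}}\setminus[1,2]$ is not dense, so condition~(b) fails. One small correction: discard your first ``concrete'' candidate $q((1,2))$, because it is not sequentially closed ($q(1+1/n)\to q(1)\notin q((1,2))$).
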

\begin{proof}
The set $K = q([1,2])$ is sequentially closed but not closed.
It is not closed because $\ast \in X \setminus K$ (since
$[1,2] \cap \{-\infty,0,+\infty\} = \varnothing$) and
$q^{-1}(X \setminus K) = \overline{\mathbb{R}} \setminus [1,2]$ is not
dense (the complement $[1,2]$ has interior $(1,2)$), so condition~(b)
fails; see Theorem~\ref{thm:notKC} below for the general statement.
To see that $K$ is sequentially closed, suppose
$\hat{x}_n \in K$ (i.e., $x_n \in [1,2]$) and $\hat{x}_n \to L$
in~$X$.
\begin{itemize}[nosep]
\item If $L = \hat{c}$ with $c \in \mathbb{R}\setminus\{0\}$, then
  $x_n \to c$, forcing $c \in [1,2]$ (closed in
  $\overline{\mathbb{R}}$), so $L \in K$.
\item $L = \ast$ is impossible: by
  Proposition~\ref{prop:seqconv}, $\hat{x}_n \to \ast$ requires
  $(x_n)$ to have no accumulation point in
  $\mathbb{R}\setminus\{0\}$, but $(x_n) \subseteq [1,2]$ is bounded
  and the Bolzano--Weierstrass theorem yields an accumulation point
  in $[1,2] \subset \mathbb{R}\setminus\{0\}$.
\end{itemize}
Thus $K$ contains all its sequential limits but is not closed.
\end{proof}

\subsection{Tightness at \texorpdfstring{$\ast$}{*}}

Recall that the \emph{tightness} of a space $X$ at a point $x$ is
$t(x, X) := \min\{\kappa : \text{if } x \in \overline{S},
\text{ then } x \in \overline{S_0} \text{ for some }
S_0 \subseteq S \text{ with } |S_0| \leq \kappa\}$; general
references on tightness, character, and related cardinal invariants
are Arhangel'ski\u{\i}~\cite{bib:arhangelskii},
Juh\'{a}sz~\cite{bib:juhasz}, and Hodel~\cite{bib:hodel}.

\begin{proposition}[Countable tightness at $\ast$]\label{prop:tightness}
$t(\ast, X) = \aleph_0$, despite
$\chi(\ast, X) \geq \aleph_1$
\textup{(Proposition~\textup{\ref{prop:notfirst}})}.
\end{proposition}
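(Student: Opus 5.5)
Since $X \setminus \{\ast\}$ is metrizable, tightness at other points is trivially countable, so the whole statement concerns the point $\ast$. The upper bound $t(\ast,X)\le\aleph_0$ is the substance; the character bound is already Proposition~\ref{prop:notfirst}. I would first characterize when $\ast \in \overline{S}$ for $S \subseteq X$, and then extract a countable witness using separability of $\overline{\mathbb{R}}$.

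\textbf{Step 1 (closure criterion).} I would show that for $S \subseteq X\setminus\{\ast\}$, writing $S' := q^{-1}(S) \subseteq \mathbb{R}\setminus\{0\}$,
\[
\ast \in \overline{S} \iff \text{either } \overline{S'} \text{ has nonempty interior in } \overline{\mathbb{R}}, \text{ or } \overline{S'} \cap \{-\infty,0,+\infty\} \neq \varnothing.
\]
(If $\ast\in S$ there is nothing to prove, taking $S_0=\{\ast\}$.) For the forward direction, suppose both conditions fail. Then $F := \overline{S'}$ (closure in $\overline{\mathbb{R}}$) is closed, nowhere dense, and contained in $\mathbb{R}\setminus\{0\}$. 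By Lemma~\ref{lem:nbhd}\ref{lem:nbhd:converse}, $W_F$ is then a neighborhood of $\ast$ missing $S$. For the converse, take any basic neighborhood $W_F$ with $F$ closed nowhere dense in $\mathbb{R}\setminus\{0\}$ (Lemma~\ref{lem:nbhd}\ref{lem:nbhd:forward}); we need $S' \not\subseteq F$. If $\overline{S'}$ has interior and $S'\subseteq F$, then $F \supseteq \overline{S'}$ has interior, which is impossible. If $\overline{S'}$ meets $\{-\infty,0,+\infty\}$ and $S'\subseteq F$, then $F\supseteq\overline{S'}$, so $F$ meets $\{-\infty,0,+\infty\}$, contradicting $F\subset\mathbb{R}\setminus\{0\}$.

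\textbf{Step 2 (countable witness).} Since $\overline{\mathbb{R}}$ is second countable, $S'$ has a countable subset $S_0'$ that is dense in $S'$, so $\overline{S_0'} = \overline{S'}$. This is the key point. Both conditions of Step 1 depend only on the closure, so $S_0 := q(S_0')$ satisfies $\ast \in \overline{S_0}$ whenever $\ast\in\overline{S}$. This yields $t(\ast,X)\le\aleph_0$. Equality holds because $\ast$ is not isolated: every neighborhood of $\ast$ is dense by the KIP and $X$ is infinite. Hence finite sets do not suffice. To see this, take $S = X\setminus\{\ast\}$: then $\ast\in\overline S$, but $\ast\notin\overline{S_0}$ for any finite $S_0\subseteq S$, because finite sets are closed by Proposition~\ref{prop:T1}. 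Thus $t(\ast,X)=\aleph_0$, and combined with Proposition~\ref{prop:notfirst} we get $\chi(\ast,X)\ge\aleph_1$.

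\textbf{Main obstacle.} The delicate step is the forward direction of Step 1. One must check that the closure of $S'$ taken in $\overline{\mathbb{R}}$ avoids all three of $-\infty,0,+\infty$ and has empty interior, so that it is a legitimate $F$ for Lemma~\ref{lem:nbhd}. It is tempting to test only interior, but accumulation at $0$ or $\pm\infty$ is a genuine second route to $\ast\in\overline{S}$, mirroring Proposition~\ref{prop:seqconv}. Step 2 needs no Baire or sequence argument, only hereditary separability of $\overline{\mathbb{R}}$.
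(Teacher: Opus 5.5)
Your proof is correct and takes essentially the same route as the paper. Both reduce to $S \subseteq X\setminus\{\ast\}$ and use the neighborhood base $W_F$ of Lemma~\ref{lem:nbhd} to see that whether $\ast\in\overline{S}$ depends only on the closure of the preimage in $\overline{\mathbb{R}}$; both then replace the preimage by a countable dense subset with the same closure (the paper states the criterion as ``not contained in any closed nowhere-dense $F\subset\mathbb{R}\setminus\{0\}$'', which your Step~1 unpacks into two explicit conditions, and your check that finite subsets never suffice supplies the lower bound $t(\ast,X)\ge\aleph_0$, which the paper leaves implicit).
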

\begin{proof}
At points $\hat{x} \neq \ast$, the subspace topology is Euclidean,
so tightness is $\aleph_0$.
For $\ast$: let $S \subseteq X$ with $\ast \in \overline{S}$.
We may assume $S \subseteq X \setminus \{\ast\}$ (if
$\ast \in S$, take $S_0 = \{\ast\}$).
Let $D := \{x \in \mathbb{R}\setminus\{0\} : q(x) \in S\}$.
The condition $\ast \in \overline{S}$ means that for every closed
nowhere-dense $F \subset \mathbb{R}\setminus\{0\}$,
$D \not\subseteq F$ (otherwise $W_F$ is a neighborhood of $\ast$
missing~$S$).
Since $\overline{\mathbb{R}}$ is second-countable (metrizable), $D$
has a countable dense subset $D_0 \subseteq D$ (every subspace of a
second-countable space is second-countable, hence separable).
We claim $D_0$ is not contained in any closed nowhere-dense
$F \subset \mathbb{R}\setminus\{0\}$:
if $D_0 \subseteq F$ with $F$ closed, then
$D \subseteq \overline{D_0}^{\,\overline{\mathbb{R}}} \subseteq F$,
contradicting $D \not\subseteq F$.
Hence $S_0 := q(D_0)$ is countable and $\ast \in \overline{S_0}$.
\end{proof}

\begin{remark}\label{rem:tightness}
Note that $t(\ast, X) = \aleph_0$ coexists with $\chi(\ast, X) \geq
\aleph_1$ and with the failure of sequentiality
(Proposition~\ref{prop:notsequential}), so ERI is a compact space in
which countable tightness does not imply sequentiality.
The non-sequentiality is witnessed by $q([1, 2])$: no sequence in
$[1, 2]$ escapes to $\ast$ by Proposition~\ref{prop:seqconv}, so
$q([1, 2])$ is sequentially closed, yet $\ast \in \overline{q([1, 2])}$
by the KIP.
ERI is therefore sequentially compact
(Proposition~\ref{prop:seqcpt}), countably tight, and not sequential.
\end{remark}

\subsection{Position in the Wilansky hierarchy}\label{sec:hierarchy}

\begin{theorem}[ERI is US]\label{thm:US}
$(X, \tau_{\mathrm{ERI}})$ is a US-space:
every convergent sequence in $X$ has a unique limit.
In particular, ERI is not Fr\'{e}chet--Urysohn
\textup{(}since Fr\'{e}chet--Urysohn implies sequential,
contradicting Proposition~\textup{\ref{prop:notsequential})}.
\end{theorem}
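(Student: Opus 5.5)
The theorem has two parts, and both follow almost directly from results already proved in this section.

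For the US property, I would simply invoke Proposition~\ref{prop:unique}, which proves exactly the required statement. To make the theorem self-contained, I would recall the structure of that argument. First, sequences that hit $\ast$ infinitely often can only converge to $\ast$. This uses that $\{\ast\}$ is closed (Proposition~\ref{prop:T1}), so $X\setminus\{\ast\}$ is an open neighborhood of every other point that excludes those terms. Otherwise the tail of the sequence lies in $X\setminus\{\ast\}\cong\mathbb{R}\setminus\{0\}$.

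Two cases then remain. If the sequence has two limits $\hat c \neq \hat d$, both different from $\ast$, then the sequence lives eventually in Euclidean neighborhoods, and these separate the two points. If one limit is $\ast$ and the other is $\hat c$, then convergence to $\hat c$ makes $c$ an accumulation point in $\mathbb{R}\setminus\{0\}$. This contradicts the convergence criterion of Proposition~\ref{prop:seqconv}. The only substantive ingredient is this criterion. Its key step, the ($\Rightarrow$) direction, uses the countable closed set $C=\{x_{n_k}\}\cup\{t\}$ as a nowhere-dense set to build the neighborhood $W_C$ of $\ast$ via Lemma~\ref{lem:nbhd}. That has already been done, so no new obstacle arises.

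For the ``in particular'' clause, I would argue by contradiction. Suppose $X$ were Fr\'echet--Urysohn. Then every point in the closure of a set $S$ would be a sequential limit of points of $S$. Hence every sequentially closed set would be closed, i.e.\ $X$ would be sequential. This contradicts Proposition~\ref{prop:notsequential}.

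To make the failure concrete, I would point to the set $K=q([1,2])$. We have $\ast\in\overline{K}$ by Corollary~\ref{cor:pclosure}, since $q((1,2))$ is a nonempty open set contained in $K$. Yet no sequence in $K$ converges to $\ast$: by Bolzano--Weierstrass such a sequence accumulates in $[1,2]$, and Proposition~\ref{prop:seqconv} rules this out. So the Fr\'echet--Urysohn property fails at $\ast$ directly.

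There is no real obstacle here, since the theorem is a packaging of earlier results. The only point needing care is the reduction step for sequences containing $\ast$ infinitely often, where one must use $T_1$ correctly.
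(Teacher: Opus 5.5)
Your proposal is correct and follows the paper's route: the US property is exactly Proposition~\ref{prop:unique}, whose case analysis you recall accurately with Proposition~\ref{prop:seqconv} as the key input, and the non-Fr\'echet--Urysohn clause is the implication Fr\'echet--Urysohn $\Rightarrow$ sequential combined with Proposition~\ref{prop:notsequential}. Your concrete witness $q([1,2])$, with $\ast$ in its closure by Corollary~\ref{cor:pclosure}, is the same witness the paper uses in Remark~\ref{rem:tightness}.
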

\begin{proof}
Immediate from Proposition~\ref{prop:unique}.
\end{proof}

\begin{theorem}[ERI is not KC]\label{thm:notKC}
$(X, \tau_{\mathrm{ERI}})$ is not a KC-space.
\end{theorem}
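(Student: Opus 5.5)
The plan is to exhibit the explicit compact set $K := q([1,2])$, which already appeared in Proposition~\ref{prop:notsequential}, and to show that it is compact but not closed in $X$. No new machinery is needed beyond continuity of $q$ and the canonical description of neighborhoods of $\ast$.

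\emph{Compactness.} The interval $[1,2]$ is compact in $\overline{\mathbb{R}}$. By Proposition~\ref{prop:cont}, $q$ is continuous into $(X,\tau_{\mathrm{ERI}})$, so $K = q([1,2])$ is compact as a continuous image of a compact set. It is worth noting that $K$ is even a continuous image of a compact Hausdorff space, so the same set will later witness failure of weak Hausdorffness as well.

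\emph{Non-closedness.} Observe that $[1,2]\cap\{-\infty,0,+\infty\}=\varnothing$, so $\ast\notin K$, and $[1,2]$ is saturated. Hence $q^{-1}(X\setminus K)=\overline{\mathbb{R}}\setminus[1,2]$. This set is open, but it is not dense, since its complement $[1,2]$ contains the open interval $(1,2)$. Since $\ast\in X\setminus K$, condition~(b) of Definition~\ref{def:ERI} fails, so $X\setminus K\notin\tau_{\mathrm{ERI}}$. Equivalently, by Lemma~\ref{lem:nbhd}\ref{lem:nbhd:forward}, any open neighborhood of $\ast$ has the form $W_F$ with $F$ nowhere dense, and cannot omit $K$ because $[1,2]$ has interior. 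One can also argue directly via Corollary~\ref{cor:pclosure}: $q((1,2))$ is a nonempty open set (its preimage $(1,2)$ is open and avoids $\ast$), so $\ast\in\overline{q((1,2))}\subseteq\overline{K}$, while $\ast\notin K$.

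Therefore $K$ is a compact non-closed subset of $X$, and $X$ is not KC. No step is a real obstacle. The only point needing care is the saturation bookkeeping, which guarantees $q^{-1}(q([1,2]))=[1,2]$ so that the preimage computation is legitimate; this follows from the preliminary remark that every subset of $\mathbb{R}\setminus\{0\}$ is saturated.
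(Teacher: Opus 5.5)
Your proof is correct and is essentially the paper's own argument: the paper also takes $K = q([1,2])$, gets compactness from continuity of $q$, and shows $X \setminus K$ is not open because $\ast \in X \setminus K$ while $q^{-1}(X \setminus K) = \overline{\mathbb{R}} \setminus [1,2]$ is open but not dense, so condition~(b) fails. Your extra remarks (the saturation check and the alternative route via Corollary~\ref{cor:pclosure}) are valid but not needed.
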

\begin{proof}
Let $K := q([1, 2])$.
Continuity of $q$ (Proposition~\ref{prop:cont}) makes $K$ compact, but
$\ast \in X \setminus K$ has preimage
$q^{-1}(X \setminus K) = \overline{\mathbb{R}} \setminus [1, 2]$, which
is open but not dense, so condition~(b) fails and $K$ is not closed.
More generally, $q(F)$ is compact and not closed for every closed
$F \subset \mathbb{R}\setminus\{0\}$ with nonempty interior.
\end{proof}

\begin{remark}[ERI is a k-space]\label{rmk:kspace}
ERI is a k-space---immediate from compactness; for background on
k-spaces and generalized metric classes see
Gruenhage~\cite{bib:gruenhage}.
It is, however, not a $k_H$-space (compactly generated with respect to
compact Hausdorff subspaces), since it is not weakly Hausdorff
(Proposition~\ref{prop:notwH}).
\end{remark}

\begin{corollary}\label{cor:firstcountbarrier}
The failure of first-countability at $\ast$
(Proposition~\ref{prop:notfirst}) is the precise obstruction that
allows $X$ to be US without being Hausdorff. Indeed, at every point
$\hat{x} \neq \ast$ the sets $q((x-1/n, x+1/n))$ for $n > 1/|x|$
form a countable base, so $X$ is first-countable away from $\ast$;
by Fact~\ref{fact:firstUS}, first-countability at $\ast$ would force
$T_2$, contradicting Theorem~\ref{thm:nothausdorff}.
\end{corollary}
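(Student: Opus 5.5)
The corollary makes two claims. First, $X$ is first-countable at every point other than $\ast$. Second, first-countability at $\ast$ would, through Fact~\ref{fact:firstUS}, force $X$ to be Hausdorff. I would prove them in that order.

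\emph{Step 1: a countable base at $\hat{x}\neq\ast$.} Fix $x\in\mathbb{R}\setminus\{0\}$. For $n>1/|x|$ the interval $I_n:=(x-1/n,x+1/n)$ avoids $0$, and being bounded it also avoids $\pm\infty$. So $I_n$ is saturated and $q^{-1}(q(I_n))=I_n$ is open. Since $\ast\notin q(I_n)$, condition~(b) is vacuous, and $q(I_n)\in\tau_{\mathrm{ERI}}$.

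Next I would check that these sets form a base at $\hat{x}$. Let $U\ni\hat{x}$ be open. Then $q^{-1}(U)$ is an open subset of $\overline{\mathbb{R}}$ containing $x$, so it contains some $I_n$ with $n$ large. Hence $q(I_n)\subseteq U$. This proves the first claim, and it also justifies the assertion in Remark~\ref{rem:character} that the character is $\aleph_0$ away from $\ast$.

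\emph{Step 2: the obstruction at $\ast$.} Suppose, hypothetically, that $X$ were also first-countable at $\ast$. Combined with Step~1, $X$ would then be first-countable everywhere. It is US by Theorem~\ref{thm:US}, so Fact~\ref{fact:firstUS} would make it Hausdorff. That contradicts Theorem~\ref{thm:nothausdorff}. This gives a second, independent proof of Proposition~\ref{prop:notfirst}, and it explains why US-without-$T_2$ requires the failure of first-countability.

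\emph{Where care is needed.} The only delicate point is to use Fact~\ref{fact:firstUS} in its standard global form, namely that a first-countable US space is $T_2$. I would briefly recall its proof so that the dependence is transparent. Suppose two points $a\neq b$ cannot be separated, and both have decreasing countable bases $(A_n)$ and $(B_n)$. Choose $z_n\in A_n\cap B_n$. Then $z_n\to a$ and $z_n\to b$, which contradicts US.

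This argument needs countable bases only at the two points being separated. Every non-separable pair in $X$ involves $\ast$, by Theorem~\ref{thm:nothausdorff} together with Step~1 and the Hausdorffness of $\mathbb{R}\setminus\{0\}$. So first-countability at $\ast$ alone already suffices for the contradiction, and this is the precise sense of ``the precise obstruction''.
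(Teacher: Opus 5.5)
Your proof is correct and follows the paper's own argument, which is contained in the corollary's statement: the sets $q((x-1/n,x+1/n))$ with $n>1/|x|$ form a countable base at $\hat{x}$, and then Fact~\ref{fact:firstUS} together with Theorem~\ref{thm:nothausdorff} rules out first-countability at $\ast$. Your remark that the pointwise form of the Fact suffices (countable bases are needed only at the two points to be separated) sharpens the same argument without changing it.
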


\section{The $k_2$-Hausdorff Property and Hierarchy Position}\label{sec:k2H}

\subsection{Comparison with known examples}\label{sec:compare}

\begin{definition}[Reference examples]\label{def:examples}
We fix notation for four reference spaces used throughout this section.
The \emph{particular point topology} (PPT) on an infinite set $Y$ with
distinguished point $p$ has open sets $\varnothing$ and the subsets of
$Y$ containing $p$~\cite[Ex.~8--9]{bib:steen}.
The \emph{line with two origins} (LTO) is the quotient of
$\mathbb{R} \times \{0, 1\}$ identifying $(x, 0) \sim (x, 1)$ for
$x \neq 0$.
The \emph{cocountable topology} on an uncountable set $Y$ declares
countable sets and $Y$ itself closed.
\emph{Van Douwen's space}~\cite{bib:vandouwen} is a countable, compact,
Fr\'{e}chet--Urysohn, anti-Hausdorff US-space constructed via MAD
families.
\end{definition}

The three previously known compact US-not-KC spaces recorded in
pi-Base~\cite{bib:pibase} sit at three distinct levels of the Clontz
hierarchy~\eqref{eq:clontz}; ERI shares its level with one of them
(S165) but is separated from the other two by intermediate axioms:

\begin{center}
\begin{tikzpicture}[>={Stealth[scale=1.1]},
                    node distance=1.05cm,
                    every node/.style={font=\small}]
\node (T2)  {$T_2$};
\node[right=of T2]  (k1H) {$k_1\mathrm{H}$};
\node[right=of k1H] (KC)  {$\mathrm{KC}$};
\node[right=of KC]  (wH)  {$\mathrm{wH}$};
\node[right=of wH]  (k2H) {$k_2\mathrm{H}$};
\node[right=of k2H] (US)  {$\mathrm{US}$};
\node[right=of US]  (T1)  {$T_1$};
\draw[->] (T2)  -- (k1H);
\draw[->] (k1H) -- (KC);
\draw[->] (KC)  -- (wH);
\draw[->] (wH)  -- (k2H);
\draw[->] (k2H) -- (US);
\draw[->] (US)  -- (T1);
\node[below=10pt of wH,  align=center, font=\footnotesize]
  {$\mathbb{Q}^{\ast}{\times}\mathbb{Q}^{\ast}$\\[-1pt]
   \emph{wH, not KC}};
\node[below=10pt of k2H, align=center, font=\footnotesize]
  {$\alpha(\text{Arens--Fort})$ (S165),\\[-1pt]
   \textbf{ERI}\\[-1pt]
   \emph{$k_2$H, not wH}};
\node[below=10pt of US,  align=center, font=\footnotesize]
  {$\omega_1{+}1$ doubled (S37)\\[-1pt]
   \emph{US, not $k_2$H}};
\end{tikzpicture}
\end{center}

ERI occupies the $k_2$H-not-wH level alongside the one-point
compactification $\alpha(\text{Arens--Fort})$ (for the general theory
of compactifications, see Porter--Woods~\cite{bib:porterwoods}), but
the two are distinguished at every other axis:
$\alpha(\text{Arens--Fort})$ is totally disconnected and its non-$T_2$
point has countable pseudo-character, whereas ERI is
path-connected, sober, perfect, and has $\chi(\ast, X) \geq \aleph_1$.
The product $\mathbb{Q}^{\ast} \times \mathbb{Q}^{\ast}$ is weakly
Hausdorff and totally disconnected; the ordinal $\omega_1 + 1$ with
doubled endpoint is totally disconnected and fails $k_2$-Hausdorff.
None of the three is path-connected---this is the separation that
ERI supplies.

We now contrast ERI with the non-compact reference spaces.
The cocountable topology is US and KC but not compact; its only
convergent sequences are eventually constant.
ERI is compact, with a non-trivial convergence criterion
(Proposition~\ref{prop:seqconv}).
Van~Douwen's space is compact-US-KC, but its construction via MAD
families is highly non-constructive, and by K\"{u}nzi--van der
Zypen~\cite{bib:kunzi} every Fr\'{e}chet US-space is KC; ERI is
compact-US-not-KC and constructively explicit.
The line with two origins is first-countable and fails US (the sequence
$1/n$ converges to both origins); ERI is US and fails first-countability
precisely at $\ast$, and Fact~\ref{fact:firstUS} shows that
this trade-off is structurally necessary.
The standard quotient $(X_0, \tau_q)$ is Hausdorff; condition~(b)---the
density requirement on preimages of neighborhoods of $\ast$---is the
unique mechanism that destroys $T_2$ and KC while preserving $T_1$
and US.
The author is unaware of whether Van~Douwen's space is sober; see
Open Problem~\ref{op:vandouwen}.

\subsection{The $k_2$-Hausdorff property}\label{subsec:k2Hproof}

The $k_2$-Hausdorff property originates in the homotopical theory of
``convenient categories'' of topological spaces introduced by
Steenrod~\cite{bib:steenrod}, where closure of the diagonal in a
compactly generated setting is precisely the $k_2$H condition.
Within the refined Clontz hierarchy~\cite{bib:clontz,bib:clontzwilliams},
$k_2$H sits strictly between wH and US
(Remark~\ref{rmk:kspace} on the k-space of ERI).

\begin{definition}[$k_2$-Hausdorff]\label{def:k2H}
A space $X$ is \emph{$k_2$-Hausdorff} ($k_2$H) if for every compact
Hausdorff space $K$ and every continuous $f\colon K \to X$, the
\emph{kernel} $\ker(f) := \{(a,b) \in K^2 : f(a) = f(b)\}$ is closed
in $K \times K$.
Equivalently, for every pair
$k_0, k_1 \in K$ with $f(k_0) \neq f(k_1)$, there exist disjoint open
$U_0 \ni k_0$, $U_1 \ni k_1$ in $K$ with
$f[U_0] \cap f[U_1] = \varnothing$.
\end{definition}

This notion was introduced by
Clontz and Williams~\cite{bib:clontzwilliams}; we use the kernel
formulation (equivalent by~\cite[Prop.~2.3]{bib:clontzwilliams})
throughout.
The implications
$\mathrm{wH} \Rightarrow k_2\mathrm{H} \Rightarrow \mathrm{US}$
hold in general~\cite{bib:clontz}.

We prove the $k_2$H property in a general setting covering ERI and
every FMQ space over a first-countable base.
The notation below anticipates Section~\ref{sec:fmq}.

\begin{definition}[Filter-Modified Quotient, preview]\label{def:fmq-preview}
Let $(Y, \tau)$ be a topological space, $A \subseteq Y$ nonempty,
$q\colon Y \to Y/A$ the projection collapsing $A$ to
$\ast := q(A)$, and $\mathcal{F} \subseteq \mathcal{P}(Y)$ a
collection satisfying mild closure properties
(see Definition~\ref{def:admissible}).
The \emph{$\mathcal{F}$-modified quotient} is
$\mathrm{FMQ}(Y, A, \mathcal{F}) := (Y/A,\, \tau_{\mathcal{F}})$,
where $U \in \tau_{\mathcal{F}}$ iff $q^{-1}(U)$ is open in~$Y$
and ($\ast \in U \Rightarrow q^{-1}(U) \in \mathcal{F}$).
The \emph{density modifier} is
$\mathcal{D}_Y := \{D \subseteq Y : \overline{D} = Y\}$.
The original ERI is
$\mathrm{FMQ}(\overline{\mathbb{R}},\,\{-\infty,0,+\infty\},\,
\mathcal{D}_{\overline{\mathbb{R}}})$.
\end{definition}

Throughout this section, $Y$ is a compact Hausdorff space without
isolated points, $A \subseteq Y$ is finite, and
$X = \mathrm{FMQ}(Y, A, \mathcal{F})$ with $\mathcal{F} \subseteq
\mathcal{D}_Y$ and $X$ is $T_1$.

\begin{lemma}[Closed fibers principle]\label{lem:closedfibers}
Let $f\colon K \to X$ be continuous with $K$ compact Hausdorff. Set
$N := f^{-1}(\ast)$ and let $g\colon K \setminus N \to Y \setminus A$ be the
continuous ``coordinate map'' $g := (q|_{Y \setminus A})^{-1} \circ f$.
Then for every closed nowhere-dense $F \subset Y \setminus A$, the set
$g^{-1}(F)$ is closed in~$K$ \textup{(}not merely in $K \setminus N$\textup{)}.
\end{lemma}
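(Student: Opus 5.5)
Observe first that $g^{-1}(F) = f^{-1}(q(F))$, because $q|_{Y\setminus A}$ is a bijection onto $X\setminus\{\ast\}$ and $F\subset Y\setminus A$. It therefore suffices to show that $q(F)$ is closed in $X$, since $f$ is continuous. So the plan is to show $X\setminus q(F)$ is open in $\tau_{\mathcal F}$ and then pull back.

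For the openness I will check the two defining conditions. First, $X\setminus q(F)$ contains $\ast$, and $F$ avoids $A$, so the set $Y\setminus F$ contains $A$ and is saturated. Hence $q^{-1}(X\setminus q(F)) = Y\setminus F$, which is open because $F$ is closed in $Y$. Second, since $\ast$ lies in the set, condition (b) requires $Y\setminus F\in\mathcal F$.

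This second condition is the main obstacle. Nowhere-density of $F$ gives $Y\setminus F\in\mathcal D_Y$, but the standing hypothesis only says $\mathcal F\subseteq\mathcal D_Y$, not equality. I would first try to exploit the admissibility axioms of Definition~\ref{def:admissible} together with the $T_1$ hypothesis. $T_1$ forces $Y\setminus\{y\}\in\mathcal F$ for every $y\in Y\setminus A$, which handles finite $F$ via finite intersections. For a general closed nowhere-dense $F$ this is not enough, and an arbitrary admissible $\mathcal F\subseteq\mathcal D_Y$ need not contain co-nowhere-dense open sets.

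So, as written for general $\mathcal F$, I expect the lemma to need either $\mathcal F=\mathcal D_Y$ (the density modifier, as in ERI) or an admissibility axiom saying that $\mathcal F$ contains all open dense sets containing $A$. I would state the argument under that reading: for $\mathcal D_Y$ the openness is exactly Lemma~\ref{lem:nbhd}\ref{lem:nbhd:converse} transplanted to $Y$. Under that assumption, the conclusion follows at once: $g^{-1}(F)=f^{-1}(q(F))$ is the preimage of a closed set under a continuous map, hence closed in $K$ itself, not merely in $K\setminus N$.

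If instead one insists on general $\mathcal F$, the fallback is to prove only the weaker statement that $g^{-1}(F)$ is closed when $X\setminus q(F)$ is open. I would then flag that the general case needs the extra hypothesis.
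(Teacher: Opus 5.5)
Your proof is correct and is essentially the paper's argument: both show that $q(Y\setminus F)=X\setminus q(F)$ is a $\tau_{\mathcal F}$-open neighborhood of $\ast$, whose preimage $Y\setminus F$ is saturated and open, and pull it back along $f$; the complement of that open preimage in $K$ is exactly $g^{-1}(F)=f^{-1}(q(F))$. Your caveat is also justified: the paper's proof likewise verifies $Y\setminus F\in\mathcal F$ only when $\mathcal F=\mathcal D_Y$ or $\mathcal F_{\mathrm{com}}\subseteq\mathcal F$, and some such restriction is genuinely needed, since for the cofinite modifier on $[0,1]$ with $A=\{0,1\}$ (admissible, contained in $\mathcal D_Y$, and $T_1$) the map $\omega+1\to X$ given by $n\mapsto\widehat{1/2+1/(n+3)}$, $\omega\mapsto\ast$ is continuous by Proposition~\ref{prop:cofinite}, yet for the closed nowhere-dense $F=\{1/2\}\cup\{1/2+1/(n+3):n\in\omega\}$ the set $g^{-1}(F)=\omega$ is not closed in $\omega+1$.
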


\begin{proof}
Since $F$ is closed and nowhere dense, $Y \setminus F$ is open and dense.
Since $F \subset Y \setminus A$, we have $A \subseteq Y \setminus F$.
For $\mathcal{F} = \mathcal{D}_Y$: $Y \setminus F$ is dense, hence
$Y \setminus F \in \mathcal{D}_Y$.
More generally, closed nowhere-dense sets are meager, so when
$\mathcal{F}_{\mathrm{com}} \subseteq \mathcal{F}$,
$Y \setminus F$ is comeager, hence in $\mathcal{F}$.

Set $W_F := q(Y \setminus F)$. Then $\ast \in W_F$ and
$W_F \in \tau_{\mathcal{F}}$, so $f^{-1}(W_F)$ is open in~$K$.
Its complement is
$K \setminus f^{-1}(W_F) = \{k \in K \setminus N : g(k) \in F\}
= g^{-1}(F)$, which is therefore closed.
\end{proof}

\begin{proposition}[Accumulation structure]\label{prop:accum}
Suppose every point of $Y \setminus A$ has a countable neighborhood base
in~$Y$ \textup{(}automatic if $Y$ is metrizable\textup{)}.
Let $f$, $K$, $N$, $g$ be as in Lemma~\textup{\ref{lem:closedfibers}}.
If $(k_\alpha)$ is a net in $K \setminus N$ with $k_\alpha \to a \in N$,
then every cluster point of $(g(k_\alpha))$ in~$Y$ belongs to~$A$.
\end{proposition}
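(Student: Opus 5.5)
We argue by contradiction, using Lemma~\ref{lem:closedfibers} as the only input about the modified topology. Suppose $y \in Y \setminus A$ is a cluster point of $(g(k_\alpha))$ in~$Y$. The plan is to build a closed nowhere-dense set $F \subset Y \setminus A$ containing $y$ such that $g(k_\alpha) \in F$ frequently. Then the closedness of $g^{-1}(F)$ in~$K$ puts $a$ in $g^{-1}(F) \subseteq K \setminus N$, contradicting $a \in N$.

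\textbf{Step 1: reduce to a sequence.} Fix a countable decreasing neighborhood base $(V_n)$ at $y$ in~$Y$. Since $A$ is finite and $Y$ is Hausdorff, we may shrink so that $\overline{V_1} \cap A = \varnothing$. Since $a$ has a neighborhood filter in the compact Hausdorff $K$, we work with neighborhoods $O$ of $a$.

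Because $y$ is a cluster point, for each $n$ and each neighborhood $O$ of $a$ there is an index $\alpha$ with $k_\alpha \in O$ and $g(k_\alpha) \in V_n$. By diagonalization we choose points $z_n := g(k_{\alpha_n}) \in V_n$ with $z_n \to y$ in~$Y$, taking them pairwise distinct or eventually equal to $y$. This uses first-countability at $y$.

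\textbf{Step 2: the set $F$.} Put $F := \{z_n : n \ge 1\} \cup \{y\}$. This is a convergent sequence together with its limit, so it is compact and hence closed in the Hausdorff space~$Y$. It lies in $\overline{V_1} \subset Y \setminus A$. It is nowhere dense because it is countable and $Y$ is compact Hausdorff without isolated points: a nonempty open subset of such a space is uncountable (for instance by the Baire category theorem, since singletons are nowhere dense). By Lemma~\ref{lem:closedfibers}, $g^{-1}(F)$ is closed in~$K$.

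\textbf{Step 3: conclude.} We need $a \in \overline{g^{-1}(F)}$, and this is the main obstacle. The $z_n$ come from indices $\alpha_n$ along the net, and we need the $k_{\alpha_n}$ to accumulate at $a$. The fix is to choose the indices more carefully: $(g(k_\alpha))$ clusters at $y$ jointly with $k_\alpha \to a$, meaning that for every neighborhood $O$ of $a$ and every $n$, some $k_\alpha$ lies in $O$ with $g(k_\alpha) \in V_n$.

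So we should not fix a single sequence. Instead, for each neighborhood $O$ of $a$, set
\[
F_O := \{y\} \cup \bigl\{ g(k_{\alpha_n^O}) : n \ge 1 \bigr\},
\]
where $k_{\alpha_n^O} \in O$ and $g(k_{\alpha_n^O}) \in V_n$. Each $F_O$ is closed and nowhere dense, so each $g^{-1}(F_O)$ is closed in~$K$ and meets~$O$. Combining these over all $O$ via compactness of~$K$ is delicate because the sets $F_O$ vary.

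A cleaner route is to use the ``frequently'' formulation directly. Take
\[
F := \{y\} \cup \bigl(\overline{V_m} \cap \text{(a closed countable set)}\bigr).
\]
Alternatively, use that $K$ is compact Hausdorff, so $a$ is a limit of a net that can be tied to the sequence of $V_n$'s: for each $n$, the closed set $C_n := \overline{\{k_\alpha : g(k_\alpha) \in V_n\}}$ contains $a$. The crucial point to verify is that $a$ already lies in the closure of $\{k_{\alpha_n}\}$ for one fixed choice. I would aim for this by choosing, at stage $n$, the point $k_{\alpha_n}$ inside a neighborhood $O_n$ of $a$ that shrinks. If $a$ fails to have a countable base this breaks down, and then I would replace the countable set by a closed nowhere-dense set of the form $\overline{V_n'}$, with $V_n'$ a small closed neighborhood intersected with a Cantor-type nowhere-dense set.
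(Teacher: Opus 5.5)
Your Steps~1--2 are sound, but Step~3 is a genuine gap, and it is the whole difficulty: nothing forces $a \in \overline{g^{-1}(F)}$. The points $k_{\alpha_n}$ are countably many terms of a net over an arbitrary directed set. When $a$ has uncountable character in $K$, a countable choice of terms need not accumulate at $a$ at all; think of indices in $\omega_1$ and $a = \omega_1 \in [0,\omega_1]$. Choosing $k_{\alpha_n}$ in shrinking neighborhoods $O_n$ of $a$ works only if $a$ is first countable. Your fallbacks (sets $F_O$ depending on $O$, Cantor-type sets) are not carried out and do not produce a single closed nowhere-dense set. Note that the paper's own Step~4 makes the same unjustified move: it asserts $k_{\alpha_{j_n}} \to a$ for a merely increasing index sequence $(j_n)$, which is not a subnet unless it is cofinal. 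So you located the weak point correctly, but neither argument closes it as written.

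The missing idea is to take the sequence not from the net but from outside a neighborhood of the fiber over $z$. Then compactness of $K$, rather than convergence of the net, supplies the accumulation point.
\begin{itemize}
\item By Lemma~\ref{lem:closedfibers} with $F=\{z\}$, the fiber $P := g^{-1}(\{z\})$ is compact and misses $N$. So there are disjoint open sets $O \ni a$ and $U \supseteq P$ in $K$.
\item Let $(V_n)$ be a decreasing base at $z$, and suppose $g^{-1}(V_n) \not\subseteq U$ for every $n$. Pick $l_n \in (K \setminus N) \setminus U$ with $g(l_n) \in V_n$. Then $g(l_n) \neq z$ and $g(l_n) \to z$.
\item Set $C := \{g(l_n) : n \ge 1\} \cup \{z\}$. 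It is a closed, countable, hence nowhere-dense (Lemma~\ref{lem:countable-baire}) subset of $Y \setminus A$, the same kind of set as your $F$. So $g^{-1}(C)$ is compact and contains every $l_n$, which gives a cluster point $l \in g^{-1}(C) \subseteq K \setminus N$ of $(l_n)$.
\item Since $K \setminus U$ is closed, $l \notin U$. Since $g$ is continuous at $l$ and $Y$ is Hausdorff, $g(l) = z$, so $l \in P \subseteq U$, a contradiction.
\item Hence $g^{-1}(V_n) \subseteq U$ for some $n$. Since $k_\alpha \in O$ eventually and $O \cap U = \varnothing$, we get $g(k_\alpha) \notin V_n$ eventually. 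This contradicts $z$ being a cluster point.
\end{itemize}
This argument uses first countability only at $z$ and needs nothing about the character of $a$ in $K$.
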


\begin{proof}
Suppose a subnet $g(k_{\alpha_j}) \to z$ with $z \in Y \setminus A$;
we derive a contradiction.

\emph{Step~1.}
Since $\{z\}$ is closed and nowhere dense ($z$ is not isolated),
Lemma~\ref{lem:closedfibers} gives $g^{-1}(\{z\})$ closed in~$K$.
Because $a \in N$ and $g^{-1}(\{z\}) \subseteq K \setminus N$, we have
$a \notin g^{-1}(\{z\})$, so $k_{\alpha_j} \notin g^{-1}(\{z\})$
eventually, i.e., $g(k_{\alpha_j}) \neq z$ eventually.

\emph{Step~2 (sequence extraction).}
Since $z \in Y \setminus A$ has a countable neighborhood base
$V_1 \supset V_2 \supset \cdots$, the set
$R_n := \{j : g(k_{\alpha_j}) \in V_n \setminus \{z\}\}$ is cofinal
in the directed set.
Pick $j_1 \in R_1$. For $n \ge 2$, pick $j_n \in R_n$ with
$j_n \ge j_{n-1}$ and
$g(k_{\alpha_{j_n}}) \notin
\{g(k_{\alpha_{j_1}}), \ldots, g(k_{\alpha_{j_{n-1}}})\}$
(possible since each $g^{-1}(\{v\})$ is closed in~$K$ by
Lemma~\ref{lem:closedfibers} with $a \notin g^{-1}(\{v\})$;
the finite intersection of cofinal sets with $R_n$ is cofinal).

\emph{Step~3.}
Define $C := \{g(k_{\alpha_{j_n}}) : n \ge 1\} \cup \{z\}$.
The values are distinct and $g(k_{\alpha_{j_n}}) \to z$, so $z$ is the
only accumulation point of~$C$ and $z \in C$, hence $C$ is closed.
$C \subset Y \setminus A$ is countable, hence meager (each singleton is
nowhere dense since $Y$ has no isolated points), hence nowhere dense
(closed and meager in a Baire
space~\cite[Theorem~3.9.3]{bib:engelking}).

\emph{Step~4.}
By Lemma~\ref{lem:closedfibers}, $g^{-1}(C)$ is closed in~$K$.
Since $k_{\alpha_{j_n}} \in g^{-1}(C)$ for all~$n$ and
$k_{\alpha_{j_n}} \to a$, we get
$a \in \overline{g^{-1}(C)} = g^{-1}(C) \subseteq K \setminus N$.
But $a \in N$---contradiction.
\end{proof}

\begin{theorem}[$k_2$-Hausdorff for FMQ spaces]\label{thm:k2H}
Let $Y$ be a compact Hausdorff space without isolated points,
$A \subseteq Y$ finite, and $\mathcal{F}$ an admissible modifier with
$\mathcal{F} \subseteq \mathcal{D}_Y$.
Suppose every point of $Y \setminus A$ has a countable neighborhood base
in~$Y$ \textup{(}automatic if $Y$ is metrizable\textup{)}, and suppose
$X = \mathrm{FMQ}(Y, A, \mathcal{F})$ is $T_1$. Then $X$ is
$k_2$-Hausdorff.

In particular, ERI $= \mathrm{FMQ}(\overline{\mathbb{R}},
\{-\infty,0,+\infty\}, \mathcal{D}_{\overline{\mathbb{R}}})$ is
$k_2$-Hausdorff.
\end{theorem}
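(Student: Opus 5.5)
We prove closedness of $\ker(f)$ by showing that its complement is open. Fix a continuous $f\colon K\to X$ with $K$ compact Hausdorff, and let $N:=f^{-1}(\ast)$ and $g$ be as in Lemma~\ref{lem:closedfibers}. Take $(k_0,k_1)$ with $f(k_0)\neq f(k_1)$. It suffices to find open $U_0\ni k_0$, $U_1\ni k_1$ with $f[U_0]\cap f[U_1]=\varnothing$. Equivalently, we rule out a net $(a_\alpha,b_\alpha)\to(k_0,k_1)$ with $f(a_\alpha)=f(b_\alpha)$. We split into two cases according to whether $\ast\in\{f(k_0),f(k_1)\}$.

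\emph{Case 1: neither point lies in $N$.} Then $y_i:=g(k_i)$ are distinct points of $Y\setminus A$. Since $Y$ is Hausdorff, choose disjoint open $V_0\ni y_0$, $V_1\ni y_1$ in $Y\setminus A$. The sets $q(V_i)$ are open in $X$: their preimages are open and avoid $A$, so condition (b) is vacuous. Set $U_i:=f^{-1}(q(V_i))$. These are open, and $f[U_0]\subseteq q(V_0)$, $f[U_1]\subseteq q(V_1)$, which are disjoint because $q$ is injective off $A$.

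\emph{Case 2: $k_0\in N$ and $k_1\notin N$.} Write $z:=g(k_1)\in Y\setminus A$ and argue by contradiction using nets. Suppose $(a_\alpha,b_\alpha)\to(k_0,k_1)$ with $f(a_\alpha)=f(b_\alpha)$. Because $K\setminus N=f^{-1}(X\setminus\{\ast\})$ is open ($X$ is $T_1$), eventually $b_\alpha\notin N$, hence $f(a_\alpha)=f(b_\alpha)\neq\ast$ and $a_\alpha\notin N$. Thus $g(a_\alpha)=g(b_\alpha)$ eventually. Continuity of $g$ on the open set $K\setminus N$ gives $g(b_\alpha)\to z$, so $g(a_\alpha)\to z$. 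But $(a_\alpha)$ is a net in $K\setminus N$ converging to $k_0\in N$. By Proposition~\ref{prop:accum}, which uses the countable-base hypothesis at points of $Y\setminus A$, every cluster point of $(g(a_\alpha))$ lies in $A$, whereas $z\notin A$. This is a contradiction.

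Since $f(k_0)\neq f(k_1)$, the case with both points in $N$ cannot occur, so the cases are exhaustive. The net formulation is equivalent to the open-set formulation: if no such $U_0,U_1$ exist, then indexing by pairs of neighborhoods produces such a net. The main obstacle is Case 2, and it is essentially already discharged by Proposition~\ref{prop:accum}. What remains is to check carefully that the net eventually lies in $K\setminus N$, which uses $T_1$, and that $g$ is continuous on $K\setminus N$, which holds because $q|_{Y\setminus A}$ is a homeomorphism onto the open subspace $X\setminus\{\ast\}$. For the latter, one checks that open subsets of $Y\setminus A$ map to open sets of $X$, as in Case 1.

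For ERI, take $Y=\overline{\mathbb{R}}$, which is compact metrizable without isolated points, $A=\{-\infty,0,+\infty\}$ and $\mathcal{F}=\mathcal{D}_Y$. Then $X$ is $T_1$ by Proposition~\ref{prop:T1}, and the theorem applies.
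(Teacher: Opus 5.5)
Your argument is the paper's own proof. It uses the same case split, the same use of $T_1$ to push both nets into $K\setminus N$, and the same appeal to Proposition~\ref{prop:accum} in the mixed case; your Case~1 is just the open-set form of the paper's uniqueness-of-limits step. The genuine problem, which you inherit from the paper, is that this black box is not available at the stated generality, and the statement itself is false there. Proposition~\ref{prop:accum} needs $g^{-1}(C)$ to be closed in $K$ for the countable closed set $C=\{y_n\}\cup\{z\}\subseteq Y\setminus A$, i.e.\ it needs $Y\setminus C\in\mathcal{F}$. Lemma~\ref{lem:closedfibers} justifies this only for $\mathcal{F}=\mathcal{D}_Y$ or $\mathcal{F}\supseteq\mathcal{F}_{\mathrm{com}}$. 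By contrast, $\mathcal{F}\subseteq\mathcal{D}_Y$, $T_1$ and (F3) only guarantee $Y\setminus E\in\mathcal{F}$ for finite $E\subseteq Y\setminus A$. The cofinite modifier is a counterexample: $Y=[0,1]$, $A=\{0,1\}$, $\mathcal{F}=\mathcal{F}_{\mathrm{cof}}$ satisfies every hypothesis, but $X$ is not US (Proposition~\ref{prop:cofinite}), hence not $k_2$H. Explicitly, $f\colon\omega+1\to X$ with $f(n)=\hat{x}_n$, $x_n=\tfrac12+\tfrac1n$, $f(\omega)=\ast$, is continuous and violates the conclusion of Proposition~\ref{prop:accum}. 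So your proof establishes the theorem only under an extra hypothesis, such as $Y\setminus C\in\mathcal{F}$ for every convergent sequence in $Y\setminus A$ together with its limit (e.g.\ $\mathcal{F}_{\mathrm{com}}\subseteq\mathcal{F}$). The ERI case $\mathcal{F}=\mathcal{D}_Y$ is covered.

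Even there, your Case~2 should not lean on the paper's proof of Proposition~\ref{prop:accum}. Its Step~4 asserts that the extracted sequence $k_{\alpha_{j_n}}$ converges to $a$, which does not follow in a compact Hausdorff $K$ that need not be first countable, since the indices $j_n$ need not be cofinal. Here is a direct repair in your notation:
\begin{enumerate}
\item Let $V_1\supseteq V_2\supseteq\cdots$ be a neighborhood base at $z$ inside $Y\setminus A$.
\item The set $G:=g^{-1}(\{z\})=f^{-1}(\{\hat{z}\})$ is closed because $X$ is $T_1$, and $k_0\notin G$. Regularity of $K$ gives an open $O\ni k_0$ with $\overline{O}\cap G=\varnothing$.
\item Choose $\alpha_n$ with $a_{\alpha_n}\in O\setminus N$ and $g(a_{\alpha_n})\in V_n$.
\item Then $C:=\{g(a_{\alpha_n}):n\ge 1\}\cup\{z\}$ is closed and countable, hence nowhere dense (Lemma~\ref{lem:countable-baire}). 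So $g^{-1}(C)$ is closed in $K$ (Lemma~\ref{lem:closedfibers}).
\item By compactness $(a_{\alpha_n})$ has a cluster point $p$, and $p\in g^{-1}(C)$ because that set is closed and contains every term. Thus $p\in K\setminus N$.
\item Continuity of $g$ at $p$ forces $g(p)=z$, so $p\in G\cap\overline{O}=\varnothing$, a contradiction.
\end{enumerate}
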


\begin{proof}
We show that $\ker(f) := \{(a,b) \in K^2 : f(a) = f(b)\}$ is closed
in $K^2$ for every compact Hausdorff~$K$ and continuous
$f\colon K \to X$.
Let $(k_\alpha, \ell_\alpha)$ be a net in $\ker(f)$ converging to $(a,b)$.
By continuity, $f(k_\alpha) \to f(a)$ and $f(\ell_\alpha) \to f(b)$;
since $f(k_\alpha) = f(\ell_\alpha)$, the common net converges to both
$f(a)$ and $f(b)$.

Set $X_0 := X \setminus \{\ast\} \cong Y \setminus A$ (Hausdorff).

\emph{Case~1: $f(a), f(b) \in X_0$.}
The net $f(k_\alpha)$ is eventually in $X_0$ (since $\{\ast\}$ is closed
and $f(a) \neq \ast$). In the Hausdorff space $X_0$, a net has at most
one limit, so $f(a) = f(b)$.

\emph{Case~2: $f(a) = f(b) = \ast$.} Immediate.

\emph{Case~3: $f(a) = \ast$, $f(b) = \hat{r}$ with
$r \in Y \setminus A$} (or vice versa).
We derive a contradiction.
Since $\hat{r} \neq \ast$ and $\{\ast\}$ is closed, eventually
$f(\ell_\alpha) \in X_0$, so $\ell_\alpha \in K \setminus N$ eventually.
Since $f(k_\alpha) = f(\ell_\alpha) \neq \ast$ eventually,
both $k_\alpha$ and $\ell_\alpha$ eventually lie in $K \setminus N$,
where $g$ is defined.
The injectivity of $q|_{Y \setminus A}$ gives
$g(k_\alpha) = g(\ell_\alpha)$.
Since $\ell_\alpha \to b$ and $g(b) = r$: $g(\ell_\alpha) \to r$, so
$g(k_\alpha) \to r$.
But $k_\alpha \to a \in N$, so by Proposition~\ref{prop:accum} every
cluster point of $(g(k_\alpha))$ lies in~$A$. In particular,
$g(k_\alpha)$ cannot converge to $r \in Y \setminus A$---contradiction.
\end{proof}

\begin{remark}\label{rem:k2Hgeneral}
The first-countability hypothesis on $Y \setminus A$ is used in Step~2
of Proposition~\ref{prop:accum} to extract a convergent \emph{sequence}
of values. It is satisfied by every metrizable~$Y$, hence by all
standard examples ($\overline{\mathbb{R}}$, $[0,1]$, $S^1$, the Cantor
set, $[0,1]^\omega$, etc.).
A complete proof for non-first-countable~$Y$ (such as
$\beta\mathbb{N} \setminus \mathbb{N}$) remains an open question.
\end{remark}

\begin{remark}\label{rmk:k2H-method}
The key to Theorem~\ref{thm:k2H} is the interplay between two facts:
\begin{enumerate}[nosep]
\item Proposition~\ref{prop:accum}: the density condition forces
  $g(k_\alpha) \to L$ with $L \in Y \setminus A$ to be
  incompatible with $k_\alpha \to a \in f^{-1}(\ast)$.
\item The $T_1$ property: $\{\ast\}$ is closed, so convergence to
  $\hat{r} \neq \ast$ eventually places the net in $X \setminus
  \{\ast\}$, enabling the coordinate map argument.
\end{enumerate}
This places ERI at the same level as the one-point
compactification of the Arens--Fort space ($k_2$H but not~wH;
see~\cite{bib:clontz}).
\end{remark}

\subsection{Refined hierarchy placement}\label{sec:position}

We now locate ERI in the refined hierarchy of
Clontz~\cite{bib:clontz}:
$T_2 \Rightarrow \mathrm{k_1H} \Rightarrow \mathrm{KC}
\Rightarrow \mathrm{wH} \Rightarrow \mathrm{k_2H}
\Rightarrow \mathrm{US} \Rightarrow T_1$,
and in the chain of Bella and
Costantini~\cite{bib:bella}: $\mathrm{KC} \Rightarrow \mathrm{SC}
\Rightarrow \mathrm{US}$, where a space is \emph{SC} (sequentially
closed) if every convergent sequence together with its limit is a
closed set.

\begin{proposition}\label{prop:notwH}
ERI is not weakly Hausdorff.
\end{proposition}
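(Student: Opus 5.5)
To show that ERI is not weakly Hausdorff, we exhibit a continuous image of a compact Hausdorff space that is not closed. The witness is the one already used in Theorem~\ref{thm:notKC}. Take $K_0 := [1,2]$ with its Euclidean topology; it is compact Hausdorff. Let $f := q|_{[1,2]} \colon [1,2] \to X$. Then $f$ is continuous, as the restriction of the continuous map $q$ (Proposition~\ref{prop:cont}), and its image is $f([1,2]) = q([1,2]) =: K$.

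The key step is to show that $K$ is not closed in $X$, and we argue through condition~(b) of Definition~\ref{def:ERI}. Since $[1,2] \cap \{-\infty,0,+\infty\} = \varnothing$, we have $\ast \notin K$. The set $[1,2] \subset \mathbb{R}\setminus\{0\}$ is saturated, so
\[
q^{-1}(X \setminus K) = \overline{\mathbb{R}} \setminus [1,2].
\]
This set is open, but it is not dense, because its complement contains the nonempty open interval $(1,2)$. Now suppose $X \setminus K$ were open. It contains $\ast$, so condition~(b) would force its preimage to be dense, which it is not. Hence $X\setminus K$ is not open, and $K$ is not closed.

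Alternatively, one can use Corollary~\ref{cor:pclosure}. The set $q((1,2))$ is a nonempty open subset of $X$ lying inside $K$, so $\ast \in \overline{K}$ while $\ast \notin K$. This again shows that $K$ is not closed.

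Thus $K$ is the continuous image of a compact Hausdorff space and is not closed, so ERI fails the weak Hausdorff property. The same argument works with $[1,2]$ replaced by any closed $F \subset \mathbb{R}\setminus\{0\}$ with nonempty interior. No step here is a genuine obstacle. The only care needed is to check the saturation of $[1,2]$, which gives the exact preimage, and to note that compactness of the domain refers to its own Euclidean topology, not to any subspace topology inherited from $X$.
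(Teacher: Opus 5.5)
Your proof is correct and is essentially the paper's argument. The paper uses the map $t \mapsto q(t+1)$ on $[0,1]$, which has the same image $q([1,2])$, and shows that this image is not closed exactly as you do, via the failure of condition~(b) for $\overline{\mathbb{R}}\setminus[1,2]$; your alternative via Corollary~\ref{cor:pclosure} is also valid, since $q((1,2))$ is open (its preimage $(1,2)$ avoids $\{-\infty,0,+\infty\}$).
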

\begin{proof}
A space is \emph{weakly Hausdorff} (wH) if every continuous image of a
compact Hausdorff space is closed~\cite{bib:clontz}.
The map $f\colon [0,1] \to X$ defined by $f(t) = q(t+1)$ is continuous
(composition of $q$ with the continuous map $t \mapsto t+1\colon [0,1] \to [1,2]$;
note that $[1,2] \subset \mathbb{R}\setminus\{0\}$, so $f$ does not
pass through~$\ast$) and $[0,1]$ is compact
Hausdorff.
The image $f([0,1]) = q([1,2])$ is not closed:
$\ast \in X \setminus q([1,2])$ (since $[1,2] \subset
\mathbb{R}\setminus\{0\}$), but
$q^{-1}(X \setminus q([1,2])) = \overline{\mathbb{R}} \setminus [1,2]$
is not dense (the complement $[1,2]$ has interior $(1,2)$), so
condition~(b) fails.
\end{proof}

\begin{proposition}\label{prop:SC}
ERI is SC \textup{(sequentially closed)}.
\end{proposition}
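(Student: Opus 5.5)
We need to show that for every sequence $(y_n)$ in $X$ converging to a limit $L$, the set $S := \{y_n : n \ge 1\} \cup \{L\}$ is closed in $X$. Since $X$ is US (Theorem~\ref{thm:US}), $L$ is well defined. We will verify closedness through conditions (a) and (b) of Definition~\ref{def:ERI} applied to $X \setminus S$. Concretely, we check that $q^{-1}(S)$ is closed in $\overline{\mathbb{R}}$. Then, if $\ast \notin S$, we also check that $q^{-1}(S)$ has empty interior, so that its complement is dense.

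\emph{Step 1: the limit is not $\ast$.} Say $L = \hat{c}$ with $c \in \mathbb{R}\setminus\{0\}$. As in the proof of Proposition~\ref{prop:unique}, the sequence eventually lies in $X \setminus \{\ast\}$. Hence $\ast \in S$ only if some of the finitely many early terms equal $\ast$. Write $y_n = \hat{x}_n$ for the remaining terms. Convergence in the Euclidean neighbourhoods $q((c-\varepsilon,c+\varepsilon))$ gives $x_n \to c$ in $\overline{\mathbb{R}}$. Thus $C := \{x_n\} \cup \{c\}$ is a compact countable subset of $\mathbb{R}\setminus\{0\}$, and in particular a closed nowhere-dense set. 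If $\ast \notin S$, then $q^{-1}(S) = C$ is closed with empty interior, so $X \setminus S = W_C$ is open by Lemma~\ref{lem:nbhd}\ref{lem:nbhd:converse}. If instead some $y_n = \ast$, then $q^{-1}(S) = C \cup \{-\infty,0,+\infty\}$ is closed, and condition (b) does not apply to $X \setminus S$ because $\ast \notin X \setminus S$.

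\emph{Step 2: the limit is $\ast$.} Now $\ast \in S$, so condition (b) is vacuous for $X\setminus S$, and we only need $q^{-1}(S) = \{-\infty,0,+\infty\} \cup \{x_n : y_n \neq \ast\}$ to be closed in $\overline{\mathbb{R}}$. Let $t$ be an accumulation point of this set. If $t \notin \{-\infty,0,+\infty\}$, then $t$ is either a value attained infinitely often or a limit of a subsequence of $(x_n)$. In both cases $t$ is an accumulation point of $(x_n)$ in $\mathbb{R}\setminus\{0\}$, which Proposition~\ref{prop:seqconv} excludes.

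The main obstacle is this case $L = \ast$, because there $(x_n)$ need not converge in $\overline{\mathbb{R}}$ (Remark~\ref{rem:seqequiv}). The key point is that every cluster point of the $x_n$ lies in the collapsed set, and that set is contained in $q^{-1}(S)$. Proposition~\ref{prop:seqconv} gives exactly this, through its equivalent formulation (a).
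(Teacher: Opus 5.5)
Your proof is correct and follows the same route as the paper. You split on whether $L=\ast$. When $L=\hat c$, you use that $\{x_n\}\cup\{c\}$ is a countable closed nowhere-dense subset of $\mathbb{R}\setminus\{0\}$. When $L=\ast$, you use Proposition~\ref{prop:seqconv} to confine all cluster points to $\{-\infty,0,+\infty\}\subseteq q^{-1}(S)$, so condition~(b) is vacuous. In the case $L=\hat c$ you also treat finitely many early terms equal to $\ast$ explicitly, which is slightly more careful than the paper: it simply asserts $S\subset X\setminus\{\ast\}$ there, and that fails if an early term is $\ast$.
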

\begin{proof}
Let $(y_n)$ be a convergent sequence in~$X$ with limit~$L$, and set
$S := \{y_n : n \ge 1\} \cup \{L\}$.
We show $X \setminus S \in \tau_{\mathrm{ERI}}$.

\emph{Case $L = \hat{c}$, $c \in \mathbb{R}\setminus\{0\}$.}\;
Write $y_n = q(x_n)$ with $x_n \to c$.
Then $S \subset X \setminus \{\ast\}$
and $\ast \in X \setminus S$.
The set $C := \{c\} \cup \{x_n : n \ge 1\}$ is countable, closed in
$\overline{\mathbb{R}}$ (limit point $c$ included), and nowhere dense.
Hence $q^{-1}(X \setminus S) = \overline{\mathbb{R}} \setminus C$
is open and dense.

\emph{Case $L = \ast$.}\;
By Proposition~\ref{prop:seqconv}, $(x_n)$ has no accumulation point
in $\mathbb{R}\setminus\{0\}$; every accumulation point in
$\overline{\mathbb{R}}$ belongs to $\{-\infty, 0, +\infty\}$.
Since $\ast \in S$, we have $\ast \notin X \setminus S$, so
condition~(b) does not apply to $X \setminus S$.
The set $\{-\infty, 0, +\infty\} \cup \{x_n : n \ge 1\}$ is closed
in $\overline{\mathbb{R}}$ (all limit points of $\{x_n\}$ lie
in $\{-\infty, 0, +\infty\}$ and are included), so
$q^{-1}(X \setminus S)$ is open.
\end{proof}

\begin{proposition}\label{prop:seqcpt}
ERI is sequentially compact.
\end{proposition}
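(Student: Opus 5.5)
To prove Proposition~\ref{prop:seqcpt}, take an arbitrary sequence $(y_n)$ in $X$ and produce a convergent subsequence, reducing everything to the convergence criterion of Proposition~\ref{prop:seqconv} together with sequential compactness of $\overline{\mathbb{R}}$.

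\emph{First split.} If $y_n = \ast$ for infinitely many $n$, that constant subsequence converges to $\ast$ and we are done. Otherwise, discard finitely many terms and write $y_n = q(x_n)$ with $x_n \in \mathbb{R}\setminus\{0\}$.

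\emph{Second split.} Consider whether $(x_n)$ has an accumulation point $t \in \mathbb{R}\setminus\{0\}$.
\begin{itemize}[nosep]
\item If it does, choose a subsequence $x_{n_k} \to t$ in $\overline{\mathbb{R}}$. By continuity of $q$ (Proposition~\ref{prop:cont}), $\hat{x}_{n_k} \to \hat{t}$ in $X$.
\item If it does not, then Proposition~\ref{prop:seqconv} applies to the whole sequence and gives $\hat{x}_n \to \ast$ directly, with no subsequence needed.
\end{itemize}

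The only point needing care is the definition of ``accumulation point'' used in Proposition~\ref{prop:seqconv}. It should mean a subsequential limit, equivalently a point every neighbourhood of which contains $x_n$ for infinitely many $n$. In the metrizable space $\overline{\mathbb{R}}$ such a point always yields a convergent subsequence, so the first bullet is justified. In fact one could avoid the case split altogether: Bolzano--Weierstrass in $\overline{\mathbb{R}}$ gives a subsequence converging to some $s \in \overline{\mathbb{R}}$, and continuity of $q$ sends it to $q(s)$. I expect no real obstacle, since this continuity argument alone already suffices.
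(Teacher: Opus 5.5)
Your proof is correct. Your main line reorganizes the paper's argument slightly.
\begin{itemize}
\item The paper first extracts a Bolzano--Weierstrass subsequence $x_{n_k}\to L$ in $\overline{\mathbb{R}}$. It then splits on whether $L\in\{-\infty,0,+\infty\}$ and invokes Proposition~\ref{prop:seqconv} in that case.
\item You instead split on whether $(x_n)$ has a cluster point in $\mathbb{R}\setminus\{0\}$. This gives a slightly sharper dichotomy: when there is no such cluster point, the whole tail (not just a subsequence) converges to $\ast$.
\end{itemize}

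Your closing observation is the cleaner argument and deserves to be the proof:
\begin{itemize}
\item Lift every term, sending $\ast$ to, say, $0$, and apply Bolzano--Weierstrass in $\overline{\mathbb{R}}$ to get $x_{n_k}\to s$.
\item Continuity of $q$ (Proposition~\ref{prop:cont}) then gives $y_{n_k}=q(x_{n_k})\to q(s)$.
\end{itemize}
This route needs neither case split nor Proposition~\ref{prop:seqconv}. It never uses the density condition. It is exactly the reasoning behind the paper's Remark~\ref{rmk:SCpreserv}, which says sequential compactness passes to every FMQ over a sequentially compact base.

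The case-split routes (the paper's and your first one) add only one thing: an explicit description of which limit a subsequence has. That is the content of Proposition~\ref{prop:seqconv}, not of sequential compactness itself.
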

\begin{proof}
Let $(y_n)$ be a sequence in~$X$.
If $y_n = \ast$ for infinitely many~$n$, the constant subsequence
converges.
Otherwise, $y_n = q(x_n)$ with $x_n \in \mathbb{R}\setminus\{0\}$
for all large~$n$.
Since $\overline{\mathbb{R}}$ is compact and metrizable, it is
sequentially compact, so $(x_n)$ has a subsequence
$x_{n_k} \to L \in \overline{\mathbb{R}}$.
If $L \in \mathbb{R}\setminus\{0\}$, then $q(x_{n_k}) \to \hat{L}$
(the subspace topology on $X \setminus \{\ast\}$ agrees with the
Euclidean topology).
If $L \in \{-\infty, 0, +\infty\}$, then every accumulation point
of $(x_{n_k})$ in $\overline{\mathbb{R}}$ is~$L$, which lies in
$\{-\infty, 0, +\infty\}$; by
Proposition~\ref{prop:seqconv}, $q(x_{n_k}) \to \ast$.
\end{proof}

\begin{remark}[Sequential compactness preservation]\label{rmk:SCpreserv}
More generally, if $Y$ is sequentially compact, then
$X = \mathrm{FMQ}(Y, A, \mathcal{F})$ is sequentially compact for
any admissible modifier~$\mathcal{F}$: subsequential limits in~$Y$
project to limits in~$X$ via~$q$.
\end{remark}

\begin{theorem}[Sobriety]\label{thm:sober}
$(X, \tau_{\mathrm{ERI}})$ is sober.
\end{theorem}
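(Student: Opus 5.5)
To prove sobriety, I will show that every irreducible closed set $C \subseteq X$ is the closure of a unique point. Uniqueness is immediate, since $X$ is $T_1$ (Proposition~\ref{prop:T1}) and so distinct points have distinct closures $\{x\}$. The work is in existence, which I split into two cases according to whether $\ast \in C$.

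\emph{Case $\ast \notin C$.} Here $X \setminus C$ is an open neighborhood of $\ast$. By Lemma~\ref{lem:nbhd}\ref{lem:nbhd:forward}, $X\setminus C = W_F$ for a closed nowhere-dense $F \subset \mathbb{R}\setminus\{0\}$, so $C = q(F)$. On $X\setminus\{\ast\}$ the map $q$ is a homeomorphism onto the Hausdorff space $\mathbb{R}\setminus\{0\}$, so the subspace $C \cong F$ is Hausdorff. A nonempty irreducible Hausdorff space is a singleton: two distinct points would have disjoint open neighborhoods, giving two nonempty relatively open subsets of $C$ with empty intersection. Hence $C = \{\hat x\} = \overline{\{\hat x\}}$.

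\emph{Case $\ast \in C$.} I claim $C = \overline{\{\ast\}} = \{\ast\}$. Suppose instead that $C$ contains some $\hat x \neq \ast$. Take disjoint small intervals $I \ni x$ and $J$ around another point $x' \neq x$, with $x' \in \mathbb{R}\setminus\{0\}$. The aim is to exhibit $C$ as a union of two proper closed subsets, contradicting irreducibility. The natural candidates are
\[
C_1 := C \cap \{\ast\} \quad\text{and}\quad C_2 := C \setminus q(I_0)\ (\text{or its closure}),
\]
where $I_0$ is an open interval around $x$ avoiding $0$. The difficulty is whether $C_2$ is closed. Its complement is $q(I_0) \cup (X \setminus C)$, which contains $\ast$ only if $\ast \notin C$, and here $\ast \in C$. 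So I instead write $C = \{\ast\} \cup (C \cap q(\overline{\mathbb{R}}\setminus\{0,\pm\infty\}))$ and use the closedness of $\{\ast\}$. I need the other piece, namely the closure of $C\setminus\{\ast\}$, to be a proper subset of $C$, i.e.\ not to contain $\ast$. But by Corollary~\ref{cor:pclosure}, $\ast$ lies in the closure of every nonempty open set, and a closed set $C' = \overline{C\setminus\{\ast\}}$ omits $\ast$ only if $X\setminus C'$ is a dense-preimage neighborhood. That holds exactly when $q^{-1}(C')\setminus\{-\infty,0,+\infty\}$ has empty interior, so I will argue via this nowhere-density criterion.

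I expect this to be the main obstacle. Rethinking it: since $C$ is closed and contains $\ast$, the set $D := q^{-1}(C)\setminus\{-\infty,0,+\infty\}$ is arbitrary subject to $q^{-1}(C)$ being closed in $\overline{\mathbb{R}}$; no density condition constrains closed sets containing $\ast$. So I split $C$ into two proper closed subsets directly. If $D$ has two distinct points $x \neq y$, choose disjoint closed neighborhoods $[x-\varepsilon,x+\varepsilon]$ and $[y-\varepsilon,y+\varepsilon]$ avoiding $0$. Set
\[
C_1 := C \setminus q\bigl((x-\varepsilon,x+\varepsilon)\bigr), \qquad C_2 := C \setminus q\bigl((y-\varepsilon,y+\varepsilon)\bigr).
\]
Each $C_i$ is closed: its complement is the union of an open set avoiding $\ast$ (condition~(b) is vacuous) and $X\setminus C$. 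Moreover $C = C_1 \cup C_2$, and each $C_i$ is proper, which gives the contradiction.

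If $D = \{x\}$ is a single point, then $C = \{\ast,\hat x\}$ is the union of the two closed singletons, again a contradiction. Hence $D = \varnothing$ and $C = \{\ast\} = \overline{\{\ast\}}$. This settles both cases and completes the plan.
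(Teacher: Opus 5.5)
Your proof is correct, and in the case $\ast \in C$ it takes a different route from the paper's proof of this theorem. The paper fixes one point $\hat c \in F\setminus\{\ast\}$ and writes $F=(F\cap K)\cup(F\cap L)$ with $K=q([c-\varepsilon,c+\varepsilon])$ and $L=X\setminus q((c-\varepsilon,c+\varepsilon))$, so that $\ast$ is excluded from one piece and $\hat c$ from the other. You instead separate two points of $C\setminus\{\ast\}$ by disjoint intervals away from $0$, and you handle $C=\{\ast,\hat x\}$ with closed singletons. This is exactly the argument the paper gives later, for general FMQ spaces, in Proposition~\ref{prop:soberFMQ}.

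Your decomposition only ever deletes sets $q(I)$ with $I\subset\mathbb{R}\setminus\{0\}$ open. So condition~(b) plays no role beyond giving $T_1$, and the argument transfers verbatim to any admissible modifier with a $T_1$ quotient. It also avoids a real flaw in the paper's version: $K$ is \emph{not} closed in $\tau_{\mathrm{ERI}}$. The set $\overline{\mathbb{R}}\setminus[c-\varepsilon,c+\varepsilon]$ has closure $[-\infty,c-\varepsilon]\cup[c+\varepsilon,+\infty]$, so it violates condition~(b); this is exactly how Theorem~\ref{thm:notKC} shows that $q([1,2])$ is not closed. For $F=X$ the paper's piece $F\cap K=K$ therefore fails to be closed, while your two-interval decomposition still works.

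When you write this up, drop the two abandoned attempts. The first misdiagnoses its own defect. The complement $q(I_0)\cup(X\setminus C)$ is a union of two open sets, so $C_2$ is closed; what actually fails is that $C_1\cup C_2$ misses $C\cap q(I_0)\ni\hat x$. The second attempt cannot work whenever $C\setminus\{\ast\}$ contains a nonempty open set (e.g.\ $C=X$), since then $\ast\in\overline{C\setminus\{\ast\}}$ by Corollary~\ref{cor:pclosure}. You can also drop the case split on whether $\ast\in C$: the two-interval decomposition applies to any closed $C$ with $|C\setminus\{\ast\}|\geq 2$.
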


\begin{proof}
A space is \emph{sober} if every irreducible closed subset has a unique
generic point.  A closed set~$F$ is \emph{irreducible} if whenever
$F = F_1 \cup F_2$ with $F_1, F_2$ closed, then $F_1 = F$ or $F_2 = F$.
A point~$x$ is \emph{generic} for~$F$ if $\overline{\{x\}} = F$.

Since $X$ is~$T_1$ (Proposition~\ref{prop:T1}), every singleton is
closed, so $\overline{\{x\}} = \{x\}$ for all $x \in X$.  Hence a closed
set can have a generic point only if it is a singleton, and each
singleton~$\{x\}$ has the unique generic point~$x$.  Sobriety therefore
reduces to the following claim.

\medskip
\noindent\textbf{Claim.}\; \emph{Every closed subset $F \subseteq X$
with $|F| \geq 2$ is reducible} (i.e., expressible as a union of two
proper closed subsets).
\medskip

\noindent\emph{Case 1: $\ast \notin F$.}\;
Then $F \subseteq X \setminus \{\ast\}$.  The subspace
$X \setminus \{\ast\}$ carries the Euclidean topology of
$\mathbb{R}\setminus\{0\}$ (since $q$ restricts to a homeomorphism
$\mathbb{R}\setminus\{0\} \xrightarrow{\;\sim\;} X\setminus\{\ast\}$),
hence is Hausdorff.  Every Hausdorff space is sober: if $|F| \geq 2$,
pick distinct $a, b \in F$ and separate them by disjoint open sets
$U_a, U_b$ in the subspace; then $F = (F \setminus U_a) \cup (F \setminus U_b)$
is a decomposition into two proper closed subsets of~$F$.
Since irreducibility is intrinsic to the subspace topology on~$F$,
the set~$F$ is reducible.

\smallskip
\noindent\emph{Case 2: $\ast \in F$ and $|F| \geq 2$.}\;
Pick $\hat{c} \in F \setminus \{\ast\}$ with
$c \in \mathbb{R}\setminus\{0\}$.  Choose $\varepsilon > 0$ small
enough that $[c - \varepsilon,\, c + \varepsilon] \subseteq
\mathbb{R}\setminus\{0\}$ (possible since $c \neq 0$).  Define
\[
  K \;:=\; q\bigl([c-\varepsilon,\, c+\varepsilon]\bigr),
  \qquad
  L \;:=\; X \setminus q\bigl((c-\varepsilon,\, c+\varepsilon)\bigr).
\]
We verify that $K$ and $L$ are closed in~$X$.

\emph{$K$ is closed.}\;
Its complement satisfies
$q^{-1}(X \setminus K) = \overline{\mathbb{R}} \setminus
[c-\varepsilon, c+\varepsilon]$, which is open in~$\overline{\mathbb{R}}$.
Moreover, $\{-\infty, 0, +\infty\} \subseteq
\overline{\mathbb{R}} \setminus [c-\varepsilon,c+\varepsilon]$
(since $[c-\varepsilon,c+\varepsilon] \subseteq
\mathbb{R}\setminus\{0\}$), and
$\overline{\mathbb{R}} \setminus [c-\varepsilon,c+\varepsilon]$
is dense in~$\overline{\mathbb{R}}$ because it contains
$(-\infty, c-\varepsilon) \cup (c+\varepsilon, +\infty)$.
Since $\ast \in X \setminus K$, condition~(b) requires this density,
which holds.  So $X \setminus K \in \tau_{\mathrm{ERI}}$.

\emph{$L$ is closed.}\;
Its complement satisfies
$q^{-1}(X \setminus L) = (c-\varepsilon, c+\varepsilon)$, which is open
in~$\overline{\mathbb{R}}$.
Since $\ast \notin X \setminus L$ (as
$\{-\infty, 0, +\infty\} \cap (c-\varepsilon,c+\varepsilon) = \varnothing$),
condition~(b) is vacuous.  So $X \setminus L \in \tau_{\mathrm{ERI}}$.

Now set $F_1 := F \cap K$ and $F_2 := F \cap L$.  Both are closed
(intersections of closed sets).  We check the three required properties.
\begin{itemize}[nosep]
\item \emph{$F_1 \subsetneq F$:}\;
  Since $q^{-1}(K) = [c-\varepsilon,c+\varepsilon]$ does not meet
  $\{-\infty,0,+\infty\}$, we have $\ast \notin K$, hence
  $\ast \notin F_1$.  But $\ast \in F$, so $F_1 \subsetneq F$.
  Also $\hat{c} \in K \cap F$ gives $F_1 \neq \varnothing$.
\item \emph{$F_2 \subsetneq F$:}\;
  Since $c \in (c-\varepsilon,c+\varepsilon)$, we have
  $\hat{c} \in q\bigl((c-\varepsilon,c+\varepsilon)\bigr)$, hence
  $\hat{c} \notin L$ and $\hat{c} \notin F_2$.  But $\hat{c} \in F$,
  so $F_2 \subsetneq F$.
  Also $\ast \notin q\bigl((c-\varepsilon,c+\varepsilon)\bigr)$ gives
  $\ast \in L$, so $\ast \in F_2 \neq \varnothing$.
\item \emph{$F_1 \cup F_2 = F$:}\;
  It suffices to show $K \cup L = X$.  Let $\hat{x} \in X$.  If
  $\hat{x} \in K$ we are done.  If $\hat{x} \notin K$, then
  $x \notin [c-\varepsilon, c+\varepsilon]$, so in particular
  $x \notin (c-\varepsilon, c+\varepsilon)$, hence
  $\hat{x} \notin q\bigl((c-\varepsilon,c+\varepsilon)\bigr)$, giving
  $\hat{x} \in L$.
\end{itemize}
Thus $F = F_1 \cup F_2$ with both $F_i$ proper closed subsets
of~$F$; hence $F$ is reducible.

\medskip
In both cases every closed set with $|F| \geq 2$ is reducible.
The only irreducible closed sets are the singletons~$\{x\}$, each with
unique generic point~$x$.  Hence $(X, \tau_{\mathrm{ERI}})$ is sober.
\end{proof}

\begin{remark}[Sobriety vs.\ connectedness]\label{rmk:sober-subtlety}
It may seem surprising that a $T_1$, connected, non-Hausdorff space
can be sober.
The classical counterexample to sobriety in $T_1$ spaces is the
\emph{cocountable topology} on an uncountable set, where the whole
space is irreducible with no generic point.
The key distinction is between \emph{connectedness} (no partition
into two disjoint non-empty closed sets) and \emph{irreducibility}
(no expression as a union of two proper closed subsets, which may
overlap).
In ERI the closed sets $F_1 = F \cap K$ and $F_2 = F \cap L$
constructed in Case~2 above overlap---they share points of~$F$
other than $\ast$ and $\hat{c}$---so the decomposition
$F = F_1 \cup F_2$ does not contradict connectedness.
\end{remark}

\begin{remark}[Position summary]\label{rmk:position}
Combining the results of this paper, ERI occupies the position
marked with~$\bullet$ in the following Hasse diagram of the
Clontz hierarchy~\cite{bib:clontz}~\eqref{eq:clontz}
(an arrow $P \to Q$ means $P \Rightarrow Q$;
labels indicate ERI's status):

\begin{center}
\begin{tikzpicture}[
  node distance=0.9cm,
  level/.style={font=\normalsize},
  no/.style={font=\scriptsize, text=red!70!black},
  yes/.style={font=\scriptsize, text=green!50!black},
  >={Stealth[length=4pt]}
]
\node[level] (T2) {$T_2$};
\node[level, below=of T2] (k1H) {$\mathrm{k_1H}$};
\node[level, below=of k1H] (KC) {$\mathrm{KC}$};
\node[level, below=of KC] (wH) {$\mathrm{wH}$};
\node[level, below=of wH, draw, thick, inner sep=4pt, rounded corners=2pt,
      fill=blue!5] (k2H) {$\mathrm{k_2H}$};
\node[level, below=of k2H] (US) {$\mathrm{US}$};
\node[level, below=of US] (T1) {$T_1$};

\node[no, right=4pt of T2] {$\times$};
\node[no, right=4pt of k1H] {$\times$};
\node[no, right=4pt of KC] {$\times$};
\node[no, right=4pt of wH] {$\times$};
\node[yes, right=10pt of k2H] {$\checkmark$};
\node[yes, right=4pt of US] {$\checkmark$};
\node[yes, right=4pt of T1] {$\checkmark$};

\draw[->] (T2) -- (k1H);
\draw[->] (k1H) -- (KC);
\draw[->] (KC) -- (wH);
\draw[->] (wH) -- (k2H);
\draw[->] (k2H) -- (US);
\draw[->] (US) -- (T1);

\node[right=3.2cm of T2, anchor=north west, text width=4.2cm,
      font=\small] (side) {%
  $\mathrm{KC} \Rightarrow \mathrm{SC}
   \Rightarrow \mathrm{US}$\\[2pt]
  ERI: SC\,=\,Yes, KC\,=\,No\\[6pt]
  Sober, compact,\\
  path-connected, loc.\ connected\\[4pt]
  $t(\ast) = \aleph_0$,\;
  $\chi(\ast) \geq \aleph_1$};

\node[left=1.5cm of k2H, font=\small, text=blue!70!black]
  (erihere) {\textbf{ERI is here}};
\draw[->, blue!70!black, shorten >=2pt]
  (erihere.east) -- (k2H.west);
\end{tikzpicture}
\end{center}

\noindent
Theorem~\ref{thm:k2H} places ERI at the same hierarchy level~\cite{bib:clontz} as the
one-point compactification of the Arens--Fort space:
both are $k_2$H but not wH.
ERI is the only known path-connected
compact sober example at this level.
\end{remark}

\subsection{Regularity failures}

\begin{proposition}[Functional triviality for ERI]\label{prop:ERI-cfunctions}
Every continuous function $f\colon X \to \mathbb{R}$ is constant.
\end{proposition}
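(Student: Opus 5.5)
The plan is to exploit the key intersection principle (Proposition~\ref{prop:kip}), which says that every nonempty open set meets every neighborhood of $\ast$, together with the Hausdorff property of $\mathbb{R}$. Let $f\colon X \to \mathbb{R}$ be continuous and set $c := f(\ast)$. We show that $f(y) = c$ for every $y \in X$ by contradiction.

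Suppose $f(y) = d \neq c$ for some $y$; necessarily $y = \hat{x}$ with $x \in \mathbb{R}\setminus\{0\}$. Put $\delta := |c-d|/2 > 0$ and take the disjoint open intervals $I_c := (c-\delta, c+\delta)$ and $I_d := (d-\delta, d+\delta)$. By continuity, $U := f^{-1}(I_c)$ is an open set of $X$ containing $\ast$, and $V := f^{-1}(I_d)$ is an open set of $X$ containing $\hat{x}$, so $V \neq \varnothing$. The KIP then gives a point $z \in U \cap V$. This point satisfies $f(z) \in I_c \cap I_d = \varnothing$, which is a contradiction. Hence $f \equiv c$.

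Equivalently, the argument is the general fact that any continuous map from $X$ into a Hausdorff space identifies every point with $\ast$: disjoint neighborhoods of $f(\ast)$ and $f(y)$ would pull back to disjoint open sets in $X$, contradicting Theorem~\ref{thm:nothausdorff}. Real-valuedness is used only through the Hausdorff property of $\mathbb{R}$.

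There is essentially no obstacle. The only step needing care is that $V$ is nonempty, which holds because it contains $\hat{x}$, so the KIP applies. As a byproduct, the same argument shows that $X$ is not completely regular, and in fact not even Urysohn-separable at $\ast$ by functions.
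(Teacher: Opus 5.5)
Your proof is correct and is essentially the paper's argument. The paper lifts to $g = f \circ q$ and notes that the dense preimage of the neighborhood $f^{-1}((c-\varepsilon,c+\varepsilon))$ of $\ast$ misses the nonempty open set $g^{-1}((g(t)-\varepsilon,g(t)+\varepsilon))$; this is just the KIP (Proposition~\ref{prop:kip}) unpacked on $\overline{\mathbb{R}}$, and your Hausdorff-target reformulation is exactly the paper's proof of Proposition~\ref{prop:hausdorff-constant}.
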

\begin{proof}
Let $g := f \circ q\colon \overline{\mathbb{R}} \to \mathbb{R}$;
then $g$ is continuous and
$g(-\infty) = g(0) = g(+\infty) =: c$.
Suppose $g(t) \neq c$ for some $t \in \mathbb{R}\setminus\{0\}$.
Pick $\varepsilon < |g(t) - c|/2$.
The set $U := f^{-1}((c - \varepsilon, c + \varepsilon))$ is open
in~$X$ with $\ast \in U$, so $q^{-1}(U)$ must be dense.
But
$q^{-1}(U) = g^{-1}((c - \varepsilon, c + \varepsilon))$
misses the nonempty open set
$g^{-1}((g(t) - \varepsilon, g(t) + \varepsilon))$---contradicting density.
\end{proof}

\begin{corollary}[Pseudocompactness of ERI]\label{cor:ERI-pseudocompact}
ERI is pseudocompact: every continuous $f\colon X \to \mathbb{R}$
is bounded \textup{(}indeed constant\textup{)}.
\end{corollary}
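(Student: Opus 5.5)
This is an immediate consequence of Proposition~\ref{prop:ERI-cfunctions}, and I would simply invoke it. Let $f\colon X \to \mathbb{R}$ be continuous. By Proposition~\ref{prop:ERI-cfunctions}, $f$ is constant, say $f \equiv c$. Then $|f(x)| \le |c|$ for every $x \in X$, so $f$ is bounded, which is the definition of pseudocompactness.

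An independent route avoids the functional triviality result. Compactness of $X$ (Proposition~\ref{prop:compact}) already gives pseudocompactness, because the continuous image $f(X)$ is a compact subset of $\mathbb{R}$ and hence bounded. I would mention this to stress that the stronger parenthetical clause ``indeed constant'' is what genuinely comes from Proposition~\ref{prop:ERI-cfunctions}.

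No step is a real obstacle. The only point to check is that the definition of pseudocompactness used here (every real-valued continuous function is bounded) does not include a Tychonoff hypothesis. If a reader insists on the Tychonoff convention, ERI is not Tychonoff, so the corollary should be read with the stated definition. I would add this as a one-line remark and not treat it as a gap in the proof.
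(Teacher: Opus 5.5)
Your proposal is correct and follows the paper, which gives no separate proof and treats this corollary as an immediate consequence of Proposition~\ref{prop:ERI-cfunctions}: a constant function is bounded. Your side remark is also valid: compactness alone (Proposition~\ref{prop:compact}) already gives boundedness, since a continuous image of a quasi-compact space is compact in $\mathbb{R}$, so only the parenthetical ``indeed constant'' actually needs the density condition.
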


\begin{corollary}\label{cor:ERI-notCR}
ERI is not completely regular.
\end{corollary}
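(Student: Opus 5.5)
The corollary says ERI is not completely regular. The plan is to derive it from Proposition~\ref{prop:ERI-cfunctions}, using the standard definition: a space is completely regular if for every closed set $C$ and every point $x \notin C$ there is a continuous $h\colon X \to [0,1]$ with $h(x) = 0$ and $h|_C \equiv 1$.

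First, choose a concrete point and closed set. Take $x := \ast$ and $C := \{\hat{1}\}$. This set is closed because $X$ is $T_1$ (Proposition~\ref{prop:T1}), and clearly $\ast \notin C$. Any $\hat{c} \neq \ast$ would serve equally well.

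Second, suppose a separating function $h$ existed. Composing with the inclusion $[0,1] \hookrightarrow \mathbb{R}$ gives a continuous real-valued function on $X$. By Proposition~\ref{prop:ERI-cfunctions}, this function is constant. That contradicts $h(\ast) = 0 \neq 1 = h(\hat{1})$.

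There is no real obstacle here; everything rests on the functional triviality already proved. Two side remarks may be worth adding. The same argument shows ERI is not even functionally Hausdorff, since no continuous real function separates any two distinct points. If one's definition of complete regularity includes $T_1$, that is harmless, because the failure comes from the separation clause alone.
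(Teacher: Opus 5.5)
Your proof is correct and is essentially the paper's argument: both reduce the corollary to the functional triviality of Proposition~\ref{prop:ERI-cfunctions}. You choose the closed set $\{\hat{1}\}$ explicitly and note that it is closed because ERI is $T_1$, which spells out the step the paper leaves implicit, namely why complete regularity would produce a non-constant real-valued function on ERI.
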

\begin{proof}
A completely regular space with more than one point admits a non-constant
continuous function to $\mathbb{R}$; by
Proposition~\ref{prop:ERI-cfunctions}, no such function exists.
\end{proof}

\begin{corollary}\label{cor:ERI-notregular}
ERI is not regular \textup{(}$T_3$\textup{)}.
\end{corollary}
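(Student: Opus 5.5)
Since ERI is $T_1$ (Proposition~\ref{prop:T1}), regularity in the $T_3$ sense reduces to the point–closed-set separation axiom. I will exhibit a specific point and a closed set that cannot be separated by disjoint open sets. The natural choice is the point $\ast$ together with the compact interval $K := q([c-\varepsilon, c+\varepsilon])$ for some $c \in \mathbb{R}\setminus\{0\}$ and small $\varepsilon>0$ with $[c-\varepsilon,c+\varepsilon]\subseteq\mathbb{R}\setminus\{0\}$. The sobriety proof (Theorem~\ref{thm:sober}, Case~2) already shows that this $K$ is closed: its preimage complement $\overline{\mathbb{R}}\setminus[c-\varepsilon,c+\varepsilon]$ is open, dense, and contains $\{-\infty,0,+\infty\}$. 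Moreover $\ast\notin K$.

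Now suppose $U \ni \ast$ and $V \supseteq K$ are open in $X$. Since $K\neq\varnothing$, the set $V$ is a nonempty open set. The Key Intersection Principle (Proposition~\ref{prop:kip}) then gives $U\cap V\neq\varnothing$. So $\ast$ and $K$ cannot be separated, and ERI is not regular.

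An even simpler witness is the pair consisting of $\ast$ and the closed singleton $\{\hat c\}$. This pair cannot be separated by Theorem~\ref{thm:nothausdorff}, and a $T_1$ regular space is Hausdorff, so the same conclusion follows. As a third alternative, regular $T_1$ plus compactness would give normality and hence complete regularity, contradicting Corollary~\ref{cor:ERI-notCR}; this route is longer, so I would use the direct KIP argument.

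None of these steps is a real obstacle. The only point needing care is confirming that $K$ is closed, and that check is already done in Theorem~\ref{thm:sober}.
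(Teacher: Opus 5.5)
Your primary argument fails at the one step you flagged as needing care: $K = q([c-\varepsilon,c+\varepsilon])$ is \emph{not} closed in ERI.

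\begin{itemize}
\item Since $\ast \notin K$, condition~(b) would require $q^{-1}(X\setminus K) = \overline{\mathbb{R}}\setminus[c-\varepsilon,c+\varepsilon]$ to be dense. Its complement $[c-\varepsilon,c+\varepsilon]$ has nonempty interior, so its closure misses $(c-\varepsilon,c+\varepsilon)$ and it is not dense.
\item This is exactly the argument of Theorem~\ref{thm:notKC}, where $q([1,2])$ is the compact non-closed witness against KC.
\item Moreover, $K$ contains the nonempty open set $q((c-\varepsilon,c+\varepsilon))$, so $\ast \in \overline{K}$ by Corollary~\ref{cor:pclosure}. The fact that every open set around $\ast$ meets every open set around $K$ is then automatic and says nothing about regularity.
\item The density claim you import from Case~2 of Theorem~\ref{thm:sober} is itself wrong: $(-\infty,c-\varepsilon)\cup(c+\varepsilon,+\infty)$ is not dense in $\overline{\mathbb{R}}$. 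It also contradicts Theorem~\ref{thm:notKC}, so it cannot be cited.
\end{itemize}

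More structurally, Lemma~\ref{lem:nbhd} shows that every closed subset of $X$ avoiding $\ast$ has the form $q(F)$ with $F\subset\mathbb{R}\setminus\{0\}$ closed and nowhere dense. So no interval can ever serve as the closed set in a regularity witness at $\ast$.

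The proposal is rescued by your second witness, which is correct. $\{\hat c\}$ is closed by Proposition~\ref{prop:T1}, and $\ast\notin\{\hat c\}$. Any open $V\supseteq\{\hat c\}$ is nonempty, so by the KIP it meets every open $U\ni\ast$. This is precisely the ``alternatively'' argument in the paper.

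The paper's main proof is your third route: regular $+$ $T_1$ $\Rightarrow$ $T_2$, compact $T_2$ $\Rightarrow$ normal, and Urysohn then gives complete regularity, contradicting Corollary~\ref{cor:ERI-notCR}.

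Lead with the singleton, or with any closed nowhere-dense $F\subset\mathbb{R}\setminus\{0\}$ such as a convergent sequence together with its limit, and delete the argument with $K$.
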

\begin{proof}
If ERI were regular, then $T_1 +$ regular $= T_3 \Rightarrow T_2$,
and compact $T_2 \Rightarrow$ normal
(\cite{bib:engelking},~Theorem~3.1.9, under the convention
compact $=$ quasi-compact $+$ Hausdorff).
Then normal $+$ $T_1$ $\Rightarrow$ completely regular (Urysohn lemma),
contradicting Corollary~\ref{cor:ERI-notCR}.
Alternatively, the KIP
(Proposition~\ref{prop:kip}) shows directly that $\ast$ and any
closed set $\{\hat{c}\}$ ($c \neq 0$) cannot be separated by disjoint
open sets.
\end{proof}

\begin{corollary}\label{cor:ERI-notnormal}
ERI is not normal.
\end{corollary}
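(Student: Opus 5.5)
The most direct route is to exhibit two disjoint closed sets that cannot be separated by disjoint open sets, using the KIP (Proposition~\ref{prop:kip}). Normality in the paper's sense is ``$T_1$ + disjoint closed sets separable'' or just the separation clause; either way, since $X$ is $T_1$ (Proposition~\ref{prop:T1}), it suffices to defeat the separation clause.

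I would take $C_1 := \{\ast\}$ and $C_2 := \{\hat{c}\}$ for any fixed $c \in \mathbb{R}\setminus\{0\}$, say $c = 1$. Both are closed by Proposition~\ref{prop:T1}, and they are disjoint. Now suppose $U \supseteq C_1$ and $V \supseteq C_2$ are open. Then $U$ is an open neighborhood of $\ast$ and $V$ is a nonempty open set, so the KIP gives $U \cap V \neq \varnothing$. Hence $C_1$ and $C_2$ cannot be separated, and $X$ is not normal. In fact this is exactly the failure of Hausdorffness from Theorem~\ref{thm:nothausdorff}, read as a statement about closed singletons.

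As an alternative, cleaner route, I could argue indirectly. A normal $T_1$ space is completely regular by Urysohn's lemma. Since $X$ has more than one point, Corollary~\ref{cor:ERI-notCR} then gives a contradiction. Equivalently, Urysohn's lemma applied to $\{\ast\}$ and $\{\hat{1}\}$ would yield a nonconstant continuous real function, which contradicts Proposition~\ref{prop:ERI-cfunctions}.

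There is no real obstacle here. The only point needing care is to make explicit that singletons are closed, which is what makes the KIP argument apply to \emph{closed} sets. I would present the direct KIP argument as the main proof and mention the Urysohn route in one sentence.
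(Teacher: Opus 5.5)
Your proof is correct, but your main argument is not the paper's. The paper proves the corollary in one line by the route you list as your alternative. A normal $T_1$ space is completely regular by Urysohn's lemma, and ERI is $T_1$ (Proposition~\ref{prop:T1}) but not completely regular (Corollary~\ref{cor:ERI-notCR}); the latter rests on the functional triviality of Proposition~\ref{prop:ERI-cfunctions}.

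Your direct argument uses only the $T_1$ property and the KIP. The closed singletons $\{\ast\}$ and $\{\hat{1}\}$ have no disjoint open neighbourhoods, which amounts to saying that a normal $T_1$ space is Hausdorff, contradicting Theorem~\ref{thm:nothausdorff}. This route is more elementary, since it needs neither Urysohn's lemma nor any analysis of $C(X,\mathbb{R})$. It also exhibits an explicit pair of inseparable closed sets, showing that normality fails for the most basic reason: even points cannot be separated. The paper itself gives the same argument as the ``alternative'' proof of non-regularity in Corollary~\ref{cor:ERI-notregular}.

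The paper's route is chosen to fit its chain of corollaries flowing from functional triviality (pseudocompact, not completely regular, not regular, not normal). It carries over verbatim to the general setting in Corollary~\ref{cor:notnormal} via Proposition~\ref{prop:cfunctions}. Your KIP argument generalizes just as easily via Proposition~\ref{prop:gen-KIP}.
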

\begin{proof}
By the Urysohn lemma, every normal $T_1$ space is completely regular.
ERI is $T_1$ (Proposition~\ref{prop:T1}) but not completely regular.
\end{proof}

\begin{proposition}[Hausdorff-valued maps are constant]\label{prop:hausdorff-constant}
Every continuous map from $X$ to a Hausdorff space with at least two
points is constant.
In particular, $\check{H}^n(X;\mathbb{Z}) = 0$ for all $n \geq 1$
\textup{(}\v{C}ech cohomology\textup{)}.
\end{proposition}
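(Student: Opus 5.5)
The argument is a direct adaptation of the proof of Proposition~\ref{prop:ERI-cfunctions}, with the real line replaced by an arbitrary Hausdorff target $Z$ and the $\varepsilon$-intervals replaced by disjoint open sets.

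Let $h\colon X \to Z$ be continuous with $Z$ Hausdorff, and set $c := h(\ast)$. Suppose some $\hat{t} \neq \ast$ satisfies $h(\hat{t}) = d \neq c$. Since $Z$ is Hausdorff, choose disjoint open sets $O_c \ni c$ and $O_d \ni d$ in $Z$. Then $U := h^{-1}(O_c)$ is an open neighborhood of $\ast$ in $X$, and $V := h^{-1}(O_d)$ is a nonempty open subset of $X$, since it contains $\hat{t}$. The KIP (Proposition~\ref{prop:kip}) gives $U \cap V \neq \varnothing$. But any point of $U \cap V$ would map into $O_c \cap O_d = \varnothing$, a contradiction. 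Hence $h \equiv c$.

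This is essentially the non-Hausdorffness argument of Theorem~\ref{thm:nothausdorff}, pushed forward along $h$. No density argument in $\overline{\mathbb{R}}$ is needed beyond what the KIP already encodes. The hypothesis ``at least two points'' plays no role in constancy; it only makes the statement non-vacuous.

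For the cohomological consequence, I would use that \v{C}ech cohomology is computed from nerves of open covers. The key point is that any two nonempty open subsets of $X$ meet whenever one of them contains $\ast$. More precisely, let $\mathcal{U}$ be an open cover, and let $U_0 \in \mathcal{U}$ contain $\ast$. By the KIP, $U_0$ meets every nonempty member of $\mathcal{U}$. This alone does not make the nerve a cone, because higher intersections need not contain a point of $U_0$. So I will refine instead. Every open cover has a refinement of the form $\{W\} \cup \mathcal{V}$, where $W \ni \ast$ and $\mathcal{V}$ consists of open sets missing $\ast$. The natural approach is then to show that the nerves form a cofinal system of contractible complexes, or, more simply, to use the sheaf-theoretic description.

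That description runs as follows. Since every open neighborhood of $\ast$ is dense and every nonempty open set meets it, the only open set containing $\ast$ in a useful sense is ``large'', and the stalk at $\ast$ of any constant sheaf computes the global sections. I would try to prove that the global section functor $\Gamma(X,-)$ factors through the stalk at $\ast$, so that it is exact on sheaves and higher cohomology vanishes. Equivalently, I would exhibit $X$ as having the property that every open cover is refined by one whose nerve is a cone.

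The main obstacle is exactly this cohomological step. The KIP controls pairwise intersections with neighborhoods of $\ast$, but not triple intersections such as $U_0 \cap V_1 \cap V_2$. My first attempt would be to shrink each $V_i$ away from $\ast$ using $q$ and a Lebesgue-number argument in $\overline{\mathbb{R}}$. If that fails, I would instead derive $\check{H}^n = 0$ from the first statement applied to Eilenberg--MacLane-type classifying maps into Hausdorff (CW) targets, using the homotopy representability of \v{C}ech cohomology for paracompact spaces (Remark~\ref{rmk:paracompact}). Every such map is constant, hence null-homotopic, so all classes of positive degree vanish.
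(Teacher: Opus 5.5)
Your constancy argument is exactly the paper's: pull back disjoint neighbourhoods of $h(\ast)$ and $h(\hat t)$ and apply the KIP. The gap is in the cohomological part, which your proposal never closes, and the obstacle you identify there is not real. You say the KIP does not control triple intersections such as $U_0\cap V_1\cap V_2$. But the KIP applies to \emph{every} nonempty open set, in particular to $V_1\cap\cdots\cap V_k$.

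So let $\mathcal U=\{U_i\}_{i\in I}$ be any open cover and pick $i_0$ with $\ast\in U_{i_0}$. For each simplex $\sigma$ of the nerve $N(\mathcal U)$, the set $\bigcap_{i\in\sigma}U_i$ is nonempty and open, hence meets $U_{i_0}$. Thus $\sigma\cup\{i_0\}$ is again a simplex. Every nerve is therefore a cone with apex $i_0$, hence contractible, and $\check H^n(X;G)=\varinjlim_{\mathcal U}H^n(N(\mathcal U);G)=0$ for all $n\ge 1$ and every coefficient group $G$. No refinement or Lebesgue-number shrinking is needed.

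Your sheaf-theoretic idea would not work as stated. The skyscraper sheaf at $\hat x\neq\ast$ has zero stalk at $\ast$, because $W_{\{x\}}$ is a neighbourhood of $\ast$ missing $\hat x$. Yet it has nonzero global sections, so $\Gamma(X,-)$ does not factor through the stalk at $\ast$.

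Your fallback is the route the paper takes: maps into $K(\mathbb Z,n)$ are constant, and $\check H^n(X;\mathbb Z)\cong[X,K(\mathbb Z,n)]$. But that isomorphism is Huber's theorem for \emph{paracompact Hausdorff} spaces, and its proof uses partitions of unity subordinate to open covers. The paracompactness in Remark~\ref{rmk:paracompact} is only the covering property of a compact space. Since every continuous real function on $X$ is constant (Proposition~\ref{prop:ERI-cfunctions}), no open cover of $X$ admits a subordinate partition of unity unless it contains $X$ itself.

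The principle genuinely fails for compact non-Hausdorff spaces. Take the six-point face poset of the boundary of a triangle: vertices are open, and each edge's minimal neighbourhood is the edge together with its two endpoints. This space is finite and connected, so every map from it to a Hausdorff space is constant. Yet the three minimal neighbourhoods of the edges form a cover refining every open cover, and its nerve is a hollow triangle, so $\check H^1\cong\mathbb Z$.

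Constancy of maps into Eilenberg--MacLane spaces therefore does not by itself give $\check H^n=0$. What does is the cone structure of the nerves, which the KIP supplies directly.
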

\begin{proof}
Let $f\colon X \to T$ be continuous with $T$ Hausdorff and $|T| \geq 2$.
Set $c := f(\ast)$.
Suppose $f(\hat{x}) \neq c$ for some $\hat{x} \in X$.
Since $T$ is Hausdorff, separate $f(\hat{x})$ and $c$ by disjoint
open sets $V_{x} \ni f(\hat{x})$ and $V_c \ni c$.
Then $U := f^{-1}(V_c)$ is open in~$X$ with $\ast \in U$, and
$W := f^{-1}(V_x)$ is nonempty and open.
The KIP (Proposition~\ref{prop:kip}) gives $U \cap W \neq \varnothing$,
but $V_c \cap V_x = \varnothing$---contradiction.
For cohomology: for each $n \geq 1$, the Eilenberg--MacLane space
$K(\mathbb{Z}, n)$ is Hausdorff (it is a CW-complex), so every
continuous map $X \to K(\mathbb{Z}, n)$ is constant.
By the representability of \v{C}ech cohomology on compact spaces
(\cite{bib:engelking},~\S3.7; see also~\cite{bib:munkres},~\S73),
$\check{H}^n(X;\mathbb{Z}) \cong [X, K(\mathbb{Z}, n)] = 0$
for all $n \geq 1$.
\end{proof}

\subsection{Summary: ERI in the hierarchy}

Combining the results of
Sections~\ref{sec:construction}--\ref{sec:position}, the topological
profile of ERI can be stated compactly.
ERI is $T_1$ (Proposition~\ref{prop:T1}), US (Theorem~\ref{thm:US}),
sequentially closed (Proposition~\ref{prop:SC}), $k_2$-Hausdorff
(Theorem~\ref{thm:k2H}), and sober (Theorem~\ref{thm:sober}); it fails
weakly Hausdorff (Proposition~\ref{prop:notwH}), KC
(Theorem~\ref{thm:notKC}), and Hausdorff
(Theorem~\ref{thm:nothausdorff}).
It is compact, sequentially compact
(Proposition~\ref{prop:seqcpt}), pseudocompact
(Corollary~\ref{cor:ERI-pseudocompact}), a $k$-space
(Remark~\ref{rmk:kspace}), path-connected
(Proposition~\ref{prop:pathconn}), locally connected
(Proposition~\ref{prop:localconn}), separable
(Remark~\ref{rmk:separable}), ccc, and Baire
(Proposition~\ref{prop:baire}).
It fails first-countability at $\ast$
(Proposition~\ref{prop:notfirst}) and is neither sequential nor
Fr\'{e}chet--Urysohn
(Proposition~\ref{prop:notsequential}).
The regularity axioms all fail: ERI is not regular, completely regular,
semiregular, normal, or metrizable
(Corollaries~\ref{cor:ERI-notregular}, \ref{cor:ERI-notCR},
\ref{cor:ERI-notnormal}; Remarks~\ref{rmk:semiregular},
\ref{rmk:submetrizable}), though it is paracompact.
The continuous function space $C(X, \mathbb{R})$ consists of constants
only (Proposition~\ref{prop:ERI-cfunctions}), every continuous map
from~$X$ to a Hausdorff space is constant, and
$\check{H}^n(X; \mathbb{Z}) = 0$ for all $n \geq 1$
(Proposition~\ref{prop:hausdorff-constant}).
The relevant cardinal invariants are
$t(\ast, X) = \aleph_0$, $\chi(\ast, X) \geq \aleph_1$,
$w(X) = \mathfrak{c}$, $c(X) = s(X) = \aleph_0$, and
$\mathrm{ind}(X) = 1$
(Propositions~\ref{prop:notfirst}, \ref{prop:tightness},
\ref{prop:cardinal}, \ref{prop:dim}; Remark~\ref{rmk:weight}).

ERI is \emph{perfect}: every point of
$X \setminus \{\ast\} \cong \mathbb{R} \setminus \{0\}$ is a limit
point, and $\ast$ belongs to the closure of every nonempty open set
(Corollary~\ref{cor:pclosure}).
Hence ERI provides a concrete, compact, path-connected witness of the
strict implication $\mathrm{KC} \Rightarrow \mathrm{US}$---that is, of
$\mathrm{US} \not\Rightarrow \mathrm{KC}$---arising from an elementary
quotient construction.

\section{The Filter-Modified Quotient Framework}\label{sec:fmq}

The preceding sections establish ERI as a concrete example with a
specific base space ($\overline{\mathbb{R}}$) and collapse set
($\{-\infty,0,+\infty\}$).
A natural question is whether the construction is ad hoc---tied to the
peculiarities of the real line---or reflects a deeper structural
phenomenon.
We now develop the \emph{Filter-Modified Quotient} (FMQ)
framework, which shows that the density condition is a calibrated
mechanism producing the same hierarchy level for \emph{any}
compact Hausdorff base without isolated points.
Specifically, this framework yields three results not
visible from ERI alone: a modifier spectrum revealing the density
condition as an optimal equilibrium
(Sections~\ref{sec:modifiers}--\ref{sec:optimality}), an abstract
characterization of the US zone (Section~\ref{sec:convergence}),
and an extended rigidity analysis showing that the hierarchy level
is invariant under infinite collapse sets, iteration, and products
(Section~\ref{sec:extended}).
Several ERI-specific results
(convergence criterion, SC, sobriety, functional triviality) are
special cases of their FMQ counterparts below; we state the general
versions and indicate where the ERI proofs carry over
\emph{verbatim}.

\subsection{Admissible modifiers and the FMQ construction}

\begin{definition}[Admissible modifier]\label{def:admissible}
Let $(X, \tau)$ be a topological space.  A collection
$\mathcal{F} \subseteq \mathcal{P}(X)$ is a
\emph{$\tau$-admissible modifier} if:
\begin{enumerate}[label=\textup{(F\arabic*)},nosep]
\item\label{F1} $X \in \mathcal{F}$.
\item\label{F2} $\mathcal{F}$ is upward closed: $F \in \mathcal{F}$
      and $F \subseteq G$ implies $G \in \mathcal{F}$.
\item\label{F3} If $U_1, U_2 \in \mathcal{F} \cap \tau$, then
      $U_1 \cap U_2 \in \mathcal{F}$.
\end{enumerate}
\end{definition}

\begin{definition}[Filter-Modified Quotient]\label{def:fmq}
Let $(X, \tau)$ be a topological space, $A \subseteq X$ nonempty,
$q\colon X \to X/A$ the projection collapsing $A$ to
$\ast := q(A)$, and $\mathcal{F}$ a $\tau$-admissible modifier.
The \emph{$\mathcal{F}$-modified quotient topology} is
\[
\tau_{\mathcal{F}} \;:=\;
\bigl\{\, U \subseteq X/A :
  q^{-1}(U) \in \tau \;\;\text{and}\;\;
  \bigl(\ast \in U \Longrightarrow q^{-1}(U) \in \mathcal{F}\bigr)
\,\bigr\}.
\]
We write $\mathrm{FMQ}(X, A, \mathcal{F}) := (X/A,\,
\tau_{\mathcal{F}})$.
\end{definition}

\begin{proposition}\label{prop:fmq-top}
$\tau_{\mathcal{F}}$ is a topology on $X/A$, with
$\tau_{\mathcal{F}} \subseteq \tau_q$ \textup{(}the standard quotient
topology\textup{)} and equality on subsets not containing~$\ast$.
Moreover, $q\colon (X, \tau) \to (X/A, \tau_{\mathcal{F}})$ is
continuous.
\end{proposition}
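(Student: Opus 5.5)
The plan is to verify the three topology axioms for $\tau_{\mathcal{F}}$ directly, using that preimage under $q$ commutes with unions and intersections. Then I would compare $\tau_{\mathcal{F}}$ with $\tau_q = \{U : q^{-1}(U) \in \tau\}$ and read off continuity from the definition.

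\emph{Axioms.} For $\varnothing$, we have $q^{-1}(\varnothing)=\varnothing\in\tau$ and $\ast\notin\varnothing$, so the modifier condition is vacuous. For $X/A$, we have $q^{-1}(X/A)=X\in\tau$, and $X\in\mathcal{F}$ by \ref{F1}.

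For an arbitrary union $U=\bigcup_i U_i$ of members of $\tau_{\mathcal{F}}$, note that $q^{-1}(U)=\bigcup_i q^{-1}(U_i)\in\tau$. If $\ast\in U$, then $\ast\in U_{i_0}$ for some $i_0$, so $q^{-1}(U_{i_0})\in\mathcal{F}$. Since $q^{-1}(U_{i_0}) \subseteq q^{-1}(U)$, upward closure \ref{F2} gives $q^{-1}(U)\in\mathcal{F}$.

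For a finite intersection $U_1\cap U_2$, we have $q^{-1}(U_1\cap U_2)=q^{-1}(U_1)\cap q^{-1}(U_2)\in\tau$. If $\ast\in U_1\cap U_2$, then both $q^{-1}(U_i)$ lie in $\mathcal{F}\cap\tau$. Axiom \ref{F3} applies exactly here and puts their intersection in $\mathcal{F}$. Induction then handles any finite intersection.

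\emph{Comparison with $\tau_q$.} Every $U\in\tau_{\mathcal{F}}$ has $q^{-1}(U)\in\tau$, so $\tau_{\mathcal{F}}\subseteq\tau_q$ by definition of the quotient topology. Conversely, if $U\in\tau_q$ and $\ast\notin U$, the modifier clause is vacuous, so $U\in\tau_{\mathcal{F}}$. Hence the two topologies have the same open sets among subsets avoiding $\ast$.

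\emph{Continuity.} Continuity of $q$ is immediate, since the first clause of the definition says $q^{-1}(U)\in\tau$ for every $U\in\tau_{\mathcal{F}}$.

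The only step needing care is the intersection axiom. There I must check that \ref{F3} is applied only to sets that are both open and in $\mathcal{F}$; this holds because the relevant preimages are open by the first clause. No real obstacle is expected.
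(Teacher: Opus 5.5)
Your proposal is correct and follows essentially the same route as the paper's proof. Like the paper, you handle $\varnothing$ vacuously, the whole space via \ref{F1}, arbitrary unions via \ref{F2} applied to one member containing $\ast$, and finite intersections via \ref{F3}, which applies because the preimages lie in $\mathcal{F}\cap\tau$. You spell out the comparison with $\tau_q$ and the continuity of $q$, which the paper simply calls immediate from the definition.
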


\begin{proof}
$\varnothing \in \tau_{\mathcal{F}}$ (vacuous) and
$X/A \in \tau_{\mathcal{F}}$ (since $X \in \mathcal{F}$
by~\ref{F1}).
For finite intersections: if $\ast \in U_1 \cap U_2$, then
$q^{-1}(U_i) \in \mathcal{F} \cap \tau$, and~\ref{F3} gives
$q^{-1}(U_1) \cap q^{-1}(U_2) \in \mathcal{F}$.
For arbitrary unions: if $\ast \in \bigcup_i U_i$, pick $j$ with
$\ast \in U_j$; then $q^{-1}(U_j) \in \mathcal{F}$ and
$q^{-1}(U_j) \subseteq \bigcup_i q^{-1}(U_i)$, so~\ref{F2} gives
$\bigcup_i q^{-1}(U_i) \in \mathcal{F}$.
The remaining assertions are immediate from the definition.
\end{proof}

\begin{proposition}\label{prop:dense-admissible}
The collection of dense subsets
$\mathcal{D}_X := \{D \subseteq X : \overline{D} = X\}$
is a $\tau$-admissible modifier on any topological space~$(X,\tau)$.
\textup{(}Note: $\mathcal{D}_X$ is not a filter in general---$\mathbb{Q}$
and $\mathbb{R} \setminus \mathbb{Q}$ are both dense with empty
intersection---but the weaker axiom~\textup{\ref{F3}} suffices.\textup{)}
\end{proposition}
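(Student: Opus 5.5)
The plan is to check the three axioms (F1)--(F3) of Definition~\ref{def:admissible} directly for $\mathcal{D}_X$, and then exhibit the counterexample from the parenthetical remark.

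For (F1), $\overline{X} = X$, so $X \in \mathcal{D}_X$. For (F2), if $D \subseteq G$ with $D$ dense, then monotonicity of closure gives $X = \overline{D} \subseteq \overline{G}$, so $G$ is dense.

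The only real content is (F3), where openness is essential. I would prove the standard fact that the intersection of two dense \emph{open} sets is dense, the same fact cited from \cite[1.3.5]{bib:engelking} in Proposition~\ref{prop:topology}. Let $U_1, U_2 \in \mathcal{D}_X \cap \tau$ and let $O$ be a nonempty open set. Density of $U_1$ gives $O \cap U_1 \neq \varnothing$, and this set is open because $U_1$ is open. Density of $U_2$ then gives $(O \cap U_1) \cap U_2 \neq \varnothing$. So $U_1 \cap U_2$ meets every nonempty open set, hence is dense.

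For the parenthetical, take $X = \mathbb{R}$ with its usual topology. Both $\mathbb{Q}$ and $\mathbb{R}\setminus\mathbb{Q}$ meet every nonempty open interval, so both are dense, yet they are disjoint. Their intersection $\varnothing$ is therefore not in $\mathcal{D}_X$, and $\mathcal{D}_X$ is not closed under arbitrary finite intersections, so it is not a filter. This shows the openness hypothesis in (F3) cannot be dropped.

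There is no real obstacle. The only point to watch is that the openness of $U_1$ is used to keep $O \cap U_1$ open before invoking the density of $U_2$.
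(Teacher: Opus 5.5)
Your proposal is correct and follows the paper's proof. (F1) and (F2) are immediate, and (F3) is the fact that a finite intersection of dense open sets is dense; the only difference is that you prove this fact inline (using the openness of $U_1$ to keep $O \cap U_1$ open before applying the density of $U_2$), whereas the paper simply cites Engelking's 1.3.5.
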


\begin{proof}
\ref{F1}:~$X$ is dense.
\ref{F2}:~supersets of dense sets are dense.
\ref{F3}:~finite intersections of dense open sets are
dense~\cite[1.3.5]{bib:engelking}.
\end{proof}

\begin{remark}\label{rmk:eri-fmq}
The requirement that $A$ be closed is essential for $T_1$
(see row~9 of Appendix~\ref{app:master}).
The original ERI is
$\mathrm{FMQ}(\overline{\mathbb{R}},\,\{-\infty,0,+\infty\},\,
\mathcal{D}_{\overline{\mathbb{R}}})$;
the generalized ERI of Theorem~\ref{thm:general} is
$\mathrm{FMQ}(Y, A, \mathcal{D}_Y)$.
\end{remark}

\subsection{Basic properties of FMQ spaces}

\begin{proposition}\label{prop:fmq-basic}
Let $Y$ be a topological space, $A \subseteq Y$ finite nonempty, and
$\mathcal{F}$ an admissible modifier.
\begin{enumerate}[label=\textup{(\roman*)},nosep]
\item If $Y$ is compact, then $\mathrm{FMQ}(Y, A, \mathcal{F})$ is compact.
\item $\mathrm{FMQ}(Y, A, \mathcal{F})$ is $T_1$ if and only if: $Y$ is $T_1$,
  $A$ is closed in~$Y$, and for every
  $y \in Y \setminus A$, $Y \setminus \{y\} \in \mathcal{F}$.
\end{enumerate}
\end{proposition}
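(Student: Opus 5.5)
For (i), the plan is to mimic Proposition~\ref{prop:compact}. The map $q\colon Y \to \mathrm{FMQ}(Y,A,\mathcal{F})$ is continuous by Proposition~\ref{prop:fmq-top}. So $\mathrm{FMQ}(Y,A,\mathcal{F})$ is the continuous image of a compact space, hence compact. Alternatively, given an open cover, pull it back along $q$, take a finite subcover in $Y$, and push it forward. Finiteness of $A$ is not needed here.

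For (ii), I first record what it means for a singleton to be closed.

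\emph{The singleton $\{\ast\}$.} Its complement is $U=X/A\setminus\{\ast\}$, with $q^{-1}(U)=Y\setminus A$. Since $\ast\notin U$, condition (b) is vacuous. So $\{\ast\}$ is closed iff $A$ is closed in $Y$.

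\emph{A singleton $\{q(y)\}$ with $y\in Y\setminus A$.} Its complement $V$ contains $\ast$ and has $q^{-1}(V)=Y\setminus\{y\}$. So $\{q(y)\}$ is closed iff $Y\setminus\{y\}$ is open in $Y$ (i.e.\ $\{y\}$ is closed in $Y$) and $Y\setminus\{y\}\in\mathcal{F}$.

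Both directions then follow from this computation.

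\emph{($\Leftarrow$).} Assume $Y$ is $T_1$, $A$ is closed, and $Y\setminus\{y\}\in\mathcal{F}$ for every $y\in Y\setminus A$. The computation shows every singleton of $X/A$ is closed.

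\emph{($\Rightarrow$).} Closedness of $\{\ast\}$ gives $A$ closed. Closedness of each $\{q(y)\}$ gives $\{y\}$ closed in $Y$ and $Y\setminus\{y\}\in\mathcal{F}$ for $y\notin A$. It remains to show $\{a\}$ is closed in $Y$ for $a\in A$, and this is where finiteness of $A$ enters. Write $Y\setminus\{a\}=(Y\setminus A)\cup(A\setminus\{a\})$. The set $Y\setminus A$ is open, since $A$ is closed. The finite set $A\setminus\{a\}$ should also be handled, but it is not obvious that it is closed in $Y$. A cleaner route is to show that every point $z\ne a$ has a neighborhood missing $a$:
\begin{itemize}
\item If $z\notin A$, then $\{z\}$ is closed in $Y$, so points of $Y\setminus A$ are closed.
\item If $z\in A\setminus\{a\}$, I need a neighborhood of $z$ avoiding $a$. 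Points of $Y\setminus A$ are closed, so that alone does not separate $z$ from $a$.
\end{itemize}

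So the only delicate step is deducing that points of $A$ are closed in $Y$ from the $T_1$ property of the quotient. In general this appears false: take $Y$ to be $A=\{a,b\}$ with the indiscrete topology, disjoint-unioned with a $T_1$ space. Then $A$ is closed and everything else works, yet $Y$ is not $T_1$. I therefore expect this direction to require either reading ``$Y$ is $T_1$'' as a standing assumption, or weakening it to ``$Y\setminus A$ is $T_1$''. My first attempt would be to check whether the paper's standing hypotheses (e.g.\ $Y$ compact Hausdorff, as in Section~\ref{sec:k2H}) are in force. If they are, the condition on $Y$ is automatic and the equivalence reduces to the two singleton computations above. That is the version I would write up, noting the caveat.
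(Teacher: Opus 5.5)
Part (i) and your two singleton computations in (ii) are exactly the paper's proof. Those computations are: $\{\ast\}$ is closed iff $A$ is closed in $Y$, and for $y\in Y\setminus A$, $\{\hat y\}$ is closed iff $Y\setminus\{y\}$ is open in $Y$ and lies in $\mathcal{F}$. You are also right that nothing more can be extracted. The paper's proof never addresses the points of $A$, and the clause ``$Y$ is $T_1$'' in the forward direction of (ii) is false as literally stated.

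Your counterexample works once $\mathcal{F}$ is specified. Take $Y=\{a,b\}\sqcup[0,1]$ with $\{a,b\}$ indiscrete, $A=\{a,b\}$, and $\mathcal{F}=\mathcal{D}_Y$ (or $\mathcal{P}(Y)$). Then:
\begin{itemize}
\item $A$ is clopen, so $\{\ast\}$ is closed.
\item Each $z\in[0,1]$ is closed in $Y$, and $Y\setminus\{z\}$ is dense, so each $\{\hat z\}$ is closed. Hence the quotient is $T_1$.
\item $\{a\}$ is not closed in $Y$, so $Y$ is not $T_1$.
\end{itemize}
Since this $Y$ is even compact, the statement fails within the setting of (i) as well.

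One precision point on your proposed repair. The correct condition, which is exactly what your computation yields, is ``every point of $Y\setminus A$ is closed in $Y$''. It is not ``$Y\setminus A$ is $T_1$'' as a subspace, which is too weak for ($\Leftarrow$). Consider the Sierpi\'nski space $Y=\{a,y\}$ with open sets $\varnothing,\{y\},Y$, and take $A=\{a\}$ and $\mathcal{F}=\mathcal{P}(Y)$. Here $A$ is closed and $Y\setminus A$ is a single point, yet the quotient is not $T_1$, because $q^{-1}(\{\ast\})=\{a\}$ is not open.

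Every later use of the proposition needs only ($\Leftarrow$), or closedness of $\{\ast\}$, and always with $Y$ compact Hausdorff. This covers Theorem~\ref{thm:general}, Lemma~\ref{lem:baire} and Theorem~\ref{thm:abstractUS}. So writing up the corrected equivalence, or simply assuming $Y$ is $T_1$, loses nothing downstream.
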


\begin{proof}
(i) follows from compactness of $Y$ and continuity of $q$.
For~(ii): the singleton $\{\ast\}$ is closed iff
$X \setminus \{\ast\}$ is open; its preimage is $Y \setminus A$,
and $\ast \notin X \setminus \{\ast\}$, so the $\mathcal{F}$-condition
is vacuous.  Hence $\{\ast\}$ is closed iff $Y \setminus A$ is open,
i.e., iff $A$ is closed in~$Y$.
For $y \notin A$: $\{\hat{y}\}$ is closed iff $X \setminus \{\hat{y}\}$ is open; the preimage is $Y \setminus \{y\}$ which contains~$A$, so both conditions
(open $+$ $\mathcal{F}$-membership) must hold.
\end{proof}

\subsection{The generalized ERI construction}

The density modifier $\mathcal{D}_Y$ applied to any compact Hausdorff
base without isolated points yields a US-not-KC space:

\begin{lemma}\label{lem:infinite}
A Hausdorff space with no isolated points and at least one point is
infinite.
\end{lemma}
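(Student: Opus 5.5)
The plan is a direct contradiction argument. Let $Y$ be a Hausdorff space with no isolated points and at least one point, and suppose $Y$ is finite, say $Y = \{y_1, \ldots, y_n\}$ with $n \ge 1$.

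\emph{Step 1: finite $T_1$ spaces are discrete.} Since $Y$ is Hausdorff, it is $T_1$, so every singleton is closed. Every subset of $Y$ is then a finite union of closed singletons, hence closed. Consequently every subset is open, and in particular $\{y_1\}$ is open.

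\emph{Step 2: the contradiction.} An open singleton is an isolated point, contradicting the hypothesis on $Y$. Hence $Y$ is infinite.

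For a version that avoids passing through discreteness, fix $y_1$. For each $j \ge 2$, Hausdorffness gives an open $U_j \ni y_1$ with $y_j \notin U_j$. The finite intersection $\bigcap_{j \ge 2} U_j$, taken to be $Y$ when $n = 1$, is then open and equals $\{y_1\}$. The case $n = 1$ is consistent with this: the singleton $Y$ is open, so its point is isolated.

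No step is a real obstacle. The only care needed is to use finiteness exactly where the intersection of open sets must be finite, and to handle the degenerate one-point case.
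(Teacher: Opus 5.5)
Your proof is correct and follows the paper's argument: a finite Hausdorff (indeed merely $T_1$) space is discrete, so every singleton is open and every point is isolated, which contradicts the hypothesis. Your alternative finite-intersection version is an unpacking of the same discreteness fact.
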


\begin{proof}
If $X$ is finite and Hausdorff, every singleton is open (finite $T_1$
spaces are discrete), so every point is isolated.
\end{proof}

\begin{theorem}[Generalized ERI construction]\label{thm:general}
Let $Y$ be a compact Hausdorff space with no isolated points, and let
$A = \{a_1, \ldots, a_k\} \subset Y$ ($k \geq 1$) be a finite non-empty
subset.
Then $X := \mathrm{FMQ}(Y, A, \mathcal{D}_Y)$ is:
\begin{enumerate}[label=\textup{(\alph*)},nosep]
\item compact;
\item $T_1$;
\item not Hausdorff;
\item connected;
\item US;
\item not KC.
\end{enumerate}
\end{theorem}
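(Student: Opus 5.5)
The plan is to transfer the ERI arguments almost verbatim, replacing $\overline{\mathbb{R}}$ by $Y$. The only nontrivial inputs are that a compact Hausdorff space is Baire and that $Y$ has no isolated points, so every finite set and every singleton is closed nowhere dense. We write $\ast=q(A)$ and identify $X\setminus\{\ast\}$ with $Y\setminus A$ via $q$. This restriction is a homeomorphism onto an open subspace, since $\tau_{\mathcal{D}_Y}$ agrees with $\tau_q$ on sets not containing $\ast$ (Proposition~\ref{prop:fmq-top}).

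The first four items are quick.
\begin{itemize}[nosep]
\item (a) is Proposition~\ref{prop:fmq-basic}(i).
\item (b) uses Proposition~\ref{prop:fmq-basic}(ii). $Y$ is Hausdorff hence $T_1$, and the finite set $A$ is closed. For $y\notin A$, the set $Y\setminus\{y\}$ is dense because $y$ is not isolated.
\item (c) and (d) follow the KIP exactly. If $\ast\in U$ and $V\neq\varnothing$ are open, then $q^{-1}(U)$ is dense and $q^{-1}(V)$ is nonempty open, so they meet. This gives non-Hausdorffness (some point other than $\ast$ exists by Lemma~\ref{lem:infinite}), as in Theorem~\ref{thm:nothausdorff}, and connectedness, as in Proposition~\ref{prop:connected}.
\end{itemize}

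For (f), we need a compact non-closed set. Pick $y\in Y\setminus A$. Since $Y$ is regular and $A$ is finite, choose an open $O\ni y$ with $\overline{O}\cap A=\varnothing$. Then $K:=q(\overline{O})$ is compact as a continuous image of a compact set. The point $\ast\notin K$, but $q^{-1}(X\setminus K)=Y\setminus\overline{O}$ misses the nonempty open set $O$, so it is not dense. Hence $X\setminus K$ is not open and $K$ is not closed, as in Theorem~\ref{thm:notKC}.

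The main work is (e), US, since Theorem~\ref{thm:k2H} needs first-countability of $Y\setminus A$, which we do not assume here. I would argue directly with sequences. Suppose $(y_n)$ converges to two distinct limits. If $\ast$ occurs infinitely often, only $\ast$ can be a limit, because $\{\ast\}$ is closed. Otherwise we pass to a tail in $Y\setminus A$. If both limits lie in $Y\setminus A$, use Hausdorffness of the open subspace $X\setminus\{\ast\}$: the net is eventually in it.

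The remaining case is $y_n\to\ast$ and $y_n\to\hat c$ with $c\in Y\setminus A$. Convergence to $\hat c$ puts the tail in $Y\setminus A$ converging to $c$ in $Y$. Let $C:=\{y_n\}_{n\ge N}\cup\{c\}$. This is a convergent sequence with its limit in the Hausdorff space $Y$, so it is compact and hence closed. It is also countable, hence meager, since singletons are nowhere dense. A closed meager set in the Baire space $Y$ is nowhere dense. Choosing $N$ so that the tail avoids $A$, we get $C\subset Y\setminus A$. Then $W_C=q(Y\setminus C)$ is an open neighbourhood of $\ast$ that contains no $y_n$ with $n\ge N$, contradicting $y_n\to\ast$.

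The only delicate point is checking that $C$ is closed and nowhere dense without metrizability. The plan is to use compactness of convergent sequences together with Baire, which avoids any countable-base hypothesis.
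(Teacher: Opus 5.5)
Your proposal is correct and follows the paper's proof essentially step for step. Parts (a)--(b) come from Proposition~\ref{prop:fmq-basic}, (c)--(d) from the KIP, and (f) from a closed neighbourhood $\overline{O}\subset Y\setminus A$ with nonempty interior. Part (e) uses the same Baire-category argument that the paper routes through Lemma~\ref{lem:baire}, Proposition~\ref{prop:gen-conv} and Lemma~\ref{lem:countable-baire}; you simply inline it on the convergent tail, instead of extracting a subsequence from an accumulation point, which makes the independence from first-countability fully explicit.
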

\begin{proof}
\emph{Parts (a)--(b).}\; Follow from
Proposition~\ref{prop:fmq-basic}.

\emph{Part (c).}\; The density modifier satisfies
\textup{(KIP$_{\mathcal{F}}$)} (Remark~\ref{rem:KIPdensity} below),
so every neighborhood of $\ast$ meets every nonempty open set.

\emph{(d) Connected.}\;
Suppose $X = U \cup V$ is a disconnection with $\ast \in U$.
Then $q^{-1}(U)$ is open and dense, and $q^{-1}(V)$ is open and nonempty.
A dense set meets every nonempty open set, so
$q^{-1}(U) \cap q^{-1}(V) \neq \varnothing$, contradicting
$U \cap V = \varnothing$.

\emph{(e) US.}\;
This follows from Lemma~\ref{lem:baire} (Baire exclusion) below.

\emph{(f) Not KC.}\;
By Lemma~\ref{lem:infinite}, $Y$ is infinite, so $Y \setminus A$ is
dense in~$Y$.
Since $Y$ is compact Hausdorff (hence
regular~\cite{bib:willard}), for any $y \in Y \setminus A$ there exists
an open set $W$ with
$y \in W \subseteq \overline{W} \subseteq Y \setminus A$.
Set $C := \overline{W}$; then $\mathrm{int}(C) \neq \varnothing$
and $C \subset Y \setminus A$.
Now $q(C)$ is compact and
$\ast \in X \setminus q(C)$, but
$q^{-1}(X \setminus q(C)) = Y \setminus C$ is not dense, so $q(C)$
is not closed.
\end{proof}

\begin{remark}\label{rem:geninstances}
ERI is the instance $Y = \overline{\mathbb{R}}$,
$A = \{-\infty, 0, +\infty\}$.
Connectedness of $Y$ is not required; the density condition forces
$X$ to be connected even when $Y$ is disconnected
(e.g., $Y = [0,1] \sqcup [2,3]$).
Path-connectedness does not generalize without further hypotheses on $Y$.
\end{remark}

\subsection{The Key Intersection Principle (generalized)}

\begin{proposition}[Generalized KIP]\label{prop:gen-KIP}
If $\mathcal{F}$ satisfies
\begin{equation}\tag{KIP$_{\mathcal{F}}$}
\text{for all } F \in \mathcal{F} \cap \tau \text{ and all nonempty } V \in \tau, \quad
F \cap V \neq \varnothing,
\end{equation}
then every open neighborhood of~$\ast$ in $\mathrm{FMQ}(Y, A, \mathcal{F})$
meets every nonempty open set.
In particular, $\ast$ cannot be separated from any point by disjoint open sets,
so the FMQ space is not Hausdorff (unless it is a singleton).
\end{proposition}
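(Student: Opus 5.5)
The argument is a direct lift of the proof of Proposition~\ref{prop:kip} to the abstract setting, with condition (KIP$_{\mathcal{F}}$) replacing density. Let $U$ be an open neighborhood of $\ast$ in $\mathrm{FMQ}(Y, A, \mathcal{F})$ and $V$ a nonempty open set there. By Definition~\ref{def:fmq}, $q^{-1}(U)$ is open in $Y$, and since $\ast \in U$ it also lies in $\mathcal{F}$. Hence $q^{-1}(U) \in \mathcal{F} \cap \tau$. For $V$, the preimage $q^{-1}(V)$ is open in $Y$. It is nonempty because $q$ is surjective and $V \neq \varnothing$.

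Applying (KIP$_{\mathcal{F}}$) with $F = q^{-1}(U)$ and the nonempty open set $q^{-1}(V)$ gives a point $y \in q^{-1}(U) \cap q^{-1}(V) = q^{-1}(U \cap V)$. Therefore $q(y) \in U \cap V$, so $U \cap V \neq \varnothing$, which is the first assertion.

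For the non-Hausdorff clause, take any point $x \neq \ast$; one exists precisely when the space is not a singleton. Let $U \ni \ast$ and $V \ni x$ be open. Then $V$ is nonempty, so the first part gives $U \cap V \neq \varnothing$. Thus $\ast$ and $x$ cannot be separated by disjoint open sets, and the space fails to be Hausdorff.

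There is no genuine obstacle here. The only points needing care are the two checks above: that $q^{-1}(U)$ lies in $\mathcal{F} \cap \tau$ and not merely in $\mathcal{F}$, which is what the hypothesis requires, and that surjectivity of $q$ keeps $q^{-1}(V)$ nonempty.
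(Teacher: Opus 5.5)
Your proof is correct and follows the paper's argument exactly: pull $U$ and $V$ back along $q$, observe that $q^{-1}(U) \in \mathcal{F} \cap \tau$ and that $q^{-1}(V)$ is a nonempty open set, and apply (KIP$_{\mathcal{F}}$). Your explicit use of the surjectivity of $q$ and your derivation of the non-Hausdorff clause fill in details the paper leaves implicit.
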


\begin{proof}
If $U \ni \ast$ and $V \neq \varnothing$ are open in $X$, then
$q^{-1}(U) \in \mathcal{F} \cap \tau$ and $q^{-1}(V) \in \tau$ is nonempty. By
(KIP$_{\mathcal{F}}$), $q^{-1}(U) \cap q^{-1}(V) \neq \varnothing$, so
$U \cap V \neq \varnothing$.
\end{proof}

\begin{remark}\label{rem:KIPdensity}
The density modifier $\mathcal{D}_Y$ satisfies
\textup{(KIP$_{\mathcal{F}}$)} by definition: a dense
open set meets every nonempty open set.
\end{remark}

\subsection{General convergence criterion and Baire exclusion}

The following auxiliary fact is used repeatedly.

\begin{lemma}[Countable Baire]\label{lem:countable-baire}
In a compact Hausdorff space without isolated points, every countable
subset has empty interior.
\end{lemma}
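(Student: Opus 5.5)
The plan is to prove the statement by contradiction, using the Baire category theorem for compact Hausdorff spaces together with the observation that in such a space without isolated points every singleton is nowhere dense.

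Let $Y$ be compact Hausdorff without isolated points, and suppose $C = \{c_1, c_2, \dots\} \subseteq Y$ is countable with nonempty interior. Fix a nonempty open $V \subseteq C$.

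The first step is to show that each singleton $\{c_n\}$ is closed and nowhere dense. It is closed because $Y$ is Hausdorff, hence $T_1$. Its interior is empty: if $\{c_n\}$ had nonempty interior, then $\{c_n\}$ would be open, so $c_n$ would be an isolated point, which is excluded by hypothesis.

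The second step is to pass to the subspace $V$. Since $V$ is open in a compact Hausdorff space, it is locally compact Hausdorff and hence a Baire space. Alternatively, one can argue directly in $Y$: the sets $Y \setminus \{c_n\}$ are open and dense, so by the Baire property of compact Hausdorff spaces (\cite{bib:munkres},~\S48, as already used in Proposition~\ref{prop:notfirst}) the intersection $D := \bigcap_n (Y \setminus \{c_n\}) = Y \setminus C$ is dense in $Y$.

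The contradiction is then immediate. Because $D$ is dense, it meets the nonempty open set $V$. But $V \subseteq C$ and $D \cap C = \varnothing$, so $D \cap V = \varnothing$. Hence $\mathrm{int}(C) = \varnothing$. The same argument handles finite $C$, since finite unions of closed nowhere-dense sets are nowhere dense, or one simply runs the argument with finitely many $n$.

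There is no serious obstacle here. The only points needing care are citing the Baire category theorem in its compact Hausdorff form, rather than its complete-metric form, so that nonmetrizable bases such as $\beta\mathbb{N}\setminus\mathbb{N}$ are covered, and using the $T_1$ property so that the singletons are closed.
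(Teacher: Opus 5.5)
Your proof is correct and is essentially the paper's argument. Both remove the countably many points one at a time and use the Baire property to get a dense set disjoint from a nonempty open subset of $C$. The only difference is that the paper intersects the sets $U\setminus\{s_n\}$ inside the open subspace $U$, which needs the fact that open subspaces of Baire spaces are Baire, whereas your main line intersects the sets $Y\setminus\{c_n\}$ in $Y$ itself and so avoids that step.
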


\begin{proof}
Let $S = \{s_1, s_2, \ldots\}$ be countable and suppose
$\mathrm{int}(S) \neq \varnothing$; let $U \subseteq S$ be nonempty
open.  Since $X$ has no isolated points, each $U_n := U \setminus \{s_n\}$ is
open and dense in~$U$.  Since compact Hausdorff spaces are
Baire~\cite{bib:munkres}, and open subsets of Baire spaces are Baire,
$\bigcap_n U_n = U \setminus S$ is dense in~$U$.  But
$U \subseteq S$ gives $U \setminus S = \varnothing$, which cannot be
dense in the nonempty space~$U$.
\end{proof}

\begin{proposition}[General convergence criterion]\label{prop:gen-conv}
Let $Y$ be a compact Hausdorff space without isolated points,
$A \subseteq Y$ nonempty and closed with $\mathrm{int}(A) = \varnothing$
\textup{(}in particular, $A$ finite\textup{)}, and
$X = \mathrm{FMQ}(Y, A, \mathcal{D}_Y)$.
For any sequence $(z_n)$ in $Y \setminus A$,
\[
\hat{z}_n \to \ast \;\text{ in } X
\quad\Longleftrightarrow\quad
(z_n) \text{ has no accumulation point in } Y \setminus A.
\]
\end{proposition}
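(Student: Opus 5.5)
The plan is to follow the proof of Proposition~\ref{prop:seqconv} verbatim, replacing the Euclidean facts about $\overline{\mathbb{R}}$ with general facts about compact Hausdorff spaces. The first step is to record the neighborhood structure at $\ast$, exactly as in Lemma~\ref{lem:nbhd}. An open $U \ni \ast$ corresponds to $F_U := Y \setminus q^{-1}(U)$. This is a closed subset of $Y \setminus A$ with empty interior, since $q^{-1}(U)$ is open, dense, and contains $A$. Conversely, for every closed nowhere-dense $F \subseteq Y \setminus A$, the set $W_F := q(Y \setminus F)$ is an open neighborhood of $\ast$, because $Y \setminus F$ is saturated, open, and dense.

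\textbf{($\Leftarrow$).} Let $U \ni \ast$ be open and set $F := F_U$, which is compact since it is closed in compact $Y$. Suppose infinitely many $z_n$ lie in $F$. Every infinite subset of a compact space has a complete accumulation point, so the sequence $(z_n)$ has an accumulation point $p$ in $F$: every neighborhood of $p$ contains $z_n$ for infinitely many $n$. Here I will take ``accumulation point'' to mean cluster point of the sequence, which is the natural reading in a space that need not be first countable. Since $F \subseteq Y \setminus A$, this contradicts the hypothesis. Hence $z_n \in q^{-1}(U)$ eventually, that is, $\hat z_n \in U$ eventually.

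\textbf{($\Rightarrow$).} Suppose $t \in Y \setminus A$ is a cluster point of $(z_n)$. The goal is a closed nowhere-dense $C \subseteq Y \setminus A$ that contains $z_n$ for infinitely many $n$; then $W_C$ is a neighborhood of $\ast$ missed infinitely often, so $\hat z_n \not\to \ast$.

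\emph{Main obstacle.} Without first countability there need not be a subsequence converging to $t$, so the set $\{x_{n_k}\}\cup\{t\}$ used for ERI may be unavailable. The fix is to take
\[
C := \overline{\{z_n : n \ge 1\} \cap V},
\]
where $V$ is an open neighborhood of $t$ with $\overline V \subseteq Y \setminus A$. Such a $V$ exists by regularity of compact Hausdorff $Y$, since $A$ is closed and $t \notin A$.

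Three properties of $C$ then need checking.
\begin{itemize}
\item $C \subseteq \overline V \subseteq Y\setminus A$.
\item $C$ contains infinitely many $z_n$, because $t$ is a cluster point and $V$ is a neighborhood of $t$.
\item $C$ is nowhere dense. This is the only nontrivial point. $C$ is the closure of a countable set $S$, and a countable set need not have nowhere-dense closure in general (for instance $\mathbb{Q}$). So I need more than Lemma~\ref{lem:countable-baire}.
\end{itemize}

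For the third property, I would exploit the hypothesis once more. Every cluster point of $(z_n)$ in $Y \setminus A$ is a priori allowed here, so the argument cannot rely on the hypothesis. Instead I would shrink the choice. If $\overline{S}$ has nonempty interior, pick a single $z_{n_0}$, or better, replace $C$ by a closed countable set. In a compact Hausdorff space, the closure of a countable set $S$ has nonempty interior $O$ only if $S \cap O$ is dense in $O$.

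A cleaner route is to avoid closures entirely. Either some value $v = z_n$ is repeated infinitely often, in which case take $C = \{v\}$, closed and nowhere dense since $Y$ has no isolated points. Or, arguing as in Step~2 of Proposition~\ref{prop:accum} when $t$ has a countable base, extract distinct $z_{n_k} \to t$ and set $C = \{z_{n_k}\}\cup\{t\}$. This is closed, countable, hence meager, hence nowhere dense by Lemma~\ref{lem:countable-baire} and Baire.

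For the general non-first-countable case, I would first try to prove the proposition under the reading of ``accumulation point'' as a limit of a subsequence, where the extraction is immediate. I would flag that the fully general cluster-point version of ($\Rightarrow$) is the delicate step.
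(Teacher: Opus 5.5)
\textbf{The ($\Leftarrow$) direction is correct; the ($\Rightarrow$) direction has a genuine gap.}

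Your ($\Leftarrow$) argument works in full generality. It is in fact more careful than the paper's, which claims that compactness of $F$ yields a convergent subsequence; that claim fails, e.g., in $\beta\mathbb{N}\setminus\mathbb{N}$. One small fix: if the infinitely many $z_n\in F$ take only finitely many values, you need the repeated value. It is cleaner to say that the terms lying in the compact set $F$ have a cluster point in $F$.

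The gap is in ($\Rightarrow$). You only handle an infinitely repeated value, or a point $t$ with a countable base. That is exactly what the paper does when it passes to a subsequence $z_{n_k}\to t$. For a general compact Hausdorff $Y$ this direction therefore remains unproved, and the stated hypothesis does cover such $Y$, including $\beta\mathbb{N}\setminus\mathbb{N}$.

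Your fallback of reading ``accumulation point'' as ``limit of a subsequence'' does not rescue anything, because under that reading ($\Leftarrow$) is false. Take $Y=\beta\mathbb{N}\setminus\mathbb{N}$ with $A=\{p\}$, and write $B^{*}:=\mathrm{cl}_{\beta\mathbb{N}}B\setminus\mathbb{N}$. Choose an infinite $B\subseteq\mathbb{N}$ with $p\notin B^{*}$, pairwise disjoint infinite sets $B_n\subseteq B$, and points $z_n\in B_n^{*}$. Since $Y$ has no nontrivial convergent sequences, no subsequence of $(z_n)$ converges. Yet $\{z_n\}$ is discrete, so by the fact below $C:=\overline{\{z_n : n\ge 1\}}\subseteq B^{*}$ is closed, nowhere dense, and misses $A$. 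Then $W_C$ is a neighbourhood of $\ast$ containing no $\hat z_n$. So you must keep the cluster-point reading and find a different set $C$.

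The missing idea is to extract a \emph{discrete} set instead of a convergent sequence. Two facts do the work.
\begin{itemize}
\item Every infinite Hausdorff space contains an infinite discrete subspace (standard).
\item In a $T_1$ space without isolated points, the closure of a discrete set $D$ is nowhere dense. Indeed, suppose $O:=\mathrm{int}\,\overline{D}\neq\varnothing$. Pick $d\in D\cap O$ and an open $U\ni d$ with $U\cap D=\{d\}$. Then $U\cap O\subseteq\overline{U\cap D}=\{d\}$, so $d$ is isolated in $Y$, a contradiction.
\end{itemize}
Now take your $V$ with $t\in V$ and $\overline{V}\cap A=\varnothing$. If some value $v$ equals $z_n\in V$ for infinitely many $n$, put $C:=\{v\}$. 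Otherwise $E:=\{z_n : z_n\in V\}$ is infinite; choose an infinite discrete $D\subseteq E$ and put $C:=\overline{D}\subseteq\overline{V}\subseteq Y\setminus A$. In either case $C$ is a closed nowhere-dense subset of $Y\setminus A$ containing $z_n$ for infinitely many $n$. Hence $(\hat z_n)$ leaves the neighbourhood $W_C$ of $\ast$ infinitely often, and $\hat z_n\not\to\ast$.

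With this step your argument proves the proposition as stated. The paper's subsequence-based proof is valid only when $Y\setminus A$ is first countable (e.g.\ $Y$ metrizable).
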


\begin{proof}
($\Rightarrow$): Suppose $t \in Y \setminus A$ is an accumulation
point. Pass to a subsequence $z_{n_k} \to t$.
The set $C := \{z_{n_k}\} \cup \{t\}$ is closed, countable,
and nowhere dense (Lemma~\ref{lem:countable-baire}).
Since $C \subset Y \setminus A$, the set $Y \setminus C$ is open
and dense, so $q(Y \setminus C)$ is a neighborhood of~$\ast$
not containing $\hat{z}_{n_k}$ for any~$k$---contradicting
$\hat{z}_n \to \ast$.

($\Leftarrow$): If $U$ is a neighborhood of $\ast$, then
$F := Y \setminus q^{-1}(U)$ is closed in $Y$ (hence compact)
and contained in $Y \setminus A$.
If infinitely many $z_n$ belonged to $F$, compactness yields
a convergent subsequence with limit in $F \subseteq Y \setminus A$,
contradicting the hypothesis.
Hence eventually $\hat{z}_n \in U$.
\end{proof}

For the original ERI space, this reduces to
Proposition~\ref{prop:seqconv}: $\hat{x}_n \to \ast$ iff
$(x_n)$ has no accumulation point in $\mathbb{R}\setminus\{0\}$.

\begin{lemma}[Baire exclusion]\label{lem:baire}
Let $Y$ be compact Hausdorff, $A \subseteq Y$ finite, and suppose no point of
$Y \setminus A$ is isolated in~$Y$. Then $\mathrm{FMQ}(Y, A, \mathcal{D}_Y)$ is US.
\end{lemma}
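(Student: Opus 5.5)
We will show directly that a sequence $(y_n)$ in $X = \mathrm{FMQ}(Y, A, \mathcal{D}_Y)$ cannot converge to two distinct limits $L_1 \neq L_2$. We follow the structure of Proposition~\ref{prop:unique}, replacing the metric arguments for $\overline{\mathbb{R}}$ with compactness and the Baire property of $Y$.

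\emph{Preliminaries.} First we record that $X$ is $T_1$, by Proposition~\ref{prop:fmq-basic}(ii). Indeed, $Y$ is Hausdorff and $A$ is finite, hence closed. For $y \in Y \setminus A$, the set $Y \setminus \{y\}$ is dense because $y$ is not isolated, so it lies in $\mathcal{D}_Y$. It follows that $\{\ast\}$ is closed, and that $X \setminus \{\ast\}$ is homeomorphic to the Hausdorff space $Y \setminus A$, since $\tau_{\mathcal{F}}$ agrees with $\tau_q$ on sets avoiding $\ast$ (Proposition~\ref{prop:fmq-top}).

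\emph{Reduction to sequences avoiding $\ast$.} If $y_n = \ast$ for infinitely many $n$, then any limit $L \neq \ast$ has the neighbourhood $X \setminus \{\ast\}$, which misses those terms. Hence $\ast$ is the only possible limit. Otherwise we may discard finitely many terms and write $y_n = \hat z_n$ with $z_n \in Y \setminus A$.

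\emph{The two-point case.} If both limits lie in $X \setminus \{\ast\}$, the sequence is eventually in this open Hausdorff subspace, so the limits agree.

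\emph{The main case: $L_1 = \ast$ and $L_2 = \hat c$ with $c \in Y \setminus A$.*}
\begin{itemize}
\item Convergence to $\hat c$ in the open subspace $X \setminus \{\ast\} \cong Y \setminus A$ means $z_n \to c$ in $Y$.
\item We would like to invoke Proposition~\ref{prop:gen-conv}, but its proof used that $C = \{z_{n_k}\} \cup \{t\}$ is nowhere dense via Lemma~\ref{lem:countable-baire}. That lemma needs no isolated points in all of $Y$, whereas here only points of $Y \setminus A$ are assumed non-isolated. So we argue directly.
\item Set $C := \{z_n : n \ge 1\} \cup \{c\}$. It is closed in $Y$, since $Y$ is Hausdorff and $z_n \to c$, and $C \subseteq Y \setminus A$.
\item It is countable. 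Each singleton $\{z_n\}$ or $\{c\}$ is closed with empty interior, because these points are not isolated. So $C$ is a closed meager set in the compact Hausdorff, hence Baire, space $Y$, and therefore $\mathrm{int}(C) = \varnothing$. This is the same Baire-category step as in Lemma~\ref{lem:countable-baire}, applied to a nonempty open $U \subseteq C$ with the countably many dense-in-$U$ open sets $U \setminus \{s\}$ for $s \in C$.
\item Consequently $Y \setminus C$ is open, dense, and contains $A$. Thus $W := q(Y \setminus C)$ is a $\tau_{\mathcal{D}_Y}$-open neighbourhood of $\ast$ containing no $y_n$, contradicting $y_n \to \ast$.
\end{itemize}

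The only delicate point is this nowhere-density step under the weakened hypothesis on isolated points. Even an isolated point of $A$ causes no trouble, since $C$ avoids $A$ entirely. Everything else is a routine transcription of Proposition~\ref{prop:unique}.
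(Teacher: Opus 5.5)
Your proof is correct and follows the paper's argument: you use the same reduction via closedness of $\{\ast\}$, the same Hausdorff case inside $X \setminus \{\ast\} \cong Y \setminus A$, and the same contradiction from the neighbourhood $q(Y \setminus C)$ of $\ast$, where $C = \{z_n\} \cup \{c\}$ is closed, countable and nowhere dense. The only difference is that the paper cites Proposition~\ref{prop:gen-conv}, whose stated hypotheses (no isolated points in all of $Y$, and $\mathrm{int}(A)=\varnothing$) are stronger than the lemma's; you instead inline the Baire step and correctly observe that it only needs the points of $C \subseteq Y \setminus A$ to be non-isolated, so your version matches the lemma exactly as stated.
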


\begin{proof}
Let $(y_n)$ be a convergent sequence in $X$ with limits $L_1, L_2$.
If $y_n = \ast$ for infinitely many~$n$, then every limit must lie
in $\{\ast\}$ (since $\{\ast\}$ is closed by
Proposition~\ref{prop:fmq-basic}(ii) and the assumed $T_1$ of~$X$),
so $L_1 = L_2 = \ast$.
Otherwise a tail of $(y_n)$ lies in $X \setminus \{\ast\}$; write
$y_n = \hat{z}_n$ with $z_n \in Y \setminus A$.
If $\hat{z}_n \to \ast$ and $\hat{z}_n \to \hat{c}$ with
$c \in Y \setminus A$, then $z_n \to c$ in~$Y$, so $c$ is an
accumulation point of $(z_n)$ in $Y \setminus A$---contradicting
Proposition~\ref{prop:gen-conv}.
If $\hat{z}_n \to \hat{c}$ and $\hat{z}_n \to \hat{d}$ with
$c \neq d$ in $Y \setminus A$, the Hausdorff property of~$Y$ gives
disjoint neighborhoods that cannot both contain $z_n$ eventually.
\end{proof}

\subsection{US and KC criteria for general modifiers}

\begin{theorem}[US criterion]\label{thm:USgeneral}
Let $Y$ be compact Hausdorff, $A \subseteq Y$ finite, and $\mathcal{F} = \mathcal{D}_Y$.
Then $X = \mathrm{FMQ}(Y, A, \mathcal{D}_Y)$ is US if and only if no point of
$Y \setminus A$ is isolated in~$Y$.
\end{theorem}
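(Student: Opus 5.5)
The ``if'' direction is exactly Lemma~\ref{lem:baire} (Baire exclusion): when no point of $Y \setminus A$ is isolated in~$Y$, $\mathrm{FMQ}(Y, A, \mathcal{D}_Y)$ is US. Before citing it, I would check that its proof uses only the hypothesis on $Y\setminus A$. It relies on $\{\ast\}$ being closed, which needs only $A$ closed; this holds since $A$ is finite in the Hausdorff space~$Y$. It also relies on Proposition~\ref{prop:gen-conv}. That proposition is stated for $Y$ without isolated points, but its proof only uses that the countable closed set $C \subset Y \setminus A$ is nowhere dense, i.e.\ that $Y\setminus C$ is dense. I would re-derive this via Lemma~\ref{lem:countable-baire} applied inside the open subspace $Y \setminus A$. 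Points of $A$ might be isolated, but $C$ avoids $A$. Any nonempty open $U \subseteq C$ lies in $Y \setminus A$, where no point is isolated. $U$ is also locally compact Hausdorff, hence Baire, so $U$ cannot be countable. This settles sufficiency.

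For the ``only if'' direction, suppose $y_0 \in Y \setminus A$ is isolated in~$Y$, and aim to show $X$ is not US. The plan is to observe that the density condition now forces every neighborhood of $\ast$ to contain $\hat{y}_0$. Indeed, if $U \ni \ast$ is open, then $q^{-1}(U)$ is dense. Since $\{y_0\}$ is a nonempty open set of~$Y$, it meets $q^{-1}(U)$, so $y_0 \in q^{-1}(U)$ and $\hat{y}_0 \in U$. Consequently the constant sequence $\hat{y}_n := \hat{y}_0$ converges both to $\hat{y}_0$ and to $\ast$. These are distinct points because $y_0 \notin A$, so US fails.

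The only genuine subtlety is in the sufficiency direction: confirming that the cited convergence criterion remains valid when points of $A$ may be isolated, as handled above. The necessity direction is a one-line density argument. I note that it actually shows $X$ is not even $T_1$ in that case, in agreement with Proposition~\ref{prop:fmq-basic}(ii), since $Y\setminus\{y_0\}$ is then not dense.
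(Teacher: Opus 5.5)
Your proof is correct and follows the paper's argument in both directions: sufficiency by Lemma~\ref{lem:baire}, and necessity because an isolated $p \in Y \setminus A$ lies in every dense set, so the constant sequence at $\hat{p}$ converges to both $\hat{p}$ and $\ast$. Your extra check is a worthwhile repair of a point the paper glosses over. You show that the countable closed set $C \subset Y \setminus A$ is still nowhere dense when points of $A$ are isolated, by running the Baire argument inside the open set $Y \setminus A$; this matters because Lemma~\ref{lem:baire} invokes Proposition~\ref{prop:gen-conv}, which is stated only for $Y$ without isolated points and with $\mathrm{int}(A) = \varnothing$.
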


\begin{proof}
($\Leftarrow$). This is Lemma~\ref{lem:baire} (Baire exclusion).

($\Rightarrow$). Suppose $p \in Y \setminus A$ is isolated in~$Y$. Then $\{p\}$ is
open, so every dense subset of~$Y$ must contain~$p$. Every open neighborhood~$U$ of~$\ast$
in $\tau_{\mathcal{D}_Y}$
has preimage $q^{-1}(U)$ that is dense in~$Y$, hence contains~$p$, so $\hat{p} \in U$.
Therefore $\hat{p}$ belongs to every neighborhood of~$\ast$. The constant sequence
$(\hat{p}, \hat{p}, \ldots)$ converges to both $\hat{p}$ and~$\ast$, so $X$ is not US.
\end{proof}

\begin{theorem}[KC failure criterion]\label{thm:KCgeneral}
Let $Y$ be compact Hausdorff without isolated points, $A \subseteq Y$ finite, and
$\mathcal{F} = \mathcal{D}_Y$. Then $X = \mathrm{FMQ}(Y, A, \mathcal{D}_Y)$ is not KC
if and only if there exists a closed $C \subset Y \setminus A$ with
$\mathrm{int}_Y(C) \neq \varnothing$.
\end{theorem}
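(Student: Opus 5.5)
We prove the two directions separately, arguing throughout with the description of open sets from Definition~\ref{def:fmq}. A set $S \subseteq X$ is closed iff $X \setminus S$ is open. Hence a compact set $K \subseteq X$ with $\ast \notin K$ is closed iff $q^{-1}(K)$ is closed in $Y$ and $Y \setminus q^{-1}(K)$ is dense. Since $q^{-1}(K)$ is closed, density of its complement just says that $q^{-1}(K)$ has empty interior.

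\emph{($\Leftarrow$).} This is the argument of Theorem~\ref{thm:general}(f). Given a closed $C \subset Y \setminus A$ with $\mathrm{int}_Y(C) \neq \varnothing$, the set $q(C)$ is compact, as the continuous image of the compact set $C$ under $q$ (Proposition~\ref{prop:fmq-top}). Moreover $\ast \notin q(C)$, and since $C$ avoids $A$ it is saturated, so $q^{-1}(X \setminus q(C)) = Y \setminus C$. This set is not dense because $C$ has interior. Therefore $X \setminus q(C) \notin \tau_{\mathcal{D}_Y}$, and $q(C)$ is a non-closed compact set.

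\emph{($\Rightarrow$).} We argue by contraposition: assume every closed $C \subset Y \setminus A$ has empty interior, and show that $X$ is KC. The first step is to observe that the hypothesis is very strong. If $Y \setminus A$ were nonempty, pick $y \in Y \setminus A$. Regularity of $Y$ and closedness of the finite set $A$ give an open $W \ni y$ with $\overline{W} \subseteq Y \setminus A$, and $\overline{W}$ has interior. So the hypothesis forces $Y \setminus A = \varnothing$, i.e.\ $Y = A$. Now $Y$ is compact Hausdorff without isolated points, hence infinite by Lemma~\ref{lem:infinite}, while $A$ is finite. This contradiction shows the hypothesis never holds, so ($\Rightarrow$) is vacuously true: $X$ is never KC under the standing assumptions. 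If $Y$ were empty, both sides would fail trivially, but $A$ is nonempty, so this case does not arise.

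The only step needing care is the regularity argument. It requires that $A$ be closed (true since $A$ is finite and $Y$ is Hausdorff) and that $Y \setminus A \neq \varnothing$ (from Lemma~\ref{lem:infinite}). Both are immediate, so the real content of the theorem is the direction ($\Leftarrow$), together with the remark that its hypothesis is always met. Combined with Theorem~\ref{thm:general}(f), this confirms the criterion.
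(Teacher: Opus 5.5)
Your proof is correct. The ($\Leftarrow$) direction is the same as the paper's: $q(C)$ is compact, misses $\ast$, and $q^{-1}(X\setminus q(C)) = Y\setminus C$ is not dense.

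For ($\Rightarrow$) you take a different route. The paper argues the contrapositive directly. It assumes every closed subset of $Y\setminus A$ is nowhere dense and then checks that every compact $K\subseteq X$ is closed, in two cases:
\begin{itemize}
\item for $\ast\notin K$, it uses that $q^{-1}(K)$ is a compact, hence closed, subset of $Y\setminus A$, so $Y\setminus q^{-1}(K)$ is open and dense;
\item for $\ast\in K$, it asserts that $q^{-1}(K)$ is closed, so the modifier condition is vacuous.
\end{itemize}
You instead show that this hypothesis can never be satisfied. $Y$ is infinite by Lemma~\ref{lem:infinite}, $A$ is finite and hence closed, and regularity gives $\overline{W}\subseteq Y\setminus A$ with nonempty interior. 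So the right-hand side always holds and ($\Rightarrow$) is trivial. This is the same regularity argument the paper itself uses later in Theorem~\ref{thm:general}(f) and Corollary~\ref{cor:notKCgeneral}.

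Your route is shorter. It makes explicit that under the stated hypotheses the ``criterion'' is degenerate: the theorem really says that $X$ is never KC. It also avoids having to justify the paper's intermediate claims that $q^{-1}(K)$ is compact, resp.\ closed, in $Y$ for compact $K\subseteq X$. These are not automatic because $X$ is not Hausdorff, particularly the claim that $q^{-1}(K)$ is closed when $\ast\in K$. The paper's version is written as a genuine KC verification. It would only carry content in a setting where the right-hand side could fail, and your observation shows that never happens here.
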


\begin{proof}
($\Leftarrow$). Such a $C$ is compact (closed in compact~$Y$), so $q(C)$ is compact.
But $X \setminus q(C) \ni \ast$ (since $C \cap A = \varnothing$) and
$q^{-1}(X \setminus q(C)) = Y \setminus C$ is not dense ($C$ has nonempty interior),
so $q(C)$ is not closed.

($\Rightarrow$). If no such $C$ exists, then every closed subset of $Y \setminus A$ is
nowhere dense. For every compact $K \subseteq X$ with $\ast \notin K$:
$q^{-1}(K)$ is compact in~$Y$, hence
closed (Hausdorff), hence a closed subset of $Y \setminus A$, hence
nowhere dense by hypothesis, so $Y \setminus q^{-1}(K)$ is open and dense.
Thus $X \setminus K$ is open. For compact $K \ni \ast$: the preimage is closed, and
$\ast \notin X \setminus K$, so the $\mathcal{F}$-condition is vacuous.
\end{proof}

\begin{corollary}\label{cor:notKCgeneral}
If $Y$ is compact Hausdorff without isolated points and $|Y \setminus A| \geq 2$, then
$\mathrm{FMQ}(Y, A, \mathcal{D}_Y)$ is always not KC.
\end{corollary}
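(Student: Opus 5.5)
The plan is to apply the ($\Leftarrow$) direction of Theorem~\ref{thm:KCgeneral}. It suffices to produce a closed set $C \subset Y \setminus A$ with $\mathrm{int}_Y(C) \neq \varnothing$. The hypothesis $|Y \setminus A| \geq 2$ (in fact $\geq 1$) guarantees a point to build $C$ around. Implicitly $A$ is finite (as in Theorem~\ref{thm:KCgeneral}), so I will work under that standing assumption.

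\emph{Step 1: choose a point.} Pick $y \in Y \setminus A$, which exists by the cardinality hypothesis.

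\emph{Step 2: separate $y$ from $A$.} Since $A$ is finite it is closed in the Hausdorff space $Y$. Compact Hausdorff spaces are regular (indeed normal), so there is an open $W$ with
\[
y \in W \subseteq \overline{W} \subseteq Y \setminus A.
\]

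\emph{Step 3: check the properties of $C$.} Set $C := \overline{W}$. Then $C$ is closed and disjoint from $A$, and $\mathrm{int}_Y(C) \supseteq W \ni y$ is nonempty. Theorem~\ref{thm:KCgeneral} then yields that $X$ is not KC. Concretely, $q(C)$ is compact, it misses $\ast$, and its complement has non-dense preimage $Y \setminus C$, so $q(C)$ is not closed.

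This is exactly the argument of Theorem~\ref{thm:general}(f), so I expect no real obstacle. The only points to handle carefully are that $A$ is closed, which uses finiteness and $T_1$, and that regularity of $Y$ supplies $W$. The condition $|Y\setminus A|\ge 2$ is stronger than needed: since $Y$ has no isolated points and is nonempty, Lemma~\ref{lem:infinite} already makes $Y \setminus A$ infinite.
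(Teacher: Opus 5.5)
Your proposal is correct and matches the paper's proof essentially verbatim: regularity of the compact Hausdorff space $Y$ gives an open $W$ with $y \in W \subseteq \overline{W} \subseteq Y \setminus A$, and $C := \overline{W}$ then satisfies the hypothesis of the ($\Leftarrow$) direction of Theorem~\ref{thm:KCgeneral}. Your side remark is also right: $|Y \setminus A| \geq 1$ suffices, and this is automatic from Lemma~\ref{lem:infinite} when $A$ is finite.
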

\begin{proof}
Since $Y$ is compact Hausdorff (hence regular) and $A$ is finite, for any
$y \in Y \setminus A$ there exists an open set $W$ with
$y \in W \subseteq \overline{W} \subseteq Y \setminus A$.
Then $C := \overline{W}$ is closed, $C \subset Y \setminus A$, and
$\mathrm{int}_Y(C) \supseteq W \neq \varnothing$.
The result follows from Theorem~\ref{thm:KCgeneral}.
\end{proof}

\subsection{SC property for general FMQ spaces}

\begin{proposition}[SC for FMQ spaces]\label{prop:SCgeneral}
Let $Y$ be compact Hausdorff without isolated points, $A$ finite, and
$\mathcal{F}$ admissible with
$\mathcal{F}_{\mathrm{com}} \subseteq \mathcal{F} \subseteq \mathcal{D}_Y$.
Then $X = \mathrm{FMQ}(Y, A, \mathcal{F})$ is SC \textup{(sequentially closed):}
every convergent sequence together with its limit is a closed set.
\end{proposition}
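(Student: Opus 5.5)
The plan is to mirror the ERI argument of Proposition~\ref{prop:SC}, replacing the explicit use of density by the sandwich $\mathcal{F}_{\mathrm{com}} \subseteq \mathcal{F} \subseteq \mathcal{D}_Y$. Here $\mathcal{F}_{\mathrm{com}}$ is the comeager modifier, as used in Lemma~\ref{lem:closedfibers}. Let $(y_n)$ converge to $L$ in $X$ and put $S := \{y_n\} \cup \{L\}$. We must show $X \setminus S \in \tau_{\mathcal{F}}$. That is, $q^{-1}(X \setminus S)$ is open in $Y$, and if $\ast \notin S$ this preimage lies in $\mathcal{F}$. Since $\mathcal{F}_{\mathrm{com}} \subseteq \mathcal{F}$ and $\mathcal{F}$ is upward closed, membership will follow once the preimage is shown to be comeager. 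For a set of the form $Y \setminus C$ with $C$ closed and nowhere dense, this is immediate.

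\emph{Case $L = \ast$.} Here $\ast \in S$, so only openness of $q^{-1}(X\setminus S) = Y \setminus (A \cup Z)$ is needed, where $Z := \{z_n : y_n \neq \ast\}$ and $y_n = \hat z_n$. We must show $A \cup Z$ is closed in $Y$. The tool should be Proposition~\ref{prop:gen-conv}, but that was proved for $\mathcal{D}_Y$. I would re-prove only its ($\Rightarrow$) direction for $\mathcal{F}$. If some $t \in Y\setminus A$ were an accumulation point of $Z$, I want a closed nowhere-dense $C \subset Y \setminus A$ containing infinitely many $z_n$. Then $q(Y\setminus C)$ is a neighborhood of $\ast$ (its preimage is comeager, hence in $\mathcal{F}$), and it misses infinitely many terms, contradicting $y_n \to \ast$.

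Without first countability of $Y$ we cannot extract a subsequence converging to $t$. Instead I would use that $Y$ is compact Hausdorff, hence regular. Choose an open $W \ni t$ with $\overline W \cap A = \varnothing$; infinitely many $z_n$ lie in $W$. Take $C := \overline{\{z_n : z_n \in W\}}$. This is the closure of a countable set and lies in $\overline W \subseteq Y \setminus A$. \emph{The main obstacle} is that $C$ need not be nowhere dense, since the closure of a countable set can have interior. If this happens, I would try a different route: use the $T_1$ property of $X$ (Proposition~\ref{prop:fmq-basic}(ii)), which shows that each $Y \setminus\{y\} \in \mathcal{F}$. With $\mathcal{F}_{\mathrm{com}} \subseteq \mathcal{F}$ this gives that $Y \setminus C \in \mathcal{F}$ for every countable closed $C$, because countable sets are meager by Lemma~\ref{lem:countable-baire}. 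So it suffices to find a subsequence whose set of values, together with its accumulation points outside $A$, forms a closed meager set avoiding $A$. I would take a discrete countable subset of $W$ drawn from the $z_n$, with its closure inside $\overline W$. Failing that, I would assume $Y$ first countable, as in Theorem~\ref{thm:k2H}, and use sequence extraction exactly as in Proposition~\ref{prop:gen-conv}.

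Once no accumulation point of $Z$ lies in $Y \setminus A$, closedness of $A\cup Z$ is straightforward. Any limit point of $Z$ lies in $A$, which is finite and hence closed, so $\overline{Z} \subseteq Z \cup A$.

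\emph{Case $L = \hat c$, $c \notin A$.} Since $\{\ast\}$ is closed, eventually $y_n \ne \ast$. Because $X\setminus\{\ast\} \cong Y\setminus A$ is Hausdorff, $z_n \to c$ in $Y$. Moreover only finitely many $y_n$ equal $\ast$; if any do, $S$ contains $\ast$ and we only need openness, as in the first case. Otherwise put $C := \{z_n\}\cup\{c\}$. This set is compact, being a convergent sequence with its limit, so it is closed; it is countable, hence nowhere dense by Lemma~\ref{lem:countable-baire}; and it lies in $Y\setminus A$. Therefore $Y\setminus C$ is open, dense and comeager, so it belongs to $\mathcal{F}$, and $X\setminus S$ is open.
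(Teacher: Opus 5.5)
\textbf{There is a genuine gap, in the case $L=\ast$.} Your case $L=\hat c$ is correct and matches the paper. You also correctly reduce $L=\ast$ to excluding a point $t\in\overline{Z}\setminus(A\cup Z)$. That reduction, however, is never completed.
\begin{itemize}
\item Your first candidate, $C=\overline{\{z_n : z_n\in W\}}$, can have interior, as you note yourself.
\item Your second idea, passing to a countable relatively discrete $D\subseteq Z\cap W$, is the right move. But you give no reason why $\overline D$ should be meager. The detour through countable closed sets does not help, since the closure of a countable discrete set need not be countable: in $\beta\mathbb{N}\setminus\mathbb{N}$ it is a copy of $\beta\mathbb{N}$.
\item Your third option, assuming $Y$ first countable, proves a strictly weaker statement than the one claimed.
\end{itemize}
So for non-first-countable $Y$ the case $L=\ast$ remains unproved. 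The paper's proof does not fill this hole either. It calls this case ``identical'' to ERI, which amounts to invoking Proposition~\ref{prop:gen-conv}. The ($\Rightarrow$) direction of that proposition passes to a subsequence $z_{n_k}\to t$, which is exactly the extraction you flagged as unavailable.

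\textbf{The missing ingredient} is elementary: \emph{in a $T_1$ space without isolated points, every relatively discrete set $D$ is nowhere dense.}
\begin{itemize}
\item \emph{Proof of the lemma.} Suppose a nonempty open $U$ satisfied $U\subseteq\overline D$. Pick $d\in D\cap U$ and an open $V\ni d$ with $V\cap D=\{d\}$. Since $V$ is open and $U\cap V\subseteq\overline D$, every point of $U\cap V$ lies in $\overline{D\cap V}=\overline{\{d\}}=\{d\}$. Hence $U\cap V=\{d\}$, and $d$ would be isolated.
\item \emph{Building $D$.} Set $P_0:=W$. Inductively choose $d_k\in Z\cap P_{k-1}$; this is possible because $t\in P_{k-1}$ and $t\in\overline Z\setminus Z$. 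Then choose disjoint open sets $O_k\ni d_k$ and $P_k\ni t$, both contained in $P_{k-1}$. Every $d_j$ with $j\neq k$ lies outside $O_k$, so $D:=\{d_k\}$ is infinite and relatively discrete.
\item \emph{Finishing the case.} The set $C:=\overline D\subseteq\overline W\subseteq Y\setminus A$ is closed and nowhere dense. Hence $Y\setminus C$ is open, dense and comeager, so it lies in $\mathcal F$. Therefore $q(Y\setminus C)$ is a neighborhood of $\ast$ that omits the infinitely many distinct values $d_k$, and hence infinitely many terms $y_n$. This contradicts $y_n\to\ast$.
\end{itemize}
With this lemma inserted, your outline proves the proposition for every compact Hausdorff $Y$ without isolated points. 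The same lemma also repairs the ($\Rightarrow$) direction of Proposition~\ref{prop:gen-conv}.
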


\begin{proof}
The proof follows the same scheme as Proposition~\ref{prop:SC},
replacing $\overline{\mathbb{R}}$ with $Y$,
$\{-\infty,0,+\infty\}$ with~$A$, and the density condition with
the comeager hypothesis: in the case $L = \hat{c}$, the set
$C := \{c\} \cup \{z_n\}$ is closed, countable, and meager
(each singleton is nowhere dense), hence nowhere dense; its
complement is dense and comeager, so belongs to~$\mathcal{F}$.
The case $L = \ast$ is identical.
\end{proof}

\subsection{Hereditary properties}

\begin{proposition}[US is hereditary]\label{prop:UShereditary}
Every subspace of a US space is US. In particular, every
subspace of $\mathrm{FMQ}(Y, A, \mathcal{D}_Y)$ has unique sequential limits.
\end{proposition}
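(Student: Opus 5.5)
The plan is to argue directly from the definitions, since the statement is purely about subspace topology. Let $X$ be a US space and $S \subseteq X$ a subspace with the relative topology, and let $(s_n)$ be a sequence in $S$ converging in $S$ to two points $L_1, L_2 \in S$. The aim is to show $L_1 = L_2$ by pushing the convergence up to $X$ and invoking uniqueness there.

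\emph{Step 1.} Convergence in $S$ implies convergence in $X$. Let $U$ be any open neighborhood of $L_i$ in $X$. Then $U \cap S$ is open in $S$ and contains $L_i$, so by convergence in $S$ the terms $s_n$ lie in $U \cap S \subseteq U$ for all large $n$. Hence $s_n \to L_i$ in $X$ for $i=1,2$. Equivalently, this is continuity of the inclusion map $S \hookrightarrow X$.

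\emph{Step 2.} Since $X$ is US, the sequence $(s_n)$, viewed in $X$, has at most one limit. Therefore $L_1 = L_2$, and so $S$ is US.

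For the ``in particular'' clause, we combine this with Theorem~\ref{thm:general}(e), or equivalently Lemma~\ref{lem:baire}, which says that $\mathrm{FMQ}(Y, A, \mathcal{D}_Y)$ is US under the standing hypotheses ($Y$ compact Hausdorff without isolated points, $A$ finite). Then Steps 1--2 apply to every subspace.

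There is no real obstacle here. The only point to be careful about is the direction of the implication: we use only that relative open sets are traces of open sets of $X$, i.e.\ continuity of the inclusion. We do not need the converse, which would fail in general.
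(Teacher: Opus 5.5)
Your proof is correct and is essentially the paper's own argument: convergence in the subspace $S$ implies convergence in $X$ (continuity of the inclusion), and uniqueness of sequential limits in $X$ then forces $L_1 = L_2$. Your appeal to Theorem~\ref{thm:general}(e) (equivalently Lemma~\ref{lem:baire}) for the ``in particular'' clause states explicitly, under the standing hypotheses, the step the paper leaves implicit.
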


\begin{proof}
Let $X$ be US and $S \subseteq X$. If $(x_n)$ in $S$ converges to both $a$
and $b$ in the subspace topology, then $x_n \to a$ and $x_n \to b$ in~$X$. By US, $a = b$.
\end{proof}

\subsection{Sobriety of FMQ spaces}

\begin{proposition}[Sobriety for FMQ spaces]\label{prop:soberFMQ}
Let $Y$ be a compact Hausdorff space without isolated points,
$A \subseteq Y$ finite nonempty, and $\mathcal{F}$ any admissible
modifier.
Then $X = \mathrm{FMQ}(Y, A, \mathcal{F})$ is sober, provided $X$
is $T_1$.
\end{proposition}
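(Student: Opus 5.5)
The plan is to follow the scheme of Theorem~\ref{thm:sober} verbatim, replacing $\overline{\mathbb{R}}$ by $Y$, $\{-\infty,0,+\infty\}$ by $A$, and condition~(b) by membership in $\mathcal{F}$. Since $X$ is assumed $T_1$, every singleton is closed, so $\overline{\{x\}}=\{x\}$. Hence a closed set has a generic point only if it is a singleton, and a singleton has a unique generic point. Sobriety therefore reduces to the claim that every closed $F\subseteq X$ with $|F|\ge 2$ is reducible. By Proposition~\ref{prop:fmq-basic}(ii), $T_1$ also gives three facts we will need: $A$ is closed in~$Y$, $Y\setminus\{y\}\in\mathcal{F}$ for every $y\in Y\setminus A$, and $q|_{Y\setminus A}$ is a homeomorphism onto the open subspace $X\setminus\{\ast\}$. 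The last fact holds because $\tau_{\mathcal{F}}$ agrees with $\tau_q$ on sets not containing~$\ast$ (Proposition~\ref{prop:fmq-top}).

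\emph{Case $\ast\notin F$.} Then $F$ lies in the Hausdorff subspace $X\setminus\{\ast\}\cong Y\setminus A$. For distinct $a,b\in F$, take disjoint open $U_a\ni a$, $U_b\ni b$ in that subspace. These are open in~$X$, since $X\setminus\{\ast\}$ is open. Then $F=(F\setminus U_a)\cup(F\setminus U_b)$ writes $F$ as a union of two proper closed subsets.

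\emph{Case $\ast\in F$.} Pick $\hat c\in F$ with $c\in Y\setminus A$. Since $Y$ is compact Hausdorff, hence regular, and $A$ is closed, there are open sets with $c\in O\subseteq\overline{O}\subseteq Y\setminus A$. Put $K:=q(\overline O)$ and $L:=X\setminus q(O)$. The set $L$ is closed, because $q^{-1}(q(O))=O$ is open and avoids~$A$, so the $\mathcal{F}$-condition is vacuous. The sets $K\cup L=X$, $\ast\notin K$ and $\hat c\notin L$ hold exactly as in Theorem~\ref{thm:sober}. Then $F_1:=F\cap K$ misses~$\ast$ and $F_2:=F\cap L$ misses~$\hat c$, so both are proper, and $F=F_1\cup F_2$.

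The main obstacle is showing that $K$ is closed. Its complement has preimage $Y\setminus\overline O$, which is open and contains~$A$, so we need $Y\setminus\overline O\in\mathcal{F}$. For $\mathcal{D}_Y$ this fails as soon as $O\neq\varnothing$, so the ERI choice of a fat interval cannot work for a general $\mathcal{F}$. The fix is to use the freedom in the decomposition: we only need some closed $K\ni\hat c$ with $\ast\notin K$, and some closed $L\ni\ast$ with $\hat c\notin L$ and $K\cup L\supseteq F$. The plan is to take $K:=\{\hat c\}$, which is closed by $T_1$. For $L$ we need a closed set containing $F\setminus\{\hat c\}$ but not~$\hat c$. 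If $\hat c$ is not isolated in~$F$, no such $L$ exists, so instead we choose $K$ to be a closed neighborhood trace. Concretely, consider $F_1:=F\cap q(\overline O)$ and ask whether it is closed in~$F$ rather than in~$X$. This does not help directly either. So I would try the alternative decomposition $F=\bigl(F\setminus q(O)\bigr)\cup\bigl(\overline{F\cap q(O)}\bigr)$. The first set is closed and proper, since it misses~$\hat c$. The second is closed, and it is proper provided $\ast\notin\overline{F\cap q(O)}$. Establishing this is the crux.

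This last point is where I expect the argument to require care. If $\ast$ lies in the closure of $q(O)$ for every small $O$, as happens for $\mathcal{D}_Y$, then one must instead choose two points $\hat c,\hat d\in F\setminus\{\ast\}$ and separate them by disjoint $O_c,O_d$ in $Y\setminus A$. One then uses $F=(F\setminus q(O_c))\cup(F\setminus q(O_d))$. Both pieces are closed and proper, and they cover $F$ because $O_c\cap O_d=\varnothing$. This handles every $F$ with at least two non-$\ast$ points. The residual case $F=\{\ast,\hat c\}$ is immediate: $F=\{\ast\}\cup\{\hat c\}$, a union of two closed proper subsets by $T_1$.
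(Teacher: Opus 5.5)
Your final argument is correct and is essentially the paper's proof. Since $X$ is $T_1$, sobriety reduces to showing that every closed $F$ with $|F|\ge 2$ is reducible. This is handled by Hausdorff separation in $X\setminus\{\ast\}\cong Y\setminus A$ when $\ast\notin F$, by $F=\{\ast\}\cup\{\hat c\}$ when $F=\{\ast,\hat c\}$, and otherwise by $F=(F\setminus q(O_c))\cup(F\setminus q(O_d))$ with disjoint open $O_c,O_d\subseteq Y\setminus A$, whose images are open because the modifier condition is vacuous.

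In a write-up, drop the abandoned detours through $q(\overline O)$ and $\overline{F\cap q(O)}$, and the conditional ``if $\ast$ lies in the closure\dots'' phrasing, since the two-point decomposition works for every admissible $\mathcal{F}$. Your observation that $q(\overline O)$ is not closed under $\mathcal{D}_Y$ is right. It also means Case~2 of Theorem~\ref{thm:sober} is flawed as written: its $K=q([c-\varepsilon,c+\varepsilon])$ is not closed, cf.\ Theorem~\ref{thm:notKC}. That is exactly why the general proof uses the two-point decomposition instead.
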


\begin{proof}
Same scheme as Theorem~\ref{thm:sober}: since $X$ is~$T_1$, sobriety
reduces to showing every closed $F$ with $|F| \geq 2$ is reducible.
If $\ast \notin F$, Hausdorff of $Y \setminus A$ gives reducibility.
If $\ast \in F$ and $|F \setminus \{\ast\}| = 1$,
$F$ is a union of two closed singletons.
If $|F \setminus \{\ast\}| \geq 2$, separate two points of
$F \setminus \{\ast\}$ in $Y \setminus A$ by Hausdorff; the
resulting decomposition has vacuous modifier condition since
$\ast \notin q(U_a) \cup q(U_b)$.
\end{proof}

\begin{remark}\label{rmk:sobriety-general}
Sobriety of FMQ spaces contrasts sharply with the cocountable
topology, which is $T_1$ and connected but \emph{not} sober (the
whole space is irreducible with no generic point).
The difference is that in FMQ spaces, the Hausdorff structure of
$Y \setminus A$ provides enough closed sets to decompose any
non-singleton closed subset.
\end{remark}

\subsection{The Hausdorff barrier}

\begin{theorem}[Hausdorff barrier]\label{thm:barrier}
Let $Y$ be compact Hausdorff, $A \subseteq Y$ finite nonempty, and
$\mathcal{F}$ any $\tau$-admissible modifier.
If\/ $X := \mathrm{FMQ}(Y, A, \mathcal{F})$ is KC, then $X$ is Hausdorff.
\end{theorem}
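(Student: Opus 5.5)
The plan is to show directly that, under the KC hypothesis, every pair of distinct points of $X$ can be separated by disjoint $\tau_{\mathcal{F}}$-open sets. There are two kinds of pairs: two points $\hat{y}, \hat{z}$ with $y, z \in Y \setminus A$, and a pair $\ast, \hat{y}$ with $y \in Y \setminus A$. The key observation, from Proposition~\ref{prop:fmq-top}, is that any set whose preimage is open in $Y$ and avoids $A$ is $\tau_{\mathcal{F}}$-open, because the modifier condition is vacuous for it. The modifier $\mathcal{F}$ therefore only intervenes at $\ast$, and that is where KC will be used.

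\emph{Pairs avoiding $\ast$.} Since $A$ is finite, it is closed in the Hausdorff space $Y$. Given distinct $y, z \in Y \setminus A$, take disjoint open $U_y \ni y$ and $U_z \ni z$ in $Y$, and replace them by $U_y \setminus A$ and $U_z \setminus A$. These are still open, still disjoint, and now avoid $A$, so they are saturated. Their images under $q$ therefore have preimages exactly $U_y \setminus A$ and $U_z \setminus A$, and they do not contain $\ast$. Hence both images are $\tau_{\mathcal{F}}$-open, and they are disjoint.

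\emph{Separating $\ast$ from $\hat{y}$.} This is the only step that uses KC, exactly as in the proof of Theorem~\ref{thm:general}(f) read in reverse.
\begin{enumerate}[label=\textup{(\arabic*)},nosep]
\item Since $Y$ is compact Hausdorff, it is regular, and $A$ is closed with $y \notin A$. So there is an open $W$ with $y \in W \subseteq C := \overline{W} \subseteq Y \setminus A$.
\item The set $C$ is compact in $Y$, so $q(C)$ is compact in $X$ by continuity of $q$ (Proposition~\ref{prop:fmq-top}).
\item By KC, $q(C)$ is closed, so $V_\ast := X \setminus q(C)$ is $\tau_{\mathcal{F}}$-open. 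It contains $\ast$ because $C \cap A = \varnothing$.
\item Because $C$ avoids $A$, it is saturated, so $q^{-1}(V_\ast) = Y \setminus C$. In particular, membership of $Y \setminus C$ in $\mathcal{F}$ is guaranteed by the openness of $V_\ast$; we do not need to check it.
\item Set $V_y := q(W)$. It is $\tau_{\mathcal{F}}$-open because its preimage is $W$, which avoids $A$, and it contains $\hat{y}$.
\item $V_y \subseteq q(C)$, so $V_y$ and $V_\ast$ are disjoint.
\end{enumerate}

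I expect no real obstacle. The only point needing care is the bookkeeping of saturation: preimages of images of subsets of $Y \setminus A$ are the sets themselves, and this is what makes both the openness of $q(W)$ and the identification of $q^{-1}(V_\ast)$ correct. The argument never uses any property of $\mathcal{F}$ beyond admissibility, since KC supplies the needed neighborhood of $\ast$ directly. As a byproduct, combined with Corollary~\ref{cor:notKCgeneral}, this recovers that density-type modifiers can never yield KC without also yielding $T_2$.
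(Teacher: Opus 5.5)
Your proof is correct. It reaches the conclusion by a more hands-on route than the paper does.

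The paper argues globally. KC makes $q(C)$ closed for every closed $C\subseteq Y$, so $q$ is a closed continuous surjection and hence a quotient map. This gives $\tau_{\mathcal F}=\tau_q$, and the paper then cites the fact (Willard, Thm.~9.2) that a closed quotient of a compact Hausdorff space is Hausdorff.

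You instead separate points directly. The case of two ordinary points needs no hypothesis at all, and you invoke KC only once, for the single compact set $q(\overline W)$ with $\overline W\subseteq Y\setminus A$.

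The paper's route is shorter and gives the stronger structural statement that under KC the modifier is completely inert ($\tau_{\mathcal F}=\tau_q$), at the cost of an external citation. Your route is self-contained and shows exactly which compact sets matter: images of closed neighbourhoods avoiding $A$. These are the same sets the paper uses to witness the failure of KC and wH in Theorem~\ref{thm:general}(f) and Theorem~\ref{thm:uniformzone}.

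Both arguments only use that a continuous image of a compact Hausdorff space (here $q$ restricted to a closed subset of $Y$) is closed. So both actually prove $\mathrm{wH}\Rightarrow T_2$ for these spaces.

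You can also recover the paper's identification $\tau_{\mathcal F}=\tau_q$ from your conclusion. The identity $(X,\tau_q)\to(X,\tau_{\mathcal F})$ is a continuous bijection from a compact space onto a Hausdorff space, hence a homeomorphism.
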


\begin{proof}
In a KC space every compact set is closed.
Since $Y$ is compact Hausdorff and $q\colon Y \to X$ is continuous,
for every closed $C \subseteq Y$ the image $q(C)$ is compact, hence
closed in the KC space~$X$.
Thus $q$ is a closed map.
A continuous, surjective, closed map is a quotient map, so the
topology of $X$ equals the standard quotient topology~$\tau_q$.
By Willard~\cite{bib:willard} (Theorem~9.2), a quotient map out of a
compact Hausdorff space yields a Hausdorff quotient if and only if it
is closed; since $q$ is closed, $X$ is Hausdorff.
\end{proof}

\begin{corollary}\label{cor:barrier}
For single-point-collapse FMQ spaces over compact Hausdorff base
spaces, the Wilansky hierarchy collapses at KC:
$\mathrm{KC} \Rightarrow T_2$.
The density modifier $\mathcal{D}_X$ achieves the finest
non-Hausdorff level, namely $\mathrm{US}$-not-$\mathrm{KC}$.
\end{corollary}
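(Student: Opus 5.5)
The corollary has two assertions, and I would prove them separately.

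\emph{First assertion: KC implies $T_2$.} This is Theorem~\ref{thm:barrier} specialized to the situation at hand. An FMQ space is formed by collapsing the set $A$ to the single point $\ast$; this is what ``single-point collapse'' refers to. So I would take $Y$ compact Hausdorff, $A$ finite nonempty and $\mathcal{F}$ admissible, and observe that Theorem~\ref{thm:barrier} applies with no further hypotheses.

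For the reader I would briefly recall the mechanism.
\begin{enumerate}
\item KC makes every $q(C)$ closed, where $C\subseteq Y$ is closed; hence $q$ is a closed map.
\item A closed continuous surjection is a quotient map, so $\tau_{\mathcal{F}}=\tau_q$.
\item $\tau_q$ is Hausdorff, since the quotient of a compact Hausdorff space by the closed relation collapsing a finite set is Hausdorff, as in Proposition~\ref{prop:strict}.
\end{enumerate}
Since $T_2\Rightarrow\mathrm{KC}$ always holds, the two axioms coincide, and every intermediate level $k_1\mathrm{H}$ between them coincides with them as well.

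\emph{Second assertion: $\mathcal{D}_Y$ realizes US-not-KC.} Here I would fix $Y$ compact Hausdorff without isolated points and $A$ finite with $|Y\setminus A|\ge 2$ (Lemma~\ref{lem:infinite} makes the latter automatic).
\begin{itemize}
\item US holds by Lemma~\ref{lem:baire}.
\item Failure of KC follows from Corollary~\ref{cor:notKCgeneral}.
\item Failure of $T_2$ follows from Theorem~\ref{thm:general}(c), or directly from the first assertion once KC fails.
\end{itemize}
So the space is US and not KC, which is the lowest non-Hausdorff level allowed once KC has collapsed to $T_2$.

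\emph{The main obstacle} is making ``finest non-Hausdorff level'' precise. I would read it as follows. In the FMQ setting the hierarchy becomes $T_2=\mathrm{KC}$ strictly above $\mathrm{wH}\Rightarrow k_2\mathrm{H}\Rightarrow\mathrm{US}$. Among non-Hausdorff FMQ spaces, the best one can hope for is to satisfy US, or even $k_2\mathrm{H}$ by Theorem~\ref{thm:k2H} when the base is metrizable, while failing KC; $\mathcal{D}_Y$ attains this. I would state this interpretation explicitly and cite Theorem~\ref{thm:k2H} to place ERI at $k_2\mathrm{H}$. The optimality of $\mathcal{D}_Y$ among modifiers is a separate result and is deferred to Theorem~\ref{thm:optimality}.
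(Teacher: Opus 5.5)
Your proposal is correct and matches the paper. The corollary carries no separate proof there. Its first assertion is Theorem~\ref{thm:barrier}: KC makes $q$ closed, so $\tau_{\mathcal{F}}=\tau_q$, which is Hausdorff. Its second is the content of Theorem~\ref{thm:general}(c),(e),(f), which rest on exactly the KIP, Lemma~\ref{lem:baire}, and the regularity argument of Corollary~\ref{cor:notKCgeneral} that you cite.

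One small wording point: US-not-KC is the \emph{finest} (strongest) Wilansky level compatible with failing $T_2$, not the ``lowest'' one as you wrote in passing.
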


\subsection{Functional triviality}

\begin{proposition}[$C(X, \mathbb{R})$ is trivial]\label{prop:cfunctions}
For any compact Hausdorff $Y$ without isolated points, finite $A$, and
$X = \mathrm{FMQ}(Y, A, \mathcal{D}_Y)$, every continuous function
$f\colon X \to \mathbb{R}$ is constant.
\end{proposition}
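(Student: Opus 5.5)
The plan is to follow the scheme of Proposition~\ref{prop:ERI-cfunctions} verbatim, with $\overline{\mathbb{R}}$ replaced by $Y$ and $\{-\infty,0,+\infty\}$ by $A$. Let $f\colon X \to \mathbb{R}$ be continuous and set $g := f \circ q\colon Y \to \mathbb{R}$. By Proposition~\ref{prop:fmq-top}, $q$ is continuous, so $g$ is continuous. Since $A$ is a single fiber of $q$, $g$ is constant on $A$; write $c := g(a)$ for any $a \in A$, so that $c = f(\ast)$.

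Suppose, for contradiction, that $f$ is not constant. Then there is some $y \in Y \setminus A$ with $g(y) \neq c$. Choose $0 < \varepsilon < |g(y) - c|/2$, so that the intervals $I_c := (c-\varepsilon, c+\varepsilon)$ and $I_y := (g(y)-\varepsilon, g(y)+\varepsilon)$ are disjoint. The set $U := f^{-1}(I_c)$ is open in $X$ and contains $\ast$. By the definition of $\tau_{\mathcal{D}_Y}$, its preimage $q^{-1}(U) = g^{-1}(I_c)$ must therefore be dense in $Y$. On the other hand, $g^{-1}(I_y)$ is open in $Y$ (by continuity of $g$) and nonempty (it contains $y$). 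It is also disjoint from $g^{-1}(I_c)$. So $q^{-1}(U)$ misses a nonempty open set, contradicting its density.

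A slicker route is available: the argument is exactly the $T = \mathbb{R}$ case of the Hausdorff-target argument in Proposition~\ref{prop:hausdorff-constant}. That argument uses only the generalized KIP, which holds for $\mathcal{D}_Y$ by Proposition~\ref{prop:gen-KIP} and Remark~\ref{rem:KIPdensity}. So one could simply cite it: $f^{-1}(I_c)$ is a neighborhood of $\ast$, $f^{-1}(I_y)$ is a nonempty open set, and the two are disjoint, which the KIP forbids.

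There is essentially no obstacle. The only points needing care are that the constant value on $A$ is well defined, which holds because $A$ collapses to the single point $\ast$, and that $f^{-1}(I_y)$ is nonempty, which is witnessed by $\hat y$. Notably, the hypotheses that $Y$ is compact and has no isolated points are not actually used; only the density modifier matters.
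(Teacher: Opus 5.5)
Your proposal is correct and is essentially the paper's own argument: the paper transplants Proposition~\ref{prop:ERI-cfunctions} verbatim, using that $g = f \circ q$ is constant on $A$ with value $c$, and that $f^{-1}((c-\varepsilon,c+\varepsilon))$ would then be a neighborhood of $\ast$ whose preimage misses the nonempty open set $g^{-1}((g(y)-\varepsilon,g(y)+\varepsilon))$, contradicting density. Your side remarks are also accurate: the argument is the $T=\mathbb{R}$ case of the KIP-based Proposition~\ref{prop:hausdorff-constant}, and compactness and the absence of isolated points are never used.
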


\begin{proof}
Identical to Proposition~\ref{prop:ERI-cfunctions}, replacing
$\overline{\mathbb{R}}$ with $Y$ and
$\{-\infty,0,+\infty\}$ with~$A$: the key point is that
$g := f \circ q$ is constant on $A$ (say value $c$), and if
$g(t) \neq c$ for some $t \in Y \setminus A$, then
$f^{-1}((c-\varepsilon, c+\varepsilon))$ is a neighborhood of
$\ast$ whose preimage is not dense.
\end{proof}

\begin{corollary}[Pseudocompactness]\label{cor:pseudocompact}
Every FMQ space $X = \mathrm{FMQ}(Y,A,\mathcal{D}_Y)$ is pseudocompact:
every continuous $f\colon X \to \mathbb{R}$ is bounded
\textup{(}indeed constant, by Proposition~\textup{\ref{prop:cfunctions})}.
\end{corollary}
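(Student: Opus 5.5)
The plan is to derive the corollary directly from Proposition~\ref{prop:cfunctions}. Let $f\colon X \to \mathbb{R}$ be continuous, where $X = \mathrm{FMQ}(Y,A,\mathcal{D}_Y)$, $Y$ is compact Hausdorff without isolated points, and $A$ is finite. That proposition says $f$ is constant, and a constant function is trivially bounded, which is exactly pseudocompactness in the sense stated. I will also record the alternative route that avoids functional triviality. By Proposition~\ref{prop:fmq-basic}(i), $X$ is compact, since $Y$ is compact and $q$ is continuous. The image $f(X)$ is then a compact subset of $\mathbb{R}$, hence bounded.

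The one point needing care is the hypotheses under which Proposition~\ref{prop:cfunctions} was stated. It assumes $Y$ is compact Hausdorff without isolated points and $A$ is finite. I will read the corollary under these same standing assumptions, which are implicit in the phrase ``every FMQ space'' in this subsection. If one wanted the corollary with no hypothesis on isolated points, I would use the compactness argument instead. That argument only needs $Y$ compact, and Proposition~\ref{prop:fmq-basic}(i) covers this for any admissible modifier.

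I therefore expect no real obstacle. The only choice is which of the two arguments to present. I would present the first, since it matches the parenthetical ``indeed constant'' in the statement, and mention compactness as the hypothesis-light backup.
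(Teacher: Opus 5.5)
Your primary argument is the paper's own: the corollary is recorded as immediate from Proposition~\ref{prop:cfunctions}, since a constant function is bounded, and your compactness backup via Proposition~\ref{prop:fmq-basic}(i) is also correct. One small correction to your framing: the proof of Proposition~\ref{prop:cfunctions} uses only the density condition at $\ast$ and needs neither compactness of $Y$ nor the absence of isolated points, so it is not the more hypothesis-laden route --- it is the compactness argument that needs $Y$ compact, though that argument works for any admissible modifier.
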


\begin{corollary}\label{cor:notCR}
No FMQ space $X = \mathrm{FMQ}(Y,A,\mathcal{D}_Y)$ is completely regular.
\end{corollary}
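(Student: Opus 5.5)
The claim is that no FMQ space $X = \mathrm{FMQ}(Y,A,\mathcal{D}_Y)$ is completely regular. Here $Y$ is compact Hausdorff without isolated points and $A$ is finite, as in Proposition~\ref{prop:cfunctions}. I would mirror Corollary~\ref{cor:ERI-notCR}: a completely regular space with two distinct points that are separated by the topology admits a non-constant real-valued continuous function. Proposition~\ref{prop:cfunctions} forbids any such function on $X$, so it only remains to exhibit the two separated points.

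The first step is to check that $X$ has at least two points. By Lemma~\ref{lem:infinite}, $Y$ is infinite, so $Y \setminus A$ is nonempty. Pick $y \in Y \setminus A$; then $\hat y \neq \ast$.

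The second step is to produce the separating data. By Theorem~\ref{thm:general}(b), $X$ is $T_1$, so $\{\ast\}$ is a closed set not containing $\hat y$. Complete regularity (in whichever convention, since $T_1$ already holds) then yields a continuous $f\colon X \to [0,1]$ with $f(\hat y) = 0$ and $f \equiv 1$ on $\{\ast\}$. This $f$ is non-constant, which contradicts Proposition~\ref{prop:cfunctions}.

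There is no real obstacle here. The only care needed is on a convention point: if complete regularity is meant without $T_1$, the argument still applies, because we used only the point/closed-set separation together with the closedness of $\{\ast\}$, and the latter comes from Theorem~\ref{thm:general}(b) rather than from any assumed axiom. As an alternative route, complete regularity implies regularity, and the generalized KIP (Proposition~\ref{prop:gen-KIP} with Remark~\ref{rem:KIPdensity}) shows directly that $\hat y$ and the closed set $\{\ast\}$ have no disjoint neighborhoods. So $X$ is not even regular, and the corollary follows a fortiori.
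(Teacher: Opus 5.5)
Your proposal is correct and follows essentially the paper's route: both combine Proposition~\ref{prop:cfunctions} with the fact that complete regularity on a space with at least two points yields a non-constant continuous real function. Your explicit appeal to the $T_1$ property (Theorem~\ref{thm:general}(b)) to separate $\hat y$ from the closed set $\{\ast\}$ is a welcome tightening. The paper's one-line justification tacitly needs it, since a two-point indiscrete space is completely regular in the weak sense yet has only constant functions.
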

\begin{proof}
A completely regular space with more than one point admits a non-constant
continuous function to $\mathbb{R}$; Proposition~\ref{prop:cfunctions} shows
$C(X,\mathbb{R})$ consists of constants only.
\end{proof}

\begin{corollary}\label{cor:notnormal}
No FMQ space $X = \mathrm{FMQ}(Y,A,\mathcal{D}_Y)$ is normal.
\end{corollary}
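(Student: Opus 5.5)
The argument mirrors Corollary~\ref{cor:ERI-notnormal}, now in the general FMQ setting. Since the standing hypotheses of the corollary are those of Proposition~\ref{prop:cfunctions}, I will assume $Y$ is compact Hausdorff without isolated points and $A$ is finite nonempty. I would then combine three facts: $X$ is $T_1$, $X$ has at least two points, and $C(X,\mathbb{R})$ consists only of constants.

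First, $X$ is $T_1$. By Proposition~\ref{prop:fmq-basic}(ii), this requires three things. The space $Y$ is $T_1$, being Hausdorff. The set $A$ is closed, being finite in a Hausdorff space. Finally, $Y\setminus\{y\}\in\mathcal{D}_Y$ for each $y\in Y\setminus A$, which holds because $y$ is not isolated. Equivalently, one may simply cite Theorem~\ref{thm:general}(b).

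Second, $|X|\ge 2$. Lemma~\ref{lem:infinite} shows that $Y$ is infinite, so $Y\setminus A\neq\varnothing$ and $X$ contains some point $\hat y\neq\ast$.

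Now suppose, for contradiction, that $X$ is normal. Because $X$ is $T_1$, the closed sets $\{\ast\}$ and $\{\hat y\}$ are disjoint and closed. Urysohn's lemma then gives a continuous $f\colon X\to[0,1]$ with $f(\ast)=0$ and $f(\hat y)=1$. This $f$ is non-constant, which contradicts Proposition~\ref{prop:cfunctions}.

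A direct alternative avoids Urysohn entirely. Normality would give disjoint open sets $U\supseteq\{\ast\}$ and $V\supseteq\{\hat y\}$. Here $V$ is a nonempty open set, so it must meet $U$ by the generalized KIP (Proposition~\ref{prop:gen-KIP} together with Remark~\ref{rem:KIPdensity}), which is again a contradiction. I would present the Urysohn route and mention the KIP route in one line.

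There is no real obstacle here. The only step needing care is checking that the corollary's implicit hypotheses (no isolated points, $A$ finite) are exactly what make $X$ both $T_1$ and nontrivial. Without the $T_1$ property, singletons need not be closed, and "normal" alone would not yield a contradiction.
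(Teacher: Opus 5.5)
Your proposal is correct and follows the paper's argument: the paper also combines the $T_1$ property (Theorem~\ref{thm:general}(b)) with Urysohn's lemma and the functional triviality of Proposition~\ref{prop:cfunctions}, but routes the last step through Corollary~\ref{cor:notCR} (normal $T_1$ $\Rightarrow$ completely regular) rather than applying Urysohn directly to $\{\ast\}$ and $\{\hat y\}$. Your explicit check that $|X|\ge 2$ makes precise a point the paper leaves implicit, and your one-line KIP alternative is also valid.
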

\begin{proof}
By the Urysohn lemma, every normal $T_1$ space is completely regular.
Since $X$ is $T_1$ (Theorem~\ref{thm:general}) but not completely regular
(Corollary~\ref{cor:notCR}), it cannot be normal.
\end{proof}

\begin{remark}[Spectral collapse]\label{rmk:gelfand}
The triviality of $C(X, \mathbb{R})$ for spaces where every
open neighborhood of a point is dense was studied by
Herrlich~\cite{bib:herrlich}.
In the language of $C^*$-algebras, the ring of bounded continuous
real-valued functions $C^*(X, \mathbb{R}) \cong \mathbb{R}$
(constants only), so its Gelfand spectrum (maximal ideal space) is a
single point.
(Since $X$ is not completely regular, the classical Stone--\v{C}ech
compactification $\beta X$ is not defined; by $\beta X$ we mean
the Gelfand spectrum of $C^*(X)$, which is a singleton.)
This ``spectral collapse'' is intrinsic to the density modifier:
the topology is rich enough to support non-trivial
convergence (Proposition~\ref{prop:seqconv}), yet too coarse for
continuous real-valued functions to detect it.
The result extends to any admissible modifier
$\mathcal{F} \subseteq \mathcal{D}_Y$ satisfying
\textup{(KIP$_{\mathcal{F}}$)}, since the KIP forces every
continuous $f\colon X \to \mathbb{R}$ to be constant by the same
argument as Proposition~\ref{prop:cfunctions}.
\end{remark}

\section{Modifier Spectrum and Optimality}\label{sec:modifiers}

The density modifier $\mathcal{D}_Y$ is one choice among many.
We now replace $\mathcal{D}_Y$ by other modifiers and ask which ones
still yield a US quotient; the answer isolates $\mathcal{D}_Y$ as the
least restrictive such choice.
Section~\ref{sec:convergence} gives the abstract criterion for the
``US zone'' identified here.

\subsection{The cofinite modifier}

\begin{definition}\label{def:cofinite}
The \emph{cofinite modifier} on $Y$ is
$\mathcal{F}_{\mathrm{cof}} := \{S \subseteq Y : Y \setminus S \text{ is finite}\}$.
\end{definition}

Every cofinite set in a space without isolated points is dense,
so $\mathcal{F}_{\mathrm{cof}} \subsetneq \mathcal{D}_Y$, and
$\tau_{\mathcal{F}_{\mathrm{cof}}} \subsetneq \tau_{\mathcal{D}_Y}$: the cofinite-modified
topology is strictly coarser.

\begin{proposition}\label{prop:cofinite}
Let $Y = [0,1]$, $A = \{0,1\}$. Then $\mathrm{FMQ}(Y, A, \mathcal{F}_{\mathrm{cof}})$ is
$T_1$ but not US.
\end{proposition}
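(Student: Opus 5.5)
The argument has two independent parts, $T_1$ and the failure of US, and I would do them in that order.

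\emph{$T_1$.} First, $\mathcal{F}_{\mathrm{cof}}$ is admissible: (F1) and (F2) are clear, and the union of two finite complements is finite, which gives (F3). Then I apply Proposition~\ref{prop:fmq-basic}(ii). The space $Y=[0,1]$ is $T_1$, $A=\{0,1\}$ is closed, and for $y\in Y\setminus A$ the set $Y\setminus\{y\}$ has finite complement, so it lies in $\mathcal{F}_{\mathrm{cof}}$. Hence $X$ is $T_1$.

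\emph{Not US.} I will exhibit a sequence with two limits, $\ast$ and an interior point. Write $N\subseteq X$ for an open neighbourhood of $\ast$.
\begin{enumerate}
\item Describe the neighbourhoods of $\ast$. The set $q^{-1}(N)$ is open, contains $A$, and has finite complement. So $N=q(Y\setminus E)$ for a finite set $E\subset(0,1)$.
\item Pick the sequence. Set $x_n:=\tfrac12+\tfrac1{n+2}$, which is injective in $n$, with $x_n\to\tfrac12$ in $Y$ and every $x_n\notin A$.
\item Convergence to $\hat{\tfrac12}$. By Proposition~\ref{prop:fmq-top}, open sets avoiding $\ast$ carry the usual topology of $(0,1)$. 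Every neighbourhood of $\hat{\tfrac12}$ contains the $q$-image of some interval around $\tfrac12$: if it avoids $\ast$ this is immediate, and if it contains $\ast$ its preimage is cofinite open and still contains such an interval. Hence $\hat x_n\to\hat{\tfrac12}$.
\item Convergence to $\ast$. Each neighbourhood $N$ of $\ast$ omits only the finitely many points $q(E)$. The $x_n$ are distinct, so only finitely many of them lie in $E$, and $\hat x_n\in N$ eventually. Hence $\hat x_n\to\ast$.
\end{enumerate}
Since $\ast\neq\hat{\tfrac12}$, the sequence $(\hat x_n)$ has two distinct limits and $X$ is not US.

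The only delicate point is step 1, the description of the neighbourhoods of $\ast$. It needs both conditions of the FMQ definition, openness of the preimage and membership in $\mathcal{F}_{\mathrm{cof}}$. Once that is in place, step 4 follows by a counting argument. In contrast with the density modifier of Proposition~\ref{prop:gen-conv}, a cofinite neighbourhood cannot exclude a countably infinite closed set such as $\{x_n\}\cup\{\tfrac12\}$.
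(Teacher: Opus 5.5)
Your proof is correct and follows the paper's own argument. In both, $T_1$ holds because each $Y\setminus\{y\}$ is cofinite. Non-US is witnessed by the injective sequence $\tfrac12+\tfrac1n$, which converges to $\widehat{1/2}$ by continuity of $q$ and to $\ast$ because a cofinite neighbourhood of $\ast$ omits only finitely many of its terms. Your version is only slightly more explicit: it checks admissibility of $\mathcal{F}_{\mathrm{cof}}$ and obtains closedness of $\{\ast\}$ via Proposition~\ref{prop:fmq-basic}(ii).
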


\begin{proof}
$T_1$: for $y \in (0,1)$, $Y \setminus \{y\}$ is cofinite, so
$Y \setminus \{y\} \in \mathcal{F}_{\mathrm{cof}}$; this gives $\{\hat{y}\}$ closed.

Not US: consider $x_n = \frac{1}{2} + \frac{1}{n}$ for $n \geq 3$. Then $x_n \to \frac{1}{2}$
in $[0,1]$, so $\hat{x}_n \to \widehat{1/2}$ in $X$. We claim $\hat{x}_n \to \ast$ as well.
Let $U$ be a neighborhood of $\ast$ in $\tau_{\mathcal{F}_{\mathrm{cof}}}$. Then $q^{-1}(U)$
is cofinite, i.e., $[0,1] \setminus q^{-1}(U)$ is a finite set $\{y_1, \ldots, y_m\}$.
Since $x_n \to \frac{1}{2}$ and $x_n \neq y_j$ for all large $n$,
eventually $x_n \in q^{-1}(U)$, hence $\hat{x}_n \in U$. So $\hat{x}_n$ converges to both
$\widehat{1/2}$ and $\ast$: US fails.
\end{proof}

\subsection{The comeager modifier}

\begin{definition}\label{def:comeager}
The \emph{comeager modifier} on a topological space $Y$ is
$\mathcal{F}_{\mathrm{com}} := \{S \subseteq Y : S \text{ is comeager (complement is meager)}\}$.
\end{definition}

In a Baire space, every comeager set is dense, so
$\mathcal{F}_{\mathrm{com}} \subsetneq \mathcal{D}_Y$.

\begin{lemma}\label{lem:comadmissible}
$\mathcal{F}_{\mathrm{com}}$ is an admissible modifier on any topological space~$Y$.
\end{lemma}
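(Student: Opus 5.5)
The plan is to check the three axioms of Definition~\ref{def:admissible} directly for $\mathcal{F}_{\mathrm{com}}$, using only that meager sets form a $\sigma$-ideal. Recall that a set is meager if it is contained in a countable union of nowhere dense sets. The meager sets are closed under subsets and under countable (in particular finite) unions. This is the only structural input needed. No Baire hypothesis on $Y$ enters, which is why the lemma can be stated for any topological space.

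\textbf{Axiom \ref{F1}.} $Y$ is comeager, because $Y \setminus Y = \varnothing$ is meager: it is the empty union, or it is contained in any single nowhere dense set such as $\varnothing$.

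\textbf{Axiom \ref{F2}.} Suppose $S \in \mathcal{F}_{\mathrm{com}}$ and $S \subseteq G$. Then $Y \setminus G \subseteq Y \setminus S$, and a subset of a meager set is meager, being covered by the same countable family of nowhere dense sets. Hence $G \in \mathcal{F}_{\mathrm{com}}$.

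\textbf{Axiom \ref{F3}.} This holds in the stronger form that works for arbitrary members, not only open ones. If $U_1, U_2$ are comeager, then
\[
Y \setminus (U_1 \cap U_2) = (Y \setminus U_1) \cup (Y \setminus U_2),
\]
and a union of two meager sets is meager: concatenate the two countable families of nowhere dense sets. So $U_1 \cap U_2 \in \mathcal{F}_{\mathrm{com}}$. In fact $\mathcal{F}_{\mathrm{com}}$ is a genuine countably complete filter, unlike $\mathcal{D}_Y$ (compare the note in Proposition~\ref{prop:dense-admissible}).

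There is no real obstacle here. The only point needing care is to avoid importing the Baire-space facts used elsewhere in this section, such as ``comeager implies dense''. Those facts are needed for the inclusion $\mathcal{F}_{\mathrm{com}} \subseteq \mathcal{D}_Y$, but not for admissibility itself.
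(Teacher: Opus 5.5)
Your proof is correct and follows the paper's argument exactly: \ref{F1} because $\varnothing$ is meager, \ref{F2} because subsets of meager sets are meager, and \ref{F3} because a finite union of meager sets is meager (you also observe that \ref{F3} holds without the openness restriction). One small caveat on your aside: if ``genuine filter'' means a proper one, it needs $Y$ not meager in itself, since for $Y=\mathbb{Q}$ the set $\varnothing$ is comeager and $\mathcal{F}_{\mathrm{com}}=\mathcal{P}(Y)$; this does not affect admissibility.
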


\begin{proof}
(F1): $Y$ is comeager ($Y \setminus Y = \varnothing$ is meager).
(F2): if $S$ is comeager and $S \subseteq T$, then $Y \setminus T \subseteq Y \setminus S$;
a subset of a meager set is meager, so $T$ is comeager.
(F3): if $U_1, U_2 \in \mathcal{F}_{\mathrm{com}} \cap \tau$, then
$Y \setminus (U_1 \cap U_2) = (Y \setminus U_1) \cup (Y \setminus U_2)$; a finite
union of meager sets is meager, so $U_1 \cap U_2$ is comeager.
\end{proof}

\begin{proposition}\label{prop:comeager}
Let $Y$ be compact Hausdorff without isolated points and $A$ finite. Then
$\mathrm{FMQ}(Y, A, \mathcal{F}_{\mathrm{com}})$ is $T_1$, compact,
not Hausdorff, and US.
\end{proposition}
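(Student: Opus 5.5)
The plan is to verify the four properties in turn, with US as the only substantive one; the argument reuses the density-modifier proofs, replacing ``dense'' by ``comeager'' throughout.

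\emph{Compactness and $T_1$.} Compactness follows from Proposition~\ref{prop:fmq-basic}(i), because $\mathcal{F}_{\mathrm{com}}$ is admissible by Lemma~\ref{lem:comadmissible}. For $T_1$ we check the three conditions of Proposition~\ref{prop:fmq-basic}(ii).
\begin{enumerate}[label=\textup{(\arabic*)},nosep]
\item $Y$ is $T_1$ because it is Hausdorff.
\item $A$ is closed because it is finite in a $T_1$ space.
\item For $y \in Y\setminus A$, the singleton $\{y\}$ is closed with empty interior, since $y$ is not isolated. Hence $\{y\}$ is nowhere dense, so meager, and $Y\setminus\{y\}\in\mathcal{F}_{\mathrm{com}}$.
\end{enumerate}

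\emph{Not Hausdorff.} We verify (KIP$_{\mathcal{F}}$) and apply Proposition~\ref{prop:gen-KIP}. Compact Hausdorff spaces are Baire, so every comeager set is dense. In particular every $F \in \mathcal{F}_{\mathrm{com}}\cap\tau$ meets every nonempty open $V$. The FMQ space is not a singleton, since $Y$ is infinite by Lemma~\ref{lem:infinite} and $A$ is finite. Therefore $\ast$ cannot be separated from any other point.

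\emph{US.} I would first prove the analogue of Proposition~\ref{prop:gen-conv} for $\mathcal{F}_{\mathrm{com}}$, and in fact only its ($\Rightarrow$) half is needed. Let $(z_n)$ lie in $Y\setminus A$ and suppose $\hat z_n\to\ast$. Assume for contradiction that some subsequence $z_{n_k}\to t$ with $t\in Y\setminus A$.
\begin{enumerate}[label=\textup{(\arabic*)},nosep]
\item Set $C:=\{z_{n_k}\}\cup\{t\}$. Since $Y$ is Hausdorff and $t$ is the limit, $C$ is closed.
\item $C$ is countable, hence meager, since each singleton is nowhere dense. (Lemma~\ref{lem:countable-baire} even gives that $C$ is nowhere dense.)
\item $C\subset Y\setminus A$, so $Y\setminus C$ is open, contains $A$, and is comeager. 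Thus $q(Y\setminus C)$ is a $\tau_{\mathcal{F}_{\mathrm{com}}}$-neighbourhood of $\ast$.
\item This neighbourhood misses every $\hat z_{n_k}$, contradicting $\hat z_n \to \ast$.
\end{enumerate}

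The uniqueness argument then follows Lemma~\ref{lem:baire}. Suppose a sequence has two distinct limits $L_1 \neq L_2$.
\begin{enumerate}[label=\textup{(\arabic*)},nosep]
\item If the sequence hits $\ast$ infinitely often, then both limits equal $\ast$, because $\{\ast\}$ is closed and $X\setminus\{\ast\}$ is a neighbourhood of any other limit.
\item Otherwise a tail has the form $\hat z_n$ with $z_n\in Y\setminus A$. If both limits lie in $X\setminus\{\ast\}$, the limit is unique: this set is open and, since $\tau_{\mathcal{F}}$ agrees with $\tau_q$ on sets not containing $\ast$ (Proposition~\ref{prop:fmq-top}), homeomorphic to the Hausdorff space $Y\setminus A$.
\item If one limit is $\ast$ and the other is $\hat c$ with $c\in Y\setminus A$, then $z_n\to c$ in $Y$. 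So $c$ is an accumulation point in $Y\setminus A$, which contradicts the criterion above.
\end{enumerate}

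The main obstacle, though mild, is the comeager membership in step (3) of the criterion. Everything else transfers verbatim from the $\mathcal{D}_Y$ case. The point requiring care is that countable closed sets have comeager complements, which needs only that singletons in $Y\setminus A$ are nowhere dense, and not the Baire theorem itself.
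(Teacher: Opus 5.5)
Your proof is correct and follows the paper's route. You get $T_1$ from nowhere-dense singletons, non-Hausdorffness from Baire (comeager $\Rightarrow$ dense) plus the generalized KIP, and US from the fact that complements of closed countable subsets of $Y\setminus A$ are comeager open neighbourhoods of $A$. The only difference is that the paper obtains US by citing hypothesis~(ii) of the abstract US criterion (Theorem~\ref{thm:abstractUS}), whereas you reprove that criterion inline in the style of Proposition~\ref{prop:gen-conv} and Lemma~\ref{lem:baire}.
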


\begin{proof}
$T_1$: singletons in $Y \setminus A$ are nowhere dense, hence meager,
so their complements are comeager.
US: closed countable sets are meager, so $\mathcal{F}_{\mathrm{com}}$
satisfies hypothesis~(ii) of Theorem~\ref{thm:abstractUS}.
Not Hausdorff: every comeager set in a Baire space is dense, so
\textup{(KIP$_{\mathcal{F}}$)} holds.
\end{proof}

\subsection{The accumulation-at-$A$ modifier}

\begin{definition}\label{def:accumulation}
$\mathcal{F}_{\mathrm{acc}} := \{S \subseteq Y : \text{every } a \in A
\text{ is an accumulation point of } S\}$.
\end{definition}

\begin{proposition}\label{prop:accadmissible}
$\mathcal{F}_{\mathrm{acc}}$ satisfies \textup{(F1)} and \textup{(F2)}.
It satisfies \textup{(F3)} for all pairs
$U_1, U_2 \in \mathcal{F}_{\mathrm{acc}} \cap \tau$ such that
$A \subseteq U_1 \cap U_2$: indeed, if every $a \in A$ is an
accumulation point of both $U_1$ and~$U_2$, then $a$ is an accumulation
point of $U_1 \cap U_2$ (since $U_1 \cap U_2$ is an open neighborhood
of~$a$ in~$U_1$, which has $a$ as an accumulation point).
This restricted form of~\textup{(F3)} suffices for the FMQ construction:
in the proof that $\tau_{\mathcal{F}}$ is closed under finite
intersections \textup{(Proposition~\ref{prop:fmq-top})}, axiom~\textup{(F3)} is
invoked only when $\ast \in U_1 \cap U_2$, which forces
$A \subseteq q^{-1}(U_1) \cap q^{-1}(U_2)$; hence the
hypothesis $A \subseteq U_1 \cap U_2$ in the restricted form is
automatically satisfied.
Moreover, $\mathcal{D}_Y \subsetneq \mathcal{F}_{\mathrm{acc}}$:
for $Y = [0,1]$, $A = \{0,1\}$, the set
$S = (0, \tfrac{1}{2}) \cup (\tfrac{3}{4}, 1)$ belongs to
$\mathcal{F}_{\mathrm{acc}}$ \textup{(}both $0$ and $1$ are
accumulation points of~$S$\textup{)} but not to $\mathcal{D}_Y$
\textup{(}since $\overline{S} = [0, \tfrac{1}{2}] \cup [\tfrac{3}{4}, 1]
\neq [0,1]$\textup{)}.
\end{proposition}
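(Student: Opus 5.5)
We verify the three assertions in the order stated, working directly from Definitions~\ref{def:admissible} and~\ref{def:accumulation}; throughout, ``accumulation point of $S$'' means that every neighborhood of $a$ meets $S\setminus\{a\}$.

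For \ref{F1}, $Y$ has no isolated points, so every $a\in A$ is an accumulation point of $Y$, and hence $Y\in\mathcal{F}_{\mathrm{acc}}$. (Without this standing hypothesis, \ref{F1} could fail at an isolated point of $A$.) For \ref{F2}, let $S\subseteq T$. Any neighborhood of $a$ that meets $S\setminus\{a\}$ also meets $T\setminus\{a\}$, so accumulation points of $S$ are accumulation points of $T$.

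For the restricted \ref{F3}, fix $a\in A\subseteq U_1\cap U_2$ and a neighborhood $N$ of $a$. Since $U_2$ is open and contains $a$, the set $N\cap U_2$ is again a neighborhood of $a$. Because $a$ accumulates at $U_1$, the set $N\cap U_2$ meets $U_1\setminus\{a\}$, so $N$ meets $(U_1\cap U_2)\setminus\{a\}$. Thus $a$ is an accumulation point of $U_1\cap U_2$. Note that only the openness of $U_2$ at $a$ is used.

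To justify that the restricted form suffices, we re-read the proof of Proposition~\ref{prop:fmq-top}.
\begin{itemize}
\item The axioms are consulted only when $\ast\in U_1\cap U_2$.
\item In that case $q^{-1}(U_i)\supseteq q^{-1}(\ast)=A$, so the hypothesis $A\subseteq q^{-1}(U_1)\cap q^{-1}(U_2)$ of the restricted axiom holds automatically.
\item The union step uses only \ref{F2}.
\end{itemize}
Hence $\tau_{\mathcal{F}_{\mathrm{acc}}}$ is a topology.

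For the strict inclusion $\mathcal{D}_Y\subsetneq\mathcal{F}_{\mathrm{acc}}$, the containment $\mathcal{D}_Y\subseteq\mathcal{F}_{\mathrm{acc}}$ is the main point. Let $D$ be dense and $a\in A$. Every neighborhood $N$ of $a$ is a nonempty open set, and since $a$ is not isolated, $N\setminus\{a\}$ is also nonempty and open (using $T_1$). Density of $D$ then gives $N\cap(D\setminus\{a\})\neq\varnothing$. Strictness is witnessed by $S=(0,\tfrac12)\cup(\tfrac34,1)$ in $[0,1]$ with $A=\{0,1\}$: both $0$ and $1$ accumulate at $S$, yet $\overline S=[0,\tfrac12]\cup[\tfrac34,1]\neq[0,1]$.

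The main obstacle is less mathematical than bookkeeping: the claimed ``restricted \ref{F3}'' requires openness (not merely membership in $\mathcal{F}_{\mathrm{acc}}$) of one of the sets at each $a$. Indeed, unrestricted \ref{F3} already fails for open sets: take $U_1=(0,\tfrac13)\cup(\tfrac23,1]$ and $U_2=[0,\tfrac13)\cup(\tfrac23,1)$, whose intersection $(0,\tfrac13)\cup(\tfrac23,1)$ does accumulate at $A$. A better counterexample would be $U_1=\bigcup_n(\tfrac1{2n+1},\tfrac1{2n})\cup\cdots$ versus its shifted complement. I would therefore state clearly that $A\subseteq U_1\cap U_2$ is exactly what makes the intersection argument go through.
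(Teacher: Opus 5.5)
Your proof is correct and matches the paper's own inline argument. Intersecting with the open set $U_2 \ni a$ preserves accumulation at $a$, and $\ast \in U_1 \cap U_2$ forces $A \subseteq q^{-1}(U_1) \cap q^{-1}(U_2)$; you also usefully justify \textup{(F1)} via non-isolatedness of the points of $A$ and actually prove $\mathcal{D}_Y \subseteq \mathcal{F}_{\mathrm{acc}}$, which the paper only asserts before giving the strictness witness.

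Your closing aside, which is not part of the statement, needs fixing. Its first example is not a counterexample to unrestricted \textup{(F3)}, since, as you note yourself, the intersection does accumulate at $A$. A genuine counterexample is the pair of interleaved open sets $\bigcup_{n\ge1}(\tfrac{1}{2n+1},\tfrac{1}{2n}) \cup (\tfrac{3}{4},1)$ and $\bigcup_{n\ge1}(\tfrac{1}{2n+2},\tfrac{1}{2n+1}) \cup (\tfrac{3}{4},1)$: both lie in $\mathcal{F}_{\mathrm{acc}}$, but they meet only in $(\tfrac{3}{4},1)$, which does not accumulate at $0$.
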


\begin{proposition}[Accumulation collapses to standard quotient]\label{prop:acccollapse}
Let $Y$ be compact Hausdorff without isolated points, $A$ finite. Then
$\mathrm{FMQ}(Y, A, \mathcal{F}_{\mathrm{acc}})$ is Hausdorff (homeomorphic to the standard
quotient $Y/A$).
\end{proposition}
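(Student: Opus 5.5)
The plan is to show that $\tau_{\mathcal{F}_{\mathrm{acc}}} = \tau_q$, the standard quotient topology. Hausdorffness then follows exactly as in Proposition~\ref{prop:strict}: $Y/A$ is the quotient of a compact Hausdorff space by the closed equivalence relation collapsing the finite (hence closed) set $A$, so it is Hausdorff by~\cite[2.4.15]{bib:engelking}. Since $\tau_{\mathcal{F}} \subseteq \tau_q$ always holds (Proposition~\ref{prop:fmq-top}), only the reverse inclusion is needed. Moreover, the two topologies agree on sets not containing $\ast$. So the task reduces to the following claim: whenever $U \in \tau_q$ with $\ast \in U$, the set $V := q^{-1}(U)$ lies in $\mathcal{F}_{\mathrm{acc}}$, i.e.\ every $a \in A$ is an accumulation point of $V$.

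The key step is immediate from the hypotheses. Let $a \in A$. Then $V$ is an open set containing $A$, hence an open neighborhood of $a$. Take any open $W \ni a$. Then $W \cap V$ is a nonempty open set containing $a$. Since $Y$ has no isolated points, $W \cap V \neq \{a\}$, so it contains a point of $V$ other than $a$. Thus $a$ is an accumulation point of $V$, so $V \in \mathcal{F}_{\mathrm{acc}}$ and $U \in \tau_{\mathcal{F}_{\mathrm{acc}}}$.

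Finally, I would note that the identity map $(Y/A,\tau_{\mathcal{F}_{\mathrm{acc}}}) \to (Y/A,\tau_q)$ is therefore a homeomorphism, which gives the parenthetical claim. I expect no real obstacle here. The only point needing care is the well-definedness of the FMQ construction under the restricted form of (F3), and Proposition~\ref{prop:accadmissible} already handles this. Finiteness of $A$ is used only to make $A$ closed, so that the standard quotient is Hausdorff; the no-isolated-points hypothesis is used exactly once, in the accumulation step above.
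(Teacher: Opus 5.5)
Your proposal is correct and follows the paper's proof: because no point of $A$ is isolated, every open set containing $A$ already has each $a \in A$ as an accumulation point, so the $\mathcal{F}_{\mathrm{acc}}$-condition is vacuous and $\tau_{\mathcal{F}_{\mathrm{acc}}} = \tau_q$. The only addition is that you spell out why $\tau_q$ is Hausdorff (collapsing the closed set $A$ is a closed equivalence relation on a compact Hausdorff space), which the paper simply asserts.
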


\begin{proof}
Every open $V$ with $A \subseteq V$ satisfies $V \in \mathcal{F}_{\mathrm{acc}}$
(each $a \in A$ is an accumulation point of $V$ since $a$ is not isolated).
Hence the $\mathcal{F}$-condition is vacuous:
$\tau_{\mathcal{F}_{\mathrm{acc}}} = \tau_q$ (the standard quotient topology), which is
Hausdorff.
\end{proof}

\subsection{Summary: three regimes of the spectrum}

The four natural modifiers are organized by inclusion as
\[
\underbrace{\mathcal{F}_{\mathrm{cof}}}_{\text{not US}}
\;\subsetneq\;
\underbrace{\mathcal{F}_{\mathrm{com}}}_{k_2\mathrm{H},\,\mathrm{not\ wH}}
\;\subsetneq\;
\underbrace{\mathcal{D}_Y}_{k_2\mathrm{H},\,\mathrm{not\ wH}}
\;\subsetneq\;
\underbrace{\mathcal{F}_{\mathrm{acc}} \,=_\tau\, \mathcal{P}(Y)}_{\text{Hausdorff}},
\]
where $=_\tau$ denotes equality of the resulting FMQ topologies.
The spectrum partitions into three qualitatively distinct regimes.
Below $\mathcal{F}_{\mathrm{com}}$ the neighborhood filter at $\ast$
is too sparse: the cofinite modifier, for instance, admits double
limits and US fails while $T_1$ is preserved
(Proposition~\ref{prop:cofinite}).
Between $\mathcal{F}_{\mathrm{com}}$ and $\mathcal{D}_Y$ every
admissible modifier produces the same $k_2\mathrm{H}$-not-$\mathrm{wH}$
level: the countable Baire argument (Lemma~\ref{lem:countable-baire}) blocks
every double-limit scenario via the abstract US criterion
(Theorem~\ref{thm:abstractUS}), yet every neighborhood of $\ast$ has
dense preimage, so the KIP holds and $T_2$ fails.
Above $\mathcal{D}_Y$ the KIP itself breaks down: the accumulation
modifier $\mathcal{F}_{\mathrm{acc}}$ and the powerset $\mathcal{P}(Y)$
both collapse to the standard Hausdorff quotient $Y/A$.
The modifier thus acts as a discrete knob with three settings;
Section~\ref{sec:extended} shows that none of the auxiliary parameters
in the construction---cardinality of $A$, iteration, product---can
refine this trichotomy further.

\subsection{The US zone is uniformly not wH}

\begin{theorem}\label{thm:uniformzone}
Let $Y$ be compact Hausdorff without isolated points, $A$ finite, and $\mathcal{F}$ any
admissible modifier with $\mathcal{F}_{\mathrm{com}} \subseteq \mathcal{F} \subseteq \mathcal{D}_Y$.
Then $\mathrm{FMQ}(Y, A, \mathcal{F})$ is \emph{not} weakly Hausdorff.
\end{theorem}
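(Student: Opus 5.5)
The plan is to exhibit a continuous image of a compact Hausdorff space that fails to be closed, exactly as in Proposition~\ref{prop:notwH} and in the ($\Leftarrow$) direction of Theorem~\ref{thm:KCgeneral}. The only change is that we now use the upper bound $\mathcal{F} \subseteq \mathcal{D}_Y$ to see that the complement is not open. The lower bound $\mathcal{F}_{\mathrm{com}} \subseteq \mathcal{F}$ is not needed for this direction. It only ensures we are in the $T_1$/US zone (singletons in $Y\setminus A$ are nowhere dense, so their complements are comeager and lie in $\mathcal{F}$).

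\emph{Step 1: choose a closed set with interior avoiding $A$.} By Lemma~\ref{lem:infinite}, $Y$ is infinite, so $Y \setminus A \neq \varnothing$. Pick $y \in Y\setminus A$. Since $A$ is finite, hence closed, and $Y$ is compact Hausdorff, hence regular, there is an open $W$ with $y \in W \subseteq \overline{W} \subseteq Y \setminus A$. Set $C := \overline{W}$, exactly as in Corollary~\ref{cor:notKCgeneral}. Then $C$ is closed in $Y$, so $C$ is compact Hausdorff, and $\mathrm{int}_Y(C) \supseteq W \neq \varnothing$.

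\emph{Step 2: the image is not closed.} The map $f := q|_C \colon C \to X$ is continuous by Proposition~\ref{prop:fmq-top}. Its image $q(C)$ is therefore a continuous image of a compact Hausdorff space. Since $C \cap A = \varnothing$, the point $\ast$ lies in $X \setminus q(C)$. The set $C$ is saturated, so $q^{-1}(X\setminus q(C)) = Y \setminus C$. If $X \setminus q(C)$ were open, then because it contains $\ast$ we would need $Y \setminus C \in \mathcal{F} \subseteq \mathcal{D}_Y$. But $Y \setminus C$ misses the nonempty open set $W$, so it is not dense. Hence $q(C)$ is not closed, and $X$ fails to be weakly Hausdorff.

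The only point requiring care is the saturation identity $q^{-1}(q(C)) = C$. It holds because $C \subseteq Y \setminus A$ and $q$ is injective off $A$. With that in place there is no real obstacle; the argument is a direct transcription of Proposition~\ref{prop:notwH}.
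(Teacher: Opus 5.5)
Your proposal is correct and follows essentially the same route as the paper's proof: choose a closed $C \subseteq Y \setminus A$ with nonempty interior by regularity, note that $q(C)$ is a continuous image of the compact Hausdorff space $C$, and use $\mathcal{F} \subseteq \mathcal{D}_Y$ to see that $X \setminus q(C)$ is not open because its preimage $Y \setminus C$ is not dense. You are also right that the lower bound $\mathcal{F}_{\mathrm{com}} \subseteq \mathcal{F}$ plays no role in this argument, and your explicit saturation check $q^{-1}(q(C)) = C$ fills in a step the paper leaves implicit.
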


\begin{proof}
Let $C \subseteq Y \setminus A$ be a closed set with nonempty interior (exists by regularity).
The inclusion $\iota\colon C \hookrightarrow Y$ composed with $q$ gives a continuous map
$f = q \circ \iota\colon C \to X$ from the compact Hausdorff space~$C$.
The image $f(C) = q(C)$ is compact.
We claim $q(C)$ is not closed: $X \setminus q(C) \ni \ast$, so $q(C)$ is closed iff
$q^{-1}(X \setminus q(C)) = Y \setminus C \in \mathcal{F}$. But $C$ has nonempty interior,
so $Y \setminus C$ is not dense, and since
$\mathcal{F} \subseteq \mathcal{D}_Y$, every element of $\mathcal{F}$ is dense.
Hence $Y \setminus C \notin \mathcal{F}$, so $q(C)$ is not closed.
Since $f$ is continuous from compact Hausdorff with non-closed image, $X$ is not wH.
\end{proof}

\begin{corollary}[Uniform hierarchy of the US zone]\label{cor:uniformzone}
For any admissible modifier $\mathcal{F}$ in the US zone
($\mathcal{F}_{\mathrm{com}} \subseteq \mathcal{F} \subseteq \mathcal{D}_Y$), the space
$\mathrm{FMQ}(Y, A, \mathcal{F})$ sits at exactly the position
\[
k_2\text{H}, \quad \text{not wH}, \quad \text{not KC}, \quad \text{not } T_2
\]
in the Clontz hierarchy~\cite{bib:clontz}. The entire US zone collapses to a \emph{single} hierarchy level.
\textup{(}This extends to infinite closed nowhere-dense $A$ by
Theorem~\textup{\ref{thm:infiniteA}}.\textup{)}
\end{corollary}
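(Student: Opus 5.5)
The corollary asserts four facts about $X=\mathrm{FMQ}(Y,A,\mathcal{F})$ for every admissible $\mathcal{F}$ with $\mathcal{F}_{\mathrm{com}}\subseteq\mathcal{F}\subseteq\mathcal{D}_Y$: it is $k_2$H, not wH, not KC, and not $T_2$. I would prove each one by combining results already in the paper, under the standing hypotheses ($Y$ compact Hausdorff without isolated points, $A$ finite). The first task is to show that $X$ is $T_1$, since Theorem~\ref{thm:k2H} needs it. By Proposition~\ref{prop:fmq-basic}(ii), this reduces to three checks: $Y$ is $T_1$; $A$ is closed, which holds because $A$ is finite in a Hausdorff space; and $Y\setminus\{y\}\in\mathcal{F}$ for every $y\in Y\setminus A$. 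For the last check, $\{y\}$ is nowhere dense because $y$ is not isolated, so $Y\setminus\{y\}$ is comeager and lies in $\mathcal{F}_{\mathrm{com}}\subseteq\mathcal{F}$.

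\textbf{Failure of wH and KC.} Theorem~\ref{thm:uniformzone} gives directly that $X$ is not wH. Since $\mathrm{KC}\Rightarrow\mathrm{wH}$ in the hierarchy~\eqref{eq:clontz}, $X$ is not KC either.

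\textbf{Failure of $T_2$.} Every element of $\mathcal{F}$ is dense, so (KIP$_{\mathcal{F}}$) holds. Proposition~\ref{prop:gen-KIP} then shows that $X$ is not Hausdorff, provided $X$ has more than one point. This holds because $Y$ is infinite by Lemma~\ref{lem:infinite} and $A$ is finite. There is a shortcut, since $T_2\Rightarrow\mathrm{KC}$ and $X$ is not KC, but the KIP argument is more direct.

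\textbf{The $k_2$H property.} I would apply Theorem~\ref{thm:k2H}, which is stated for admissible $\mathcal{F}\subseteq\mathcal{D}_Y$ with $X$ $T_1$. Its proof relies on Lemma~\ref{lem:closedfibers}, which needs $Y\setminus F\in\mathcal{F}$ for closed nowhere-dense $F\subset Y\setminus A$. This holds because such $Y\setminus F$ is comeager and $\mathcal{F}_{\mathrm{com}}\subseteq\mathcal{F}$; the lower bound in the zone is exactly what makes this step work.

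\textbf{Main obstacle.} Theorem~\ref{thm:k2H} also assumes that points of $Y\setminus A$ are first-countable, and that hypothesis is used in Step~2 of Proposition~\ref{prop:accum}. The corollary as literally stated omits it. The honest reading, which I would make explicit, is that the $k_2$H clause holds under the standing first-countability (for instance metrizability) assumption of Section~\ref{sec:k2H}, consistent with Remark~\ref{rem:k2Hgeneral}; the non-first-countable case remains open. The negative clauses (not wH, not KC, not $T_2$) need no such hypothesis.

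Finally, all admissible $\mathcal{F}$ in the zone produce the same profile, so the zone occupies a single level of the hierarchy. The remark on infinite $A$ I would simply defer to Theorem~\ref{thm:infiniteA}.
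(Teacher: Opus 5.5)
Your proposal is correct and follows the route the paper intends. The corollary is stated without proof right after Theorem~\ref{thm:uniformzone}, and it rests on four ingredients, each of which you use: not wH from Theorem~\ref{thm:uniformzone}, hence not KC; not $T_2$ from the KIP, since $\mathcal{F}\subseteq\mathcal{D}_Y$; $k_2$H from Theorem~\ref{thm:k2H}; and $T_1$ from Proposition~\ref{prop:fmq-basic}(ii) via $\mathcal{F}_{\mathrm{com}}\subseteq\mathcal{F}$. Both of your caveats are accurate and worth keeping: the lower bound $\mathcal{F}_{\mathrm{com}}\subseteq\mathcal{F}$ is what actually makes Lemma~\ref{lem:closedfibers} apply (Theorem~\ref{thm:k2H}, read literally for every $\mathcal{F}\subseteq\mathcal{D}_Y$, would cover the cofinite modifier, which is not even US), and the $k_2$H clause really does need first-countability of $Y\setminus A$, which the corollary's statement omits and which the paper itself leaves open for $\beta\mathbb{N}\setminus\mathbb{N}$ (Open Problem~\ref{op:k2H}).
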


\subsection{Optimality of the density modifier}\label{sec:optimality}

We now show that $\mathcal{D}_Y$ is the least restrictive admissible
modifier preserving the KIP, and give an abstract characterization of
the US zone in Section~\ref{sec:convergence}.

\begin{theorem}[Optimality of density]\label{thm:optimality}
Let $Y$ be compact Hausdorff without isolated points and $A$ finite.
\begin{enumerate}[label=\textup{(\roman*)},nosep]
\item \textup{(KIP holds for $\mathcal{D}_Y$)}\;
  Every $\mathcal{D}_Y$-open neighborhood of $\ast$ is dense, hence meets every nonempty
  open set.
\item \textup{(KIP fails beyond $\mathcal{D}_Y$)}\;
  For any admissible $\mathcal{F} \supsetneq \mathcal{D}_Y$, the KIP fails: there exists a
  non-dense open $\mathcal{F}$-neighborhood of~$\ast$.
\item \textup{(Hausdorff recovery)}\;
  $\mathcal{F}_{\mathrm{acc}}$ and $\mathcal{P}(Y)$ both produce Hausdorff spaces
  (Proposition~\ref{prop:acccollapse}).
\item \textup{(US zone boundary)}\;
  Below $\mathcal{D}_Y$, US may or may not hold. The comeager modifier satisfies
  (US$_{\mathcal{F}}$); the cofinite does not
  (Propositions~\ref{prop:comeager} and~\ref{prop:cofinite}).
\end{enumerate}
In summary, $\mathcal{D}_Y$ is the least restrictive admissible modifier for which the KIP
holds.
\end{theorem}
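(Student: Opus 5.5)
I would prove the four parts in order; all four are short, and only (ii) has real content.

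For (i), let $U$ be a $\tau_{\mathcal{D}_Y}$-open neighborhood of $\ast$. Then $q^{-1}(U)\in\mathcal{D}_Y$, so it is dense in $Y$. Remark~\ref{rem:KIPdensity} and Proposition~\ref{prop:gen-KIP} then show that $U$ meets every nonempty open set. In particular $U$ is dense in $X$.

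For (ii), the plan is to convert a non-dense member of $\mathcal{F}$ into a non-dense open neighborhood of $\ast$.
\begin{enumerate}[label=\textup{(\arabic*)},nosep]
\item Since $\mathcal{F}\supsetneq\mathcal{D}_Y$, pick $S\in\mathcal{F}\setminus\mathcal{D}_Y$. Then $\overline{S}\neq Y$, so $O:=Y\setminus\overline{S}$ is a nonempty open set disjoint from $S$.
\item Build a non-dense open set $V\supseteq S\cup A$ lying in $\mathcal{F}$ by \ref{F2}. The difficulty is that $S$ need not be open or contain $A$, so I cannot take $V:=S$.
\item Since $Y$ has no isolated points, $O$ is infinite, so it cannot lie inside the finite set $A$. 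Choose $y\in O\setminus A$.
\item By regularity of the compact Hausdorff space $Y$, choose an open $W\ni y$ with $\overline{W}\subseteq O\setminus A$. This is possible because $O\setminus A$ is open, $A$ being finite and hence closed.
\item Set $V:=Y\setminus\overline{W}$. It is open and contains $A$. It contains $S$, because $S\cap O=\varnothing$. So \ref{F2} gives $V\in\mathcal{F}$.
\item $V$ is saturated because $A\subseteq V$, so $U:=q(V)$ satisfies $q^{-1}(U)=V$. Hence $U$ is $\tau_{\mathcal{F}}$-open and contains $\ast$.
\item $U$ misses the nonempty open set $q(W)$, whose preimage is $W\subseteq Y\setminus A$. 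So $U$ is not dense and the KIP fails.
\end{enumerate}

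For (iii), Proposition~\ref{prop:acccollapse} handles $\mathcal{F}_{\mathrm{acc}}$. For $\mathcal{P}(Y)$ the modifier condition is vacuous, so $\tau_{\mathcal{P}(Y)}=\tau_q$. This is Hausdorff by the closed-equivalence-relation argument of Proposition~\ref{prop:strict}: $A$ is finite, hence closed, in compact Hausdorff $Y$.

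For (iv), cite Propositions~\ref{prop:comeager} and~\ref{prop:cofinite}, noting that both modifiers sit inside $\mathcal{D}_Y$.

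For the summary, combine (i) and (ii). The KIP holds for $\mathcal{D}_Y$ and fails for every strictly larger admissible modifier. For a modifier that is neither contained in nor containing $\mathcal{D}_Y$, steps (1)--(7) still apply as long as it contains some non-dense set. Consequently, any admissible modifier satisfying the KIP is contained in $\mathcal{D}_Y$, which is the precise sense of ``least restrictive''.

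The main obstacle is step (2) of part (ii), namely that $S$ need not be open or contain $A$. Upward closure \ref{F2} resolves it: rather than making $S$ itself a neighborhood, it lets me enlarge $S$ to a cleverly chosen non-dense open set containing $A$.
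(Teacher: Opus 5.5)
Your proposal is correct and takes essentially the same route as the paper. For (ii), the paper likewise picks a non-dense $S\in\mathcal{F}$ and a point $p\notin\overline{S}\cup A$, uses normality to obtain an open $W\supseteq A\cup\overline{S}$ whose closure misses $p$, and applies \ref{F2} to get $W\in\mathcal{F}$; this is your regularity-based construction of $V=Y\setminus\overline{W}$, and your extra remarks (why $Y\setminus\overline{S}\not\subseteq A$, the $\mathcal{P}(Y)$ case, and that only some non-dense member of $\mathcal{F}$ is needed, which justifies the closing ``least restrictive'' claim) fill in points the paper leaves implicit.
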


\begin{proof}
(i) is immediate from the definitions.

(ii): There exists $S \in \mathcal{F}$ with
$\overline{S} \neq Y$. Pick
$p \in Y \setminus (\overline{S} \cup A)$.
By normality,
separate $\{p\}$ from $A \cup \overline{S}$ by disjoint open sets $V_p \ni p$
and $W \supseteq A \cup \overline{S}$. Then $W \in \mathcal{F}$ (upward closure from~$S$)
is not dense ($p \notin \overline{W}$), so $q(W)$ is a non-dense $\tau_{\mathcal{F}}$-open
neighborhood of $\ast$ disjoint from $q(V_p)$.

(iii)--(iv): Already established.
\end{proof}

\begin{remark}\label{rem:densityboundary}
The density modifier $\mathcal{D}_Y$ sits at the boundary between
the non-Hausdorff US zone and the Hausdorff regime. The equilibrium is
between two competing requirements:
\emph{all neighborhoods of $\ast$ must be dense} (to block $T_2$;
requires $\mathcal{F} \subseteq \mathcal{D}_Y$), and
$\ast$ \emph{must have enough neighborhoods to block double limits}
(to preserve US; requires $\mathcal{F}$ to contain complements of
closed countable sets).
\end{remark}

\subsection{Abstract US criterion}\label{sec:convergence}

\begin{theorem}[Abstract US criterion]\label{thm:abstractUS}
Let $Y$ be compact Hausdorff, $A \subseteq Y$ finite nonempty, and $\mathcal{F}$ an
admissible modifier.  Suppose:
\begin{enumerate}[label=\textup{(\roman*)},nosep]
\item No point of $Y \setminus A$ is isolated in~$Y$.
\item Every closed countable $C \subset Y \setminus A$ satisfies
$Y \setminus C \in \mathcal{F}$.
\end{enumerate}
Then $\mathrm{FMQ}(Y, A, \mathcal{F})$ is US.

Equivalently, $\mathrm{FMQ}(Y, A, \mathcal{F})$ is US whenever
\begin{equation}\tag{US$_{\mathcal{F}}$}
\text{for every convergent sequence } z_n \to c \in Y \setminus A, \quad
Y \setminus (\{z_n\} \cup \{c\}) \in \mathcal{F}.
\end{equation}
Hypothesis~\textup{(ii)} is satisfied by $\mathcal{D}_Y$
(closed countable sets are nowhere dense, so their complements are dense),
by $\mathcal{F}_{\mathrm{com}}$ (closed countable sets are meager, so
their complements are comeager), and fails for $\mathcal{F}_{\mathrm{cof}}$
(closed countable sets need not be finite).
\end{theorem}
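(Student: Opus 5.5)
The plan follows the scheme of Lemma~\ref{lem:baire}, with the density hypothesis replaced by (ii). First I would check that $X = \mathrm{FMQ}(Y,A,\mathcal{F})$ is $T_1$, using Proposition~\ref{prop:fmq-basic}(ii). $Y$ is Hausdorff, hence $T_1$. $A$ is finite, hence closed. For $y \in Y\setminus A$, the singleton $\{y\}$ is a closed countable subset of $Y\setminus A$, so (ii) gives $Y\setminus\{y\}\in\mathcal{F}$. Consequently $\{\ast\}$ is closed.

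Now let $(y_n)$ converge to both $L_1$ and $L_2$. If $y_n=\ast$ for infinitely many $n$, then any limit $L\neq\ast$ has the neighbourhood $X\setminus\{\ast\}$, which would have to contain a tail of the sequence. That is impossible, so $L_1=L_2=\ast$. Otherwise a tail has the form $\hat z_n$ with $z_n\in Y\setminus A$.

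Two cases remain.

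\emph{Case 1: $L_1=\hat c$ and $L_2=\hat d$ with $c,d\in Y\setminus A$.} The sets $q(V)$ with $V$ open in $Y\setminus A$ are open in $X$ and avoid $\ast$. The subspace $X\setminus\{\ast\}$ is therefore homeomorphic to $Y\setminus A$, which is Hausdorff, and so $c=d$.

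\emph{Case 2: $L_1=\ast$ and $L_2=\hat c$ with $c\in Y\setminus A$.} This is the main step. Convergence to $\hat c$ through the open set $q(Y\setminus A)$ gives $z_n\to c$ in $Y$. I would take $C:=\{z_n:n\ge n_0\}\cup\{c\}$, chosen so that $C\subset Y\setminus A$. Since $A$ is closed and $c\notin A$, a tail of $(z_n)$ lies in a neighbourhood of $c$ whose closure misses $A$.

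The obstacle is showing that $C$ is closed in $Y$, since $Y$ need not be first-countable. This is still fine. Let $p\notin C$. Hausdorffness gives disjoint open sets $P\ni p$ and $Q\ni c$. Only finitely many $z_n$ lie outside $Q$, and all of them differ from $p$, so we can shrink $P$ to avoid them as well. Hence $p\notin\overline{C}$, and $C$ is a countable closed subset of $Y\setminus A$.

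By (ii), $Y\setminus C\in\mathcal{F}$. It is open and contains $A$, so it is saturated, and $W:=q(Y\setminus C)$ is a $\tau_{\mathcal{F}}$-open neighbourhood of $\ast$. But $W$ contains none of the $\hat z_n$ with $n\ge n_0$, contradicting $\hat z_n\to\ast$.

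For the stated equivalent form (US$_{\mathcal{F}}$), the argument above used (ii) only on sets of the form $\{z_n\}\cup\{c\}$ for a convergent sequence $z_n\to c\in Y\setminus A$, together with singletons. Singletons are covered by constant sequences. So the same proof works verbatim under (US$_{\mathcal{F}}$), after passing to the tail inside $Y\setminus A$. Hypothesis (i) is not used in this direction; it only matters for the remarks on which modifiers satisfy (ii).

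Finally, I would check the modifier remarks.
\begin{itemize}
\item For $\mathcal{D}_Y$ and $\mathcal{F}_{\mathrm{com}}$: under (i), Lemma~\ref{lem:countable-baire} applies to the closed countable set $C$. Although it is stated for spaces without isolated points, the proof only needs the points of $C$ to be non-isolated, which holds since $C\subset Y\setminus A$. So $C$ is closed and nowhere dense, hence meager, and its complement is dense and comeager.
\item For $\mathcal{F}_{\mathrm{cof}}$: Proposition~\ref{prop:cofinite} supplies an infinite closed countable set, namely $\{x_n\}\cup\{1/2\}$, whose complement is not cofinite.
\end{itemize}
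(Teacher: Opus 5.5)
Your proof is correct and takes the same route as the paper. You reduce to a tail in $X\setminus\{\ast\}$ and use Hausdorffness of $Y\setminus A$ for two ordinary limits; in the mixed case you apply (ii) to $C=\{z_n\}\cup\{c\}$, so that $q(Y\setminus C)$ is a neighbourhood of $\ast$ the sequence never enters. Your version is slightly more careful than the paper's, which attributes $T_1$ to hypothesis (i) (for a general admissible $\mathcal{F}$ it actually comes, as you note, from (ii) applied to singletons) and asserts without proof that $C$ is closed.
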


\begin{proof}
Hypothesis~(i) ensures $X$ is $T_1$ (Proposition~\ref{prop:fmq-basic}(ii)).
Since $X \setminus \{\ast\} \cong Y \setminus A$ is Hausdorff, it suffices to
show no sequence converges to both an ordinary point and~$\ast$.
Suppose $\hat{z}_n \to \hat{c}$ with $c \in Y \setminus A$ and also $\hat{z}_n \to \ast$.
Then $z_n \to c$ in~$Y$; passing to a tail, $z_n \notin A$ for all~$n$. The set
$C := \{z_n : n \geq 1\} \cup \{c\}$ is closed and countable in $Y \setminus A$. By~(ii),
$Y \setminus C \in \mathcal{F}$. Since $C$ is closed, $Y \setminus C$ is open; since
$C \cap A = \varnothing$, we have $A \subseteq Y \setminus C$. Hence $q(Y \setminus C)$ is
a $\tau_{\mathcal{F}}$-open neighborhood of~$\ast$. But $\hat{z}_n \notin q(Y \setminus C)$
for all~$n$, contradicting $\hat{z}_n \to \ast$.
\end{proof}

\subsection{The neighborhood filter at $\ast$}

The entire non-Hausdorff character of the space is encoded in the
neighborhood filter of~$\ast$: at all other points, the topology is the
standard Hausdorff quotient topology. Thus:

\begin{quote}
\emph{$\mathrm{FMQ}(Y, A, \mathcal{F})$ is a standard Hausdorff space with a single
``deformed'' point~$\ast$ whose neighborhood filter has been replaced by~$\mathcal{F}$.}
\end{quote}

For $\mathcal{F} = \mathcal{D}_Y$, the neighborhood filter of $\ast$ is generated by the sets
$W_F = q(Y \setminus F)$ where $F$ ranges over closed nowhere-dense subsets of
$Y \setminus A$.

\begin{remark}[Convergence-space embedding]\label{prop:convergencespace}
The sequential coreflection $\sigma X$ (Dolecki--Mynard~\cite{bib:doleckimynard})
is strictly finer than $X$ and Hausdorff, with
$\tau_{\sigma X} \supsetneq \tau_X$; the identity
$\sigma X \to X$ is a homeomorphism on $X \setminus \{\ast\}$.
\end{remark}

\begin{remark}[Frame-theoretic perspective]\label{sec:frame}\label{rem:frame}
When $\mathcal{F} = \mathcal{D}_Y$, the density condition
corresponds to the \emph{double-negation nucleus}
$j(U) = \mathrm{int}(\overline{U})$ on $\mathcal{O}(Y)$
\cite{bib:johnstone}: an open set $U$ is in $\mathcal{D}_Y$ iff
$j(U) = Y$.
The FMQ construction thus replaces the neighborhood filter
of~$\ast$ with the filter of dense opens---a point-set manifestation
of $\neg\neg$-sheafification.
\end{remark}

\section{Extended Rigidity and Further Examples}\label{sec:extended}

A comprehensive catalogue of FMQ configurations---organized by base
space~$Y$, collapse set~$A$, and modifier~$\mathcal{F}$, with the
resulting separation, convergence, and $k_2$-Hausdorff properties---is
collected in Appendix~\ref{app:master}.
The remainder of this section analyzes two structurally distinct
examples: the Cantor--ERI, which is connected but not path-connected,
and a one-point extension that achieves the same hierarchy position
outside the FMQ framework.

\subsection{Example: the Cantor--ERI}

\begin{example}\label{ex:cantor}
Let $Y = \mathcal{C}$ (the standard Cantor set), $A = \{0,1\}$,
$X = \mathrm{FMQ}(\mathcal{C}, A, \mathcal{D}_{\mathcal{C}})$.
\end{example}

\begin{proposition}\label{prop:cantor}
$X$ is compact, $T_1$, US, $k_2$-Hausdorff, not KC, not $T_2$, and not
path-connected, but connected.
\end{proposition}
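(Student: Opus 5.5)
Most of the listed properties are instances of the general results of Section~\ref{sec:fmq}, with $Y=\mathcal{C}$ and $A=\{0,1\}$. The Cantor set is compact metrizable and has no isolated points, and $A$ is finite. So compactness, $T_1$, failure of $T_2$, connectedness, US and failure of KC come from Theorem~\ref{thm:general}(a)--(f). Failure of KC can also be read off Corollary~\ref{cor:notKCgeneral}, since $|\mathcal{C}\setminus A|\ge 2$. The $k_2$-Hausdorff property is Theorem~\ref{thm:k2H}: $\mathcal{D}_{\mathcal{C}}$ is admissible (Proposition~\ref{prop:dense-admissible}), $\mathcal{C}$ is metrizable so every point of $\mathcal{C}\setminus A$ is first countable, and $X$ is $T_1$. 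Connectedness deserves a remark because $\mathcal{C}$ is totally disconnected. It still follows from the KIP argument of Theorem~\ref{thm:general}(d): any clopen set containing $\ast$ has dense open preimage, and that preimage meets the preimage of the complementary open set.

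The real content is the failure of path-connectedness. The plan is to show that every continuous path $\gamma\colon[0,1]\to X$ is constant. Suppose $\gamma$ is nonconstant. Then the set $O:=\gamma^{-1}(X\setminus\{\ast\})$ is open, because $\{\ast\}$ is closed, and it is nonempty. On each component interval $J$ of $O$, the map $g:=(q|_{\mathcal{C}\setminus A})^{-1}\circ\gamma$ is continuous into $\mathcal{C}\setminus A$. This uses that $X\setminus\{\ast\}$ carries the subspace topology of $\mathcal{C}\setminus A$ (Proposition~\ref{prop:fmq-top}). Since $\mathcal{C}$ is totally disconnected, $g$ is constant on $J$, say with value $c$. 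Any such $J$ is a proper subinterval of $[0,1]$: if $J=[0,1]$, then $\gamma$ would be constant on all of $[0,1]$. So $J$ has an endpoint $s$ with $\gamma(s)=\ast$.

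This is where the argument hits the obstacle. Continuity of $\gamma$ at $s$ requires every neighbourhood $W_F$ of $\ast$ to contain $\gamma(t)=\hat c$ for $t\in J$ near $s$. Taking $F=\{c\}$, which is closed and nowhere dense, gives the neighbourhood $W_{\{c\}}$, which omits $\hat c$. This is a contradiction. Equivalently, one can invoke Lemma~\ref{lem:closedfibers}: $g^{-1}(\{c\})\supseteq J$ is closed in $[0,1]$, so its closure contains $s$ and therefore $s\in g^{-1}(\{c\})\subseteq[0,1]\setminus\gamma^{-1}(\ast)$, which is absurd. Hence $O=\varnothing$, $\gamma\equiv\ast$, and no path joins $\ast$ to any other point.

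I expect the main difficulty to be handling the decomposition of $O$ correctly, not the final contradiction. Specifically, one must check that $g$ is continuous on each component $J$ and that $J$ has an endpoint lying in $\gamma^{-1}(\ast)$. Once this is set up, Lemma~\ref{lem:closedfibers} closes the argument immediately.
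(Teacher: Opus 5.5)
Your proposal is correct and follows the paper's route: the general theorems give everything except non-path-connectedness, and for that you, like the paper, take a component of $\gamma^{-1}(X\setminus\{\ast\})$, use total disconnectedness of $\mathcal{C}\setminus A$ to make $\gamma$ constant there, and derive a contradiction at an endpoint mapped to $\ast$ (the paper phrases this via US, you via the neighbourhood $W_{\{c\}}=X\setminus\{\hat c\}$, which is the same $T_1$ fact). You actually prove the slightly stronger statement that every path is constant; your closing line should read ``hence $\gamma$ is constant'' rather than ``$O=\varnothing$, $\gamma\equiv\ast$'', because a constant path at $\hat c$ has $O=[0,1]$.
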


\begin{proof}
$\mathcal{C}$ is compact, Hausdorff, perfect (no isolated points). By
the general theory, $X$ is US and not KC.

$X$ is not path-connected. Suppose for contradiction that
$\gamma\colon [0,1] \to X$ is a continuous path with $\gamma(0) = \hat{x}$ and
$\gamma(1) = \hat{y}$ for distinct $x,y \in \mathcal{C} \setminus \{0,1\}$.
Let $S := \gamma^{-1}(X \setminus \{\ast\})$; since $\{\ast\}$ is closed
($X$ is $T_1$), the set $S$ is open in $[0,1]$, and
$\{0,1\} \subseteq S$ (since $\gamma(0), \gamma(1) \neq \ast$).
On~$S$, $\gamma$ takes values in
$X \setminus \{\ast\} \cong \mathcal{C} \setminus \{0,1\}$, which is
totally disconnected.
Since $\gamma|_S$ is continuous into a totally disconnected space, it is
constant on each connected component of~$S$.
Let $C_0$ be the connected component of $S$ containing~$0$;
then $C_0$ is an interval $[0, r)$ for some $r \leq 1$, and
$\gamma \equiv \hat{x}$ on $C_0$.
Since $S$ is open, if $r \in S$ then $r$ belongs to a different
component (giving $\gamma(r) \neq \hat{x}$), contradicting
continuity of $\gamma$ at $r$ via left-convergence.
If $r \notin S$, then $\gamma(r) = \ast$.
Choose $t_n \nearrow r$ with $t_n \in C_0$; then
$\gamma(t_n) = \hat{x} \to \hat{x}$ (constant) and
$\gamma(t_n) \to \gamma(r) = \ast$ (continuity),
contradicting uniqueness of sequential limits (US).

Connectivity follows from the KIP argument: if $X = U \sqcup V$ with
$\ast \in U$, then $q^{-1}(U)$ is dense and $q^{-1}(V)$ is nonempty open, contradicting
density.
\end{proof}

\begin{remark}\label{rem:cantorcounterpoint}
The Cantor--ERI is connected but not path-connected---a key counterpoint
to the original ERI, showing that path-connectedness depends on the
base space minus the collapse locus.
\end{remark}

\subsection{A one-point extension outside FMQ}\label{sec:onepoint}

The US-not-KC phenomenon is not an artifact of the FMQ framework.
Let $Z := (0,1) \cup \{\ast\}$ with the topology: open sets on
$(0,1)$ are the usual ones, and $U \ni \ast$ is open iff
$(0,1) \setminus U$ is a compact nowhere-dense subset of $(0,1)$
(equivalently, closed in~$\mathbb{R}$, contained in~$(0,1)$,
and with empty interior).
This is a topology: finite unions of compact nowhere-dense sets are
compact nowhere-dense (Baire category in~$(0,1)$), and arbitrary
intersections of compact sets are compact (closed subsets of a
compact set) with empty interior (subsets of nowhere-dense sets).

\begin{proposition}\label{prop:onepoint}
$Z$ is compact, $T_1$, sober, US, not KC, path-connected,
$k_2$-Hausdorff, and not weakly Hausdorff.
\end{proposition}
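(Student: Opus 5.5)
The most economical route is to recognise $Z$ as a homeomorphic copy of an FMQ space, and then import the earlier results. Take $Y=[0,1]$ and $A=\{0,1\}$, and let $X' = \mathrm{FMQ}([0,1],\{0,1\},\mathcal{D}_{[0,1]})$. The map $h\colon X' \to Z$ is the identity on $(0,1)$ and sends $q(A)$ to $\ast$. By Lemma~\ref{lem:nbhd}, whose proof carries over verbatim to this base, the open sets of $X'$ containing $\ast$ are exactly $q(Y\setminus F)$ with $F\subset(0,1)$ closed in $[0,1]$ and nowhere dense. Such $F$ are precisely the compact nowhere-dense subsets of $(0,1)$, so these sets correspond under $h$ to the open sets of $Z$ containing $\ast$. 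Open sets avoiding $\ast$ are the usual open subsets of $(0,1)$ on both sides. Hence $h$ is a homeomorphism.

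The opening remark that $Z$ lies ``outside FMQ'' only means it is defined intrinsically, without a base space. The identification above is the first step, and it is also the only point where any care is needed. Once $h$ is in place, the listed properties follow from earlier results:
\begin{itemize}[nosep]
\item \emph{Compact and $T_1$:} Theorem~\ref{thm:general}(a),(b).
\item \emph{US:} Theorem~\ref{thm:general}(e), via Lemma~\ref{lem:baire}.
\item \emph{Not KC:} Corollary~\ref{cor:notKCgeneral}.
\item \emph{Sober:} Proposition~\ref{prop:soberFMQ}.
\item \emph{$k_2$-Hausdorff:} Theorem~\ref{thm:k2H}, since $[0,1]$ is metrizable and has no isolated points.
\item \emph{Not weakly Hausdorff:} Theorem~\ref{thm:uniformzone} with $\mathcal{F}=\mathcal{D}_Y$.
\end{itemize}

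Path-connectedness is not covered by the general theory, so I would prove it directly, as in Proposition~\ref{prop:pathconn}. Any point $\hat a$ with $a\in(0,1)$ is joined to $\ast$ by $t\mapsto q((1-t)a)$, which is continuous as a composite of continuous maps (Proposition~\ref{prop:fmq-top}). Two ordinary points are joined by a segment inside $(0,1)$, or by concatenating two such paths through $\ast$.

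For a self-contained argument, each property can be checked by hand instead. Compactness holds because any open set containing $\ast$ leaves out a compact subset of $(0,1)$, which finitely many further members of the cover handle. US follows from the convergent-sequence argument of Proposition~\ref{prop:seqconv}: if $z_n\to c\in(0,1)$, the set $\{z_n\}\cup\{c\}$ is compact, countable and nowhere dense, so its complement together with $\ast$ is a neighbourhood of $\ast$ containing no $z_n$. Not KC and not wH are witnessed by the image of $[1/4,3/4]$, which is compact but not closed. Sobriety and $k_2$H are the only places where citing the FMQ versions saves real effort.
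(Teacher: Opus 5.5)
Your proof is correct, and it takes a different route from the paper's. The paper treats $Z$ as a space built outside the FMQ framework and re-verifies each property by hand, in parallel with ERI:
compactness from compactness of $(0,1)\setminus U_{\alpha_0}$;
US from an intrinsic Bolzano--Weierstrass convergence criterion;
path-connectedness by checking continuity of $\gamma(t)=tc$ at $t=0$ via $\inf C>0$;
$k_2$H and sobriety only by asserting that Lemma~\ref{lem:closedfibers}, Proposition~\ref{prop:accum} and Proposition~\ref{prop:soberFMQ} apply \emph{mutatis mutandis}.

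You instead show that $Z$ \emph{is} $\mathrm{FMQ}([0,1],\{0,1\},\mathcal{D}_{[0,1]})$. Open sets missing $\ast$ coincide. A subset of $(0,1)$ that is closed in $[0,1]$ with empty interior is exactly a compact nowhere-dense subset of $(0,1)$; the interiors agree because $(0,1)$ is open in $[0,1]$. After that, everything except path-connectedness is a literal instance of Theorem~\ref{thm:general}, Corollary~\ref{cor:notKCgeneral}, Proposition~\ref{prop:soberFMQ}, Theorem~\ref{thm:k2H} and Theorem~\ref{thm:uniformzone}, and you check their hypotheses correctly. Your path $t\mapsto q((1-t)a)$ is continuous simply because $q$ is, which is tidier than the paper's $\inf C>0$ computation. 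Your route gains economy and rigour: the two items the paper only sketches ($k_2$H and sobriety) become citations of fully stated theorems.

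It also has a consequence you understate. The identification does not merely reinterpret the phrase ``outside FMQ''; it refutes the paper's framing. Remarks~\ref{rmk:onepoint-role} and~\ref{rem:further}(d) list representability of $Z$ as an FMQ over a compact Hausdorff base as open. The concluding section offers $Z$ as evidence that the separation ``does not depend on the density modifier''. Your homeomorphism shows that $Z$ is just row~2 of Table~\ref{tbl:eri-like} with the collapsed point relabelled. So question~(d) has an immediate affirmative answer, and $Z$ gives no evidence independent of the FMQ construction. The paper's direct approach gains only a base-free description of $Z$, and your identification already provides that.
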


\begin{proof}[Proof \textup{(sketch)}]
The proofs parallel the ERI arguments with~$(0,1)$ playing the role
of $Y \setminus A$ and $\{0,1\}$ playing the role of~$A$.

\emph{Compactness:}
any open cover includes some $U_{\alpha_0} \ni \ast$; its complement
$(0,1) \setminus U_{\alpha_0}$ is compact, hence finitely covered.

\emph{$T_1$:} singletons $\{c\}$ ($c \in (0,1)$) are compact
nowhere-dense, so their complements are open; $Z \setminus \{\ast\}
= (0,1)$ is open.

\emph{Not KC:}
$K := [1/3, 2/3]$ is compact but not closed ($[1/3,2/3]$ has
nonempty interior).

\emph{Convergence criterion:}
$x_n \to \ast$ in~$Z$ iff $(x_n)$ has no accumulation point
in~$(0,1)$ (same Bolzano--Weierstrass argument as
Proposition~\ref{prop:seqconv}).

\emph{US:}
if $x_n \to c \in (0,1)$ and $x_n \to \ast$, then $c$ is an
accumulation point in~$(0,1)$---contradicting the convergence
criterion.

\emph{Path-connected:}
for $c \in (0,1)$, define $\gamma(0) = \ast$, $\gamma(t) = tc$ for
$t > 0$.
Continuity at $t=0$: the complement $C := (0,1) \setminus U$ of any
neighborhood $U \ni \ast$ satisfies $\delta := \inf C > 0$
(compactness of~$C$ forces $0 \notin \overline{C}$), so
$\gamma(t) \in U$ for $0 < t < \delta/c$.

\emph{Not weakly Hausdorff:}
$\iota\colon [1/3, 2/3] \hookrightarrow Z$ is continuous from a
compact Hausdorff space with non-closed image.

\emph{$k_2$-Hausdorff:}
the Closed Fibers Principle and Accumulation Structure
(Lemma~\ref{lem:closedfibers}, Proposition~\ref{prop:accum}) apply
\emph{mutatis mutandis}: for any compact nowhere-dense
$C \subset (0,1)$, $Z \setminus C$ is open with $\ast \in
Z \setminus C$, so $g^{-1}(C)$ is closed in~$K$.
Any cluster point of~$(g(k_\alpha))$ must lie in $\{0,1\}$ (outside
$(0,1)$); the three cases for $\ker(f)$ closedness proceed as in
Theorem~\ref{thm:k2H}.

\emph{Sober:}
$Z$ is $T_1$; the reducibility of non-singleton closed sets follows
from the Hausdorff property of~$(0,1)$, exactly as in
Proposition~\ref{prop:soberFMQ}.
\end{proof}

\begin{remark}[Comparison of $Z$ and ERI]\label{rmk:onepoint-role}
Both spaces occupy the same hierarchy position ($k_2$H, not wH, US,
not KC, compact, path-connected, sober), confirming that this
separation is robust and not an artifact of the FMQ framework.
The space $Z$ provides an explicit convergence criterion
(analogous to Proposition~\ref{prop:seqconv}):
$x_n \to \ast$ iff $(x_n)$ has no accumulation point in $(0,1)$.
However, $Z$ lacks FMQ's structural advantages:
no generalization beyond $(0,1)$
(Theorem~\ref{thm:general}), and no modifier theory
(Sections~\ref{sec:modifiers}--\ref{sec:optimality}).
Whether $Z$ is representable as
$\mathrm{FMQ}(K, A, \mathcal{F})$ for some compact Hausdorff~$K$
remains open (Remark~\ref{rem:further}(d)).
\end{remark}

\subsection{FMQ with infinite collapse sets}\label{subsec:rigidity}

The preceding sections establish FMQ as a framework with a rigid
US zone: every modifier between $\mathcal{F}_{\mathrm{com}}$ and
$\mathcal{D}_Y$ produces the same hierarchy level.
We now show that this rigidity extends much further than the
finite-$A$ setting suggests, and that two natural attempts to
escape it---iteration (Section~\ref{subsec:iterated}) and products
(Section~\ref{subsec:products})---are absorbed by the framework.

Theorem~\ref{thm:general} requires $A$ finite.
The following extension resolves Open Problem~\ref{op:infiniteA}
in the affirmative and shows that the hierarchy level is
insensitive to the cardinality of~$A$.

\begin{theorem}[Infinite collapse sets]\label{thm:infiniteA}
Let $Y$ be a compact Hausdorff space without isolated points,
and let $A \subset Y$ be a nonempty \emph{closed nowhere-dense}
subset \textup{(}not necessarily finite\textup{)}.
Then $X := \mathrm{FMQ}(Y, A, \mathcal{D}_Y)$ is:
\begin{enumerate}[label=\textup{(\alph*)},nosep]
\item compact;
\item $T_1$;
\item connected;
\item US;
\item not KC;
\item not Hausdorff.
\end{enumerate}
If moreover every point of $Y \setminus A$ has a countable
neighborhood base in~$Y$ \textup{(}automatic if $Y$ is
metrizable\textup{)}, then $X$ is $k_2$-Hausdorff but not weakly
Hausdorff.
In particular, $X$ occupies exactly the position
$k_2\textup{H}$, not $\textup{wH}$, not $\textup{KC}$,
not~$T_2$ in the Clontz hierarchy~\cite{bib:clontz}---the same level as in the
finite-$A$ case.
\end{theorem}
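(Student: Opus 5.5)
I will go through the list (a)--(f) and then the $k_2$H/wH clause, each time checking which finite-$A$ argument survives when $A$ is only closed nowhere dense. The point is that finiteness of $A$ was used in only two ways: to get $Y\setminus A$ open, i.e.\ $\{\ast\}$ closed, and to get $A$ nowhere dense, i.e.\ $Y\setminus A$ dense. Both are now hypotheses.

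\emph{(a)} is Proposition~\ref{prop:fmq-basic}(i): $q$ is continuous and $\tau_{\mathcal{D}_Y}\subseteq\tau_q$, and that proof does not use finiteness.

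\emph{(b)} For $\{\ast\}$, the preimage of $X\setminus\{\ast\}$ is $Y\setminus A$, which is open because $A$ is closed. For $y\notin A$, the preimage of $X\setminus\{\hat y\}$ is $Y\setminus\{y\}$, which is open and dense since $y$ is not isolated.

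\emph{(c), (f)} Every neighborhood of $\ast$ has dense open preimage, so (KIP$_{\mathcal{F}}$) holds (Remark~\ref{rem:KIPdensity}) and Proposition~\ref{prop:gen-KIP} applies. This gives non-Hausdorffness provided $X$ is not a singleton, which holds because $A$ nowhere dense forces $Y\setminus A\neq\varnothing$. Connectedness follows from the same disconnection argument as in Theorem~\ref{thm:general}(d).

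\emph{(d)} Proposition~\ref{prop:gen-conv} is already stated for closed $A$ with empty interior. I will rerun the proof of Lemma~\ref{lem:baire} verbatim: $\{\ast\}$ is closed by (b), and $X\setminus\{\ast\}\cong Y\setminus A$ is Hausdorff, because $Y\setminus A$ is open in $Y$ and $q$ restricts to a homeomorphism there (the modifier condition is vacuous on sets missing $\ast$). Alternatively, Theorem~\ref{thm:abstractUS}(ii) holds since closed countable sets are nowhere dense (Lemma~\ref{lem:countable-baire}), and that proof uses finiteness of $A$ only through $T_1$.

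\emph{(e) and not wH.} Pick $y\in Y\setminus A$. The set $Y\setminus A$ is open and $Y$ is regular, so there is an open $W$ with $y\in W\subseteq\overline W\subseteq Y\setminus A$. Put $C=\overline W$. Then $q(C)$ is compact, and it is a continuous image of the compact Hausdorff space $C$. However, $Y\setminus C$ is not dense, so $q(C)$ is not closed. This is exactly Theorem~\ref{thm:uniformzone}.

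The main obstacle is the $k_2$H clause. I will check that Lemma~\ref{lem:closedfibers}, Proposition~\ref{prop:accum} and Theorem~\ref{thm:k2H} never used $|A|<\infty$. In the Closed Fibers Principle, $W_F=q(Y\setminus F)$ is open for each closed nowhere-dense $F\subset Y\setminus A$; that is all it needs. In Proposition~\ref{prop:accum}:
\begin{itemize}
\item Step~2 uses first countability only at $z\in Y\setminus A$, which is our hypothesis.
\item Step~3 needs $C\subset Y\setminus A$. This holds because $z\notin A$ and the sequence values lie in $Y\setminus A$.
\item $C$ is closed, countable, hence nowhere dense by Lemma~\ref{lem:countable-baire}.
\end{itemize}
Theorem~\ref{thm:k2H} uses only $T_1$ and the Hausdorffness of $X\setminus\{\ast\}\cong Y\setminus A$; Hausdorffness of $Y$ gives the latter. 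The one subtle point is the identification of $X\setminus\{\ast\}$ with $Y\setminus A$ as spaces and the continuity of $g$ on the open set $K\setminus N$. I expect this to go through because $Y\setminus A$ is open, so sets missing $\ast$ are open in $X$ exactly when they are open in $Y$.

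Combining these with wH $\Rightarrow$ $k_2$H $\Rightarrow$ US and KC $\Rightarrow$ wH places $X$ at $k_2$H, not wH, not KC, not $T_2$.
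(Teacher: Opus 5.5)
Your proposal is correct and follows the paper's own proof essentially item by item. It uses continuity of $q$ for (a), the preimage computation for (b), the KIP for (c) and (f), and the general convergence criterion (Proposition~\ref{prop:gen-conv}, already stated for closed $A$ with empty interior) for (d). It uses a closed $C\subset Y\setminus A$ with nonempty interior for (e) and for non-wH, and for $k_2$H it observes that Lemma~\ref{lem:closedfibers} and Proposition~\ref{prop:accum} use only $F\cap A=\varnothing$ and first countability at points of $Y\setminus A$, never $|A|<\infty$.

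One caveat you inherit from the paper; it concerns the net argument, not the finiteness of $A$. Step~4 of Proposition~\ref{prop:accum} asserts $k_{\alpha_{j_n}}\to a$, but an increasing sequence $j_1\le j_2\le\cdots$ need not be cofinal in the directed set, for instance when it has uncountable cofinality. The repair is to use Lemma~\ref{lem:closedfibers} and regularity of $K$ to fix an open $O\ni a$ with $\overline{O}\cap g^{-1}(\{z\})=\varnothing$, and to choose every $k_{\alpha_{j_n}}$ inside $O$. Any cluster point $k'$ of the sequence then lies in the closed set $g^{-1}(C)\subseteq K\setminus N$. Continuity of $g$ at $k'$ forces $g(k')=z$, which contradicts $k'\in\overline{O}$.
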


\begin{proof}
We verify that each step of the finite-$A$ theory extends without
change, noting explicitly where finiteness was \emph{not} used.

\emph{(a) Compact.}\;
$q\colon Y \to X$ is surjective and continuous (Proposition~\ref{prop:fmq-top}),
and $Y$ is compact.

\emph{(b) $T_1$.}\;
For $\hat{c}$ with $c \in Y \setminus A$: $Y \setminus \{c\}$ is
open and dense (no isolated points).
For~$\ast$: $Y \setminus A$ is open ($A$ closed) and dense
($A$ nowhere dense).
\emph{This is where ``$A$ closed nowhere dense'' replaces ``$A$ finite.''}

\emph{(c) Connected.}\;
If $X = U \sqcup V$ with $\ast \in U$, then $q^{-1}(U)$ is open and
dense (condition~(b)), while $q^{-1}(V)$ is nonempty and open.
A dense set meets every nonempty open set, contradicting disjointness.

\emph{(d) US.}\;
The General Convergence Criterion (Proposition~\ref{prop:gen-conv})
applies: its proof uses only that $A$ is closed and nowhere dense
in a compact Hausdorff space without isolated points.
If $\hat{z}_n \to \ast$ and $\hat{z}_n \to \hat{c}$ with
$c \in Y \setminus A$, then $z_n \to c$ in~$Y$, giving
$C := \{z_n\} \cup \{c\}$ closed, countable, and nowhere dense
(Lemma~\ref{lem:countable-baire}) with $C \subset Y \setminus A$.
Hence $q(Y \setminus C)$ is a neighborhood of~$\ast$ that
$\hat{z}_n$ never enters---contradiction.

\emph{(e) Not KC.}\;
By regularity, there exists a closed $C \subset Y \setminus A$ with
$\mathrm{int}(C) \neq \varnothing$ (same argument as
Theorem~\ref{thm:general}(f)).
Then $q(C)$ is compact but not closed (condition~(b) fails for
$X \setminus q(C)$, since $Y \setminus C$ is not dense).

\emph{(f) Not Hausdorff.}\;
The KIP holds: every open neighborhood of~$\ast$ has dense preimage,
hence meets every nonempty open preimage.

\emph{$k_2$-Hausdorff.}\;
The proof of Theorem~\ref{thm:k2H} carries over: the Closed
Fibers Principle (Lemma~\ref{lem:closedfibers}) uses $F \cap A =
\varnothing$ (not $|A| < \infty$), and the Accumulation Structure
(Proposition~\ref{prop:accum}) uses first-countability of
$Y \setminus A$ (not finiteness of~$A$).

\emph{Not weakly Hausdorff.}\;
By regularity, $Y \setminus A$ contains a closed $C$ with nonempty
interior; $q(C)$ is a compact non-closed image, so $X$ is not wH
(same argument as Theorem~\ref{thm:uniformzone}).
\end{proof}

\begin{remark}\label{rem:infiniteA-significance}
Theorem~\ref{thm:infiniteA} shows that the Uniform Hierarchy
Corollary~\ref{cor:uniformzone} extends to
\emph{arbitrary} closed nowhere-dense collapse sets:
the single hierarchy level $k_2$H-not-wH-not-KC is
accessed by $\mathrm{FMQ}(Y, A, \mathcal{D}_Y)$ regardless
of whether $|A|$ is $1$, $3$, $\aleph_0$, or $2^{\aleph_0}$.
The cardinality and geometry of $A$ do not constitute a ``knob''
for the hierarchy level.
\end{remark}

\begin{example}\label{ex:infiniteA}
The following instances illustrate Theorem~\ref{thm:infiniteA}.
\begin{enumerate}[nosep]
\item \emph{Countable $A$}: $Y = [0,1]$,
  $A = \{0\} \cup \{1/n : n \geq 1\}$.
  $A$ is closed, countable, and nowhere dense.
  $Y \setminus A = (0,1] \setminus \{1/n\}$ is first-countable.
  The resulting FMQ is compact, US, $k_2$H, not wH, not KC, and
  path-connected (since $Y \setminus A$ is arc-connected).
\item \emph{Uncountable $A$}: $Y = [0,1]$, $A = \mathcal{C}$
  (the standard Cantor set).
  $A$ is closed, uncountable, and nowhere dense.
  $Y \setminus A$ is a countable union of open intervals, hence
  first-countable, arc-connected, and dense.
  The resulting FMQ is compact, US, $k_2$H, not wH, not KC,
  and path-connected.
\end{enumerate}
\end{example}

\subsection{Iterated FMQ: absorption into multi-point collapse}\label{subsec:iterated}

A natural attempt to produce new hierarchy levels is to iterate
the FMQ construction: apply FMQ to an FMQ space.
We show this reduces to a \emph{multi-point} FMQ over the
original base, producing the same hierarchy level.

\begin{definition}[Multi-point FMQ]\label{def:multipointfmq}
Let $Y$ be a topological space, $A_1, \ldots, A_r \subseteq Y$
pairwise disjoint nonempty subsets, and $\mathcal{F}$ a
$\tau_Y$-admissible modifier.
The \emph{multi-point FMQ} collapses each $A_i$ to a distinct
point~$\ast_i$ and defines: $U$ is open iff $q^{-1}(U)$ is open
in~$Y$ and $\ast_i \in U \Rightarrow q^{-1}(U) \in \mathcal{F}$
for each~$i$.
\end{definition}

\begin{proposition}[Iterated FMQ absorption]\label{prop:absorption}
Let $Y$ be compact Hausdorff without isolated points,
$A \subset Y$ finite, and
$X_1 = \mathrm{FMQ}(Y, A, \mathcal{D}_Y)$.
Let $B = \{\hat{b}_1, \ldots, \hat{b}_m\} \subset
X_1 \setminus \{\ast_1\}$ \textup{(}corresponding to
$b_i \in Y \setminus A$\textup{)} be finite.
Then $X_2 := \mathrm{FMQ}(X_1, B, \mathcal{D}_{X_1})$ is
canonically homeomorphic to the multi-point FMQ of~$Y$
collapsing $A$ to~$\ast_1$ and $\{b_1, \ldots, b_m\}$
to~$\ast_2$, with the density modifier $\mathcal{D}_Y$ applied
at both collapsed points.
\end{proposition}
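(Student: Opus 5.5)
The plan is to compare the two topologies directly on the common underlying set. Write $q_1\colon Y \to X_1$ and $q_2\colon X_1 \to X_2$ for the two projections, and $p := q_2 \circ q_1\colon Y \to X_2$. As a map of sets, $p$ is exactly the projection of $Y$ onto the multi-point quotient $M$ that collapses $A$ to $\ast_1$ and $\{b_1,\dots,b_m\}$ to $\ast_2$. Here $\ast_1 \notin B$, so $q_2$ does not touch $\ast_1$, and $q_1$ is injective on $Y\setminus A \supseteq \{b_i\}$. The canonical bijection $h\colon M \to X_2$ is therefore the identity on underlying sets once both are identified with $Y/{\approx}$. It remains to show that a set $U$ is open in $X_2$ if and only if it is open in $M$. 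We unwind both definitions in terms of $W := p^{-1}(U) \subseteq Y$ and $V := q_2^{-1}(U) \subseteq X_1$, so that $W = q_1^{-1}(V)$ and $\ast_1 \in U \iff \ast_1 \in V$.

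By Definition~\ref{def:fmq} applied twice, $U$ is open in $X_2$ if and only if three conditions hold:
\begin{enumerate}[label=\textup{(\arabic*)},nosep]
\item $W$ is open in $Y$;
\item $\ast_1 \in U$ implies $W$ is dense in $Y$;
\item $\ast_2 \in U$ implies $V$ is dense in $X_1$.
\end{enumerate}
Conditions (1) and (2) together say that $V$ is open in $X_1$. On the other hand, $U$ is open in $M$ if and only if (1), (2) and the following hold:
\begin{enumerate}[label=\textup{(\arabic*$'$)},nosep,start=3]
\item $\ast_2 \in U$ implies $W$ is dense in $Y$.
\end{enumerate}
So everything reduces to a density transfer lemma: \emph{for $V$ open in $X_1$, $V$ is dense in $X_1$ if and only if $q_1^{-1}(V)$ is dense in $Y$.}

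To prove the lemma, first suppose $\ast_1 \in V$. Then $q_1^{-1}(V)$ is dense by condition (b) of the definition of $X_1$, and $V$ is dense in $X_1$ by the KIP (Proposition~\ref{prop:gen-KIP} with Remark~\ref{rem:KIPdensity}); so both sides hold. Now suppose $\ast_1 \notin V$, so that $q_1^{-1}(V) \subseteq Y\setminus A$. We follow the argument of Proposition~\ref{prop:baire}. Every nonempty open $O \subseteq Y\setminus A$ gives an open set $q_1(O)$ of $X_1$, since its preimage is $O$ and the modifier condition is vacuous. Conversely, every nonempty open $G \subseteq X_1$ has nonempty open preimage $q_1^{-1}(G)$. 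This preimage meets the open dense set $Y\setminus A$: it is open because $A$ is finite and hence closed, and it is dense because $Y$ has no isolated points. Hence $q_1^{-1}(G)$ contains a nonempty open $O \subseteq Y\setminus A$. It follows that $V$ is dense in $X_1$ if and only if $q_1^{-1}(V)$ meets every nonempty open subset of $Y\setminus A$. Since $Y\setminus A$ is open and dense in $Y$, the latter is equivalent to $q_1^{-1}(V)$ being dense in $Y$. With the lemma, (3) and (3$'$) coincide given (1) and (2), so the two topologies agree and $h$ is a homeomorphism.

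The only substantive step is the density transfer lemma; the rest is bookkeeping of definitions. The main thing to watch there is the case split on whether $\ast_1 \in V$. One must also make sure that the density condition at $\ast_2$, which is imposed in $X_1$ with the modifier $\mathcal{D}_{X_1}$, is indeed translated into a condition in $Y$ with the modifier $\mathcal{D}_Y$. Finiteness of $A$ together with the absence of isolated points in $Y$ is exactly what makes $Y\setminus A$ open and dense, and that is the fact the translation uses.
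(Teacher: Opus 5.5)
Your proposal is correct and follows essentially the same route as the paper: both unpack openness in $X_2$ into conditions on the preimage in $Y$ and reduce everything to the same density-transfer lemma ($V$ dense in $X_1$ iff $q_1^{-1}(V)$ dense in $Y$), proved by passing to nonempty open subsets of $Y\setminus A$. The differences are minor. You state the lemma only for open $V$, which is all that is needed, and you split on whether $\ast_1 \in V$, although your argument for the case $\ast_1 \notin V$ in fact works for every $V$.
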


\begin{proof}
Write $q_1\colon Y \to X_1$ and $q_2\colon X_1 \to X_2$ for
the projections, and $\pi := q_2 \circ q_1\colon Y \to X_2$
for their composition.
The set-level description of~$\pi$ is:
$A \mapsto \ast_1$, $\{b_1, \ldots, b_m\} \mapsto \ast_2$,
$x \mapsto \hat{x}$ for $x \in Y \setminus (A \cup \{b_i\})$.

\emph{Key lemma:} $V \subseteq X_1$ is dense in $X_1$ if
and only if $q_1^{-1}(V)$ is dense in~$Y$.

\emph{Proof of lemma.}\;
$V$ is dense in $X_1$ iff $V$ meets every nonempty open set.
The open sets in $X_1$ are of two kinds:
(i)~those not containing~$\ast_1$, which are exactly
$q_1(W)$ for $W \subseteq Y \setminus A$ open in~$Y$; and
(ii)~those containing~$\ast_1$, whose preimages are open \emph{and}
dense in~$Y$.
For type~(i): $V$ meets $q_1(W)$ iff
$q_1^{-1}(V) \cap W \neq \varnothing$, i.e., iff
$q_1^{-1}(V) \cap (Y \setminus A)$ is dense in
$Y \setminus A$, hence in~$Y$ (since $Y \setminus A$ is dense
in~$Y$: $A$ is finite and $Y$ has no isolated points).
For type~(ii): if $U \ni \ast_1$ is open, then $q_1^{-1}(U)$
is open and dense in~$Y$; any set whose preimage is dense in~$Y$
automatically meets $q_1^{-1}(U)$, so $V$ meets~$U$.
Thus $V$ is dense in $X_1$ iff $q_1^{-1}(V)$ is dense in~$Y$.

Now let $U \subseteq X_2$.
The topology of $X_2$ requires:
\begin{enumerate}[label=(\roman*),nosep]
\item $q_2^{-1}(U)$ is open in $X_1$, and
\item $\ast_2 \in U \Rightarrow q_2^{-1}(U)$ is dense in $X_1$.
\end{enumerate}
Condition (i) unpacks (via the $X_1$ topology) to:
$\pi^{-1}(U)$ is open in~$Y$, and
$\ast_1 \in U \Rightarrow \pi^{-1}(U)$ is dense in~$Y$
(since $\ast_1 \in q_2^{-1}(U)$ iff $\ast_1 \in U$,
because $\ast_1 \notin B$).
Condition (ii) becomes, by the Key Lemma:
$\ast_2 \in U \Rightarrow \pi^{-1}(U)$ is dense in~$Y$.
Combining: $U \in \tau_{X_2}$ iff $\pi^{-1}(U)$ is open in~$Y$,
and $(\ast_1 \in U \text{ or } \ast_2 \in U) \Rightarrow
\pi^{-1}(U)$ is dense in~$Y$.
This is exactly the multi-point FMQ topology with density
modifier~$\mathcal{D}_Y$.
\end{proof}

\begin{corollary}\label{cor:iteratedlevel}
Iterated FMQ over metrizable base spaces produces the same
hierarchy level as single-step FMQ:
$k_2\textup{H}$, not $\textup{wH}$, not $\textup{KC}$,
not~$T_2$.
No amount of iteration over a metrizable base can produce a
new level.
\end{corollary}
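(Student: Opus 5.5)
The plan is to reduce Corollary~\ref{cor:iteratedlevel} to the single-step theory via Proposition~\ref{prop:absorption}, and then verify each hierarchy property directly for the multi-point FMQ over a metrizable base. Let $Y$ be compact metrizable without isolated points. By induction on the number of iterations, Proposition~\ref{prop:absorption} identifies the iterated space with a multi-point FMQ $X$. Its collapse sets are pairwise disjoint finite sets $A_1,\dots,A_r\subset Y$, each sent to $\ast_i$, and the density condition is imposed at every $\ast_i$. For the induction step, the Key Lemma must be re-proved with $X_1$ replaced by a multi-point FMQ. The argument is unchanged: open sets not containing any $\ast_i$ are images of open subsets of $Y\setminus\bigcup A_i$, and that set is dense in $Y$ because $\bigcup A_i$ is finite. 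Every other open set has dense preimage. Hence $V$ is dense in $X$ iff its preimage in $Y$ is dense, and the unpacking in the proof of Proposition~\ref{prop:absorption} goes through.

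The four properties are then checked one at a time. \emph{$T_1$}: $Y\setminus\bigcup_i A_i$, $Y\setminus A_i$ and $Y\setminus\{y\}$ are all open and dense. \emph{Not $T_2$, not wH, not KC}: the KIP holds at each $\ast_i$. For the other two, choose by regularity a closed $C\subset Y\setminus\bigcup A_i$ with nonempty interior. Then $q(C)$ is the continuous image of a compact Hausdorff space, but it is not closed, since the complement contains $\ast_1$ and has non-dense preimage; this is the argument of Theorem~\ref{thm:uniformzone}.

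\emph{$k_2$-Hausdorff} is where the real work lies. I would rerun the proof of Theorem~\ref{thm:k2H}. Lemma~\ref{lem:closedfibers} still holds: for closed nowhere-dense $F\subset Y\setminus\bigcup A_i$, the set $q(Y\setminus F)$ is open and contains every $\ast_i$. Proposition~\ref{prop:accum} then gives, for nets approaching $N=f^{-1}(\{\ast_1,\dots,\ast_r\})$, that every cluster point of $g(k_\alpha)$ lies in $\bigcup A_i$. Cases 1 and 3 go through as before.

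The new difficulty, and the main obstacle, is the case $f(a)=\ast_i$, $f(b)=\ast_j$ with $i\ne j$. Here I would use that each $\{\ast_i\}$ is closed, so the sets $f^{-1}(\ast_i)$ are disjoint closed subsets of $K$. If $f(k_\alpha)=\ast_i$ frequently, then $b\in f^{-1}(\ast_i)$, which is a contradiction. So we may assume $f(k_\alpha)=f(\ell_\alpha)\notin\{\ast_1,\dots,\ast_r\}$ eventually, and then $g(k_\alpha)=g(\ell_\alpha)$. By compactness of $Y$, pass to a subnet with $g(k_\alpha)\to y$. The accumulation argument must now be refined: Step~4 of Proposition~\ref{prop:accum}, run with the closed set $C$ taken near $A_i$, should show that $y\in A_i$ whenever $k_\alpha\to a\in f^{-1}(\ast_i)$. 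Symmetrically $y\in A_j$, and disjointness of $A_i$ and $A_j$ gives the contradiction.

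For the refinement I would argue as follows. Suppose $y\in A_j$. Take a neighborhood $O$ of $A_i$ with $\overline O\cap A_j=\varnothing$, and put $U=q(O\cup(Y\setminus\overline{O'}))$ for a suitable small neighborhood $O'$ of $y$. This $U$ is open, contains $\ast_i$ but not $\ast_j$, and must have dense preimage. Continuity of $f$ at $a$ puts $f(k_\alpha)$ into $U$ eventually, which is incompatible with $g(k_\alpha)\to y$ provided $O'\setminus\overline{\{\ldots\}}$ can be arranged. This density bookkeeping is the step I expect to need the most care.
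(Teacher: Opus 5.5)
You get the reduction right: induction on Proposition~\ref{prop:absorption}, with the Key Lemma re-proved for a multi-point FMQ. Your $T_1$, not-$T_2$, not-wH and not-KC checks and your $k_2$H case split also track the paper. You also correctly isolate the point the paper only asserts, namely that the Closed Fibers Principle and Accumulation Structure hold ``at each $\ast_i$ independently''. Concretely, if $k_\alpha\to a\in N_i:=f^{-1}(\ast_i)$, then every cluster point of $g(k_\alpha)$ lies in $A_i$, not merely in $\bigcup_k A_k$. Without this the case $f(a)=\ast_i$, $f(b)=\ast_j$ yields no contradiction.

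\textbf{The gap.} The argument you sketch for this refinement fails. Your set $U=q(O\cup(Y\setminus\overline{O'}))$ is not open in $X$. The complement of its preimage is $\overline{O'}\setminus O$, which contains the nonempty open set $O'\setminus\overline{O}$; this is a neighborhood of $y$, because $y\in A_j$ and $\overline{O}\cap A_j=\varnothing$. So the preimage is not dense, and the density condition at $\ast_i$ fails. It is also saturated only if $A_j\subseteq\overline{O'}$. This is not a bookkeeping issue: no neighborhood of $\ast_i$ can omit a set with nonempty interior, so you can never cut out a neighborhood of $y$.

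\textbf{The missing idea.} Cut out a closed nowhere-dense \emph{saturated} set that contains all of $A_j$.
\begin{enumerate}
\item Choose an open $O\supseteq N_j$ with $a\notin\overline{O}$. This is possible because $N_j$ is closed, as $\{\ast_j\}$ is.
\item Using a compatible metric $d$ on $Y$, pick terms $p_n$ of your subnet with $p_n\notin\overline{O}$ and $d(g(p_n),y)<1/n$. Set $y_n:=g(p_n)$ and $C:=A_j\cup\{y_n:n\ge 1\}$.
\item $C$ is closed, since its only accumulation point is $y\in A_j$. It is countable, hence nowhere dense. It is saturated and disjoint from $A_k$ for every $k\neq j$. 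Therefore $q(Y\setminus C)$ is an open neighborhood of $\ast_i$.
\item Hence $E:=K\setminus f^{-1}(q(Y\setminus C))=N_j\cup g^{-1}(\{y_n:n\ge1\})$ is closed. Since $N_j\subseteq O$, the set $E\setminus O=g^{-1}(\{y_n:n\ge1\})\setminus O$ is compact and contains every $p_n$.
\item Let $p$ be a cluster point of $(p_n)$ in $E\setminus O$. It lies in the open set $K\setminus\bigcup_k N_k$, where $g$ is continuous. So $g(p)$ is a cluster point of $(y_n)$, which forces $g(p)=y\in A_j$.
\item But $g(p)\in\{y_n\}\subseteq Y\setminus\bigcup_k A_k$, a contradiction.
\end{enumerate}
This version never needs the extracted $p_n$ to converge to $a$, which is not automatic for a net in a non-first-countable $K$. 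The same device, with $O\supseteq g^{-1}(y)$ and $C=\{y\}\cup\{y_n\}$, also gives the coarse statement when $y\in Y\setminus\bigcup_k A_k$.
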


\begin{proof}
By Proposition~\ref{prop:absorption}, any iterated FMQ
reduces to a multi-point FMQ over~$Y$.
We verify $k_2$H for the multi-point case.
Let $f\colon K \to X$ be continuous with $K$ compact Hausdorff,
$N_i := f^{-1}(\ast_i)$, and
$g\colon K \setminus \bigcup_j N_j \to Y \setminus \bigcup_j A_j$
the coordinate map.
In each case below, $k_\alpha$ and $\ell_\alpha$ eventually lie in
$K \setminus \bigcup_j N_j$ (since each $\{\ast_i\}$ is closed and
$f(k_\alpha) = f(\ell_\alpha)$ converges to a value different from
at least one~$\ast_j$), so $g(k_\alpha)$ and $g(\ell_\alpha)$ are
eventually defined.
The Closed Fibers Principle (Lemma~\ref{lem:closedfibers}) and
Accumulation Structure (Proposition~\ref{prop:accum}) hold at
each~$\ast_i$ independently: if $k_\alpha \to a \in N_i$, every
cluster point of $g(k_\alpha)$ in $Y$ lies in~$A_i$.

\emph{Case $f(a) = \ast_i$, $f(b) = \hat{r}$ with $r \notin
\bigcup_j A_j$}: the Accumulation Structure forces
$g(k_\alpha) \to z \in A_i$ while $g(\ell_\alpha) \to r \notin A_i$;
since $g(k_\alpha) = g(\ell_\alpha)$ eventually (by injectivity of
$q|_{Y \setminus \bigcup A_j}$), the common net cannot converge to
both $z$ and $r$---contradiction (Hausdorff).

\emph{Case $f(a) = \ast_i$, $f(b) = \ast_j$ with $i \neq j$}:
Since $f(k_\alpha) = f(\ell_\alpha)$ and both $\{\ast_i\}$,
$\{\ast_j\}$ are closed, the common value $f(k_\alpha)$ is
eventually different from both $\ast_i$ and~$\ast_j$
(it cannot be eventually $\ast_i$, since then
$f(\ell_\alpha) = \ast_i$ eventually, contradicting
$f(\ell_\alpha) \to \ast_j \neq \ast_i$; symmetrically for~$\ast_j$).
Hence $k_\alpha$ and $\ell_\alpha$ eventually lie in
$K \setminus \bigcup_j N_j$, so $g$ is defined on them and
$g(k_\alpha) = g(\ell_\alpha)$ eventually (by injectivity of
$q|_{Y \setminus \bigcup A_j}$).
By the Accumulation Structure, every cluster point of
$g(k_\alpha)$ lies in~$A_i$ (since $k_\alpha \to a \in N_i$)
and every cluster point of $g(\ell_\alpha)$ lies in~$A_j$
(since $\ell_\alpha \to b \in N_j$).
Since $g(k_\alpha) = g(\ell_\alpha)$ eventually, the common net
would need to cluster in $A_i \cap A_j = \varnothing$---contradiction.

The remaining cases ($f(a), f(b)$ both ordinary, or both equal to the
same~$\ast_i$) follow exactly as in Theorem~\ref{thm:k2H}.
The not-wH and not-KC arguments are unchanged.
\end{proof}

\subsection{Products of FMQ spaces}\label{subsec:products}

\begin{proposition}[Products preserve US and $k_2$H]\label{prop:products}
Let $\{X_i\}_{i \in I}$ be any family of $\mathrm{US}$
\textup{(}resp.\ $k_2$-Hausdorff\textup{)} spaces.
Then $\prod_{i \in I} X_i$ with the product topology is $\mathrm{US}$
\textup{(}resp.\ $k_2$-Hausdorff\textup{)}.
\end{proposition}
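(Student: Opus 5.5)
The plan is to treat the two properties separately. Both proofs go through the coordinate projections $\pi_i\colon \prod_j X_j \to X_i$, which are continuous and separate points.

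\emph{US.} Let $(x^n)$ be a sequence in $P := \prod_i X_i$ converging to both $L$ and $L'$. For each $i$, continuity of $\pi_i$ gives $\pi_i(x^n) \to \pi_i(L)$ and $\pi_i(x^n) \to \pi_i(L')$ in $X_i$. Since $X_i$ is US, $\pi_i(L) = \pi_i(L')$ for every $i$, and so $L = L'$. This step is routine.

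\emph{$k_2$H.} Let $K$ be compact Hausdorff and $f\colon K \to P$ continuous, and put $f_i := \pi_i \circ f\colon K \to X_i$, which is continuous. Since a point of $P$ is determined by its coordinates,
\[
\ker(f) = \{(a,b) \in K^2 : f_i(a) = f_i(b) \text{ for all } i\} = \bigcap_{i \in I} \ker(f_i).
\]
Each $\ker(f_i)$ is closed in $K \times K$ by the $k_2$-Hausdorff property of $X_i$ (Definition~\ref{def:k2H}). An arbitrary intersection of closed sets is closed, so $\ker(f)$ is closed and $P$ is $k_2$-Hausdorff.

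I expect no step to be a real obstacle. The one point to check is that we use the kernel formulation of Definition~\ref{def:k2H} and not the separating-neighbourhood formulation. With the latter, infinitely many coordinates would make it awkward to produce neighbourhoods whose images are disjoint in the product. The kernel formulation avoids this because the intersection identity above needs only that points of the product are equal exactly when all coordinates agree. Applied to FMQ spaces, the statement together with Theorem~\ref{thm:k2H} shows that arbitrary products of ERI-type spaces remain $k_2$H. The failure of wH in such a product, with each factor nonempty, should then follow by embedding a factor as a slice and using a compact non-closed image $q(C) \times \{\text{point}\}$; that is a separate remark and not part of this proposition.
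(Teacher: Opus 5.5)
Your proof is correct and matches the paper's. US follows from continuity of the coordinate projections together with uniqueness of limits in each factor, and $k_2$H follows from the identity $\ker(f) = \bigcap_{i} \ker(\pi_i \circ f)$, which is an intersection of closed subsets of $K \times K$; your remark that the kernel formulation is what makes arbitrary products painless reflects exactly how the paper's proof works.
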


\begin{proof}
US: coordinate projections preserve limits, so uniqueness in each
factor gives uniqueness in the product.
$k_2$H~\cite{bib:clontzwilliams}: $\ker(f) = \bigcap_i \ker(\pi_i \circ f)$, a closed
intersection.
\end{proof}

\begin{corollary}\label{cor:products}
If $X_1 = \mathrm{FMQ}(Y_1, A_1, \mathcal{D}_{Y_1})$ and
$X_2 = \mathrm{FMQ}(Y_2, A_2, \mathcal{D}_{Y_2})$ are in the
US zone with $Y_i \setminus A_i$ first-countable, then
$X_1 \times X_2$ is US, $k_2$-Hausdorff, not KC, and not
weakly Hausdorff.
Products do not change the hierarchy level.
\end{corollary}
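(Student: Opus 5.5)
The statement has four parts, and I would treat them separately, relying on Proposition~\ref{prop:products} for the positive ones. By Theorem~\ref{thm:general}(e), or Lemma~\ref{lem:baire}, each $X_i = \mathrm{FMQ}(Y_i, A_i, \mathcal{D}_{Y_i})$ is US. Since $Y_i \setminus A_i$ is first-countable, Theorem~\ref{thm:k2H} makes each $X_i$ also $k_2$-Hausdorff (this uses $A_i$ finite; Theorem~\ref{thm:infiniteA} covers closed nowhere-dense $A_i$). Proposition~\ref{prop:products} then shows that $X_1 \times X_2$ is US and $k_2$H. For $k_2$H the kernel identity in that proposition, $\ker(f) = \ker(\pi_1 \circ f) \cap \ker(\pi_2 \circ f)$, is immediate, so I would simply cite it.

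For the negative parts I would use the fact that each factor embeds as a closed subspace of the product. Fix a point $p_2 \in X_2$ and consider $j\colon X_1 \to X_1 \times X_2$, $x \mapsto (x, p_2)$. This map is a homeomorphism onto $X_1 \times \{p_2\}$, and that slice is closed because $\{p_2\}$ is closed, as $X_2$ is $T_1$ by Theorem~\ref{thm:general}(b).

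For not-wH, Theorem~\ref{thm:uniformzone} (or Proposition~\ref{prop:notwH}) gives a compact Hausdorff $C$ and a continuous $f\colon C \to X_1$ whose image is not closed in $X_1$. Then $j \circ f$ is continuous from the compact Hausdorff space $C$ into the product. Its image $j(f(C))$ is not closed in the slice $X_1 \times \{p_2\}$, because $j$ is a homeomorphism onto the slice. A set that is not closed in a closed subspace is not closed in the ambient space, since closedness in the subspace is the trace of closedness in the whole space. Hence the product is not wH.

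Not-KC follows at once, either because KC implies wH in \eqref{eq:clontz}, or directly: $j(f(C))$ is compact but not closed.

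The only step needing care is the closed-slice argument: the product need not be Hausdorff, so I have to check that the closedness of $\{p_2\}$ (from $T_1$) is enough. It is, since the slice is the preimage $\pi_2^{-1}(\{p_2\})$ of a closed set under a continuous projection. The concluding claim that products do not change the hierarchy level is then just a restatement of these four facts, namely $k_2$H but not wH.
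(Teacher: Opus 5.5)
Your proposal is correct and matches the paper's proof: US and $k_2$H come from Proposition~\ref{prop:products}, and the failures of wH and KC are transferred from $X_1$ along the slice map $x \mapsto (x, p_2)$. The closedness of the slice, and even the embedding property, is more than you need: since $j$ is continuous and injective, $j^{-1}(j(B)) = B$, so if $j(B)$ were closed then $B$ would be closed, and this is all the paper's one-line ``the inclusion is continuous'' argument uses.
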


\begin{proof}
US and $k_2$H follow from Proposition~\ref{prop:products}.
Not KC and not wH follow from the corresponding properties of
the factors, since the inclusion
$X_1 \hookrightarrow X_1 \times X_2$ (via
$x \mapsto (x, p)$) is continuous.
\end{proof}

The preceding results show that for metrizable~$Y$,
no variation of $(Y, A, \mathcal{F})$---finite or infinite~$A$,
iteration, multi-point collapse, or products---can change the
hierarchy level.
The first-countability frontier and the three-level structure
of the FMQ framework are discussed in Section~\ref{sec:open}
(concluding remarks).

\section{Open Problems}\label{sec:open}

\begin{openproblem}[$k_2$-Hausdorff for non-first-countable $Y$]\label{op:k2H}
Is $\mathrm{FMQ}(\beta\mathbb{N} \setminus \mathbb{N}, A, \mathcal{D}_Y)$
$k_2$-Hausdorff?
Theorem~\ref{thm:k2H} requires first-countability of $Y \setminus A$
for the sequential extraction in the Accumulation Structure
(Proposition~\ref{prop:accum}); without first-countability, the
argument breaks down.
The author suspects the space is \emph{not} $k_2\mathrm{H}$:
$\beta\mathbb{N} \setminus \mathbb{N}$ is extremally disconnected, so
every convergent sequence is eventually constant and no point has a
countable neighborhood base, and the Closed Fibers Principle
(Lemma~\ref{lem:closedfibers}) admits no bootstrap to a contradiction
because every closed neighborhood has nonempty interior.
A negative answer would establish a second hierarchy level
(US-not-$k_2\mathrm{H}$) for the FMQ framework.
\end{openproblem}

\begin{openproblem}[Sobriety of Van Douwen's space]\label{op:vandouwen}
Is Van Douwen's anti-Hausdorff Fr\'{e}chet--Urysohn US
space~\cite{bib:vandouwen} sober?
The author has not found this recorded in the literature or in
pi-Base~\cite{bib:pibase}.
\end{openproblem}

\begin{openproblem}[Covering dimension of ERI]\label{op:dim}
What is the covering dimension $\dim(X)$ of ERI, and what is the large
inductive dimension $\mathrm{Ind}(X)$?
We know $\mathrm{ind}(X) = 1$ (Proposition~\ref{prop:dim}) and
$\dim(X) \geq 1$ (Remark~\ref{rmk:dimfunctions}), but ERI is compact
$T_1$ and not normal, so the standard coincidence theorems
($\mathrm{ind} = \mathrm{Ind} = \dim$ for separable metrizable
spaces) do not apply.
\end{openproblem}

\begin{openproblem}[Minimality of the ERI topology]\label{op:minimal}
Is $\tau_{\mathrm{ERI}}$ a minimal US topology on~$X_0$?
That is, if $\tau'$ is a coarser $T_1$ topology on $X_0$ that is still
US, must $\tau' = \tau_{\mathrm{ERI}}$?
Bella and Costantini~\cite{bib:bella} showed that every minimal KC
topology on a set is compact; for the classical theory of minimal
Hausdorff, Urysohn, and $T_1$ topologies see
van~der~Zypen~\cite{bib:vanderzypen-minimal}.
\end{openproblem}

\begin{remark}[Further questions]\label{rem:further}
\begin{enumerate}[label=\textup{(\alph*)},nosep]
\item\label{op:pathconnected} \emph{Path-connectedness:} For which compact Hausdorff $Y$
  without isolated points is $\mathrm{FMQ}(Y, A, \mathcal{D}_Y)$
  path-connected? Conjecture: iff $Y$ is path-connected.
\item\label{op:seqorder} \emph{Sequential order:} Following
  Arhangel'ski\u{\i}--Franklin~\cite{bib:arhangelskii-franklin}, the
  sequential order of ERI is at least~$2$
  (Proposition~\ref{prop:notsequential}). Is it exactly~$2$,
  or~$\omega_1$?
\item\label{op:pi1} \emph{Fundamental group:} The identity
  $\mathrm{id}\colon (X_0, \tau_q) \to (X, \tau_{\mathrm{ERI}})$
  induces a homomorphism
  $\pi_1(\text{figure-eight}) \to \pi_1(X, \ast)$, which need be
  neither injective nor surjective.
  Determine $\pi_1(X, \ast)$.
  By Proposition~\ref{prop:hausdorff-constant}, every continuous
  map $X \to S^1$ is constant, so $\check{H}^1(X; \mathbb{Z}) = 0$.
  More strongly, every map $X \to K(G,1)$ with $G \neq 1$ is
  constant (since $K(G,1)$ is Hausdorff with $|K(G,1)| \geq 2$),
  suggesting $\pi_1(X, \ast) = 1$; a rigorous proof requires
  verifying semilocal simple-connectedness at~$\ast$.
\item\label{op:onepoint} \emph{One-point extension:} Is the space~$Z$ of
  Section~\ref{sec:onepoint} representable as
  $\mathrm{FMQ}(K, A, \mathcal{F})$ for some compact Hausdorff~$K$?
\end{enumerate}
\end{remark}

\begin{remark}[Resolved questions]\label{rem:resolved}
Two questions posed in an earlier version of this paper
have been resolved:
\begin{enumerate}[nosep]
\item\label{op:infiniteA} \emph{Infinite collapse sets:}
Theorem~\ref{thm:infiniteA} shows US (and $k_2$H when
$Y \setminus A$ is first-countable) for any closed
nowhere-dense~$A$.
\item\label{op:products} \emph{Products:}
US and $k_2$H are preserved by arbitrary products
(Proposition~\ref{prop:products}, Corollary~\ref{cor:products}).
\end{enumerate}
\end{remark}

\begin{remark}\label{rem:nowHnotKC}
No modifier in the US zone ($\mathcal{F} \subseteq \mathcal{D}_Y$) produces
a wH space (Theorem~\ref{thm:uniformzone}), and modifiers beyond $\mathcal{D}_Y$ break the
KIP (Theorem~\ref{thm:optimality}). Hence the FMQ framework \emph{cannot} produce
wH-not-KC spaces.
The extended rigidity analysis
(Section~\ref{sec:extended}) confirms that this obstruction is
robust: infinite collapse sets, iterated FMQ, multi-point collapse,
and products all remain below wH.
The only open direction is a potential US-not-$k_2$H level via
non-first-countable base spaces (Open Problem~\ref{op:k2H}).
\end{remark}

ERI is the first compact path-connected space the author knows of
that separates US from KC.
The mechanism is the single density condition on neighborhoods of
$\ast$: from it follow a complete convergence criterion
(Proposition~\ref{prop:seqconv}), countable tightness without
sequentiality (Proposition~\ref{prop:tightness}), and a trivial
continuous function space (Proposition~\ref{prop:ERI-cfunctions}).
The one-point extension $Z$ of~\S\ref{sec:onepoint} reaches the same
hierarchy position without going through FMQ, which is evidence that
the $k_2\mathrm{H}$-not-$\mathrm{wH}$-not-$\mathrm{KC}$ separation is
topological and does not depend on the density modifier.

The modifier alone decides the hierarchy level: $T_1$-not-US for the
cofinite modifier, $k_2\mathrm{H}$-not-$\mathrm{wH}$ for any
admissible modifier between comeager and density, and $T_2$ for the
accumulation modifier and beyond.
The inputs $(Y, A)$ decide orthogonal properties---path-connectedness,
sobriety, tightness, cardinal functions---but do not move the level,
which is invariant under infinite closed nowhere-dense collapse sets
(Theorem~\ref{thm:infiniteA}), iteration
(Proposition~\ref{prop:absorption}), and arbitrary products
(Proposition~\ref{prop:products}).

Two questions stand out.
A fourth hierarchy level, US-not-$k_2\mathrm{H}$, would need a
non-first-countable base space; the candidate is
$Y = \beta\mathbb{N} \setminus \mathbb{N}$, where the sequential
extraction in Proposition~\ref{prop:accum} fails
(Open Problem~\ref{op:k2H}), and the author suspects the resulting
FMQ space is not $k_2$-Hausdorff.
The fundamental group $\pi_1(X, \ast)$ is conjecturally trivial: every
continuous map to a Hausdorff space is constant
(Proposition~\ref{prop:hausdorff-constant}), and a full proof reduces
to verifying semilocal simple-connectedness at~$\ast$.
Note that the gap between US and Hausdorff in ERI is witnessed by nets
(Example~\ref{ex:net}) but not by sequences; by
Fact~\ref{fact:firstUS}, this is forced.

\section{Acknowledgments}
I thank Steven Clontz for endorsing this paper
on arXiv, and the
\emph{pi-Base} database~\cite{bib:pibase} for facilitating systematic
verification of topological properties.
The ERI space will be submitted to \emph{pi-Base} upon
acceptance of this paper.
Two key references~\cite{bib:clontz,bib:clontzwilliams} are currently arXiv
preprints; the results cited from them have been independently
verified.


\subsection*{Declaration of generative AI and AI-assisted technologies
in the manuscript preparation process}

During the preparation of this work I used Claude (Anthropic)
to assist with structuring the manuscript in \LaTeX, editing
prose, and verifying bibliographic claims against the existing
literature.
All mathematical results, definitions, and proofs are my own.
After using this tool, I reviewed and edited the content as
needed and take full responsibility for the content of the published
article.

\appendix
\section{Master table of FMQ generators}\label{app:master}

Each row specifies a triple $(Y, A, \mathcal{F})$ and records the
resulting properties.
Table~\ref{tbl:eri-like} collects ERI-like generators---all producing
the $k_2\mathrm{H}$-not-$\mathrm{wH}$-not-KC level---and
Table~\ref{tbl:boundary} collects the contrasting standard quotient,
boundary cases where the construction fails at the $T_1$ or US level,
and instances with infinite collapse set covered by
Theorem~\ref{thm:infiniteA}.

\begin{table}[h]
\centering
\small
\begin{tabular}{clllccccccc}
\hline
$\#$ & Base space $Y$ & Collapse $A$ & Modifier $\mathcal{F}$ &
$T_1$ & US & SC & $k_2$H & wH & KC & $T_2$ \\
\hline
\multicolumn{11}{l}{\emph{ERI-like generators} (compact, US, not KC, not $T_2$)} \\
\hline
1 & $\overline{\mathbb{R}}$ & $\{-\infty,0,+\infty\}$ & $\mathcal{D}$ &
  $\checkmark$ & $\checkmark$ & $\checkmark$ & $\checkmark$ & $\times$ & $\times$ & $\times$ \\
2 & $[0,1]$ & $\{0,1\}$ & $\mathcal{D}$ &
  $\checkmark$ & $\checkmark$ & $\checkmark$ & $\checkmark$ & $\times$ & $\times$ & $\times$ \\
3 & $S^1$ (circle) & $\{p\}$, any point & $\mathcal{D}$ &
  $\checkmark$ & $\checkmark$ & $\checkmark$ & $\checkmark$ & $\times$ & $\times$ & $\times$ \\
4 & Cantor set $\mathcal{C}$ & $\{0,1\}$ & $\mathcal{D}$ &
  $\checkmark$ & $\checkmark$ & $\checkmark$ & $\checkmark$ & $\times$ & $\times$ & $\times$ \\
5 & $\beta\mathbb{N} \setminus \mathbb{N}$ & $\{p,q\}$, $p \neq q$ & $\mathcal{D}$ &
  $\checkmark$ & $\checkmark$ & triv & $(\checkmark)$\textsuperscript{1} & $\times$ & $\times$ & $\times$ \\
6 & $[0,1] \sqcup [2,3]$ & $\{0,3\}$ & $\mathcal{D}$ &
  $\checkmark$ & $\checkmark$ & $\checkmark$ & $\checkmark$ & $\times$ & $\times$ & $\times$ \\
\hline
\end{tabular}
\caption{ERI-like FMQ generators. Higher-dimensional base spaces
($[0,1]^2$, $[0,1]^\omega$, $S^1$ with multiple collapse points) produce
the same hierarchy level and are omitted.
\textsuperscript{1}\,Conjectured: Theorem~\ref{thm:k2H} requires
first-countability of $Y \setminus A$, which
$\beta\mathbb{N} \setminus \mathbb{N}$ does not satisfy; see
Remark~\ref{rem:k2Hgeneral}.}
\label{tbl:eri-like}
\end{table}

\begin{table}[h]
\centering
\small
\begin{tabular}{clllcccccl}
\hline
$\#$ & Base space $Y$ & Collapse $A$ & Modifier $\mathcal{F}$ &
$T_1$ & US & KC & $T_2$ & & \emph{Failure reason} \\
\hline
\multicolumn{10}{l}{\emph{Standard quotient}} \\
\hline
7 & $[0,1]$ & $\{0,1\}$ & $\mathcal{P}(Y)$ &
  $\checkmark$ & $\checkmark$ & $\checkmark$ & $\checkmark$ & &
  Standard circle $\cong S^1$; Hausdorff \\
\hline
\multicolumn{10}{l}{\emph{Boundary cases}} \\
\hline
8 & $\{0,1,\ldots\} \cup \{\infty\}$ & $\{0,\infty\}$ & $\mathcal{D}$ &
  $\times$ & $\times$ & $\times$ & $\times$ & &
  Isolated points; $T_1$ fails \\
9 & $[0,1]$ & $\mathbb{Q} \cap [0,1]$ & $\mathcal{D}$ &
  $\times$ & $\times$ & $\times$ & $\times$ & &
  $A$ not closed; $T_1$ fails\textsuperscript{2} \\
10 & $[0,1]$ & $\{0,1\}$ & cofinite &
  $\checkmark$ & $\times$ & $\times$ & $\times$ & &
  US fails; too few nbhds of $\ast$ \\
\hline
\multicolumn{10}{l}{\emph{Infinite $A$} (Theorem~\ref{thm:infiniteA})} \\
\hline
11 & $[0,1]$ & $\{0\} \cup \{1/n\}$ & $\mathcal{D}$ &
  $\checkmark$ & $\checkmark$ & $\times$ & $\times$ & &
  Countable closed nwd $A$; US, $k_2$H \\
12 & $[0,1]$ & Cantor set $\mathcal{C}$ & $\mathcal{D}$ &
  $\checkmark$ & $\checkmark$ & $\times$ & $\times$ & &
  Uncountable closed nwd $A$; US, $k_2$H \\
\hline
\end{tabular}
\caption{Standard quotient, boundary cases, and infinite-$A$
instances.
\textsuperscript{2}\,$A = \mathbb{Q} \cap [0,1]$ is not closed
in $[0,1]$ (its closure is $[0,1]$), so $Y \setminus A$ is not open
and $\{\ast\}$ is not closed; the FMQ construction requires $A$
closed for $T_1$ (Theorem~\ref{thm:infiniteA}(b)).}
\label{tbl:boundary}
\end{table}


\end{document}